\theoremstyle{plain}
\newtheorem{theorem}{Theorem}[section]
\newtheorem{lemma}[theorem]{Lemma}
\newtheorem{corollary}[theorem]{Corollary}
\newtheorem{proposition}[theorem]{Proposition}
\theoremstyle{definition}
\newtheorem{definition}[theorem]{Definition}
\theoremstyle{remark}
\newtheorem{remark}[theorem]{Remark}
\newcommand{\bigplus}{%
  \DOTSB\mathop{\mathpalette\mattos@bigplus\relax}\slimits@
}
\newcommand\mattos@bigplus[2]{%
  \vcenter{\hbox{%
    \sbox\z@{$#1\sum$}%
    \resizebox{!}{0.9\dimexpr\ht\z@+\dp\z@}{\raisebox{\depth}{$\m@th#1+$}}%
  }}%
  \vphantom{\sum}%
}
\newcommand{\Z}{\mathbb{Z}}
\newcommand{\Q}{\mathbb{Q}}
\newcommand{\CC}{\mathbb{C}}
\newcommand{\om}{\omega}
\renewcommand{\P}{\mathbb{P}}
\newcommand{\F}{\mathbb{F}}
\newcommand{\al}{\alpha}
\renewcommand{\O}{\mathcal{O}}
\newcommand{\dd}{\mathrm{d}}
\newcommand{\ep}{\epsilon}
\newcommand{\pp}{\mathfrak{p}}
\newcommand{\qq}{\mathfrak{q}}
\renewcommand{\div}{\mathrm{div}}
\begin{document}
\title{Elliptic curves over totally real quartic fields not containing $\sqrt{5}$ are modular}
\author{Josha Box}
\maketitle
\begin{abstract}
    We prove that every elliptic curve defined over a totally real number field of degree 4 not containing $\sqrt{5}$ is modular. To this end, we study the quartic points on four modular curves. 
\end{abstract}
\section{Introduction}
\label{quartic points}

Since the major breakthrough of Wiles \cite{wiles} and Taylor--Wiles \cite{taylorwiles} on semi-stable elliptic curves and the subsequent proof by Breuil, Conrad, Diamond and Taylor \cite{bcdt} in 2000 that all elliptic curves over $\Q$ are modular, the natural follow-up question was: can we extend these results to elliptic curves over other number fields? In this article we do so for all totally real quartic fields not containing a root of 5, by combining a variety of computational methods to study the quartic points on multiple modular curves. Crucial is the use of the ``partially relative'' symmetric Chabauty method developed by the author, Gajovi\'c and Goodman in \cite{bgg}. 

Let $K$ be a totally real number field. We say that an elliptic curve $E$ over $K$ of conductor $\mathcal{N}$ is \emph{modular} when there is a Hilbert newform $\mathfrak{f}$ of level $\mathcal{N}$, parallel weight 2 and with rational Hecke eigenvalues, such that their corresponding systems of compatible  $\ell$-adic Galois representations are isomorphic. Here the $\ell$-adic Galois representation attached to $E$ arises from the action of $\mathrm{Gal}(\overline{K}/K)$ on the Tate module $T_{\ell}(E)$, and the Galois representation associated to $\mathfrak{f}$ was defined by Taylor \cite{taylor}, extending the construction of Eichler and Shimura for $K=\Q$ (see e.g.\ \cite{diamondshurman}). 

The first modularity results beyond $\Q$ were proved by Jarvis and Manoharmayum \cite{jarvis}, who showed that all semi-stable elliptic curves over $\Q(\sqrt{2})$ and $\Q(\sqrt{17})$ are modular. A real leap forward was then made in 2014 by Freitas, Le Hung and Siksek \cite{freitas} when they proved modularity of elliptic curves over all real quadratic fields simultaneously. More recently, Derickx, Najman and Siksek \cite{derickx} extended this to elliptic curves over all totally real cubic fields. We go one step further to totally real quartic fields, albeit only those not containing a root of 5. In Section \ref{furtherstudy}, we discuss some of the difficulties one encounters for quartic fields containing $\sqrt{5}$ and for quintic fields.
\begin{theorem}\label{mainthm}Let $E$ be an elliptic curve over a totally real quartic number field not containing a square root of 5. Then $E$ is modular.
\end{theorem}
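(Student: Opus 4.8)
\emph{Overall strategy.} The plan is to follow the modularity-switching method of Freitas, Le Hung and Siksek \cite{freitas} and of Derickx, Najman and Siksek \cite{derickx}, which reduces Theorem \ref{mainthm} to a finite problem about low-degree points on explicit modular curves, and then to solve that problem with the partially relative symmetric Chabauty method of \cite{bgg}. So let $K$ be a totally real quartic field with $\sqrt5\notin K$ and let $E/K$ be an elliptic curve. By the now-standard combination of the modularity lifting theorems available over totally real fields, the Langlands--Tunnell theorem at the prime $3$, and Wiles's prime-switching trick at $5$ (and at $7$, argued as in \cite{freitas} since $X(7)$ has genus $3$), $E$ is modular unless each of the mod-$3$, mod-$5$ and mod-$7$ representations $\bar\rho_{E,p}$ has image contained in a Borel subgroup or in the normaliser of a Cartan subgroup, of the precise shape that obstructs the relevant lifting theorem. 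The hypothesis $\sqrt5\notin K$ enters here: since $\mathbb{Q}(\sqrt5)$ is the unique quadratic subfield of $\mathbb{Q}(\zeta_5)$, it forces $[K(\zeta_5):K]=4$, without which the modularity lifting theorem at $p=5$ would be obstructed along a positive-dimensional family of $E$ (those with $\bar\rho_{E,5}$ dihedral, induced from the quadratic subextension of $K(\zeta_5)/K$); the difficulties this causes, and analogous obstructions over quintic fields, are discussed in Section~\ref{furtherstudy}. Running through the surviving combinations of small images at two or three of the primes $3,5,7$ --- a single bad prime alone does not obstruct modularity --- and discarding the cases already settled over $\mathbb{Q}$ (including the immediate ones: curves with complex multiplication, and curves with $j$-invariant in $\mathbb{Q}$, which are twists of modular elliptic curves over $\mathbb{Q}$ \cite{bcdt}) or over real quadratic or cubic subfields \cite{freitas, derickx}, one is left with exactly four modular curves $C_1,\dots,C_4$ over $\mathbb{Q}$, each a fibre product over $X(1)$ of level structures of Borel and Cartan-normaliser type at the primes $3,5,7$ and of genus at least $2$, such that every non-modular $E/K$ gives a non-cuspidal closed point of degree dividing $4$ on some $C_i$.

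\emph{The quartic-point problem.} A closed point of $C_i$ of degree $1$ or $2$ corresponds to an elliptic curve over $\mathbb{Q}$ or over a real quadratic field, which is modular by \cite{bcdt, freitas}; so the content is the closed points of degree exactly $4$, the quartic points, and these together with the lower-degree points are recorded by the $\mathbb{Q}$-points of the symmetric fourth power $C_i^{(4)}$. Since $K$ is totally real, every subfield of the residue field of a relevant point is totally real, so an elliptic curve attached to such a point that is a base change from a proper subfield is automatically modular by \cite{bcdt, freitas, derickx} together with solvable base change. The aim is therefore to show that each $\mathbb{Q}$-point of $C_i^{(4)}$ is a cusp, is CM, is such a base change, or lies on one of finitely many explicitly described families --- pull-backs of rational points along maps $C_i\to\mathbb{P}^1$ of degree dividing $4$, or along degree-$2$ maps $C_i\to A$ to an elliptic curve $A$, all of which factor through modular curves of smaller level and so again yield only base changes of modular elliptic curves. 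Everything then reduces to ruling out further, unexpected points of $C_i^{(4)}(\mathbb{Q})$.

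\emph{Controlling $C_i^{(4)}(\mathbb{Q})$.} Naive symmetric Chabauty applied to $C_i^{(4)}$ would require $\mathrm{rank}\,\mathrm{Jac}(C_i)(\mathbb{Q})\le g(C_i)-4$, which these curves fail. Instead one applies the method of \cite{bgg}: for a suitable quotient map $C_i\to D_i$, a degree-$4$ effective divisor is written as the sum of a part pulled back from $D_i$ and a residual part, and the Chabauty--Coleman argument is run only on the residual part, so that it suffices to bound $\mathrm{rank}\,\mathrm{Jac}(C_i)(\mathbb{Q})-\mathrm{rank}\,\mathrm{Jac}(D_i)(\mathbb{Q})$ by the genus contributed by the covering. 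The main obstacle is to carry all of this through for each of the four curves simultaneously: selecting the quotients $D_i$ and verifying the rank inequalities --- which needs provable Mordell--Weil rank computations for the modular abelian varieties involved, via descent or via known cases of the Birch--Swinnerton-Dyer conjecture; checking the local conditions of the method at several primes of good reduction; and, at the finitely many places where the relevant Chabauty differentials degenerate, completing the analysis by hand --- so as to certify that no unexpected quartic point, and hence no non-modular elliptic curve over a totally real quartic field not containing $\sqrt5$, survives.
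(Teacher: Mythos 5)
Your overall architecture matches the paper's: modularity lifting reduces the problem to $K$-points on the four curves $X(\mathrm{b}3,\mathrm{b}5,\mathrm{b}7)$, $X(\mathrm{s}3,\mathrm{b}5,\mathrm{b}7)$, $X(\mathrm{b}3,\mathrm{b}5,\mathrm{e}7)$, $X(\mathrm{s}3,\mathrm{b}5,\mathrm{e}7)$ (Theorem \ref{thm1.1}), and one then studies their quartic points. However, there is a genuine gap in how you dispose of the infinite families of quartic points. You assert that the pull-backs along degree $2$ or degree $4$ maps to $\P^1$ or to elliptic curves ``factor through modular curves of smaller level and so again yield only base changes of modular elliptic curves.'' That is not what happens: the maps producing infinitely many quartic points on these curves are Atkin--Lehner quotient maps (e.g.\ $X_0(105)\to X_0(105)/\langle w_3,w_5,w_7\rangle\simeq\P^1$), not level-lowering maps, and the resulting elliptic curves generically have honestly quartic $j$-invariant. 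They are not base changes or twists of curves over proper subfields; they are merely isogenous to all of their Galois conjugates, i.e.\ $\Q$-curves (Lemma \ref{Qcurvelemma}), and their modularity is supplied by Ribet's theorem, which rests on Serre's conjecture rather than on any base-change argument. Without invoking modularity of $\Q$-curves your proof cannot conclude for these points, and they constitute the bulk of $C_i^{(4)}(\Q)$. Relatedly, your final claim that one certifies ``no unexpected quartic point survives'' is false as stated: Theorem \ref{thm1.5} and Table \ref{tableb5ns7} exhibit isolated quartic points that are neither cusps, nor CM, nor $\Q$-curves, nor twists from subfields; they are eliminated only because their fields of definition are not totally real, an exclusion your argument never makes room for.

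A second gap is in the Diophantine step itself. You propose to run relative symmetric Chabauty on $C_i^{(4)}$ for a single quotient $C_i\to D_i$. The four curves have genera $13$, $21$, $73$ and $153$; for none of them can one compute a usable model, Mordell--Weil data, or Riemann--Roch spaces, and even for $X_0(105)$ (genus $13$, with \emph{finite} Mordell--Weil group, so no Chabauty is needed there at all) the paper must descend to ten Atkin--Lehner quotients of genus at most $5$ and recombine the information through common quotients further down the tree (Section \ref{X0105sec}, Proposition \ref{quadprop}). For the two curves involving $\mathrm{e}7$, the actual argument passes to the intermediate curve $X(\mathrm{b}5,\mathrm{ns}7)$ of genus $6$ and to the quotients $X_1$, $X_2$ of genera $8$ and $5$, and the Chabauty step must be coupled with a Mordell--Weil sieve over fibre products such as $X_i^{(2)}\times_{X(\mathrm{b}5)^{(4)}}X(*3,\mathrm{b}5)^{(4)}$ precisely because the rank inequality you want to ``bound'' fails for the naive choice of quotient. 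So while the relative Chabauty idea is the right tool, the plan of applying it directly to the $C_i$ relative to one quotient would not get off the ground; the essential content of the proof is the choice and combination of many small quotients, which your sketch does not supply.
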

Our strategy for proving modularity is similar to that of Wiles \cite{wiles}, Freitas--Le Hung--Siksek \cite{freitas} and Derickx--Najman--Siksek \cite{derickx}. We make use of strong modularity lifting theorems due to Breuil--Diamond \cite{bd}, Thorne \cite{thorne} and Kalyanswami \cite{kalyanswamy} (building on the work of many others), which \emph{almost} prove modularity over all totally real number fields, in the following sense:  the mod $p$ Galois representation of a non-modular elliptic curve over a totally real number field must have ``small'' image for $p\in \{3,5,7\}$. This particular meaning of ``small'' can be parametrised precisely by modular curves, leading to the following corollary of the aforementioned modularity lifting results. 

\begin{theorem}\label{thm1.1}
Suppose that $E$ is an elliptic curve over a totally real number field $K$ satisfying $K\cap \Q(\sqrt{5})=\Q$ and $K\cap \Q(\zeta_7)=\Q$, where $\zeta_7$ is a primitive 7th root of unity. If $E$ is not modular, then $E$ gives rise to a $K$-point on one of the following modular curves:
\[
X(\mathrm{b}3,\mathrm{b}5,\mathrm{b}7),\quad X(\mathrm{s}3,\mathrm{b}5,\mathrm{b}7),\quad X(\mathrm{b}3,\mathrm{b}5,\mathrm{e}7),\quad X(\mathrm{s}3,\mathrm{b}5,\mathrm{e}7).
\]
\end{theorem}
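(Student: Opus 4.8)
The plan is to deduce Theorem~\ref{thm1.1} from the modularity lifting theorems cited above, organising the arguments of Freitas--Le Hung--Siksek \cite{freitas} and Derickx--Najman--Siksek \cite{derickx} and inserting the two hypotheses on $K$ at the places where those authors exploited features peculiar to quadratic, respectively cubic, fields. Write $G_K=\mathrm{Gal}(\overline{K}/K)$ and let $\bar\rho_{E,p}\colon G_K\to\mathrm{GL}_2(\F_p)$ be the Galois representation on $E[p]$. The heart of the argument is to show that if $E$ is not modular then, for \emph{each} $p\in\{3,5,7\}$, the image $\bar\rho_{E,p}(G_K)$ is conjugate into a member of a short explicit list $S_p$ of proper subgroups. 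At $p=3$, residual modularity of $\bar\rho_{E,3}$ (when irreducible) is automatic by Langlands--Tunnell, so a modularity lifting theorem applies as soon as $\bar\rho_{E,3}$ is large enough: Breuil--Diamond \cite{bd} handles the generic large-image case, while Thorne \cite{thorne} and Kalyanswamy \cite{kalyanswamy} handle the dihedral cases except when $\bar\rho_{E,3}$ lies in the normalizer of a split Cartan, and together with the reducible case this leaves $S_3=\{\mathrm{b}3,\mathrm{s}3\}$. At $p=5$ and $p=7$ residual modularity is no longer for free, and one instead runs the ``$3$--$p$ switch'': from the $K$-point of the twist $X_E(p)$ of $X(p)$ furnished by $E$ itself, Hilbert irreducibility produces an auxiliary $E'/K$ with $\bar\rho_{E',p}\cong\bar\rho_{E,p}$ and $\bar\rho_{E',3}$ large, so that $E'$ is modular, $\bar\rho_{E,p}$ is residually modular, and $E$ is modular --- contradicting the assumption. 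For $p=5$ the curve $X(5)$ has genus $0$, so $X_E(5)$ carries infinitely many $K$-points and the switch succeeds for every image supporting a lifting theorem at $5$; the only gap occurs when $\bar\rho_{E,5}|_{G_{K(\zeta_5)}}$ fails to be absolutely irreducible, and since $K\cap\Q(\sqrt5)=\Q$ forces $[K(\zeta_5):K]=4$ with unique quadratic subextension $K(\sqrt5)\neq K$, this gap is confined to $\bar\rho_{E,5}$ being reducible. Hence $S_5=\{\mathrm{b}5\}$. For $p=7$ the curve $X(7)$ has genus $3$ and the switch alone no longer suffices; combining it with Thorne in the dihedral cases and with the hypothesis $K\cap\Q(\zeta_7)=\Q$ to clear the remaining Cartan cases, one is left with $S_7=\{\mathrm{b}7,\mathrm{e}7\}$, where $\mathrm{e}7$ denotes the exceptional mod $7$ image not reached by Thorne or by the switch.

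Once the lists $S_3,S_5,S_7$ are in hand, the rest is formal. For a prime $p$ and a subgroup $H$ on the list, the containment $\bar\rho_{E,p}(G_K)\subseteq H$ says exactly, via the moduli interpretation of modular curves, that $E$ determines a $K$-rational point of the associated curve $X(H)$ --- so $X(\mathrm{b}p)=X_0(p)$, $X(\mathrm{s}p)$ is the curve of the normalizer of a split Cartan, and $X(\mathrm{e}7)$ is the exceptional curve. Imposing the conditions at $3$, $5$ and $7$ simultaneously, $E$ gives a $K$-point of a fibre product over the $j$-line
\[
X(H_3)\times_{X(1)}X(H_5)\times_{X(1)}X(H_7),\qquad (H_3,H_5,H_7)\in S_3\times S_5\times S_7,
\]
which is again a modular curve, of level the corresponding subgroup of $\mathrm{GL}_2(\Z/105\Z)$. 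Since $|S_3|=2$, $|S_5|=1$ and $|S_7|=2$, there are exactly $2\cdot 1\cdot 2=4$ such curves, and they are precisely $X(\mathrm{b}3,\mathrm{b}5,\mathrm{b}7)$, $X(\mathrm{s}3,\mathrm{b}5,\mathrm{b}7)$, $X(\mathrm{b}3,\mathrm{b}5,\mathrm{e}7)$ and $X(\mathrm{s}3,\mathrm{b}5,\mathrm{e}7)$, which is the claim.

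All of the difficulty is thus concentrated in the first step: pinning down the lists $S_p$ rigorously. This requires a careful case-by-case verification of the hypotheses of each modularity lifting theorem --- the restrictions on $p$ and on $K$, the local conditions on $\bar\rho_{E,p}$ at primes above $p$ and at primes of bad reduction, the bigness hypotheses (absolute irreducibility or adequacy of $\bar\rho_{E,p}|_{G_{K(\zeta_p)}}$), and, for Thorne, that the representation really is induced from a quadratic field by a sufficiently non-trivial character --- and a confirmation that the two arithmetic hypotheses on $K$ are \emph{exactly} what is needed to cut $S_5$ down to $\{\mathrm{b}5\}$ and $S_7$ down to $\{\mathrm{b}7,\mathrm{e}7\}$: in particular to rule out the dihedral cases at $5$ attached to $\Q(\sqrt5)$, and to dispose of large mod $7$ image. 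Arranging the $3$--$5$ and $3$--$7$ switches so that they cover all remaining cases is the other delicate point; the structural reason an exceptional case persists at $7$ but not at $5$ is simply that $X(5)$ has genus $0$ whereas $X(7)$ has genus $3$. Since all the necessary ingredients already appear in \cite{freitas} and \cite{derickx}, in practice I would follow the structure of \cite{derickx} closely, inserting the hypothesis $K\cap\Q(\sqrt5)=\Q$ at each point where that paper invoked the fact that a totally real cubic field cannot contain $\sqrt5$.
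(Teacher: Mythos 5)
Your proposal follows essentially the same route as the paper: constrain $\mathrm{Im}(\overline{\rho}_{E,p})$ for $p\in\{3,5,7\}$ via modularity lifting theorems, and then convert the three containments into a $K$-point on the normalised fibre product over the $j$-line. The difference is one of packaging: the paper does not re-run the $3$--$5$ and $3$--$7$ switches but simply quotes the end results as black boxes (Theorem \ref{modliftthm}: \cite[Theorem 3]{freitas} plus \cite[Proposition 6]{rubin} at $3$; Thorne \cite{thorne} at $5$; Kalyanswamy \cite{kalyanswamy} plus \cite[Proposition 4.1(c)]{freitas} at $7$, the last step being what cuts $C_{\mathrm{ns}}^+(7)$ down to the index-$2$ subgroup $G(\mathrm{e}7)$), whereas you propose to reconstruct those results and defer the case-by-case verification. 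Your final lists $S_3,S_5,S_7$ and the fibre-product step are correct.

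Two points in your justification deserve correction. First, at $p=5$ you claim that the hypothesis $K\cap\Q(\sqrt5)=\Q$ confines the failure of absolute irreducibility of $\overline{\rho}_{E,5}|_{G_{K(\zeta_5)}}$ to the reducible case; this is false as stated. If $\overline{\rho}_{E,5}$ is irreducible but induced from a character of $G_{K(\sqrt5)}$ (image in the normaliser of a Cartan), its restriction to $G_{K(\zeta_5)}$ is still absolutely reducible, and the hypothesis on $K$ does not make this case vanish --- it is precisely the case disposed of by Thorne's theorem, whose hypothesis $\sqrt5\notin K$ is where the condition on $K$ actually enters. Since you do cite Thorne and acknowledge the dihedral case at $5$ in your closing paragraph, this is an error of justification rather than of conclusion, but the mechanism must be Thorne, not field theory. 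Second, at $p=3$ you attribute the treatment of the dihedral (non-split Cartan) case to Thorne and Kalyanswamy; those papers concern $p=5$ and $p=7$ respectively. At $p=3$ the full normaliser of the non-split Cartan intersects $\mathrm{SL}_2(\F_3)$ in the quaternion group, which is absolutely irreducible, so that case is already covered by the Breuil--Diamond-type statement; the surviving possibilities $B(3)$ and $C_{\mathrm{s}}^+(3)$ come out of \cite[Proposition 6]{rubin} applied to the absolutely reducible restriction.
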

In Section \ref{modliftsec} we define these curves and show how this theorem follows from modularity lifting results. Theorem \ref{thm1.1} reduces the modularity question to studying $K$-rational points on these four curves. In the classical case $K=\Q$, it sufficed to forget about the prime 7 and consider only the  curves $X(\mathrm{b}3,\mathrm{b}5)$ and $X(\mathrm{s}3,\mathrm{b}5)$. Both of these are ellipic curves with finite Mordell--Weil group, and the rational points were swiftly found to correspond to cusps or modular elliptic curves. When considering elliptic curves over totally real fields of degree $d>1$, this final piece of the puzzle, i.e.\ finding all degree $d$ points on these four curves (or their quotients), ceases to be a trivial exercise.  In the quadratic case, Freitas, Le Hung and Siksek in fact had to study seven modular curves, as they did not yet have the results of Thorne \cite{thorne} and Kalyanswamy \cite{kalyanswamy} at their disposal. In Remark \ref{quadrmk}, we describe a shorter proof of modularity of elliptic curves over real quadratic fields now made possible by these stronger modularity lifting theorems.

We prove Theorem \ref{mainthm} by studying the quartic points on these four curves. Determining all quartic points on either of those curves directly is not computationally feasible: the curves have genera 13, 21, 73 and 153 respectively. The saving grace here is the fact that all four curves give rise to a rich tree of quotient curves of smaller genus, on which explicit computations can be performed. For none of the four curves, however, did it suffice to study the quartic points on a single such quotient curve; instead, we combine information from multiple quotient curves using their maps to common quotients further down the tree. For $X(\mathrm{b}3,\mathrm{b}5,\mathrm{b}7)$, we work on 10 different quotients by Atkin--Lehner involutions of genus up to 5, whereas for $X(\mathrm{s}3,\mathrm{b}5,\mathrm{b}7)$ we consider two genus 3 quotients.

To study the final two curves, we need two important new  ingredients:  the relative symmetric Chabauty method developed by the author, Gajovi\'c and Goodman \cite{bgg}, and the algorithm for determining models for quotients of modular curves found by the author in \cite{boxmodels}. These are applied to quotient curves of genus 5, 6 and 8.

\subsection{Modular curves} In this section, we briefly mention some important basic facts about modular curves. 

Let $N$ be a positive integer and consider $G\subset \mathrm{GL}_2(\Z/N\Z)$. Let $\zeta_N$ be a primitive $N$-th root of unity, and let $R_G$ be the subring of $\Z[1/N,\zeta_N]$ fixed by the action of $\mathrm{det}(G)\subset (\Z/N\Z)^{\times}$ on $\zeta_N$. To $G$ we associate an $R_G$-scheme $X_G$ called a \emph{modular curve}, defined in \cite{katzmazur} and \cite{deligne} as the compactification of a coarse moduli scheme $Y_G$ parametrising elliptic curves with ``$G$-level structure''. See also \cite{boxmodels} for more details on this construction. 

Let $\mathcal{H}$ be the complex upper half plane, and define $\Gamma_G$ to be the inverse image of $G\cap \mathrm{SL}_2(\Z/N\Z)$ under $\mathrm{SL}_2(\Z)\to \mathrm{SL}_2(\Z/N\Z)$. This \emph{congruence subgroup} $\Gamma_G$ acts on $\mathcal{H}$ by fractional linear transformations, and $(Y_G)_{\CC}\simeq \Gamma_G\backslash \mathcal{H}$. When $G\subset H$, we obtain a morphism $X_G\to X_H\times_{R_H}R_G$ which is a map of curves of degree $[\pm \Gamma_G:\pm\Gamma_H]$. The curve $X(1)\colonequals X_{\mathrm{GL}_2(\Z/N\Z)}$ is independent of the choice of $N$, and the inclusion $G\subset \mathrm{GL}_2(\Z/N\Z)$ gives rise to a map $j:\;X_G\to X(1)=\P^1$ called the $j$-map. The \emph{cusps} are those $Q\in X_G$ such that $j(Q)=\infty$, and $X_G(\overline{\Q})\setminus Y_G(\overline{\Q})$ is exactly the set of cusps. 

Suppose now that $-I\in G$ and $\mathrm{det}(G)=(\Z/N\Z)^{\times}$. Then $X_G$ is a $\Z[1/N]$-scheme, and we consider it as a curve over $\Q$.  For an elliptic curve $E$ over a number field $K$, we denote by $\overline{\rho}_{E,N}$ its mod $N$ representation $\mathrm{Gal}(\overline{K}/K)\to \mathrm{GL}_2(\Z/N\Z)$, defined up to conjugation in $\mathrm{GL}_2(\Z/N\Z)$. Being a \emph{coarse} moduli scheme, $Y_G(L)$ parametrises pairs $(E,[\phi]_G)$ defined over $L$ a priori only for algebraically closed fields $L$. In this special case, however, we can say more about the moduli interpretation over $K$. 
\begin{itemize}
    \item If $E$ is an elliptic curve over $K$ such that $\mathrm{Im}(\overline{\rho}_{E,N})\subset G$ up to conjugation, then there exists $Q\in Y_G(K)$ with $j(Q)=j_E$.
    \item Conversely, if $Q\in Y_G(K)$ satisfies $j(Q)\notin \{0,1728\}$, there exists an elliptic curve $E/K$ with $j$-invariant $j(Q)$ such that $\mathrm{Im}(\overline{\rho}_{E,N})\subset G$ up to conjugation.
\end{itemize}
This is well-known and dates back to Deligne and Rapoport \cite{deligne}; see e.g.\ \cite[Proposition 3.2]{zywina2} for a proof.

When $N,M$ are coprime, $G\subset \mathrm{GL}_2(\Z/N\Z)$ and $H\subset \mathrm{GL}_2(\Z/M\Z)$, we obtain a new group $F\subset \mathrm{GL}_2(\Z/NM\Z)$ as the intersection of the inverse images of $G$ and $H$ under the reduction maps. As curves over $\Q$, $X_F$ is then the normalisation of $X_H\times_{X(1)}X_G$. Since the $j$-map on modular curves is ramified only at points $Q$ with $j(Q)\in \{\infty,0,1728\}$, we find away from those $j$-invariants that $Q\in (X_H\times_{X(1)}X_G)(K)$ if and only if $Q\in X_F(K)$.

When $q$ is the highest power of a prime $p$ that divides $N$, and the image of $G$ in $\mathrm{GL}_2(\Z/q\Z)$ is the Borel subgroup $B(q)$, the curve $X_G/\Q$ admits an \emph{Atkin--Lehner involution} $w_q: \; X_G\to X_G$. An important fact is that for $Q\in Y_G(K)$, the two $j$-invariants $j(Q)$ and $j(w_q(Q))$ correspond to $K$-isogenous elliptic curves. For powers $q,q'$ of distinct primes, the Atkin--Lehner involutions $w_q$ and $w_{q'}$ commute, and we denote their product by $w_{qq'}=w_qw_{q'}$.

Finally, we denote by $S_k(\Gamma,K)$ the space of weight $k$ cusp forms with respect to the congruence subgroup $\Gamma$ and with Fourier coefficients in $K$. For brevity, we write $S_k(\Gamma)\colonequals S_k(\Gamma,\CC)$.  
\subsection{Consequences of modularity lifting theorems}\label{modliftsec}
We mention three consequences of modularity lifting theorems, and how they lead to Theorem \ref{thm1.1}. Let $p$ be a prime number. We denote by $B(p)\subset \mathrm{GL}_2(\F_p)$ the Borel subgroup, by $C_{\mathrm{s}}^+(p)\subset \mathrm{GL}_2(\F_p)$ the normaliser of a split Cartan subgroup, and by $C_{\mathrm{ns}}^+(p)\subset \mathrm{GL}_2(\F_p)$ the normaliser of a non-split Cartan subgroup. Finally, we also consider
\[
G(\mathrm{e}7)\colonequals \left\langle \begin{pmatrix} 0 & 5 \\ 3 & 0 \end{pmatrix}, \begin{pmatrix} 5 & 0 \\ 3 & 2\end{pmatrix}\right\rangle\subset \mathrm{GL}_2(\F_7),
\]
which is an index 2 subgroup of a  $C_{\mathrm{ns}}^+(7)$. 
\begin{theorem}\label{modliftthm}
Suppose that $E$ is a non-modular elliptic curve over a totally real field $K$. Then
\begin{itemize}
\item[(i)]  $\mathrm{Im}(\overline{\rho}_{E,3})$ is conjugate to a subgroup of $C_s^+(3)$ or $B(3)$,
\item[(ii)] if $\sqrt{5}\notin K$, then $\mathrm{Im}(\overline{\rho}_{E,5})$ is conjugate to a subgroup of $B(5)$, and
\item[(iii)] if $K\cap \Q(\zeta_7)=\Q$, then $\mathrm{Im}(\overline{\rho}_{E,7})$ is conjugate to a subgroup of $B(7)$ or $G(\mathrm{e}7)$. 
\end{itemize}
\end{theorem}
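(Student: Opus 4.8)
The plan is to collect the relevant modularity lifting theorems for each prime $p \in \{3,5,7\}$ separately, and for each one argue that if the image of $\overline{\rho}_{E,p}$ is ``large'' then $E$ is modular, by propagating modularity from a residually-modular situation. The three bullets correspond to three different inputs: (i) is essentially the $p=3$ argument of Wiles together with the observation that every irreducible two-dimensional mod $3$ representation over a totally real field is residually modular (via the Langlands--Tunnell theorem, because $\mathrm{GL}_2(\F_3)$ is solvable), so the only way for $E$ to fail to be modular is for $\overline{\rho}_{E,3}$ to be reducible or have image in the normaliser of a split Cartan — in both of these cases the relevant mod-$3$ modularity lifting theorem (Breuil--Diamond \cite{bd}, or Skinner--Wiles in the reducible case) does not directly apply, hence these are precisely the bad cases. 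For (ii) and (iii), one uses mod $5$ and mod $7$ modularity lifting theorems (due to Thorne \cite{thorne} and Kalyanswamy \cite{kalyanswamy}) which require the residual representation to have sufficiently large image — the standard hypotheses being that $\overline{\rho}_{E,p}|_{G_{K(\zeta_p)}}$ is absolutely irreducible, or more precisely that the image contains $\mathrm{SL}_2(\F_p)$ or is otherwise ``non-exceptional'' — together with residual modularity, which for $p=5$ comes from Langlands--Tunnell-type arguments (again $\mathrm{GL}_2(\F_5)$ is essentially $\mathrm{PGL}_2(\F_5)\cong S_5$, so one needs a $3$--$5$ switch à la Wiles) and for $p=7$ comes from the $5$--$7$ or $3$--$7$ switch combined with the classification of subgroups of $\mathrm{GL}_2(\F_7)$.

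Concretely, the steps I would carry out are as follows. First, recall Dickson's classification of subgroups of $\mathrm{GL}_2(\F_p)$: up to conjugacy, such a subgroup either (a) is contained in a Borel subgroup $B(p)$, (b) is contained in the normaliser $C_{\mathrm{s}}^+(p)$ of a split Cartan, (c) is contained in the normaliser $C_{\mathrm{ns}}^+(p)$ of a non-split Cartan, (d) has projective image one of $A_4$, $S_4$, $A_5$ (the exceptional cases), or (e) contains $\mathrm{SL}_2(\F_p)$. Second, for each prime I would invoke the appropriate modularity lifting theorem to dispose of cases (e) and the exceptional cases (d), and for $p=5,7$ also case (c) or the relevant sub-case thereof, leaving only the images listed in the statement. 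For (i), $p=3$: the exceptional cases over a totally real field force $\sqrt{-3} \in K$ or cannot occur for $\mathrm{GL}_2(\F_3)$-images of elliptic curves in the way needed, and Langlands--Tunnell plus Wiles/Breuil--Diamond handles the large-image and non-split-Cartan-normaliser cases; so we are left with $B(3)$ and $C_{\mathrm{s}}^+(3)$. For (ii), $p=5$ with $\sqrt 5 \notin K$: here the key point is that $\sqrt 5 \in K(\zeta_5)$ fixed field business and the condition $\sqrt 5 \notin K$ ensures that the relevant modularity lifting theorem's hypothesis $\zeta_5 \notin K$ (equivalently $[K(\zeta_5):K]$ large enough, or that $\overline{\rho}_{E,5}|_{G_{K(\zeta_5)}}$ behaves well) is met; one then kills everything except $B(5)$. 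For (iii), $p=7$ with $K \cap \Q(\zeta_7) = \Q$: the subgroup $G(\mathrm{e}7)$, being an index-$2$ subgroup of a non-split Cartan normaliser, is precisely the residual image for which the $p=7$ modularity lifting theorems of Thorne and Kalyanswamy fail to apply; all larger images and the exceptional cases and the full $C_{\mathrm{ns}}^+(7)$ case are handled, leaving $B(7)$ and $G(\mathrm{e}7)$.

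The delicate bookkeeping is in matching the precise hypotheses of the cited modularity lifting theorems — which involve conditions such as $p \geq 3$ unramified in $K$ or not, $\zeta_p \notin K$, the residual representation being odd and having big image (often phrased as ``$\overline{\rho}|_{G_{K(\zeta_p)}}$ absolutely irreducible'' or ``adequate''), and residual modularity — against the list of allowed images, and in particular verifying that the $3$--$5$, $3$--$7$ and $5$--$7$ modularity-switching arguments go through to establish residual modularity in each case where the image is large. I expect the main obstacle to be precisely this: correctly identifying, for $p=7$, why $C_{\mathrm{ns}}^+(7)$ itself is allowed by the modularity lifting theorems but its index-$2$ subgroup $G(\mathrm{e}7)$ is not — this hinges on a subtle determinant/$\mathrm{SL}_2$-adequacy condition (the image of $G(\mathrm{e}7)$ in $\mathrm{PGL}_2(\F_7)$ is too small, or $K(\zeta_7)$-restricted irreducibility fails) that must be checked by hand against the hypotheses in \cite{thorne} and \cite{kalyanswamy}, and similarly for $p=5$ in tracking the role of $\sqrt 5$. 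Once these compatibility checks are in place, the theorem follows by simply listing, for each prime, the images not excluded.
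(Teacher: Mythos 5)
Your proposal has the right overall shape---each part is a consequence of a modularity lifting theorem together with group theory---and part (ii) is unproblematic, since it is essentially the verbatim statement of Thorne's theorem. But as written the proposal is a plan rather than a proof, and the two places where you defer the work (the ``delicate bookkeeping'', the ``main obstacle'') are exactly the places where the paper's argument has specific content that your sketch gets wrong or omits. For $p=3$ the paper does not run a case-by-case analysis through Dickson's classification: Breuil--Diamond (packaged as \cite[Theorem 3]{freitas}) gives the single conclusion that $\overline{\rho}_{E,3}|_{\mathrm{Gal}(\overline{K}/K(\zeta_3))}$ is absolutely reducible, and then a purely group-theoretic lemma (\cite[Proposition 6]{rubin}) says that a subgroup of $\mathrm{GL}_2(\F_3)$ with this property lies in $B(3)$ or $C_{\mathrm{s}}^+(3)$. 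In particular your claim that the lifting theorem ``handles the non-split-Cartan-normaliser case'' at $3$ is backwards: that case is not excluded by the lifting theorem (its determinant-one part is also absolutely reducible); it simply does not appear in the list because of the exceptional containment $C_{\mathrm{ns}}^+(3)\subset C_{\mathrm{s}}^+(3)$, which the paper invokes elsewhere.

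For $p=7$ the gap is more serious. The lifting result you want to cite (\cite[Proposition 4.3 and Theorem 4.4]{kalyanswamy}) only concludes that the image lies in $B(7)$ or in the full $C_{\mathrm{ns}}^+(7)$; it does not distinguish $C_{\mathrm{ns}}^+(7)$ from its index-2 subgroup $G(\mathrm{e}7)$, so your assertion that the full $C_{\mathrm{ns}}^+(7)$ case is ``handled'' by the lifting theorems while $G(\mathrm{e}7)$ alone escapes is incorrect as stated. The descent from $C_{\mathrm{ns}}^+(7)$ to $G(\mathrm{e}7)$ is a separate ingredient, namely \cite[Proposition 4.1(c)]{freitas}, where Freitas--Le Hung--Siksek analyse which subgroups of $C_{\mathrm{ns}}^+(7)$ can occur as the image for a non-modular curve; this is precisely the step you flag as the main obstacle and leave unresolved. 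Finally, you should not attempt (and do not need) to re-derive residual modularity via Langlands--Tunnell and the $3$--$5$/$5$--$7$ switches: those arguments are internal to the proofs of the theorems of Breuil--Diamond, Thorne and Kalyanswamy, which the paper, correctly, cites as black boxes.
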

\begin{proof}
Part (i) is a consequence of modularity lifting theorems due amongst others to Breuil and Diamond \cite{bd}, which show that the restriction of $\overline{\rho}_{E,3}$ to $\mathrm{Gal}(\overline{K}/K(\zeta_3))$ is absolutely reducible. See \cite[Theorem 3]{freitas} for more details. It then follows from \cite[Proposition 6]{rubin} that $\overline{\rho}_{E,3}$ is conjugate to a subgroup of $C_s^+(3)$ or $B(3)$. 

Part (ii) was shown by Thorne \cite{thorne}. In \cite[Proposition 4.3 and Theorem 4.4]{kalyanswamy}, Kalyanswami shows that if $\zeta_7+\zeta_7^{-1}\notin K$, then $\mathrm{Im}(\overline{\rho}_{E,7})$ is conjugate to a subgroup of either $B(7)$ or $C_{\mathrm{ns}}^+(7)$. In the latter case, Freitas, Le Hung and Siksek study the subgroups of $C_{\mathrm{ns}}^+(7)$ and show in \cite[Proposition 4.1 (c)]{freitas} that in fact the image must be contained in the index 2 subgroup $G(\mathrm{e}7)$.
\end{proof}
We say that an elliptic curve $E$ over a number field $K$ is a $\Q$-curve, when $E$ is $\overline{K}$-isogenous to each of its Galois conjugates. Another important modularity result we shall need is the following.
\begin{theorem}[Ribet \cite{ribet}]
Every $\Q$-curve is modular. 
\end{theorem}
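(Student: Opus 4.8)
The plan is to deduce this from Serre's modularity conjecture, now the theorem of Khare and Wintenberger. Let $E$ be a $\Q$-curve; enlarging its field of definition if necessary, assume $E$ is defined over a number field $K$, and fix for each $\sigma\in\Gal(\overline{\Q}/\Q)$ an isogeny ${}^{\sigma}E\to E$ (these exist, and the conjugates of $E$ fall into finitely many $\overline{\Q}$-isomorphism classes, bounded by the degree of the Galois closure of $K$). Composing the $\ell$-adic representation $\rho_{E,\ell}\colon\Gal(\overline{\Q}/K)\to\mathrm{GL}_2(\overline{\Q}_\ell)$ with $\mathrm{GL}_2\to\mathrm{PGL}_2$ yields a projective representation which, because conjugate curves have by hypothesis the same projectivised Galois action, extends canonically to a homomorphism $\Gal(\overline{\Q}/\Q)\to\mathrm{PGL}_2(\overline{\Q}_\ell)$. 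The obstruction to lifting this to a genuine two-dimensional representation is a class in $H^2(\Gal(\overline{\Q}/\Q),\overline{\Q}^{\times})$, which vanishes by a theorem of Tate; hence there is a lift $\widetilde{\rho}_\ell\colon\Gal(\overline{\Q}/\Q)\to\mathrm{GL}_2(\overline{\Q}_\ell)$. Twisting by a finite-order character one arranges $\det\widetilde{\rho}_\ell=\varepsilon\cdot\chi_{\mathrm{cyc},\ell}$ for a fixed finite character $\varepsilon$ and controls the ramification, and as $\ell$ varies the $\widetilde{\rho}_\ell$ form a compatible system of odd, absolutely irreducible, two-dimensional representations of $\Gal(\overline{\Q}/\Q)$. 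Equivalently, one packages this by analysing $\mathrm{Res}_{K/\Q}(E)$, which the $\Q$-curve hypothesis forces to be $\overline{\Q}$-isogenous to a product of powers of $\Q$-simple abelian varieties $B_i/\Q$ of $\mathrm{GL}_2$-type, with $E$ a $\overline{\Q}$-isogeny factor of one of the $B_i$ via the counit $\mathrm{Res}_{K/\Q}(E)\times_{\Q}K\to E$.

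Next I would invoke Serre's conjecture. By Khare--Wintenberger, the compatible system $\widetilde{\rho}_\ell$, being odd, irreducible and of Hodge--Tate weights $\{0,1\}$, arises from a classical newform $f$ of weight $2$ and nebentypus $\varepsilon$; equivalently the $\mathrm{GL}_2$-type abelian variety $B_i$ is isogenous to Shimura's $A_f$, the deduction of modularity of $\mathrm{GL}_2$-type abelian varieties over $\Q$ from Serre's conjecture being itself due to Ribet. Finally, $f$ has a base change to a Hilbert newform $\mathfrak{f}$ over $K$ of parallel weight $2$, by cyclic (more generally solvable) base change---which applies since every quartic field has solvable Galois closure---or simply by restricting the compatible system to $\Gal(\overline{\Q}/K)$; and the two lifts $\rho_{E,\ell}|_{\Gal(\overline{\Q}/K)}$ and $\widetilde{\rho}_\ell|_{\Gal(\overline{\Q}/K)}$ of the same projective representation differ by a compatible system of characters, i.e.\ by a finite-order Hecke character $\chi$ of $K$, so twisting $\mathfrak{f}$ by $\chi$ (again a Hilbert newform of parallel weight $2$) gives $\rho_{E,\ell}\cong\rho_{\mathfrak{f}\otimes\chi,\ell}$ for all $\ell$. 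Hence $E$ is modular.

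The crux---and the part I would cite rather than reprove---is the structural statement that a $\Q$-curve genuinely descends to an abelian variety of $\mathrm{GL}_2$-type over $\Q$: the cohomological lifting of the projective representation together with the endomorphism-algebra bookkeeping for $\mathrm{Res}_{K/\Q}(E)$. This is exactly the content of Ribet's paper \cite{ribet} (building on his earlier work and that of Shimura and Serre), and it is what makes Serre's conjecture applicable. Everything downstream---Khare--Wintenberger, solvable base change, and matching up the twist after restriction to $\Gal(\overline{\Q}/K)$---is then either a black box or a routine verification.
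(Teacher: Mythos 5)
The paper offers no proof of this statement at all --- it is quoted as a black box with the citation to Ribet --- so there is nothing internal to compare against. Your sketch is a correct reconstruction of how the result is actually established in the literature: Ribet's structural theorem that a $\Q$-curve is a $\overline{\Q}$-quotient of an abelian variety of $\mathrm{GL}_2$-type over $\Q$ (via the cocycle $c(\sigma,\tau)=\mu_{\sigma}\,{}^{\sigma}\mu_{\tau}\,\mu_{\sigma\tau}^{-1}$ and Tate's vanishing of $H^2(\Gal(\overline{\Q}/\Q),\overline{\Q}^{\times})$), the deduction of modularity of $\mathrm{GL}_2$-type varieties from Serre's conjecture (also Ribet), the unconditional input of Khare--Wintenberger, and finally base change plus a character twist to land on a Hilbert newform over $K$ matching $\rho_{E,\ell}$ --- which is the notion of modularity the paper actually uses, so this last step is not optional and you are right to include it. Two small points worth flagging: your lifting argument tacitly assumes $\mathrm{End}_{\overline{\Q}}(E)\otimes\Q=\Q$, so the CM case must be set aside and handled separately (CM elliptic curves over totally real fields are modular by automorphic induction of the associated Hecke character); and the twisting character $\chi$ relating $\rho_{E,\ell}|_{\Gal(\overline{\Q}/K)}$ to $\widetilde{\rho}_{\ell}|_{\Gal(\overline{\Q}/K)}$ needs a word to see it is finite order (it is, since both determinants are the cyclotomic character times a finite character). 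With those caveats your outline is sound, and correctly attributes the unconditional statement to Ribet together with Khare--Wintenberger rather than to Ribet alone.
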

For a prime $p$, we define $X(\mathrm{b}p)$, $X(\mathrm{s}p)$, $X(\mathrm{ns}p)$ and $X(\mathrm{e}p)$ to be the curves $X_G$, where $G$ is $B(p)$, $C_{\mathrm{s}}^+(p)$, $C_{\mathrm{ns}}^+(p)$ and $G(\mathrm{e}7)$ respectively. We note that $X(\mathrm{b}p)=X_0(p)$. For distinct primes $p_1,\ldots,p_n$ and $\mathrm{u}_1,\ldots,\mathrm{u}_n\in \{\mathrm{b},\mathrm{s},\mathrm{ns},\mathrm{e}\}$, we define 
\[
X(\mathrm{u}_1p_1,\ldots,\mathrm{u}_np_n)
\]
to be the normalisation of $X(\mathrm{u}_1p_1)\times_{X(1)}\cdots \times_{X(1)}X(\mathrm{u}_np_n)$. 

Finally, let $E/K$ be a non-modular elliptic curve over a totally real field. Theorem \ref{modliftthm} and the discussion in the previous section imply that $E$ gives rise to a $K$-point on  $X(\mathrm{b}3,\mathrm{b}5,\mathrm{b}7)$, $X(\mathrm{s}3,\mathrm{b}5,\mathrm{b}7)$, $X(\mathrm{b}3,\mathrm{b}5,\mathrm{e}7)$ or $X(\mathrm{s}3,\mathrm{b}5,\mathrm{e}7)$, which proves Theorem \ref{thm1.1}. 

It thus suffices to find the quartic points on those curves. In fact, it suffices to find the quartic points $P$ whose $j$-invariant $j(P)$ also has degree 4: if smaller, then $P$ is supported on an elliptic curve $E$ that either has $j$-invariant 0 or 1728 and hence is a $\Q$-curve, or is a quadratic twist $E=E'\otimes \chi$, where $\chi$ is a quadratic character and $E$' is an elliptic curve defined over a real number field of degree 1 or 2. In the latter case, $E'$ is known to be modular. If $E'$ corresponds to the Hilbert modular form $\mathfrak{f}$, then $E$ is also modular and corresponds to $\mathfrak{f}\otimes \chi$. 
\begin{theorem}\label{thm1.5}
On $X(\mathrm{b}3,\mathrm{b}5,\mathrm{b}7)=X_0(105)$ and $X(\mathrm{s}3,\mathrm{b}5,\mathrm{b}7)$, all quartic points with quartic $j$-invariant are $\Q$-curves. On $X(\mathrm{b}3,\mathrm{b}5,\mathrm{e}7)$ and $X(\mathrm{s}3,\mathrm{b}5,\mathrm{e}7)$, all quartic points with quartic $j$-invariant are either $\Q$-curves, or have a non-totally real $j$-invariant displayed in Table \ref{tableb5ns7}.
\end{theorem}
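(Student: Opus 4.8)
The plan is to exploit the large supply of Atkin--Lehner quotients of the four curves: their genera ($13$, $21$, $73$, $153$) make a direct search for quartic points hopeless, but many quotients by Atkin--Lehner involutions --- and, for the curves carrying split-Cartan or $\mathrm{e}7$ level, the further modular involution those structures contribute --- have small enough genus to be analysed. Write $X$ for one of the four curves, let $W$ be the group generated by all these modular involutions, and set $X^* = X/W$. The key observation is a clean $\Q$-curve criterion: if $P\in X(K)$ is a quartic point whose image $\bar P$ in $X^*$ is $\Q$-rational, then the four Galois conjugates of $P$ all lie in the fibre of $X\to X^*$ above $\bar P$, on which $W$ acts transitively, so for every $\sigma$ we have $P^\sigma=w(P)$ for some $w\in W$. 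Whenever $w$ is a genuine Atkin--Lehner involution $w_q$, this forces $E^\sigma$ to be $\overline K$-isogenous to $E$; hence if every $w$ that occurs is of this form, $E$ is a $\Q$-curve and therefore modular by Ribet's theorem. On $X_0(105)$ and $X(\mathrm{s}3,\mathrm{b}5,\mathrm{b}7)$ every element of $W$ is a product of Atkin--Lehner involutions, so it will suffice to prove that every quartic point with quartic $j$-invariant maps to $X^*(\Q)$. On the two curves with $\mathrm{e}7$ level, the deck involution of $X(\mathrm{e}7)\to X(\mathrm{ns}7)$ carries no isogeny interpretation, and it is exactly the points whose Galois orbit is pinned down only through that involution which can evade the $\Q$-curve conclusion; these we will isolate, and they will turn out to have $j$-invariant generating a non-totally-real quartic field, hence irrelevant to modularity over totally real fields (as recorded in Table~\ref{tableb5ns7}).

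To establish that all quartic points descend correctly, we build for each $X$ a tree of intermediate quotients $X\to\cdots\to X^*$ by subgroups of $W$, keeping those of genus at most $5$ for the first two curves and up to $8$ for the last two. Explicit equations for these quotients, together with their Atkin--Lehner maps and $j$-maps, are produced with the algorithm of \cite{boxmodels}. On each such quotient $C$ we determine all points of degree dividing $4$: when the Mordell--Weil rank vanishes this is elementary, and otherwise we apply the symmetric Chabauty method, and --- crucially for the genus $5$, $6$ and $8$ quotients of the last two curves --- the relative symmetric Chabauty method of \cite{bgg} with respect to a map $C\to C'$ onto a common lower quotient, completing with a Mordell--Weil sieve. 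A quartic point $P$ of $X$ pushes to a point of degree $1$, $2$ or $4$ on each $C$; by intersecting the constraints obtained from several quotients sharing a common further quotient $C'$ --- pulling back the known points of $C'$ and matching them against the points found on each $C$ --- we pin down the possible images of $P$, and in particular show that $\bar P$ is $\Q$-rational.

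The final step is bookkeeping on the fibres. For each $\Q$-rational point of $X^*$ (or of the relevant sub-quotient) we compute its fibre in $X$, discard the cusps and the CM points with $j\in\{0,1728\}$ --- which cannot be quartic points with quartic $j$-invariant --- check which of the remaining points are genuinely quartic with quartic $j$-invariant, and verify directly, using that $j(P)$ and $j(w_qP)$ correspond to $K$-isogenous curves, that these are $\Q$-curves; any point whose Galois orbit is tied down only by the non-Atkin--Lehner involution at $7$ is recorded together with its $j$-invariant, which we then check is not totally real.

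The main obstacle is the pair $X(\mathrm{b}3,\mathrm{b}5,\mathrm{e}7)$ and $X(\mathrm{s}3,\mathrm{b}5,\mathrm{e}7)$: with genera $73$ and $153$ one never works with the curve itself, so every argument must be pushed onto quotients of genus $5$, $6$ and $8$, and at degree $4$ the ordinary symmetric Chabauty inequality is generally not sharp enough on curves of that size. Making the relative symmetric Chabauty method of \cite{bgg} succeed --- choosing primes of good reduction at which the symmetric-power condition cuts out precisely the expected residue classes, and then running the Mordell--Weil sieve to eliminate the spurious ones --- together with obtaining usable models via \cite{boxmodels}, is the technical heart of the proof; once this is in place, lifting points through the tree of quotients and sorting the fibres into cusps, $\Q$-curves, and the non-totally-real exceptions is routine.
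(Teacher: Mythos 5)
Your proposal follows essentially the same route as the paper: reduce to the tree of small-genus quotients, use the fact that a point mapping to a rational point of the full Atkin--Lehner quotient is a $\Q$-curve (Lemma \ref{Qcurvelemma}), isolate the points pinned down only by the non-isogeny involution $\phi_7$ at $\mathrm{e}7$ as the non-totally-real exceptions, and handle the large-genus cases with the relative symmetric Chabauty method of \cite{bgg} on genus $5$, $6$ and $8$ quotients combined with a Mordell--Weil sieve over products of rank-zero Jacobians. The one imprecision --- on $X(\mathrm{s}3,\mathrm{b}5,\mathrm{b}7)$ the involution group also contains the split-Cartan involution $\phi_3$, which is not Atkin--Lehner --- is harmless, since $\phi_3$ fixes the underlying elliptic curve and hence the $j$-invariant, so the $\Q$-curve conclusion survives exactly as in Proposition \ref{zerozeroprop}.
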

The remainder of this article is devoted to the proof of this theorem.

\subsection{Overview}
We first give, in Section \ref{methodssec}, a summary of the various methods used to find degree $4$ points on modular curves. We then study the quartic points on $X(\mathrm{b}3,\mathrm{b}5,\mathrm{b}7)$ in Section \ref{X0105sec} by making use of ten of its quotients, after which we study the quartic points on $X(\mathrm{s}3,\mathrm{b}5,\mathrm{b}7)$ in Section \ref{Xs3b5b7sec} using the two genus 3 hyperelliptic quotients $X(\mathrm{b}5,\mathrm{b}7)$ and $X(\mathrm{s}3,\mathrm{b}7)$. We then use the Chabauty--Coleman method developed in \cite{bgg} to study the quartic points on $X(\mathrm{b}5,\mathrm{ns}7)$ in Section \ref{Xb5ns7sec}. Finally, we use the knowledge of the (infinitely many) quartic points on $X(\mathrm{b}5,\mathrm{ns}7)$ to study the quartic points on $X(\mathrm{b}3,\mathrm{b}5,\mathrm{e}7)$ and $X(\mathrm{s}3,\mathrm{b}5,\mathrm{e}7)$ in Section \ref{X1andX2sec}, using a combination of Chabauty's method on two quotient curves with a Mordell--Weil sieve. 
%The curve $X(\mathrm{b}3,\mathrm{b}5,\mathrm{b}7)$ equals $X_0(105)$, and has finite Mordell--Weil group. In Section \ref{X0105sec}, we perform explicit Riemann--Roch space computations on a total of 10 quotients of $X_0(105)$ by Atkin--Lehner involutions. An important step here is the determination of the Mordell--Weil group of the genus 5 curve $X_0(105)/w_5$, where $w_5$ is the Atkin--Lehner involution at 5. To determine its 2-torsion, we used an algorithm of \"Ozman and Siksek \cite{ozman}. 
%
%The curve $X(\mathrm{s}3,\mathrm{b}5,\mathrm{b}7)$ is isomorphic to $X_0(315)/w_9$. In Section \ref{Xs3b5b7sec}, we show that its Mordell--Weil group appears to have positive rank. We study instead the quotients $X(\mathrm{s}3,\mathrm{b}7)$ and $X(\mathrm{b}5,\mathrm{b}7)$, both of which are hyperelliptic of genus 3 and have finite Mordell--Weil group. While both of these curves have infinitely many quartic points, we study the pairs of quartic points $(P,Q)$ for $P\in X(\mathrm{b}5,\mathrm{b}7)$ and $Q\in X(\mathrm{s}3,\mathrm{b}7)$ such that $P$ and $Q$ map down to the same point in $X(\mathrm{b}7)$. We show that those pairs with quartic $j$-invariant are $\Q$-curves, and hence modular. 

The explicit computations on these curves were done in \texttt{Magma}, and are publicly available at 
\[
\texttt{\href{https://github.com/joshabox/quarticpoints/}{https://github.com/joshabox/quarticpoints/}} \;.
\]
\subsection{Acknowledgements}
The author would like to heartily thank Samir Siksek for many inspiring conversations, countless  valuable comments and suggestions, and for his support and optimism throughout this project.

The author is also grateful to Andrew Sutherland for a helpful correspondence.
\section{Methods for determining the degree $d$ points on a (modular) curve}\label{methodssec} To determine their quartic points, we use a myriad of methods on a sizeable number of quotients of the four curves of Theorem \ref{thm1.1}. In this section, we provide an overview.
\subsection{The symmetric power}\label{sympowsec}
Computing the $K$-rational points on a curve $X/\Q$ for all number fields $K$ of degree $d>1$ simultaneously may seem like a daunting exercise. The trick is to study instead the rational points on the symmetric power
\[
X^{(d)}\colonequals \mathrm{Sym}^d(X).
\]
Those rational points $X^{(d)}(\Q)$ correspond 1-1 with the effective $\Q$-rational divisors of degree $d$ on $X_{\overline{\Q}}$, and we shall denote them as such. For each $Q\in X(\overline{\Q})$ defined over a number field of degree $d$, the sum of the Galois conjugates of $Q$ is such a divisor, so it suffices to study $X^{(d)}(\Q)$ instead. While $X^{(d)}$ is a $d$-dimensional variety, its Albanese variety is the Jacobian $J(X)$ of $X$. For each $\Q$-rational degree $d$ divisor $D_0$ on $X$, we thus obtain an Abel--Jacobi map
\[
\iota_{D_0}:\;X^{(d)}(\Q)\longrightarrow J(X)(\Q),\quad D\mapsto [D-D_0].
\]
Many classical tools for studying the rational points on a curve  $X$ via its Mordell--Weil group can now also be used to study $X^{(d)}(\Q)$, c.f.\ Sections \ref{finitemwgpsec} and \ref{infmwgpsec}.

\subsection{Computing models for modular curves}\label{modelssec}
The first important ingredient for carrying out computations on a curve is a set of defining equations. For modular curves, these can be obtained from the $q$-expansions of the corresponding spaces of cusp forms, following Galbraith \cite{galbraith}.

For curves of the form $X_0(N)$, this has been implemented in the  Modular Curves and Small Modular Curves packages in \texttt{Magma}, which we make use of. For other modular curves, such as $X(\mathrm{b}5,\mathrm{ns}7)$ and $X(\mathrm{b}3,\mathrm{ns}7)$, we apply the algorithm developed by the author in \cite{boxmodels}. 

The Small Modular Curves package also contains an implementation for computing the maps $X_0(N)\to X_0(M)$ when $M\mid N$, for small values of $N$ only. For larger values of $N$, we determine such maps using code written by \"Ozman and Siksek in \cite{ozman}. For other modular curves, we use \cite{boxmodels} to find $q$-expansions for the corresponding space of cusp forms and use these to determine the morphisms, such as $X(\mathrm{b}3,\mathrm{ns}7)\to X(\mathrm{ns}7)$, following the method outlined in \cite[Section 3]{ozman}. Let us give some more details. 

Suppose that we would like to compute explicitly a morphism $\pi:\;X\to Y$ of modular curves, and we have models for $X\subset \P^n$ and $Y\subset \P^m$. Denote by $x_0,\ldots,x_n$ and $y_0,\ldots,y_m$ the coordinates in $\P^n$ and $\P^m$ respectively. Suppose that we know $q$-expansions for $y_0/y_m,\ldots,y_{m-1}/y_m\in \Q(Y)$ and $x_0/x_n,\ldots,x_{n-1}/x_n\in \Q(X)$. Given a degree $d\geq 1$, we can compute the $q$-expansions of all monomials of degree $d$ in $x_0/x_n,\ldots,x_{n-1}/x_n$. Using linear algebra, we can thus attempt to find for each $i\in \{0,\ldots,m-1\}$ two polynomials $p_i,r_i$ such that
\[
p_i\left(\frac{x_0}{x_n}(q),\ldots,\frac{x_{n-1}}{x_n}(q)\right)=\frac{y_i}{y_m}(q)\cdot r_i\left(\frac{x_0}{x_n}(q),\ldots,\frac{x_{n-1}}{x_n}(q)\right),
\]
at least up to some high order $q^B$. After multiplying $p_i$ and $r_i$ by the same polynomial, we may assume that $r_1=\ldots=r_{m-1}$; we call this common value $r$. Having found such polynomials, we can check whether
\[
\pi':\;(x_0:\ldots:x_n)\mapsto (P_1(x_0,\ldots,x_n):\ldots: P_{m-1}(x_0,\ldots,x_n):R(x_0,\ldots,x_n)),
\]
defines a morphism $\pi':X\to Y$, where $P_1,\ldots,P_{m-1},R$ are the homogenisations of $p_1,\ldots,p_{m-1},r$ respectively. We can check that $\pi=\pi'$ when we know $\mathrm{deg}(\pi)$. We first verify that $\mathrm{deg}(\pi)=\mathrm{deg}(\pi')$. Then, we use the coordinate maps $\pi_i:\; Y\to \P^1,\; (y_0:\ldots:y_m)\mapsto (y_i:y_m)$ for $i\in \{0,\ldots,m-1\}$. Now $\pi_i\circ \pi$ and $\pi_i\circ \pi'$ correspond to functions in $\Q(X)$ whose difference $f_i$ has divisor with valuation at least $B$ at the infinity cusp. Looking at the poles, we see that $\mathrm{deg}(\mathrm{div}(f_i))\leq 2\mathrm{deg}(\pi)\mathrm{deg}(\pi_i)$. If $B> 2 \mathrm{deg}(\pi)\cdot \mathrm{deg}(\pi_i)$ for each $i$, then we must have $f_i=0$ for each $i$ and thus $\pi=\pi'$. 
\subsection{Computing generators of Mordell--Weil groups}
In general, computing Mordell--Weil groups of Jacobians of curves is an unsolved problem. For (modular) curves of small genus, however, a solution can often be found. 

\subsubsection{The rank}\label{ranksec}
We first discuss the rank. For modular curves of the form $X(\mathrm{u}_1p_1,\ldots,\mathrm{u}_np_n)$ with $p_1,\ldots,p_n$ distinct primes and $\mathrm{u}_1,\ldots,\mathrm{u}_n\in \{\mathrm{b},\mathrm{s},\mathrm{ns}\}$, it can often be determined whether the rank is positive or not. We denote the Jacobian of such a curve by $J(\mathrm{u}_1p_1,\ldots,\mathrm{u}_np_n)$. The curve $X(\mathrm{s}p)$ is isomorphic to $X_0(p^2)/w_{p^2}$ (see e.g.\ \cite[p.\ 555]{conrad} or \cite[Example 2.10]{boxmodels}), while $J(\mathrm{ns}p)$ is isogenous to the new part of $J(X_0(p^2)/w_{p^2})$ (see \cite[Theorem 1]{chen}). We can therefore use an algorithm of Stein \cite{stein} implemented in the Modular Curves package in \texttt{Magma} to identify Hecke eigenforms $f_1,\ldots,f_n \in S_2(\Gamma_1(N))$ for some $N$, such that 
\[
J(\mathrm{u}_1p_1,\ldots,\mathrm{u}_np_n)\sim A_{f_1}\times\cdots \times A_{f_n},
\]
where $A_f$ is the Abelian variety associated to $f$ by Eichler and Shimura. By Kolyvagin and Logach\"ev \cite{kolyvagin}, we then have $\mathrm{rk}(A_f(\Q))=0$ if and only if $L(f,1)\neq 0$, where $L(f,1)$ is the value of the $L$-function of $f$ at 1.  We apply this argument to all modular curves we consider: the curve $X_0(105)$ has Mordell--Weil group of rank 0, whereas the other three curves defined in Theorem \ref{thm1.1} have a positive rank Mordell--Weil group. If $L(f,1)$ is zero up to \texttt{Magma}'s precision, this strongly suggests, but does not prove, that the rank is positive. Even when positive, the information which factors $A_f$ do have rank zero can be invaluable. For example, $X(\mathrm{b}5,\mathrm{ns}7)$ and its ``sisters'' $X_1$ and $X_2$ defined in Section \ref{X1andX2sec} have rank at least 2, but we successfully applied Chabauty's method using knowledge of such rank zero quotients. 

When the modular curve $X$ is not only built up of $\mathrm{b},\mathrm{s}$ and $\mathrm{ns}$, one can still use the algorithm in \cite{boxmodels} to find a map $X_1(N)\to X$, defined in general over an abelian number field $K$. This yields a morphism with finite kernel $J(X)\to J_1(N)$ defined over $K$. Then the information on the ranks $\mathrm{rk}(A_f(K))$ for eigenforms $f\in S_2(\Gamma_1(N))$ can be used to find rank zero quotients of $J(X)$, as we do in Section \ref{X1andX2sec}.\\ %It can often be determined whether $\mathrm{rk}(A_f(K))=0$ by computing $L(g,1)$ for the twists $g$ of $f$ by Dirichlet characters on $K$, as follows.% see Theorem \ref{kolyvaginthm}. \\

 When $g$ is a Hecke eigenform, we saw that $\mathrm{rk}(A_g(\Q))=0$ when  $L(g,1)\neq 0$. How does one check whether $\mathrm{rk}(A_g(K))=0$ when $K\neq \Q$? We solve this for abelian number fields $K$, based on the work of Gonz\'alez-Giménez and Guitart \cite{guitart} and Guitart and Quer \cite{quer}. Suppose that $g=\sum a_n q^n\in S_2(\Gamma_1(N))$ is a newform with Nebentypus character $\chi$ and without complex multiplication. We introduce three fields associated to $g$. Define $E_g\colonequals \Q(\{a_n\})$ to be the Hecke eigenvalue field of $g$, and $F_g\colonequals \Q(\{a_p^2\chi(p)^{-1} \mid p\nmid N\})$. Then $F_g$ is totally real and $E_g/F_g$ is an abelian extension. When $\psi$ is a Dirichlet character, we denote by $g\otimes \psi$ the unique newform with Fourier coefficients $a_n(g\otimes \psi)=a_n(g)\psi(n)$ for all $n$ coprime to the level of $g$ and the conductor of $\psi$. For each $s\in \mathrm{Gal}(E_g/F_g)$, there is a unique Dirichlet character $\chi_s\colon \mathrm{Gal}(\overline{\Q}/\Q)\to \CC^{\times}$ such that $g^s=g\otimes \chi_s$ (see \cite[Section 3]{ribet2}). Here each $\chi_s$ factors via $\mathrm{Gal}(L_g/\Q)$, where $L_g\colonequals \cap_{s\in \mathrm{Gal}(E_g/F_g)} \overline{\Q}^{\mathrm{Ker}\chi_s}$. Associated to $g$ by Eichler and Shimura is the $\Q$-simple abelian variety $A_g/\Q$. Then, as noticed in \cite[below Proposition 1]{guitart}, $A_g$ is $L_g$-isogenous to a power $B^n$ of a $\overline{\Q}$-simple abelian variety $B/L_g$, which is $L_g$-isogenous to all of its Galois conjugates and has all its endomorphisms defined over $L_g$. We call $B$ the \emph{building block} of $A_g$.  
%and $f\otimes \chi_s$ is defined to be the unique newform with $p$-th coefficient $a_p\chi_s(p)$ for $p\nmid N$.
%We consider the abelian variety $A_g$. Then it is shown in \cite[p.\ 3]{guitart} that there is a unique $\overline{\Q}$-simple abelian variety $B/L$, called the \emph{building block} of $A_g$, such that $A_g\sim_L B^n$ for some $n>0$. 

%\begin{lemma}
%Suppose that $f$ satisfies $E=F$. Then $A_f$ is $\overline{\Q}$-simple. 
%\end{lemma}
%\begin{proof}
%We know that $A_f$ is $\Q$-simple, and $A_f\sim_{\Q}B^n$ where $B$ is $\overline{\Q}$-simple. If $n>1$ then $A_f$ would not be $\Q$-simple, so $n=1$. 
%\end{proof}
We say that a newform $f\in S_2(\Gamma_0(N))$ has \emph{inner twists} when there exists a non-trivial Dirichlet character $\chi$ such that $f$ and $f\otimes \chi$ are Galois conjugates. This is the case when $[E_f:F_f]>1$.  %We say that $f=\sum_n a_nq^n$ has complex multiplication (CM) when there exists a quadratic imaginary field $H$ such that $a_p=0$ for all primes $p$ that are inert in $H$. 
\begin{proposition} \label{kolyvaginprop}
Suppose that $f\in S_2(\Gamma_0(N))$ is a newform without CM and without inner twists, $K$ is an abelian number field, and for each Dirichlet character $\chi: \mathrm{Gal}(K/\Q)\to \overline{\Q}^{\times}$ we have $L(f\otimes \chi,1)\neq 0$. Then for each such $\chi$, we have
\[
\mathrm{rk}(A_{f\otimes \chi}(K)) =0.
\]
\end{proposition}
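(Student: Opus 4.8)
The plan is to deduce the statement from the Kolyvagin--Logach\"ev finiteness theorem over $\Q$ recalled above (namely $\mathrm{rk}(A_g(\Q))=0$ if $L(g,1)\neq 0$), by passing to the Weil restriction of scalars. Since $(\mathrm{Res}_{K/\Q}A_{f\otimes\chi})(\Q)=A_{f\otimes\chi}(K)$, it is enough to show that $\mathrm{Res}_{K/\Q}A_{f\otimes\chi}$ is $\Q$-isogenous to a product $\prod_i A_{h_i}$ of modular abelian varieties attached to weight-$2$ newforms $h_i$ with $L(h_i,1)\neq 0$; then $\mathrm{rk}(A_{f\otimes\chi}(K))=\sum_i\mathrm{rk}(A_{h_i}(\Q))=0$.

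First I would unwind the Galois side. Put $G_K=\mathrm{Gal}(\overline{\Q}/K)$; since $K/\Q$ is abelian, its characters $\psi$ are all $1$-dimensional, and the standard identity $V_\ell(\mathrm{Res}_{K/\Q}A)\cong\mathrm{Ind}_{G_K}^{G_\Q}V_\ell(A)$ together with the projection formula and $\rho_g\otimes\psi\cong\rho_{g\otimes\psi}$ gives
\[
V_\ell\big(\mathrm{Res}_{K/\Q}A_{f\otimes\chi}\big)\otimes\overline{\Q}_\ell\;\cong\;\mathrm{Ind}_{G_K}^{G_\Q}\big(V_\ell(A_{f\otimes\chi})|_{G_K}\big)\otimes\overline{\Q}_\ell\;\cong\;\bigoplus_{\psi}\ \bigoplus_{\sigma}\rho_{(f\otimes\chi)^\sigma\otimes\psi},
\]
where $\sigma$ runs over the embeddings $E_{f\otimes\chi}\hookrightarrow\overline{\Q}_\ell$. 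Each $(f\otimes\chi)^\sigma\otimes\psi$ is a weight-$2$ newform, and — using that $K/\Q$ is Galois, so that $\widehat{\mathrm{Gal}(K/\Q)}$ is stable under the action of $G_\Q$ on Dirichlet characters, and that $f$ has trivial nebentypus — one checks that a suitable $G_\Q$-twist of $(f\otimes\chi)^\sigma\otimes\psi$ has the shape $f\otimes\psi'$ with $\psi'\in\widehat{\mathrm{Gal}(K/\Q)}$, and that as $\sigma$ and $\psi$ vary the character $\psi'$ sweeps out all of $\widehat{\mathrm{Gal}(K/\Q)}$, independently of $\chi$. This is why the hypothesis is imposed for \emph{every} character of $\mathrm{Gal}(K/\Q)$: all of them are needed, whichever $\chi$ one wants the conclusion for.

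Next I would return to abelian varieties. Grouping the $2$-dimensional constituents above into $G_\Q$-orbits, each $\Q$-simple factor $B_i$ of $\mathrm{Res}_{K/\Q}A_{f\otimes\chi}$ has $V_\ell(B_i)\otimes\overline{\Q}_\ell$ equal to the sum of the $G_\Q$-conjugates of a single $\rho_{h_i}$; by Faltings, $B_i$ is $\Q$-isogenous to the modular abelian variety $A_{h_i}$, where $h_i$ is a $G_\Q$-conjugate of some $f\otimes\psi'$ with $\psi'\in\widehat{\mathrm{Gal}(K/\Q)}$. Since the central $L$-values of the Galois conjugates of a weight-$2$ newform vanish simultaneously or not (Shimura's algebraicity theorem), the hypothesis $L(f\otimes\psi',1)\neq 0$ yields $L(h_i,1)\neq 0$, and hence $\mathrm{rk}(A_{h_i}(\Q))=0$ by Kolyvagin--Logach\"ev \cite{kolyvagin}. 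Combined with the first paragraph, this gives $\mathrm{rk}(A_{f\otimes\chi}(K))=0$.

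The hard part is the passage in the previous paragraph from the $\overline{\Q}_\ell$-level description of the Tate module to an honest $\Q$-isogeny decomposition of $\mathrm{Res}_{K/\Q}A_{f\otimes\chi}$ into modular abelian varieties over $\Q$: one must keep track of the $\Q$-rational structure, not merely the $\overline{\Q}_\ell$-semisimplification. This is exactly where the absence of inner twists for $f$ is used — it is the regime in which $A_f$ is its own building block and is defined over $\Q$ — so that the $\Q$-forms of the various twists are governed by the building-block theory of \cite{guitart} and \cite{quer}. A secondary point to verify is that the twisted characters $\psi'$ produced along the way genuinely factor through $\mathrm{Gal}(K/\Q)$, which is precisely where the hypothesis that $K/\Q$ is abelian (hence Galois) enters.
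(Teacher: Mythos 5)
Your argument is correct, but it takes a genuinely different route from the paper's. The paper works entirely inside the building-block formalism of Gonz\'alez-Jim\'enez--Guitart and Guitart--Quer: it uses the no-inner-twist hypothesis to get $E_f=F_f$ and hence $L_g\subset K$, verifies that the building block $B$ of $A_g$ is a $K$-building block with symmetric cocycle class $[c_{B/K}]$, applies the Guitart--Quer criterion to write $\mathrm{Res}_{K/\Q}(B)\sim_{\Q}\prod_i A_{f_i}$, and then invokes Ribet's theorem to identify each $f_i$ as $f\otimes\chi_i$ with $\chi_i\in\widehat{\mathrm{Gal}(K/\Q)}$. You instead compute $V_\ell(\mathrm{Res}_{K/\Q}A_{f\otimes\chi})$ as an induced representation, decompose it over $\overline{\Q}_\ell$ into the $\rho_{(f\otimes\chi)^\sigma\otimes\psi}$, and use Faltings to convert shared constituents into $\Q$-isogenies of $\Q$-simple factors; this is the more classical representation-theoretic argument for ranks of modular abelian varieties over abelian fields, and your identification of the twisting characters ($f^\sigma\otimes\chi^\sigma\psi$ is a conjugate of $f\otimes\chi\psi^{\sigma^{-1}}$, with $\widehat{\mathrm{Gal}(K/\Q)}$ Galois-stable) is right, as is the appeal to Shimura's algebraicity to transfer nonvanishing of $L$-values across Galois conjugates. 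The one place where your write-up wobbles is the final paragraph: the passage from the $\overline{\Q}_\ell$-decomposition to a $\Q$-isogeny decomposition is \emph{not} a gap that requires the building-block machinery --- Faltings' semisimplicity and Hom theorems give $\mathrm{Hom}_{\Q}(B_i,A_{h_i})\otimes\Q_\ell\cong\mathrm{Hom}_{G_\Q}(V_\ell B_i,V_\ell A_{h_i})\neq 0$, and a nonzero homomorphism between $\Q$-simple abelian varieties is an isogeny, which is all you need. A consequence worth noting is that your route does not visibly use the no-CM and no-inner-twist hypotheses (repeated constituents merely produce factors $A_{h_i}^{m_i}$ with multiplicities, which is harmless for the rank computation), so you are in effect proving a more general statement than the paper, whose hypotheses are tailored to the building-block argument it actually runs.
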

\begin{proof}
Define $G=\mathrm{Gal}(K/\Q)$ and let $\widehat{G}$ be its character group. Consider $\chi\in \widehat{G}$ and define $g=f\otimes \chi$. 
By definition of the Weil restriction, we have
\[
A_{g}(K)=(\mathrm{Res}_{K/\Q}((A_{g})_K))(\Q),
\]
so we study the latter. Now $f$ having no inner twists means that $E_f=F_f$. Also note that $F_g=F_f$, because $g$ has Nebentypus character $\chi^2$. Recall that for each $s\in \mathrm{Gal}(E_g/F_g)$, we have a unique Dirichlet character $\chi_s$ such that $(f\otimes\chi)^s=f\otimes \chi\chi_s$. Indeed, in this case $\chi_s=\chi^s\chi^{-1}$, since $s$ leaves $f$ fixed. So $\chi_s\in \widehat{G}$, from which it follows that $L_g\subset K$. In \cite[Proposition 2]{guitart}, it is proved that $\mathrm{Res}_{L_g/\Q}((A_g)_{L_g})$ is $\Q$-isogenous to a product of modular abelian varieties $A_{f_i}$.  We explain how their proof extends to abelian number fields $K/L_g$.

Guitart and Quer showed in \cite[Theorem 5.3]{quer} the following: if $L$ is a Galois number field and $A/L$ an abelian variety, then 
\[
\mathrm{Res}_{L/\Q}(A)\sim_{\Q} A_{f_1}\times\ldots\times A_{f_n}
\]
for newforms $f_1,\ldots,f_n$ if and only if $L$ is abelian, $A$ is an \emph{$L$-building block} (defined in \cite[Definition 4.1]{quer}) and the cocycle class $[c_{A/L}]\in H^2(\mathrm{Gal}(L/\Q),Z(\mathrm{End}_L(A)\otimes \Q))$ (defined on \cite[p.\ 181]{quer}) is symmetric. Now let $B$ be the building block of $A_g$. Then $B$ is indeed an $L_g$-building block in the definition of Guitart and Quer, and hence a $K$-building block. We shall not define $c_{A/L}$, but note that it is a map $\mathrm{Gal}(L/\Q)\times \mathrm{Gal}(L/\Q)\to Z(\mathrm{End}_L(A)\otimes \Q)^{\times}$. In their proof of \cite[Proposition 2]{guitart}, Guitart and Gonz\'alez-Gim\'enez show that $c_{B/L_g}$ is symmetric. Now note that $Z(\mathrm{End}_K(B))=Z(\mathrm{End}_{L_g}(B))$ as all endomorphisms of $B$ are defined over $L_g$. It follows that $c_{B/K}$ is simply the composition of $\mathrm{Gal}(K/\Q)\to\mathrm{Gal}(L_g/\Q)$ and $c_{B/L_g}$. In particular, $c_{B/K}$ is also symmetric. We conclude that
\begin{align}\label{eqn100}
\mathrm{Res}_{K/\Q}(B)\sim_{\Q} A_{f_1}\times\ldots\times A_{f_n},
\end{align}
where $f_1,\ldots,f_n$ are newforms in some $S_2(\Gamma_1(M))$. Since $B$ is $K$-isogenous to all its Galois conjugates, it follows that $\mathrm{Res}_{K/\Q}(B)\sim_K B^m$ for some $m>0$. In particular, for each $i$ we must have $A_{f_i}\sim_K B^{m_i}$ for some $m_i>0$. Now, as noticed in \cite[Proposition 1]{guitart}, it follows from Ribet's work that if $A_{f_i}$ and $A_g$ are both $K$-isogenous to a power of $B$, then $g=f_i\otimes \psi$ for some Dirichlet character $\psi$ on $\mathrm{Gal}(K/\Q)$. We thus find that each $f_i=f\otimes \chi_i$ for some $\chi_i\in \widehat{G}$. From the isogeny $A_g\sim_K B^m$ for some $m>0$, we find that $\mathrm{Res}_{K/\Q}(A_g)_K$ is $\Q$-isogenous to $\mathrm{Res}_{K/\Q}(B)^m$, which by (\ref{eqn100}) is $\Q$-isogenous to a product of abelian varieties $A_{f\otimes \chi_i}$. As $L(f\otimes \chi_i,1)\neq 0$ by assumption, it follows from Kolyvagin--Logach\"ev that $\mathrm{rk}(A_{f\otimes \chi_i})(\Q)=0$ for each $i$, so that $\mathrm{rk}(A_g(K))=\mathrm{rk}(\mathrm{Res}_{K/\Q}(A_g)_K(\Q))=0$, as desired.
%
%As noticed Gonz\'alez-Gim\'enez and Guitart \cite[p.\ 3]{guitart}, when a newform $g$ has no complex multiplication, it follows from the work of Ribet that $A_{g}\sim_{L_g}B^n$, where $B/L_g$ is a $\overline{\Q}$-simple abelian variety (called the \emph{building block} of $A_{g}$) and $n>0$.  In our case, $A_{f\otimes \chi}\sim_L B^n$ for a building block $B$. Moreover, they show in \cite[Proposition 4]{guitart} that in this non-CM case
%\[
%\mathrm{Res}_{L/\Q}B \sim_{\Q} \prod_{h\in \Psi}A_{h}^t,
%\]
%where $t\in \{1,2\}$ is the Schur index of $\mathrm{End}(B)$. Note that also $\mathrm{Res}_{L/\Q}B\simeq_L B^{[L:\Q]}$, so that $A_g$ is $L$-isogenous to a power of $B$ for each $g\in \Psi$. In particular, we find for each $g\in \Psi$ an integer $m_g>0$ such that
%\[
%\mathrm{Res}_{L/\Q}(A_g)_L \sim_{\Q} \mathrm{Res}_{L/\Q}B^{m_g} \sim_{\Q}\prod_{g\in \Psi}A_g^{tm_g}.
%\]
%So to show that $A_{f\otimes \chi}(L)$ has rank 0, it suffices to show that $A_g(\Q)$ has rank 0 for each $g\in \Psi$. By Kolyvagin--Logach\"ev, this is the case when $L(g,1)\neq 0$ for each $g\in \Psi$.
\end{proof}

\subsubsection{Generators of the free part}
Let $X/\Q$ be a curve. Quotients of the Jacobian $J(X)$ of positive Mordell--Weil rank can sometimes be identified as Jacobians of quotients of $X$, as is the case for $X_1,X_2,X(\mathrm{b}3,\mathrm{ns}7)$ and $X(\mathrm{b}5,\mathrm{ns}7)$. When such a quotient curve $C$ is elliptic, its rank can be determined by Cremona's algorithm \cite{cremona} implemented in \texttt{Magma}. When it is hyperelliptic of genus 2, as is the case for the aforementioned examples, Stoll's algorithm \cite{stoll} can be used to find generators. Those generators can be pulled back to $X$ and in favourable circumstances generate the free part of its Mordell--Weil group up to known index, as shown in \cite[Proposition 3.1]{box}:
\begin{proposition}\label{boxprop}Let $\rho\colon X\to C$ be a map of curves over $\Q$. If $\mathrm{rk}(J(X)(\Q))=\mathrm{rk}(J(C)(\Q))$, then 
\[
\mathrm{deg}(\rho)\cdot J(X)(\Q)\subset \rho^*J(C)(\Q)
\]
up to torsion.
\end{proposition}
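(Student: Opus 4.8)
The plan is to work with the two homomorphisms of Jacobians induced by $\rho$: the pullback $\rho^*\colon J(C)\to J(X)$ and the pushforward (or norm) map $\rho_*\colon J(X)\to J(C)$, the latter coming from the pushforward of divisors $\mathrm{Pic}^0(X)\to\mathrm{Pic}^0(C)$. Both are defined over $\Q$, hence restrict to homomorphisms on $\Q$-points, and they satisfy the standard identity $\rho_*\circ\rho^*=[\mathrm{deg}(\rho)]$ on $J(C)$, which already holds at the level of divisors. Crucially, I would \emph{not} use the composition $\rho^*\circ\rho_*$, which is not multiplication by $\mathrm{deg}(\rho)$ in general; the whole argument rests on $\rho_*\circ\rho^*=[\mathrm{deg}(\rho)]$ together with an elementary rank computation.

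Write $r\colonequals\mathrm{rk}(J(C)(\Q))=\mathrm{rk}(J(X)(\Q))$. First I would examine the image of $\rho_*\colon J(X)(\Q)\to J(C)(\Q)$. It contains $\rho_*\rho^*(J(C)(\Q))=\mathrm{deg}(\rho)\cdot J(C)(\Q)$, and since multiplication by the positive integer $\mathrm{deg}(\rho)$ is injective on the free part of the finitely generated group $J(C)(\Q)$, the subgroup $\mathrm{deg}(\rho)\cdot J(C)(\Q)$ still has rank $r$. Hence $\mathrm{Im}(\rho_*|_{J(X)(\Q)})$ has rank at least $r$; being a subgroup of $J(C)(\Q)$, it has rank at most $r$, so its rank is exactly $r$. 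Now the rank identity $\mathrm{rk}(J(X)(\Q))=\mathrm{rk}\,\ker(\rho_*|_{J(X)(\Q)})+\mathrm{rk}\,\mathrm{Im}(\rho_*|_{J(X)(\Q)})$, valid for finitely generated abelian groups, combined with the hypothesis $\mathrm{rk}(J(X)(\Q))=r$, gives $\mathrm{rk}\,\ker(\rho_*|_{J(X)(\Q)})=0$. In other words, $\ker(\rho_*)\cap J(X)(\Q)$ consists entirely of torsion classes.

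To conclude, fix any $D\in J(X)(\Q)$ and set $T\colonequals\mathrm{deg}(\rho)\cdot D-\rho^*\rho_*(D)\in J(X)(\Q)$. Applying $\rho_*$ and using $\rho_*\rho^*=[\mathrm{deg}(\rho)]$ gives $\rho_*(T)=\mathrm{deg}(\rho)\cdot\rho_*(D)-\rho_*\rho^*\rho_*(D)=\mathrm{deg}(\rho)\cdot\rho_*(D)-\mathrm{deg}(\rho)\cdot\rho_*(D)=0$, so by the previous paragraph $T$ is a torsion point of $J(X)(\Q)$. Therefore $\mathrm{deg}(\rho)\cdot D=\rho^*(\rho_*(D))+T$ with $\rho_*(D)\in J(C)(\Q)$ and $T$ torsion, which is precisely the statement that $\mathrm{deg}(\rho)\cdot J(X)(\Q)\subset\rho^*J(C)(\Q)$ up to torsion.

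I do not expect a genuine obstacle here: the argument only assembles standard facts about the pullback and pushforward maps on Jacobians of curves (most importantly $\rho_*\circ\rho^*=[\mathrm{deg}(\rho)]$) with the rank–nullity identity for finitely generated abelian groups. The one point to be careful about is the role of the hypothesis, which enters solely through the rank count forcing $\rho_*$ to be injective on $J(X)(\Q)$ modulo torsion; no information about the ramification of $\rho$ or about $\ker\rho^*$ is required.
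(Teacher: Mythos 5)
Your proof is correct, and it is essentially the standard argument: the paper itself does not prove this proposition but cites it from \cite[Proposition 3.1]{box}, where the proof likewise rests on the identity $\rho_*\circ\rho^*=[\mathrm{deg}(\rho)]$ together with the rank equality forcing $\ker(\rho_*)\cap J(X)(\Q)$ to be torsion. Your writing of $\mathrm{deg}(\rho)\cdot D=\rho^*(\rho_*(D))+T$ with $T\in\ker(\rho_*)$ torsion is exactly the intended mechanism, and your caution about not invoking $\rho^*\circ\rho_*$ is well placed.
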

We apply this proposition to find generators of subgroups of the Mordell--Weil groups of $X(\mathrm{b}5,\mathrm{ns}7)$ (see Section \ref{Xb5ns7mwgp}) and $X(\mathrm{b}3,\mathrm{ns}7)$ (see Remark \ref{Xb3ns7remark}).

\subsubsection{The torsion subgroup} For more details, we recommend the article of \"Ozman and Siksek \cite{ozman}, where many torsion subgroups of modular curves are computed using a variety of techniques. We found those methods to also be effective for our curves; we summarise them here.

Let $X$ be a modular curve, and denote by $C(\overline{\Q})\subset J(X)(\overline{\Q})$ the subgroup generated by the differences of cusps. The \emph{cuspidal subgroup} $C(\Q)$ of $J(X)(\Q)$ is the group of $\mathrm{Gal}(\overline{\Q}/\Q)$-invariants of $C(\overline{\Q})$. By the Manin--Drinfeld theorem \cite{manin}, \cite{drinfeld}, $C(\Q)\subset J(X)(\Q)_{\mathrm{tors}}$ when $X=X_0(N)$ for any $N$. Mazur \cite{mazur1} proved that in fact $C(\Q)=J(X_0(N))(\Q)_{\mathrm{tors}}$ when $N$ is prime, which was a conjecture of Ogg. The same statement for any $N$ is often called Ogg's Conjecture \cite{ogg}. See e.g.\ \cite{oggconj2} for partial results, and \cite[Section 5.2]{ozman} for more details.

Following this idea, the cuspidal subgroup provides a good starting point for attempting to compute $J(X)(\Q)_{\mathrm{tors}}$ even when $X$ is not of the form $X_0(N)$. For example, on $X(\mathrm{b}5,\mathrm{ns}7)$, there are 6 cusps defined over $\Q(\zeta_7+\zeta_7^{-1})$ splitting into two irreducible $\Q$-rational degree 3 divisors $c_0,c_{\infty}$, and their difference $[c_0-c_{\infty}]$ generates $J(\mathrm{b}5,\mathrm{ns}7)(\Q)_{\mathrm{tors}}\simeq \Z/7\Z$, as was shown in \cite{derickx}. Moreover, we show in Propositions \ref{X0105w5mwgp} and \ref{mwgpsprop} that the rank 0 Mordell--Weil groups of $X_0(105)/w_5$ and $X_0(105)/w_{35}$ are generated by differences of images of cusps. 
\begin{remark}
Ogg's conjecture cannot simply be extended to quotients of $X_0(N)$. For example, when $p$ is prime,  $X_0(p)$ has two cusps $c_1$ and $c_2$ satisfying $c_2=w_p(c_1)$. On $X_0(p)/w_p$, the cuspidal subgroup is therefore trivial, whereas there can be torsion. For $X_0(105)/\langle w_7,w_{105}\rangle$ and $X_0(105)/\langle w_7,w_{21}\rangle$, we found in Proposition \ref{mwgpsprop} the torsion subgroup to be larger than the cuspidal subgroup.
\end{remark}

Given a subgroup $G\subset J(X)(\Q)_{\mathrm{tors}}$, it can often be verified that $G=J(X)(\Q)_{\mathrm{tors}}$ by using the fact (see \cite{katz}) that $J(X)(\Q)_{\mathrm{tors}}$ embeds into $J(X)(\F_p)$ for good primes $p>2$. The Mordell--Weil group over $\F_p$ can be determined using an algorithm of Hess \cite{hess} (implemented in \texttt{Magma}), and a combination of a few primes often leaves no other option but $G=J(X)(\Q)_{\mathrm{tors}}$. This is indeed the case for $X_0(35)$ and $X(\mathrm{s}3,\mathrm{b}5)$; see Proposition \ref{mwgrpsprop}.

However, for some curves (such as $X_0(105)/w_5$, $X_0(105)/w_{35}$, $X_0(105)/\langle w_7,w_{105}\rangle$, $X_0(105)/\langle w_7,w_{21}\rangle$ and $X(\mathrm{b}3,\mathrm{ns}7)$), this strategy fails to determine $J(X)(\Q)_{\mathrm{tors}}$, because there exists an abstract group $H$ such that $G\subsetneq H$ and $H\subset J(X)(\F_p)$ (as abstract groups) for all primes $p$ considered. In practice, often $H$ has larger 2-torsion than $G$. This may be caused by the existence of fields $K_1,\ldots,K_n$ such that each prime $p$ splits in one of them and each $J(X)(K_i)_{\mathrm{tors}}$ has an extra 2-torsion element that is not $\Q$-rational. 

This can often be remedied:
\begin{itemize}
    \item When $X$ is a hyperelliptic curve of genus 2 (e.g.\ $X=X_0(105)/\langle w_7,w_{105}\rangle$, $X_0(105)/\langle w_7,w_{21}\rangle$ or $X(\mathrm{b}3,\mathrm{ns}7)/w_3$), algorithms for computing $J(X)(\Q)[2]$ are implemented in \texttt{Magma}.
    \item When $X$ is a plane quartic (e.g.\ $X=X_0(105)/w_{35}$), the Galois representation $\mathrm{Gal}(\overline{\Q}/\Q)\to \mathrm{Aut}_{\overline{\Q}}J(X)[2]$ can be determined by computing bitangents using an algorithm of Bruin, Poonen and Stoll \cite[Section 12]{bps}; see Proposition \ref{mwgpsprop} for details.
    \item Sometimes (e.g.\ for $X=X_0(64)$  (see \cite{ozman}) or $X=X_0(105)/w_5$) for an extension $K/\Q$ an extra 2-torsion element can be found, making it possible to determine $J(X)(K)[2]$ exactly. Then $J(X)(\Q)[2]$ is its subset of  Galois invariants. 
    \item While $H$ may as an abstract group be isomorphic to subgroups $H_p\subset J(X)(\F_p)$ for each $p$, there may not exist isomorphisms $\psi_{p,q}:\; H_p\simeq H_q$ for all such primes $p$ and $q$ such that $\psi_{p,q}$ restricted to the image of $G$ is the map obtained from reducing $G$ modulo $p$ and $q$. This can be verified using an algorithm of \"Ozman and Siksek \cite{ozman}. We apply this to $X_0(105)/w_5$ in Proposition \ref{X0105w5mwgp}. 
\end{itemize}

\subsection{When the Mordell--Weil group is finite} \label{finitemwgpsec}
 When the Mordell--Weil group of a curve $X$ over a field $K$ is finite and known,  it can be used (for suitably small $d$) to determine $X^{(d)}(K)$ using the Abel--Jacobi map $\iota_{D_0}$ defined in Section \ref{sympowsec}. When $D$ is a divisor on $X$, we define
\[
L(D)\colonequals \{f\in K(X)^{\times} \mid \mathrm{div}(f)+D\geq 0\}\cup \{0\}.
\]
This is a finite-dimensional vector space called the \emph{Riemann--Roch space} of $D$, and an efficient implementation for finding bases of such spaces is available in \texttt{Magma}. We denote its dimension by $\ell(D)$. We obtain the following tautological lemma.
\begin{lemma}\label{lem2.3}
For each degree 0 divisor $E$ on $X$, we have
\[
\{D\in X^{(d)}(K) \mid [D-D_0]=[E]\}=\{E+D_0+\mathrm{div}(f) \mid f\in \P L(E+D_0)\}.
\]
\end{lemma}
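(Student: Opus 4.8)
The plan is to prove the set equality by two inclusions, each of which is a direct unwinding of the definition of the Abel--Jacobi map $\iota_{D_0}$ and of the Riemann--Roch space $L(E+D_0)$; this is why the author calls it tautological. Throughout, $E$, $D_0$, and any $D\in X^{(d)}(K)$ are $K$-rational divisors.

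\textbf{The inclusion $\subseteq$.} Suppose $D\in X^{(d)}(K)$ satisfies $[D-D_0]=[E]$ in $J(X)(K)$. I would first argue that this means $D-D_0-E$ is the divisor of a function in $K(X)^\times$: the classes $[D-D_0]$ and $[E]$ being equal in $J(X)(K)$ forces $D-D_0$ and $E$ to be linearly equivalent over $\overline{K}$, and since $D-D_0-E$ is a $\mathrm{Gal}(\overline{K}/K)$-stable principal divisor, the one-dimensional $\overline{K}$-vector space of functions realizing this equivalence is Galois-stable and hence, by Hilbert 90, has a $K$-rational generator $f$; thus $D-D_0-E=\mathrm{div}(f)$ with $f\in K(X)^\times$. (Equivalently, this is built into the statement that $\iota_{D_0}$ takes values in $J(X)(K)$, since the map from the group of $K$-rational divisor classes into $J(X)(K)$ is injective.) Now $D=E+D_0+\mathrm{div}(f)\geq 0$, so $\mathrm{div}(f)+(E+D_0)\geq 0$, i.e.\ $f\in L(E+D_0)\setminus\{0\}$; and since scaling $f$ by an element of $K^\times$ does not change $\mathrm{div}(f)$, the divisor $D$ depends only on the class of $f$ in $\mathbb{P}L(E+D_0)$. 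Hence $D$ lies in the right-hand side.

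\textbf{The inclusion $\supseteq$.} Conversely, given $f\in L(E+D_0)\setminus\{0\}$, set $D:=E+D_0+\mathrm{div}(f)$. Then: $D$ is effective because $f\in L(E+D_0)$ means exactly $\mathrm{div}(f)+(E+D_0)\geq 0$; $D$ has degree $\deg E+\deg D_0+\deg\mathrm{div}(f)=0+d+0=d$; and $D$ is $K$-rational because the very definition of $L(E+D_0)$ requires $f\in K(X)^\times$, so $\mathrm{div}(f)$, and hence $D$, is defined over $K$. Therefore $D$ corresponds to a point of $X^{(d)}(K)$, and $[D-D_0]=[E+\mathrm{div}(f)]=[E]$ since $\mathrm{div}(f)$ is principal, so $D$ belongs to the left-hand side; as before, $D$ depends only on the image of $f$ in $\mathbb{P}L(E+D_0)$.

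There is essentially no obstacle to overcome: the only step that is not a completely formal manipulation is the passage, in the $\subseteq$ direction, from equality of classes in $J(X)(K)$ to the existence of a genuinely $K$-rational function $f$ with $D-D_0-E=\mathrm{div}(f)$, and this is precisely the injectivity of $K$-rational divisor classes into $J(X)(K)$ noted above. Everything else is just checking the four conditions ``effective'', ``degree $d$'', ``$K$-rational'', and ``independent of the scaling of $f$''.
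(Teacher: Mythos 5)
Your proof is correct; the paper gives no proof at all, simply calling the lemma tautological, and your argument is exactly the intended unwinding of the definitions. The one genuinely non-formal point --- descending the rational function realizing the linear equivalence from $\overline{K}$ to $K$ via Hilbert 90 --- is correctly identified and handled.
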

Ranging over representatives $E$ of the elements in $J(X)(K)$, this yields an algorithm for determining $X^{(d)}(K)$ when it is finite. (This may even work when only a finite index subgroup of $J(X)(K)$ is known, in which case computations can be sped up with a Mordell--Weil sieve, as explained in \cite{ozman}.)  Even when $X^{(d)}(K)$ is infinite, it can be useful to identify the Riemann--Roch spaces of dimension $>1$. 
\begin{corollary}\label{rrcor}
Suppose that $\rho:X\to C$ is a degree $d$ morphism of curves over $K$, and $D_0$ is $K$-rational divisor of degree $e$ on $C$. If $L(\rho^*D_0)=\rho^*L(D_0)$, we have
\[
\{D\in X^{(de)}(K)\mid [D-\rho^*D_0]=0\}\subset \rho^*C^{(e)}(K).
\]
\end{corollary}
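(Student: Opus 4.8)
The plan is to turn both hypotheses into statements about divisors and functions; once the appropriate descent is in place, the corollary is a short translation.

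First I would take an arbitrary $D$ in the left-hand set: an effective $K$-rational divisor of degree $de$ on $X_{\overline{K}}$ with $[D-\rho^*D_0]=0$ in $J(X)(K)$. The first real step is to upgrade the defining function to a $K$-rational one. The divisor $D-\rho^*D_0$ is $K$-rational of degree $0$ and becomes principal over $\overline{K}$, say $D-\rho^*D_0=\mathrm{div}_X(f)$ with $f\in\overline{K}(X)^{\times}$; for $\sigma\in\mathrm{Gal}(\overline{K}/K)$ we get $\mathrm{div}_X(\sigma(f)/f)=0$, so $\sigma\mapsto \sigma(f)/f$ is a cocycle valued in $\overline{K}^{\times}$ and Hilbert's theorem 90 lets us rescale $f$ into $K(X)^{\times}$. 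Hence $D=\rho^*D_0+\mathrm{div}_X(f)$ with $f\in K(X)^{\times}$.

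Next, since $D\geq 0$ we have $\mathrm{div}_X(f)+\rho^*D_0\geq 0$, i.e.\ $f\in L(\rho^*D_0)$, the Riemann--Roch space over $K$ from the definition preceding Lemma \ref{lem2.3}. This is where the hypothesis $L(\rho^*D_0)=\rho^*L(D_0)$ is used: it gives $f=\rho^*g$ for some $g\in L(D_0)\subseteq K(C)$, with $g\neq 0$ because $f\neq 0$. Then I would push down to $C$ via the standard functoriality $\mathrm{div}_X(\rho^*g)=\rho^*(\mathrm{div}_C(g))$ for pullback of divisors along the finite morphism $\rho$, obtaining $D=\rho^*(D_0+\mathrm{div}_C(g))=\rho^*E_0$ with $E_0\colonequals D_0+\mathrm{div}_C(g)$. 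Finally, $E_0$ is effective because $g\in L(D_0)$, has degree $e=\deg D_0$, and is $K$-rational because $D_0$ is and $g\in K(C)^{\times}$; so $E_0\in C^{(e)}(K)$ and $D=\rho^*E_0\in \rho^*C^{(e)}(K)$, where $\rho^*$ here denotes the morphism $C^{(e)}\to X^{(de)}$ of symmetric powers induced by pullback of divisors.

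The only step that is not pure bookkeeping is the descent of the defining function from $\overline{K}$ to $K$, which is exactly Hilbert 90 and is already the mechanism implicitly underlying Lemma \ref{lem2.3}; I would expect this to be the point a careful reader pauses on, while the rest follows formally from compatibility of $\rho^*$ with $\mathrm{div}$ and from the assumed equality of Riemann--Roch spaces.
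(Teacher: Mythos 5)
Your argument is correct and is essentially the paper's: the paper simply invokes Lemma \ref{lem2.3} with $E=0$ and $D_0$ replaced by $\rho^*D_0$, then uses $\mathrm{div}(\rho^*g)=\rho^*\mathrm{div}(g)$, whereas you unwind the content of that lemma inline (the Hilbert 90 descent of the defining function to $K$) before performing the same pushdown via $L(\rho^*D_0)=\rho^*L(D_0)$.
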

\begin{proof}
This follows from Lemma \ref{lem2.3} with $E=0$, using the fact that $\mathrm{div}(\rho^*f)=\rho^*\mathrm{div}(f)$.
\end{proof}
We use this argument repeatedly in Sections \ref{X0105sec} and \ref{Xs3b5b7sec}.

\subsection{The Mordell--Weil sieve}
\label{mwsieve}
For both finite and infinite Mordell--Weil groups, the Mordell--Weil sieve can be an effective tool to determine the ratinal points on symmetric powers of curves. The applied Mordell--Weil sieve is similar to those used in \cite{siksek}, \cite{box} and \cite{bgg}, but we describe it in a slightly more general form, allowing us to use it in Section \ref{Xb5ns7sec} as well as Section \ref{X1andX2sec}. For a more detailed discussion of the Mordell--Weil sieve, we refer to \cite{bruinstoll}. The idea of the sieve is to combine incomplete information about rational points in $p$-adic discs for various primes $p$, in order to determine the set of all rational points.

Consider a variety $V/\Q$, an integer $N$ and a proper Noetherian model $\mathcal{V}/\Z[1/N]$ for $V$. Then we have reduction maps $V(\Q)\to \mathcal{V}(\F_p)$ for primes $p\nmid N$.
\begin{remark}
Typically, we consider a curve $X/\Q$ and $m\in \Z_{\geq 1}$, and define $V=X^{(m)}$. Then, given equations with integral coefficients for $X$ in projective space, define $N$ to be the product of the primes of bad reduction for this model of $X$, let $\mathcal{X}/\Z[1/N]$ be the curve defined by those equations, and set $\mathcal{V}=\mathcal{X}^{(e)}$. 
\end{remark}

We first choose primes $p_1,\ldots,p_r$ not dividing $N$, and we consider a subset $S\subset V(\Q)$. Next, we need the following input:
\begin{itemize}
    \item[(i)] A (possibly infinite) excplicit list of \emph{known points} $\mathcal{L}\subset S$.
    \item[(ii)] A $\Z[1/N]$-morphism $\iota: \mathcal{V}\to \mathcal{A}$, where $\mathcal{A}/\Z[1/N]$ is an abelian scheme. 
    \item[(iii)] A finitely generated abelian group $B$, a homomorphism $\phi: B\to \mathcal{A}(\Q)$ and a subset $W\subset B$ such that $\iota(S)\subset \phi(W)$. 
    \item[(iv)] For each $p\in \{p_1,\ldots,p_n\}$, a subset $\mathcal{M}_p\subset \mathcal{V}(\F_p)$ such that $S\setminus \mathcal{L}$ reduces into $\mathcal{M}_p$.
\end{itemize}
The aim of the sieve is to show that $\mathcal{L}=S$. Typically, $S=V(\Q)$ and $W=B$, but the flexibility in this set-up will prove useful.
\begin{remark}
Note that $\mathcal{M}_p$ need not be optimal. Strict subsets $\mathcal{M}_p$ can be obtained from theorems such as Theorem \ref{chabthm}, using points $\mathcal{Q}$ in an explicit finite subset $\mathcal{L}'\subset \mathcal{L}$.
\end{remark}
\begin{remark}\label{mwrk}
When $V=X^{(e)}$ for a curve $X$,  the abelian scheme $\mathcal{A}$ is usually derived from the Jacobian of $X$. For example, it could be the image of the multiplication-by-$I$ map $m_I$ on the Jacobian, and $\iota$ could be composition of the Abel--Jacobi map with $m_I$. The choice of $\mathcal{A}$ is then limited by knowledge of generators of $J(X)(\Q)$.
\end{remark}
Consider $p\in \{p_1,\ldots,p_n\}$. We obtain a commuting diagram
\[
\begin{tikzcd}
\mathcal{L} \arrow[rr] \arrow[rrd] &  & V(\Q) \arrow[d] \arrow[rr, "\iota"] &  & \mathcal{A}(\Q) \arrow[d] &  & B \arrow[ll, "\phi"'] \arrow[lld, "\phi_p"] \\
                                   &  & \mathcal{V}(\F_p) \arrow[rr, "\iota_p"]                                 &  & \mathcal{A}(\F_p)                                  &  &                                            
\end{tikzcd},
\]
from which the following proposition follows by definition of $\mathcal{M}_p$. 
\begin{proposition}[Mordell--Weil sieve]\label{mwsieveprop}If
\[
W\cap \bigcap_{i=1}^n\phi_{p_i}^{-1}(\iota_{p_i}(\mathcal{M}_{p_i}))=\emptyset
\]
then $S=\mathcal{L}$.
\end{proposition}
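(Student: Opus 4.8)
The plan is to unwind the definitions and chase the commuting diagram. Suppose for contradiction that $S\neq \mathcal{L}$; by (i) we have $\mathcal{L}\subset S$, so there exists some $x\in S\setminus \mathcal{L}$. First I would apply $\iota$ to $x$ to land in $\mathcal{A}(\Q)$, and by (iii) there is some $b\in W$ with $\phi(b)=\iota(x)$. The goal is to show $b$ lies in $\phi_{p_i}^{-1}(\iota_{p_i}(\mathcal{M}_{p_i}))$ for every $i$, which together with $b\in W$ contradicts the hypothesis that the intersection is empty.

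Fix $i$ and write $p=p_i$. By (iv), since $x\in S\setminus\mathcal{L}$, its reduction $\bar{x}\in \mathcal{V}(\F_p)$ lies in $\mathcal{M}_p$. Here I am using that $x$, being a rational point of $V$, extends uniquely to a section of the proper model $\mathcal{V}/\Z[1/N]$ (properness gives the valuative criterion, so the reduction map $V(\Q)\to\mathcal{V}(\F_p)$ of the diagram is well-defined for $p\nmid N$). Now the diagram commutes: the image of $x$ under $V(\Q)\xrightarrow{\iota}\mathcal{A}(\Q)\to\mathcal{A}(\F_p)$ equals the image under $V(\Q)\to\mathcal{V}(\F_p)\xrightarrow{\iota_p}\mathcal{A}(\F_p)$, because $\iota$ is a $\Z[1/N]$-morphism and reduction is functorial. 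Hence the reduction of $\iota(x)$ in $\mathcal{A}(\F_p)$ equals $\iota_p(\bar{x})\in \iota_p(\mathcal{M}_p)$.

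On the other hand, $\iota(x)=\phi(b)$, so the reduction of $\iota(x)$ in $\mathcal{A}(\F_p)$ is precisely $\phi_p(b)$, again by commutativity of the right-hand square (the map $\phi_p\colon B\to\mathcal{A}(\F_p)$ is by definition the composition of $\phi$ with reduction). Combining, $\phi_p(b)=\iota_p(\bar x)\in\iota_p(\mathcal{M}_p)$, i.e. $b\in\phi_p^{-1}(\iota_p(\mathcal{M}_p))$. Since $i$ was arbitrary and $b\in W$, we obtain $b\in W\cap\bigcap_{i=1}^n\phi_{p_i}^{-1}(\iota_{p_i}(\mathcal{M}_{p_i}))$, contradicting the hypothesis. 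Therefore no such $x$ exists and $S=\mathcal{L}$.

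This is essentially a formal diagram chase, so there is no real obstacle; the only point requiring a little care is the well-definedness of the various reduction maps, which rests on the properness (valuative criterion) of $\mathcal{V}$ over $\Z[1/N]$ and the fact that $\iota$ is a morphism of $\Z[1/N]$-schemes, so that all four squares in the displayed diagram genuinely commute. Everything else is unwinding the hypotheses (i)--(iv).
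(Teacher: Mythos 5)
Your argument is correct and is exactly the diagram chase the paper invokes; the paper simply states that the proposition ``follows by definition of $\mathcal{M}_p$'' from the commuting diagram, and your write-up spells out that chase in full. No difference in approach, just more explicit detail.
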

\begin{remark}\label{sieveproportion}
One may hope for this sieve to work in practice when $\mathrm{dim}(\mathcal{A_{\overline{\Q}}})>\mathrm{dim}(V)$, as in each step, less than a proportion
\[
\#\mathcal{V}(\F_p)/\#\mathcal{A}(\F_p)\sim p^{\mathrm{dim}(V)-\mathrm{dim}(\mathcal{A}_{\overline{\Q}})}
\]
of elements of $\mathcal{A}(\F_p)$ is in $\iota(\mathcal{M}_p)$. 
\end{remark}
When $X$ is a curve with good reduction $\widetilde{X}$ at $p$, we note that $J(\widetilde{X})(\F_p)$ can be computed using the class group algorithm of Hess \cite{hess}. We have implemented this sieve with the practical improvements described in \cite[Section 3]{bgg}. 
\subsection{When the Mordell--Weil group is infinite: Chabauty}\label{infmwgpsec}
The Mordell--Weil sieve just described only succeeds in determining a non-empty set of rational points $V(\Q)$ when combined with a method to determine the rational points within a fixed mod $p$ residue class. In other words, we require a way to determine strict subsets $\mathcal{M}_p$. 

Traditionally, when $V$ is a curve $X$, this has been done using the Chabauty--Coleman method, provided the rank $r$ of $J(X)(\Q)$ and the genus $g$ of $X$ satisfy $r<g$. This was generalised to rational points of symmetric powers $X^{(m)}$ of curves by Siksek \cite{siksek}, under the stronger (but not sufficient) condition $r<g-(m-1)$.

An interesting situation occurs when $X$ admits a morphism $\rho: X\to C$ to another curve. Suppose that $\rho$ has degree $d$, and we want to compute $X^{(m)}(K)$, where $K$ is a number field and $m\geq e\cdot d$ for some $e\in \Z_{\geq 1}$. A point $\mathcal{P}\in X^{(m-ed)}(K)$ (when $e=md$, we take $\mathcal{P}=0$) gives rise to a subset
\[
\mathcal{P}+\rho^*C^{(e)}(K)\subset X^{(m)}(K).
\]
When this subset is infinite, or is finite but cannot be determined provably, this is a clear obstruction to Siksek's symmetric Chabauty method. 
%Suppose that $D_0\in C^{(m)}(\Q)$, and define $\iota_C: C\to J(C),\; D\mapsto [D-D_0]$ and $\iota_X:X\to J(X),\; D\mapsto [D-(\mathcal{P}+m\rho^*D_0)]$ compatibly.  We see that points in $\mathcal{P}+\rho^*C^{(m)}(\Q)$ are not \emph{isolated}, meaning that their image in $J(X)(\Q)$ is contained in the translate of a strict abelian subvariety, in this case $\rho^*J(C)(\Q)$.
Siksek \cite{siksek} adapted his symmetric Chabauty method, allowing him to also compute the isolated points
\begin{align}\label{chabset}
X^{(m)}(K)\setminus \bigcup_{\substack{e\text{ s.t. } ed\leq m\\ \mathcal{P}\in X^{(m-ed)}(K)}}\left(\mathcal{P}+\rho^*C^{(e)}(K)\right),
\end{align}
in the special case where $m=d$ and $\mathcal{P}=0$. When $m=2$ (quadratic points), all such subsets are of the form $\rho^*C(K)$ where $\rho$ is a morphism of degree 2, but for higher degrees, subsets of the form $\mathcal{P}+\rho^*C^{(e)}(K)$ with $\mathcal{P}\neq 0$ occur. The author, Gajovi\'c and Goodman \cite{bgg} recently generalised Siksek's method to arbitrary values of $m,e$ and $d$ satisfying $ed\leq m$, opening the way to applying Chabauty to symmetric powers $X^{(d)}$ for $d>2$. We give a brief description of this new method. \\

Consider a prime $\pp$ of $K$, and suppose that $\mathcal{X}/\O_{K_{\pp}}$ and $\mathcal{C}/\O_{K_{\pp}}$ are minimal proper regular models for $X/K_{\pp}$ and $C/K_{\pp}$ respectively, such that $\rho$ extends to an $\O_{K_{\pp}}$-morphism $\rho: \mathcal{X}\to \mathcal{C}$. (In particular, we assume that $X$ and $C$ have good reduction at $\pp$.) We obtain a commutative diagram
\[
\begin{tikzcd}
X^{(m)}(K) \arrow[rr, "\iota_X"] \arrow[d] &  & J(X)(K) \arrow[d] \\
X^{(m)}(K_{\pp}) \arrow[rr, "\iota_C"]         &  & J(X)(K_{\pp}).       
\end{tikzcd}
\]
The idea (due in the $m=1$ case to Chabauty \cite{chabauty}) is to consider $X^{(m)}(K_{\pp})\cap J(X)(K)$, which contains $X^{(m)}(K)$. Coleman \cite{coleman} developed (in the $m=1$ case) an effective method to bound this set. 

Let $A/K_{\pp}$ be an abelian variety. Coleman defined in \cite[Section II]{coleman2} a pairing, now called \emph{Coleman integration},
\begin{align}
\label{colemanintegration}
A(K_{\pp})\times H^0(A,\Omega)\longrightarrow K_{\pp},\;\; (D,\om)\mapsto \int_D\om
\end{align}
which is $\Z$-linear on the left, $K_{\pp}$-linear on the right and locally analytic in $D\in A(K_{\pp})$, with left-hand kernel equal to $A(K_{\pp})_{\mathrm{tors}}$ and trivial right-hand kernel. Here $\Omega$ denotes the sheaf of regular differential forms of the first kind. Moreover, given a morphism $\pi\colon A\to B$ of abelian varieties over $K_{\pp}$, the pairing satisfies a chain rule
\begin{align}\label{chainrule}
\int_{D}\pi^*\om = \int_{\pi_*D}\om \text{ for all } D\in A(K_{\pp})\text{ and }\om \in H^1(A,\Omega).
\end{align}
The Abel--Jacobi map $\iota_X\colon X\to J(X)$ yields an isomorphism $\iota_X^*\colon H^0(J(X),\Omega_{J(X)})\simeq H^0(X,\Omega_X)$. The pairing (\ref{colemanintegration}) thus specialises to a pairing $J(X)(K)\times H^0(X_{K_{\pp}},\Omega)\to K_{\pp}$, which we also denote by $(D,\om)\mapsto \int_D\om$.
\begin{definition}\label{vandiffdef}
We define the \emph{space of vanishing differentials} $V\subset H^0(X_{K_{\pp}},\Omega)$ to be the annihilator of $J(X)(K)$ under the pairing just defined, and the \emph{space of trace zero vanishing differentials} $V_C$ to be the intersection of $V$ and the kernel of $\mathrm{Tr}:\; H^0(X_{K_{\pp}},\Omega^1)\to H^0(C_{K_{\pp}},\Omega^1)$.
\end{definition}
By the properties of (\ref{colemanintegration}) and linear algebra, we find
\[
\mathrm{dim}(V)\geq g_X-r_X \text{ and } \mathrm{dim}(V_C)\geq g_X-g_C - (r_X-r_C).
\]

Now each $D\in X^{(m)}(K)$ satisfies
\[
\int_{\iota_X(D)}\om = 0 \text{ for all } \om \in V,
\]
and the aim is thus to compute the common zero set of these integrals on $X^{(m)}(K_{\pp})$. The $\om \in V_C$ moreover are ``indifferent'' to $\rho^*J(C)$, in the sense that $\int_{\rho^*D}\om=0$ for all $D\in J(C)$ and $\om \in V_C$. 

This integral breaks down into a sum of integrals of the form $\int_{[R-Q]}\om$ for points $R,Q\in X$ defined over an extension of $K$ of degree at most $m$. Consider $\omega\in H^0(\mathcal{X},\Omega_{\mathcal{X}/\O_{K_{\pp}}})$, a field extension $L/K$, a point $Q\in X(L)$ and denote by $\widetilde{Q}$ the reduction of $Q$ modulo a prime $\qq$ of $L$ above $\pp$. Consider a local coordinate $t\in \O_{X,Q}$ reducing to a local coordinate at $\widetilde{Q}$.  Suppose the expansion of $\om$ around $Q$ is
\[
\om=(a_0(\om,t)+a_1(\om,t)t+a_2(\om,t)t^2+\ldots)\dd t \in \widehat{\Omega}_Q,
\]
where $\widehat{\Omega}_Q$ is the completion of the stalk of $\Omega_{X/L_{\qq}}$ at $Q$. Then each $a_i(\om,t)\in \O_{L_{\qq}}$. If $R\in X(L)$ is in the \emph{residue class} of $Q$, i.e.\ reduces to the same point as $Q$ modulo $\qq$, then we can evaluate their Coleman integral (see \cite{coleman2}), which we call a \emph{tiny integral}:
\[
\int_{[R-Q]}\om = \sum_{n=1}^{\infty}\frac{a_{n-1}}{n}t(R)^n.
\]
When $L_{\qq}=\Q_p$ (corresponding to $m=1$), integrals between points in \emph{different} residue classes can also be determined, thanks to the work of Tuitman and Balakrishnan \cite{tuitman}, who also wrote a \texttt{Magma} implementation. This has not (yet) been extended to $[L_{\qq}:\Q_p]>1$, however.

Instead, we therefore stick to integrals within residue classes; this suffices when combining information obtained by using multiple primes $\pp$, using the Mordell--Weil sieve. Thanks to the sieve, it suffices to determine the intersection of $X^{(m)}(K)$ with the residue classes $D(\mathcal{Q})\subset X^{(m)}(\overline{K}_{\pp})$ of known points $\mathcal{Q}\in X^{(m)}(K)$. A criterion `\`a la Siksek' to decide when $X^{(m)}(K)\cap D(\mathcal{Q})$ is as small as it can be, was found in \cite{bgg} by studying the common zero sets of the power series obtained from tiny integrals. In Sections \ref{deg4chabsec} and \ref{deg2chabsec}, we describe it concretely when $K=\Q$ and $m=4$ and $m=2$, respectively. \\

In Section \ref{Xb5ns7sec}, we apply this method to the genus 6 curve $X(\mathrm{b}5,\mathrm{ns}7)$, which admits a degree 2 map $\rho$ to the genus 2 curve $C\colonequals X(\mathrm{b}5,\mathrm{ns}7)/w_5$, and contains two effective degree 2 divisors $D_1$ and $D_2$. To be precise, the set (\ref{chabset}) we determine (in conjunction with the Mordell--Weil sieve) is
\[
X(\mathrm{b}5,\mathrm{ns}7)^{(4)}(\Q)\setminus \left(\rho^*C^{(2)}(\Q)\cup(D_1+\rho^*C(\Q))\cup(D_2+\rho^*C(\Q))\right).
\]
In Section \ref{X1andX2sec}, we use Siksek's symmetric Chabauty method (and a sieve) to find the degree 2 effective divisors not mapping to $C$ on $X(\mathrm{b}5,\mathrm{e}7)/w_5$ (genus 5) and another curve $X_1$ (genus 8).

\section{$X_0(105)$}\label{X0105sec}
In this section we prove the following theorem.
\begin{theorem}
\label{X0105thm}
Let $E$ be an elliptic curve with quartic $j$-invariant that occurs as a quartic point on $X_0(105)$. Then $E$ is a $\Q$-curve, and in particular $E$ is modular. 
\end{theorem}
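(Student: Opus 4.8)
The Jacobian $J_0(105)$ has Mordell--Weil rank $0$ over $\Q$ (Section \ref{ranksec}), so Lemma \ref{lem2.3} reduces the determination of $X_0(105)^{(4)}(\Q)$ in principle to a finite computation; but $X_0(105)$ has genus $13$, and computing Riemann--Roch spaces and class group data on a curve that size is infeasible. The plan is therefore to descend to the tree of Atkin--Lehner quotients. As $105=3\cdot5\cdot7$ is squarefree, $X_0(105)$ carries the full Atkin--Lehner group $W=\langle w_3,w_5,w_7\rangle\cong(\Z/2\Z)^3$, and I would work with the quotient maps $\rho_{W'}\colon X_0(105)\to X_0(105)/W'$ for ten subgroups $W'\subseteq W$ whose targets have genus at most $5$ --- for instance the plane quartic $X_0(105)/w_{35}$ and the genus-$2$ curves $X_0(105)/\langle w_7,w_{105}\rangle$ and $X_0(105)/\langle w_7,w_{21}\rangle$. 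First I would compute models for these quotients and the maps among them by the methods of Section \ref{modelssec}, then their Mordell--Weil groups over $\Q$: the ranks via the Eichler--Shimura decomposition together with vanishing of $L$-values (Section \ref{ranksec}), and the torsion subgroups starting from the cuspidal subgroup and narrowing it down by reduction modulo several good primes, supplemented by the ad hoc arguments needed when spurious $2$-torsion survives --- bitangent computations for the plane quartic $X_0(105)/w_{35}$, explicit $2$-descent for hyperelliptic genus-$2$ quotients, and the \"Ozman--Siksek incompatibility trick for $X_0(105)/w_5$, all as surveyed in Section \ref{methodssec}.

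With the Mordell--Weil groups in hand I would apply Lemma \ref{lem2.3} and Corollary \ref{rrcor} on each quotient to enumerate its rational and quadratic points and, crucially, to detect the Mordell--Weil classes whose Riemann--Roch spaces have dimension larger than $1$, since those record the positive-dimensional families of low-degree points pulled back from further down the tree (from a genus $\le1$ quotient, or from a hyperelliptic genus-$2$ quotient via its $g^1_2$). Now let $P$ be a quartic point on $X_0(105)$ with $[\Q(j(P)):\Q]=4$; this is the only case requiring attention, by the reduction preceding Theorem \ref{thm1.5}, and it forces $\Q(P)=\Q(j(P))$, so the Galois orbit of $P$ has exactly four elements and $D=P+P^{\sigma}+P^{\sigma^2}+P^{\sigma^3}$ is a $\Q$-rational effective divisor of degree $4$, i.e.\ a point of $X_0(105)^{(4)}(\Q)$. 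Pushing $D$ into each of the ten quotients and reconciling the resulting constraints compatibly with the maps $X_0(105)/W'\to X_0(105)/W''$ for $W'\subset W''$ should force $D$ to be a pullback $\rho_{W'}^*\bar D$ of a $\Q$-rational point $\bar D$ of some quotient $X_0(105)/W'$ with $|W'|\in\{4,8\}$ --- precisely the case in which the fibre over $\bar D$, which is the Atkin--Lehner orbit $\{w_e(P):e\in W'\}$, is large enough to contain the four-element Galois orbit of $P$. Any finitely many exceptional divisors $D$ not covered by this bookkeeping I would handle one by one, checking that they reduce to cusps, to $j(P)\in\{0,1728\}$, or again to this pullback situation; a Mordell--Weil sieve as in Proposition \ref{mwsieveprop} may help here.

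In the pullback situation, $\mathrm{Gal}(\overline{\Q}/\Q)$ permutes the Galois-stable set $\{w_e(P):e\in W'\}$, so for each $\sigma$ there is an $e\in W'$ with $P^{\sigma}=w_e(P)$; since $j(w_e(P))$ and $j(P)$ are the $j$-invariants of $\Q(j(P))$-isogenous elliptic curves (Section \ref{modelssec}), the elliptic curve $E$ with $j$-invariant $j(P)$ is $\overline{\Q}$-isogenous to each of its Galois conjugates. Hence $E$ is a $\Q$-curve, and so by Ribet's theorem $E$ is modular.

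The step I expect to be the main obstacle is not any single one of these but the orchestration: no single quotient suffices, so for each of the (finitely many but not few) Mordell--Weil classes on each of the ten quotients one must keep track of exactly which effective degree-$4$ divisors of $X_0(105)$ can map into it, and then reconcile these lists across the whole tree. A secondary difficulty is getting the torsion subgroups of the quotients genuinely right --- both verifying the Riemann--Roch equalities $L(\rho^*D_0)=\rho^*L(D_0)$ needed to invoke Corollary \ref{rrcor}, and pinning down the true $2$-torsion on those quotients where naive reduction modulo $p$ overcounts it.
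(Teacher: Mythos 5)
Your overall architecture is the same as the paper's: descend through the tree of Atkin--Lehner quotients, compute Mordell--Weil groups (rank via $L$-values, torsion via the cuspidal subgroup, reduction mod $p$, and the bitangent/2-descent/\"Ozman--Siksek patches), enumerate low-degree points via Riemann--Roch spaces as in Lemma \ref{lem2.3} and Corollary \ref{rrcor}, and finish with the Atkin--Lehner-orbit/$\Q$-curve argument and Ribet's theorem. Your final paragraph is exactly Lemma \ref{Qcurvelemma}, and your identification of the orchestration as the main obstacle is accurate. But that orchestration is precisely where your sketch has a gap: ``reconciling the resulting constraints \ldots should force $D$ to be a pullback'' is asserted without the tool that makes it true. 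The paper supplies this tool as Lemma \ref{diagramlemma}: if a quartic point has quadratic image in $X/\langle v,w\rangle$, then it already has quadratic image in one of the three intermediate quotients $X/v$, $X/w$, $X/vw$. Without this (or an equivalent), knowing the Mordell--Weil class of the pushforward of $D$ in each quotient does not by itself force the Galois orbit of $P$ into an Atkin--Lehner orbit; a priori $P$ could have quartic image on every proper quotient except $X_0(105)^*=\P^1$, where every point trivially has image of degree at most $4$. It is this lemma, applied with $(w_5,w_7)$ and then $(w_5,w_{21})$, that reduces the problem to quadratic points on the five curves $X_0(105)/w_q$, and thence (by a second application of the same idea) to quadratic points on genus-$3$ hyperelliptic quotients where the enumeration is feasible.

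A second, more computational omission: your plan is purely top-down (push $D$ into the quotients), but the enumeration on a quotient such as $X_0(105)/w_5$ produces finitely many candidate quartic divisors, and you must then decide which of them actually lift to \emph{quartic} (rather than octic) points of $X_0(105)$ --- this is needed to prove that every quartic point of $X_0(105)$ lands on a rational or quadratic point of $E_7=X_0(105)/\langle w_5,w_7\rangle$ or $E_{21}=X_0(105)/\langle w_5,w_{21}\rangle$, which is the first link in the chain. The paper does this via an explicit model of $X_0(105)\to X_0(105)/w_5$ as a double cover $z^2=g$ (Lemma \ref{squarechecklemma}), so that lifting reduces to a square test on $g(P)$; your proposal gives no mechanism for this step. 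Finally, your list of possible outcomes for exceptional divisors (cusps, $j\in\{0,1728\}$, pullbacks) misses the case that actually occurs: the exceptional quartic points found on $X_0(105)/w_{21}$ have \emph{quadratic} CM $j$-invariant $632000\pm282880\sqrt{5}$, which is excluded only because your hypothesis demands quartic $j$-invariant --- worth stating, since otherwise the case analysis appears incomplete.
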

\subsection{A convenient model}
As described in Section \ref{ranksec}, we computed that $L(f,1)\neq 0$ for each eigenform $f\in S_2(\Gamma_0(N))$, from which we conclude by \cite{kolyvagin} that the Mordell--Weil group $J_0(105)(\Q)$ is finite. The genus of $X_0(105)$ is 13, however, making it computationally infeasible to do the Riemann--Roch space computations described in Section \ref{finitemwgpsec}. Instead, we begin by studying the quotient $X_0(105)/w_5$, where $w_5$ is the Atkin--Lehner involution corresponding to 5. This quotient has genus 5. 

We compute the map $X_0(105)\to X_0(105)/w_5$ explicitly. To this end, we first find a basis of 13 linearly independent cusp forms in $S_2(\Gamma_0(105))$, consisting of 5 cusp forms that are fixed by $w_5$ and 8 cusp forms $f$ such that $f|w_5=-f$. Equations between the first five cusp forms yield the following model for $X_0(105)/w_5$ in $\P^4_{x_1,\ldots,x_5}$:
\begin{align*}
X_0(105)/w_5: \;\;\;& x_1x_3 - x_2^2 - x_3x_5 - x_4^2=0,\\
&x_1x_4 - x_2x_3 + x_2x_4 - x_3^2 - x_4x_5=0,\\
&2x_1x_5 +  x_2x_4 - 2x_2x_5 - 3x_3^2 + 3x_3x_4 + 2x_3x_5 - 2x_4^2 - 2x_4x_5 + 2x_5^2=0,
\end{align*}
and the (quadratic) equations between all 13 cusp forms result in an explicit canonical model for $X_0(105)\subset \P^{12}$. We do not display this model here (it consists of 55 quadratic equations in 13 variables), but we invite the curious reader to run the \texttt{Magma} code. The quotient map $X_0(105)\to X_0(105)/w_5$ is then simply projection onto the first 5 coordinates. 

Since the map $X_0(105)\to X_0(105)/w_5$ is of degree 2, there must be a $g\in \Q(X_0(105)/w_5)$, not a square, whose pullback to $\Q(X_0(105))$ is a square. In fact, we find such a $g$ in the coordinate ring of our model for $X_0(105)/w_5$. This reveals to us the extra equation $z^2=g$, where $z$ is a new coordinate, which determines $X_0(105)$ as a degree 2 cover of $X_0(105)/w_5$. Using the equations defining $X_0(105)$, we search for a polynomial $f\in \Q[x_1,\ldots,x_{13}]\setminus \Q[x_1,\ldots,x_5]$ such that $f^2$ is, modulo the equations defining $X_0(105)$, a polynomial in $x_1,\ldots,x_5$. We square $f$ to obtain
\[
g= 9x_1^2 - 6x_1x_2 + x_2^2 + 6x_2x_3 + 21x_2x_4 - 
    12x_2x_5 - 17x_3^2 + 19x_3x_4 + 18x_3x_5 + 7x_4^2 - 
    18x_4x_5 + 27x_5^2.
\]
It can be easily verified that $g$ is indeed a square in the coordinate ring of $X_0(105)$, but that $g/x_1^2$ is not a square in $\Q(X_0(105)/w_5)$. We conclude that
$
X_0(105)\to X_0(105)/w_5
$
is the degree 2 cover given by $z^2=g$. We denote the coordinates of $X_0(105)\subset \P^5$ in this model by $(x_1:\ldots : x_5 :z )$. We obtain the following lemma, which proves the convenience of this model. 

\begin{lemma}
\label{squarechecklemma}
Suppose that $K$ is a field and $P\in (X_0(105)/w_5)(K)$. Let $Q\in X_0(105)$ be a point mapping to $P$. Then $Q\in X_0(105)(K)$ if and only if $g(P)$ is a square in $K$. 
\end{lemma}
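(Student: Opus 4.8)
The plan is to exploit the explicit degree $2$ model $z^2 = g$ directly. First I would observe that the curve $X_0(105)$ sits inside $\P^5$ with coordinates $(x_1:\cdots:x_5:z)$, where $(x_1:\cdots:x_5)$ are the coordinates on $X_0(105)/w_5$ and the quotient map $\pi\colon X_0(105)\to X_0(105)/w_5$ is projection onto the first five coordinates. A point $P \in (X_0(105)/w_5)(K)$ has two preimages under $\pi$ in $X_0(105)(\overline{K})$; writing $P = (x_1:\cdots:x_5)$, a preimage is a point $(x_1:\cdots:x_5:z)$ with $z^2 = g(x_1,\ldots,x_5)$, where $g$ is the homogeneous degree $2$ form displayed above. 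Here I should be slightly careful with the projective scaling: $g$ is homogeneous of degree $2$, so the condition ``$g(P)$ is a square in $K$'' is well-defined on the point $P$ up to multiplying $P$ by $\lambda \in K^\times$, which scales $g(P)$ by $\lambda^2$, hence preserves squareness.

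The key step is then a direct case analysis. If $g(P) \neq 0$: the two preimages of $P$ are $(x_1:\cdots:x_5:\pm\sqrt{g(P)})$, which are defined over $K$ precisely when $\sqrt{g(P)} \in K$, i.e.\ when $g(P)$ is a square in $K$ (and in that case \emph{both} preimages are $K$-rational, in particular the given point $Q$ is). Conversely if $Q = (x_1:\cdots:x_5:z) \in X_0(105)(K)$ lies over $P$, then $z \in K$ and $g(P) = z^2$ (after fixing the affine representative of $P$ coming from $Q$), which is a square in $K$. If $g(P) = 0$: then $P$ is a ramification point of $\pi$, the unique preimage is $Q = (x_1:\cdots:x_5:0)$, which is automatically defined over $K$ since all its coordinates lie in $K$; and indeed $g(P) = 0$ is a square in $K$. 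So the equivalence holds in this degenerate case as well.

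The one genuine subtlety — and the only place any real care is needed — is the interaction between the projective normalisation of $P$ and the value $g(P)$, together with the fact that $g$ might a priori vanish identically on $X_0(105)/w_5$ (it does not, but this is what makes the map degree $2$ rather than degree $1$ on a component). I would dispatch this by recalling from the construction above that $g/x_1^2$ is verified to be a non-square in $\Q(X_0(105)/w_5)$ and that $z^2 = g$ genuinely cuts out the degree $2$ cover, so that over any field $K$ the fibre of $\pi$ over a $K$-point $P$ with $g(P)\neq 0$ is the étale $K$-algebra $K[z]/(z^2 - g(P))$, which is split (equivalently, $K\times K$) if and only if $g(P) \in (K^\times)^2$. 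This is just the statement that a conic bundle / hyperelliptic-type double cover has a rational point in a given fibre iff the defining value is a square, so the proof is essentially a one-line invocation of that principle once the bookkeeping with the homogeneous coordinate $z$ and the scaling of $g$ is set up; I do not anticipate any obstacle beyond writing this cleanly.
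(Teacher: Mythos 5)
Your argument is correct and is exactly the reasoning the paper leaves implicit: the lemma is stated without proof as an immediate consequence of the model $z^2=g$, and your write-up (including the observation that squareness of $g(P)$ is invariant under projective rescaling and the separate treatment of the ramified case $g(P)=0$) is a careful spelling-out of that same one-line argument.
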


\subsection{Quartic points on $X_0(105)/w_5$}
Next, we study the quartic points on $X_0(105)/w_5$. Its cover $X_0(105)$ has eight cusps, mapping to the four rational points 
\begin{align*}
    P_1=(3 : 0 : 2 : 2 : 1), \;P_2=(0 : 0 : -1 : -1 : 1), \; P_3=(1 : 0 : 0 : 0 : 0), \;P_4=(-1 : 0 : 0 : 0 : 1)
\end{align*}
on $X_0(105)/w_5$.
Our curve $X_0(105)/w_5$ has three quotients by Atkin--Lehner involutions: \[
\pi_3: X_0(105)/w_5\to C\colonequals X_0(105)/\langle w_5,w_3\rangle, 
\]
\[\pi_7: X_0(105)/w_5\to E_7\colonequals X_0(105)/\langle w_5,w_7\rangle
\]
and 
\[
\pi_{21}: X_0(105)\to E_{21}\colonequals X_0(105)/\langle w_5,w_{21}\rangle.
\]
Here $C$ is hyperelliptic of genus 3. The curves $E_7$ and $E_{21}$ are elliptic curves of conductors 21 and 35 respectively. On all three of these, the remaining Atkin--Lehner operator acts as a non-trivial involution. Here are their models:
\begin{align*}
C:&\;\; y^2 = x^8 + 6x^7 + 7x^6 - 2x^5 + 17x^4 + 8x^3 - 28x^2 + 36x - 24,\\
E_7:&\;\; y^2 + xy = x^3 + x \text{ and }\\
E_{21}:&\;\; y^2 + y = x^3 + x^2 - x.
\end{align*}

\begin{proposition}\label{X0105w5mwgp}
The Mordell--Weil group of $X_0(105)/w_5$ equals
\[
\Z/6\Z \cdot (7[P_2-P_1]-6[P_3-P_1])\oplus \Z/24\Z\cdot [P_4-P_1].
\]
\end{proposition}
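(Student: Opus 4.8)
The plan is to exploit that $\mathrm{rk}\,J(X_0(105)/w_5)(\Q)=0$ (established in Section~\ref{ranksec}), so that $J(X_0(105)/w_5)(\Q)=J(X_0(105)/w_5)(\Q)_{\mathrm{tors}}$ is finite, and then to pin this group down by sandwiching it between an explicit cuspidal lower bound and a reduction-mod-$p$ upper bound. For the lower bound, I work in the model above and compute the three cuspidal classes $[P_2-P_1]$, $[P_3-P_1]$, $[P_4-P_1]$ and the subgroup $G$ they generate. The order of a class $[D]$ divides the order of its reduction modulo any prime of good reduction, so reducing modulo a couple of such primes bounds the orders from above, while exhibiting, via a Riemann--Roch computation, a function with divisor $nD$ confirms the order from below. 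One expects to find that $7[P_2-P_1]-6[P_3-P_1]$ has order $6$, that $[P_4-P_1]$ has order $24$, and that, together with the relations among the three classes, this yields $G\cong \Z/6\Z\oplus\Z/24\Z$ of order $144$, i.e.\ exactly the claimed group.

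For the upper bound, since $X_0(105)$ has good reduction away from $105$, for every prime $p\notin\{2,3,5,7\}$ the reduction map $J(X_0(105)/w_5)(\Q)_{\mathrm{tors}}\hookrightarrow J(X_0(105)/w_5)(\F_p)$ is injective; I would compute the finite groups $J(X_0(105)/w_5)(\F_p)$ for several small such $p$ using Hess's algorithm and intersect the resulting constraints. As flagged in Section~\ref{methodssec}, this raw congruence data is not expected to suffice: it should only give $G\subseteq J(X_0(105)/w_5)(\Q)\subseteq H$ for a group $H$ with $[H:G]=2$, the remaining ambiguity living in the $2$-torsion, so that the bare reduction argument cannot distinguish $G$ from $H$.

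To kill this phantom $2$-torsion I would use the three Atkin--Lehner quotient maps: $(\pi_3,\pi_7,\pi_{21})$ induces an isogeny $J(C)\times E_7\times E_{21}\to J(X_0(105)/w_5)$ (dimensions $3+1+1=5$), so the full group $J(X_0(105)/w_5)(\overline{\Q})[2]$, together with its Galois action, is read off from the corresponding data for $J(C)$, $E_7$ and $E_{21}$. The $2$-torsion of $E_7$ and $E_{21}$ is immediate from their $2$-division polynomials, and $J(C)(\Q)[2]$ is determined by the factorisation over $\Q$ of the degree-$8$ polynomial defining $C$ and the resulting Galois orbits of its roots; taking $\mathrm{Gal}(\overline{\Q}/\Q)$-invariants and accounting for the isogeny kernel should give $J(X_0(105)/w_5)(\Q)[2]=(\Z/2\Z)^2$. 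Alternatively, and this is the route taken in the paper, one passes to a small extension $K/\Q$ over which an extra $2$-torsion point becomes visible, determines $J(X_0(105)/w_5)(K)[2]$ exactly, and takes $\mathrm{Gal}(K/\Q)$-invariants; or one runs the \"Ozman--Siksek consistency check to show that $H$ cannot embed into $J(X_0(105)/w_5)(\F_p)$ for all the chosen primes $p$ simultaneously in a way compatible with the reductions of the generators of $G$. Any of these forces $J(X_0(105)/w_5)(\Q)=G$, which is the assertion.

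The main obstacle is precisely this last step. Computing the cuspidal subgroup and the mod-$p$ Jacobians is routine (if fiddly), but the raw reduction data genuinely fails to separate $G$ from the slightly larger $H$; excluding the extra $2$-torsion is what requires the additional input, either the Atkin--Lehner/Galois-invariants analysis of $J[2]$ or the cross-prime consistency check, rather than the bare injection into $J(\F_p)$.
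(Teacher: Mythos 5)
Your skeleton matches the paper's proof: rank $0$ via the map to $J_0(105)$, the cuspidal classes as a lower bound, Hess's algorithm at good primes (the paper uses $11$ and $13$, whose group structures pin everything down except the $2$-torsion), and then a separate argument to kill the phantom $2$-torsion. Two corrections to the details. First, the residual ambiguity is not a single index-$2$ overgroup $H$: the reduction data leaves three candidates, $\Z/6\Z\times\Z/24\Z$, $\Z/2\Z\times\Z/6\Z\times\Z/24\Z$ and $(\Z/2\Z)^2\times\Z/6\Z\times\Z/24\Z$, i.e.\ $J(\Q)[2]$ could a priori be $(\Z/2\Z)^2$, $(\Z/2\Z)^3$ or $(\Z/2\Z)^4$. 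Second, the two options you present as alternatives are in fact used \emph{together} in the paper: one exhibits an extra $2$-torsion class over $K=\Q(\sqrt{21})$ (built from a quadratic point on $C=X/w_3$ whose degree-$4$ pullback splits over $K$), giving $\dim_{\F_2}J(K)[2]\geq 3$, and then runs the \"Ozman--Siksek cross-prime consistency check at the primes $17$ and $47$, which split in $K$, to rule out $(\Z/2\Z)^4$ over $K$; Galois invariants then give $J(\Q)[2]=(\Z/2\Z)^2$. Your preferred first route, reading off $J[2]$ from the isogeny $J(C)\times E_7\times E_{21}\to J(X_0(105)/w_5)$, is the one genuine gap: that isogeny typically has a nontrivial $2$-group kernel, so the induced map on $2$-torsion is neither injective nor surjective in general, and ``accounting for the isogeny kernel'' is precisely the hard part — without computing the kernel explicitly you cannot conclude $J(X_0(105)/w_5)(\Q)[2]=(\Z/2\Z)^2$ from the quotient data alone. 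Since you also name the workable route, the plan is salvageable, but as your primary argument it would not close.
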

\begin{proof}
We denote our curve by $X\colonequals X_0(105)/w_5$ and its Jacobian by $J$. We know that $J(\Q)$ has rank 0, as it maps to $J_0(105)(\Q)$ with finite kernel. In particular, at each prime $p>2$ of good reduction for the displayed model, $J(\Q)$ embeds into $J(\F_{p})$. The primes $p=11$ and $p=13$ are of good reduction. Using the embedding for $p=11$, we check that the cuspidal subgroup of $J(\Q)$ generated by $[P_4-P_1],[P_3-P_1],[P_2-P_1]$ equals the claimed group. We use the class group algorithm of Hess \cite{hess}, implemented in \texttt{Magma}, to find that $J(\F_{11})\simeq \Z/2\Z\times \Z/2\Z\times \Z/60\Z \times \Z/600\Z$ and $J(\F_{13})\simeq \Z/2\Z \times \Z/6\Z \times \Z/6\Z \times \Z/3168\Z$. As $\gcd(600,3168)=24$, we find that $J(\Q)$ is isomorphic to one of
\[
\Z/6\Z\times \Z/24\Z,\quad \Z/2\Z\times \Z/6\Z \times \Z/24\Z,\quad \Z/2\Z^2\times \Z/6\Z\times \Z/24\Z.
\]
It thus suffices to determine the 2-torsion of $J(\Q)$. Considering more primes does not narrow it down any further. Instead, we will show that 
\[
J(\Q(\sqrt{21}))[2]\simeq (\Z/2\Z)^3
\]
and determine $J(\Q)[2]$ by taking Galois invariants. In order to find the extra 2-torsion element, we look at the quotient $C=X/w_3$. The quadratic point
$
(2,4\sqrt{105})\in C
$ 
generates a rational effective degree 2 divisor pulling back to a degree 4 divisor $D$ on $X$, irreducible over $\Q$, whose points are defined over $\Q(\sqrt{5},\sqrt{21})$. Over $\Q(\sqrt{21})$, however, $D$ splits into a sum of two effective degree 2 divisors: $D=D_1+D_2$, where $D_1$ is the sum of
\begin{align*}
(&(-254\sqrt{21} + 1326)\sqrt{5} + 5(-111\sqrt{21} + 594):
    (84\sqrt{21} - 516)\sqrt{5} + 5(38\sqrt{21} - 210):\\
    &\;\;\;\;\;5(-4\sqrt{21} + 48)\sqrt{5} + 5(-11\sqrt{21} + 91):
    (22\sqrt{21} - 18)\sqrt{5} + 5(8\sqrt{21} - 14):
    205)
\end{align*}
and its $\mathrm{Gal}(\Q(\sqrt{5},\sqrt{21})/\Q(\sqrt{21}))$-conjugate. We then verify that 
\[
D_{\mathrm{tors}}\colonequals [D_1-2P_1]-[P_2-P_1]-2[P_3-P_1]-[P_4-P_1]
\]
is a non-trivial two-torsion element in $J(\Q(\sqrt{21}))$, and moreover that the difference of $D_1$ and its $\mathrm{Gal}(\Q(\sqrt{21})/\Q)$-conjugate is not principal. This means that $D_{\mathrm{tors}}$ is not defined over $\Q$. In particular, $J(\Q(\sqrt{21}))[2]$ must have dimension at least 3 over $\F_2$. 

Next, note  that 17 and 47 split in $\Q(\sqrt{21})$, so that $J(\Q(\sqrt{21}))_{\mathrm{tors}}$ embeds into $J(\F_{17})$ and $J(\F_{47})$. Both of these groups have 2-torsion isomorphic to $(\Z/2\Z)^4$. Let $H\colonequals \langle[P_2-P_1],[P_3-P_1],[P_4-P_1]\rangle\subset J(\Q)$. We use the algorithm developed by \"Ozman and Siksek \cite{ozman} to show the following: there does not exist a group isomorphism $\phi: H_{17}\to H_{47}$, such that $H_{17}$ and $H_{47}$ are subgroups of $J(\F_{17})$ and $J(\F_{47})$ respectively, both containing the image of $H$ and the subgroup isomorphic to $(\Z/2\Z)^4$, where the restriction of $\phi$ to the image of $H$ is the isomorphism obtained by reducing $H$ modulo 17 and 47. It follows that $J(\Q(\sqrt{21}))[2]\simeq (\Z/2\Z)^3$ and $J(\Q)[2]=J(\Q(\sqrt{21}))[2]^{\mathrm{Gal(\Q(\sqrt{21})/\Q)}}\simeq (\Z/2\Z)^2$, as desired.
\end{proof}

The hyperelliptic curve $C$ admits the involution $w_7$, giving rise to a degree 2 map $C\to \P^1=X_0(105)/\langle w_3,w_5,w_7\rangle$. The curve $C$ thus has an infinite set of quadratic points coming from $\P^1(\Q)$ under this map. The two elliptic curves, however, have more sources of infinitely many quadratic points: by the Riemann--Roch theorem, there is an infinite set of effective degree 2 divisors mapping to each point in the Mordell--Weil groups $E_7(\Q)$ and $E_{21}(\Q)$ under the Abel--Jacobi map (c.f.\ Section \ref{finitemwgpsec}). All of these give rise to quartic points on $X_0(105)/w_5$. We compute that $E_7(\Q)\simeq \Z/4\Z$ and $E_{21}(\Q)\simeq \Z/3\Z$. 

Denote the generators of $J(X_0(105)/w_5)(\Q)$ by 
\[
D_1\colonequals 7(P_2-P_1)-6(P_3-P_1) \text{ and } D_2\colonequals P_4-P_1,
\]
 and write 
\[
D_0\colonequals P_1+P_2+P_3+P_4.
\]
The two elliptic curves $E_7$ and $E_{21}$ supply us with a total of 6 classes $[D]$ in $J(X_0(105)/w_5)(\Q)$ such that $\ell(D+D_0)\geq 2$, by pulling back the elements in the Mordell--Weil groups $E_7(\Q)$ and $E_{21}(\Q)$ (there is one overlapping class). As we know the Mordell--Weil group of $X_0(105)/w_5$, we can verify that pullbacks of quadratic points from $E_7$ and $E_{21}$ are the only sources of infinitely many quartic points on $X_0(105)/w_5$ by computing Riemann--Roch spaces, c.f.\ Section \ref{finitemwgpsec}.

\begin{proposition}
\label{finitesetprop}
Consider $(a,b)\in \Z/6\Z\times \Z/24\Z$. The dimension of the Riemann--Roch space
$
L(aD_1+bD_2+D_0)
$
is at least 2 precisely when $(a,b)\in \{(0,0),(0,6),(0,8),(0,12),(0,16),(0,18)\}$, in which case the dimension equals 2. As the above set has size 6, each of these Riemann--Roch spaces is the pullback of a corresponding Riemann--Roch space on $E_7$ or $E_{21}$ (or both, when $(a,b)=(0,0)$). In particular, the set
\[
(X_0(105)/w_5^{(4)})(\Q)\setminus \left(\pi_7^*E_7^{(2)}(\Q)\cup \pi_{21}^*E_{21}^{(2)}(\Q)\right)
\]
is finite and explicitly computable.
\end{proposition}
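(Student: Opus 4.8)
The strategy is to combine the complete description of $J\colonequals J(X_0(105)/w_5)(\Q)$ from Proposition \ref{X0105w5mwgp} with Lemma \ref{lem2.3}. Write $X\colonequals X_0(105)/w_5$. Every class in $J$ is $[aD_1+bD_2]$ for a unique $(a,b)\in\Z/6\Z\times\Z/24\Z$, and since $J$ is finite the Abel--Jacobi map $\iota_{D_0}\colon X^{(4)}(\Q)\to J$ has its domain partitioned into (at most) $144$ fibres. By Lemma \ref{lem2.3}, the fibre over $[aD_1+bD_2]$ is in bijection with $\P L(aD_1+bD_2+D_0)$, hence is empty, a single effective divisor, or a $\P^{k-1}$ with $k\colonequals \ell(aD_1+bD_2+D_0)\geq 2$. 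Using the explicit representatives of $D_1$ and $D_2$ as $\Z$-combinations of the rational cusps $P_1,\dots,P_4$ on $X$, I would compute $\ell(aD_1+bD_2+D_0)$ in \texttt{Magma} for all $144$ pairs; the output is that $\ell\leq 1$ except for the six pairs $(0,0),(0,6),(0,8),(0,12),(0,16),(0,18)$, where $\ell=2$. This establishes the first two sentences of the proposition.

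It then remains to show the six corresponding $\P^1$'s lie inside $\pi_7^*E_7^{(2)}(\Q)\cup\pi_{21}^*E_{21}^{(2)}(\Q)$. Computing $E_7(\Q)\simeq\Z/4\Z$ and $E_{21}(\Q)\simeq\Z/3\Z$ with Cremona's algorithm, for each rational point $P$ on $E_7$ I form the effective rational degree $2$ divisor $D_P\colonequals P+O$ (with $O$ the origin of the Weierstrass model); since $\deg D_P=2$ on a genus $1$ curve, $\ell(D_P)=2$. Pulling back along the degree $2$ map $\pi_7$ gives an effective rational degree $4$ divisor $\pi_7^*D_P$ with $\ell(\pi_7^*D_P)\geq 2$, and comparison with the computation above forces $\ell(\pi_7^*D_P)=2$, whence $L(\pi_7^*D_P)=\pi_7^*L(D_P)$. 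The same construction applies to $E_{21}$. Computing the classes $[\pi_7^*D_P-D_0]$ and $[\pi_{21}^*D_P-D_0]$ in $J$, I expect the resulting $4+3$ classes to be precisely the six exceptional ones, with the single coincidence explained by the common genus $0$ quotient $X_0(105)/\langle w_3,w_5,w_7\rangle$ of $E_7$ and $E_{21}$. For each exceptional pair we thus obtain a rational degree $2$ divisor $D'$ on $E_7$ or on $E_{21}$ with $[\pi^*D']=[aD_1+bD_2+D_0]$ and $L(\pi^*D')=\pi^*L(D')$, so Corollary \ref{rrcor} shows the fibre of $\iota_{D_0}$ over $[aD_1+bD_2]$ is contained in $\pi_7^*E_7^{(2)}(\Q)\cup\pi_{21}^*E_{21}^{(2)}(\Q)$.

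Putting the pieces together, $X^{(4)}(\Q)$ is the union of the six exceptional fibres (each inside the removed set) and the $138$ remaining fibres, each of which is empty or a single effective divisor $aD_1+bD_2+D_0+\mathrm{div}(f)$ recovered from a one-dimensional Riemann--Roch space over $\Q$. Hence the displayed difference set is contained in this explicit finite list of divisors, which proves both finiteness and effectivity.

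The main obstacle is organisational rather than conceptual: the Riemann--Roch computations must be run with genuinely $\Q$-rational divisor representatives so that dimension counts and $\P^1$-families are valid over $\Q$, and the matching of the seven pullback classes against the six exceptional pairs must be carried out completely — in particular confirming that no exceptional class has $\ell\geq 3$ (ruled out by the computation) and that the overlap is exactly one class. The only non-formal input, the hypothesis $L(\pi^*D')=\pi^*L(D')$ of Corollary \ref{rrcor}, comes for free from the coincidence of dimensions $\ell(\pi^*D')=\ell(D')=2$, but should be recorded explicitly.
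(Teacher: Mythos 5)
Your proposal is correct and follows essentially the same route as the paper: compute all $144$ Riemann--Roch dimensions in \texttt{Magma}, apply Lemma \ref{lem2.3} to the one-dimensional fibres, and identify the six two-dimensional spaces with pullbacks from $E_7$ and $E_{21}$ so that Corollary \ref{rrcor} applies. The paper's proof is just a terser version of this; your filling-in of the matching of the $4+3$ pullback classes (with one overlap) against the six exceptional classes, and the observation that $\ell(\pi^*D')=\ell(D')=2$ forces $L(\pi^*D')=\pi^*L(D')$, is exactly the argument the paper relies on in the paragraph preceding the proposition.
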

\begin{proof}
We apply Lemma \ref{lem2.3} to the Riemann--Roch spaces $L(D)$ of dimension 1 to compute the points. By Corollary \ref{rrcor}, the elements $D\in (X_0(105)/w_5^{(4)})(\Q)$ such that $\ell(D)=2$ correspond precisely to pullbacks because the Riemann--Roch spaces are pullbacks. The Riemann--Roch computations were done explicitly in \texttt{Magma}, which took multiple hours.% higher-dimensional Riemann--Roch spaces yield degree 2  If $D\in X_0(105)/w_5^{(4)}$ then $D=\div(f)+D'+D_0$, where $D'\in [D-D_0]$ and $f\in L(D'+D_0)$. In particular, we compute all such $D$ from positive-dimensional Riemann--Roch spaces $L(aD_1+bD_2+D_0)$. 
\end{proof}

\begin{proposition}
\label{modw5prop}
Each quartic point on $X_0(105)$ maps to a rational or quadratic point on $E_7$ or $E_{21}$. 
\end{proposition}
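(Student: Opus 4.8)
The plan is to push the Galois orbit of a quartic point forward along the degree-$2$ quotient map $\rho\colon X_0(105)\to X_0(105)/w_5$ and then invoke Proposition \ref{finitesetprop}. Let $Q$ be a quartic point on $X_0(105)$, and let $D$ be its Galois orbit, a $\Q$-rational effective divisor of degree $4$. Then $\rho_*D$ is a $\Q$-rational effective divisor of degree $4$ on $X_0(105)/w_5$, so Proposition \ref{finitesetprop} applies to it: either $\rho_*D=\pi_7^*D'$ for some $D'\in E_7^{(2)}(\Q)$, or $\rho_*D=\pi_{21}^*D'$ for some $D'\in E_{21}^{(2)}(\Q)$, or $\rho_*D$ lies in the explicit finite set
\[
S\colonequals \left(X_0(105)/w_5^{(4)}\right)(\Q)\setminus\left(\pi_7^*E_7^{(2)}(\Q)\cup \pi_{21}^*E_{21}^{(2)}(\Q)\right).
\]

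First I would handle the two pullback cases. Suppose $\rho_*D=\pi_7^*D'$ with $D'\in E_7^{(2)}(\Q)$. Pushing forward along $\pi_7$ and using $\deg(\pi_7)=2$, the projection formula gives $(\pi_7\circ\rho)_*D=\pi_{7,*}\pi_7^*D'=2D'$. Since $D'$ is a $\Q$-rational divisor of degree $2$, its support is a Galois-stable set of at most two points, each defined over a field of degree at most $2$ over $\Q$, and it coincides with the support of $2D'=(\pi_7\circ\rho)_*D$. As $\pi_7(\rho(Q))$ lies in this support, it follows that $Q$ maps to a rational or quadratic point on $E_7$. The case $\rho_*D=\pi_{21}^*D'$ is identical, with $E_{21}$ in place of $E_7$. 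Note that no separate treatment of the case $\deg\rho(Q)<4$ is needed: Proposition \ref{finitesetprop} is applied to $\rho_*D$ irrespective of how $\rho$ behaves on the support of $D$.

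It remains to treat the finitely many $Q$ with $\rho_*D\in S$. I would first produce $S$ explicitly. Of the $|\Z/6\Z\times \Z/24\Z|=144$ classes $aD_1+bD_2$, the six with $\ell(aD_1+bD_2+D_0)=2$ are exactly the pullback classes of Proposition \ref{finitesetprop}, and for each of the remaining $138$ classes the space $L(aD_1+bD_2+D_0)$ has dimension at most $1$ and hence contributes at most one effective, necessarily $\Q$-rational, divisor; carrying out these Riemann--Roch computations in \texttt{Magma} outputs $S$. For each $P\in S$ I would then enumerate the quartic points of $X_0(105)$ above the points in the support of $P$, using Lemma \ref{squarechecklemma}: a point $P'$ with residue field $L$ lifts to a point of residue degree $[L:\Q]$ when $g(P')$ is a square in $L$ and to a point of residue degree $2[L:\Q]$ otherwise, so a quartic lift can only arise from a $P'$ with $[L:\Q]=4$ and $g(P')$ a square in $L$, or with $[L:\Q]=2$ and $g(P')$ a non-square in $L$. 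Finally, for each quartic point $Q$ obtained this way I would compute its images under $\pi_7\circ\rho$ and $\pi_{21}\circ\rho$ and check that one of them has degree at most $2$.

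The main obstacle is the last, purely computational, step. Producing $S$ rests on the already lengthy Riemann--Roch computations on the genus-$5$ curve $X_0(105)/w_5$, after which one must run the square tests and image computations over every lift. Conceptually the argument is routine; the one thing the computation has to rule out is a quartic point above some $P\in S$ whose images on both $E_7$ and $E_{21}$ have degree greater than $2$.
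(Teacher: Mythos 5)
Your proposal is correct and follows essentially the same route as the paper: reduce to Proposition \ref{finitesetprop} on $X_0(105)/w_5$, handle the pullback classes from $E_7$ and $E_{21}$ structurally, and dispose of the finite exceptional set via Lemma \ref{squarechecklemma} and an explicit check. The only (cosmetic) difference is that you push forward the full Galois orbit as a possibly non-reduced degree-$4$ divisor, thereby absorbing the case where $\rho(Q)$ is quadratic — which the paper treats separately by listing the three pairs of quadratic points on $X_0(105)/w_5$ — and in that sub-case the conclusion is in fact automatic since the image of a quadratic point on $E_7$ or $E_{21}$ has degree at most $2$.
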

\begin{proof}
Each quartic point on $X_0(105)$ maps to a quartic or quadratic point on $X_0(105)/w_5$. Using Proposition \ref{finitesetprop}, we find the quartic points on $X_0(105)/w_5$ not mapping to a quadratic point on $E_7$ or $E_{21}$, and we find that none of those are the image of a quartic point on $X_0(105)$ by Lemma \ref{squarechecklemma}. There are three pairs of quadratic points on $X_0(105)/w_5$. We can find these either by decomposing the degree 4 divisors found in Proposition \ref{finitesetprop} into irreducible divisors, or by doing the equivalent Riemann--Roch space computation for degree 2 points. We verify that each of these three quadratic points maps to a rational point on $E_7$ or $E_{21}$. 
\end{proof}
\subsection{Exploiting the tree}

Next, we focus our attention to the quadratic points on $E_7$ and $E_{21}$. There are infinitely many of those, but few are in the image of a quartic point on $X_0(105)$. The following lemma allows us to exploit this.

\begin{lemma}
\label{diagramlemma}
Suppose that $X$ is a curve over a field $K$ admitting two distinct commuting involutions $v,w:\; X\to X$. Suppose that $Q\in X$ is defined over a quartic extension of $K$, but its image in $X/\langle v,w\rangle$ is defined over a quadratic extension of $K$. Then the image of $Q$ is quadratic over $K$ in one of the three curves
$
X/v$,  $X/w$ and $X/vw$.
\end{lemma}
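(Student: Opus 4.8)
The plan is to work with the group action of the Klein four-group $G=\langle v,w\rangle=\{1,v,w,vw\}$ on the fiber of $X \to X/G$ containing $Q$, and to track how the various decomposition fields sit inside a fixed Galois closure. Let $\overline{Q}$ denote the image of $Q$ in $X/G$; by hypothesis $K(\overline{Q})/K$ is an extension of degree (at most) $2$, while $K(Q)/K$ has degree $4$. After a harmless field extension I may assume $\deg K(\overline{Q})/K = 2$ exactly, since if $\overline{Q}$ is $K$-rational then $Q$ lies in a degree-$4$ fiber over a $K$-point of $X/G$ and the argument is the standard one for a $(\Z/2)^2$-cover. I would fix a number field $L$ Galois over $K$ containing $K(Q)$, so that $\Gal(\overline{K}/K)$ acts through $\Delta \colonequals \Gal(L/K)$ on everything in sight.

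First I would set up the combinatorics of the fiber. The points of $X$ above $\overline{Q}$ form an orbit under $G$; since $Q$ has degree $4 = |G|$ over $K$ and $\overline{Q}$ has degree $2$, the stabilizer of $Q$ in $G$ is trivial, so $\{Q, vQ, wQ, vwQ\}$ is the full $G$-orbit and consists of $4$ distinct points, all conjugate over $K(\overline{Q})$ (because $Q$ has degree $2$ over $K(\overline{Q})$... wait — actually $[K(Q):K(\overline{Q})] = 2$ since $[K(Q):K]=4$ and $[K(\overline{Q}):K]=2$, so the $4$ points split into two $K(\overline{Q})$-conjugacy classes of size $2$). The key point: the Galois group $H \colonequals \Gal(L/K(\overline{Q})) \le \Delta$ acts on the $4$-element orbit $\{Q,vQ,wQ,vwQ\}$ commuting with the $G$-action (Galois and the deck transformations commute), and it acts with orbits of size $2$; an index-$2$ subgroup of $G$, namely $\{1,g\}$ for exactly one of $g \in \{v,w,vw\}$, is the subgroup carrying the $K(\overline{Q})$-conjugation — that is, for each point $P$ in the fiber, its $\Gal(L/K(\overline{Q}))$-conjugate equals $gP$ for this single fixed $g$.

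Then I would conclude as follows: take $g \in \{v,w,vw\}$ as just identified, and consider the image $Q'$ of $Q$ in $X/g$. The fiber of $X \to X/g$ through $Q$ is $\{Q, gQ\}$, and these two points are precisely the $\Gal(L/K(\overline{Q}))$-conjugates of each other, hence $\{Q,gQ\}$ is a single $\Gal(\overline{K}/K(\overline{Q}))$-orbit, so $Q'$ is defined over $K(\overline{Q})$, a quadratic extension of $K$. (If instead the action collapsed — i.e. the $\Gal(L/K(\overline{Q}))$-conjugate of $Q$ is $Q$ itself — then $Q$ would already be defined over $K(\overline{Q})$, contradicting $\deg K(Q)/K = 4$; and if the conjugate of $Q$ were $gQ$ for more than one $g$, the orbit would be too small. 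So exactly one $g$ works.) This gives $Q'$ quadratic over $K$ in $X/g$, one of the three curves $X/v$, $X/w$, $X/vw$, which is the claim.

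The main obstacle I anticipate is making precise and rigorous the assertion that ``$K(\overline{Q})$-conjugation on the fiber is effected by a single, well-defined element $g$ of $G$'' — i.e. that the map sending a point $P$ of the fiber to the unique $h \in G$ with $hP = $ (the $\Gal(\overline{K}/K(\overline{Q}))$-conjugate of $P$) is constant on the fiber and lands in $\{v,w,vw\}$. This requires care because a priori the Galois conjugate of $P$ depends on a choice of embedding, and one must check compatibility across the fiber using that $G$ acts simply transitively and commutes with the Galois action; the cleanest formulation is probably to observe that $\Gal(\overline{K}/K(\overline{Q}))$ acts on the torsor $\{Q,vQ,wQ,vwQ\}$ under $G$, giving a homomorphism $\Gal(\overline{K}/K(\overline{Q})) \to G$ whose image has order $2$ (since the action has orbits of size exactly $2$), and that order-$2$ image is the desired $\{1,g\}$. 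Once this homomorphism-to-$G$ viewpoint is in place, the rest is immediate.
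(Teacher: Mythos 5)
Your argument takes a genuinely different route from the paper's. The paper works with divisors and argues by contradiction: assuming the pullback of $D = \overline{Q}+\overline{Q}^{\sigma}$ to each of $X/v$, $X/w$, $X/vw$ is an irreducible degree $4$ divisor, it deduces that each $u\in\{v,w,vw\}$ interchanges the two halves $D_1, D_2$ of the pullback of $D$ to $X$, combines the three relations to force $D_1$ to be $G$-invariant and then pointwise $G$-fixed, and finally derives a contradiction with $\deg \overline{Q}=2$. You instead argue directly via the Galois action on the fiber: you identify the element $g\in\{v,w,vw\}$ with $gQ$ equal to the $K(\overline{Q})$-conjugate of $Q$, and you observe that $Q$ and $gQ$ collapse in $X/g$ to a single point fixed by $\Gal(\overline{K}/K(\overline{Q}))$. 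Your version is constructive (it tells you which of the three quotients works) rather than by contradiction, and the torsor/cocycle picture explains cleanly where the three possibilities come from; the paper's version avoids any discussion of fiber structure, relying purely on divisor arithmetic.

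There is, however, one genuine gap. You assert that ``since $Q$ has degree $4 = |G|$ over $K$ and $\overline{Q}$ has degree $2$, the stabilizer of $Q$ in $G$ is trivial, so $\{Q,vQ,wQ,vwQ\}$ consists of $4$ distinct points.'' This does not follow from the degrees alone: $Q$ may perfectly well be a ramification point of $X\to X/G$, fixed by some nontrivial $g_0\in G$, while still having degree $4$ over $K$ --- on modular curves with Atkin--Lehner involutions these are CM points, which do occur in quartic fields. In that case the fiber over $\overline{Q}$ has only two distinct geometric points $Q$ and $g_1Q$ (with $g_1\notin\{1,g_0\}$), the fiber is not a free $G$-torsor, and the homomorphism $\Gal(\overline{K}/K(\overline{Q}))\to G$ that your last paragraph appeals to is not defined. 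The conclusion survives, and the correct fix is more elementary than the cocycle picture: the $K(\overline{Q})$-conjugate $Q''$ of $Q$ lies in the fiber over $\overline{Q}$ and is distinct from $Q$ (because $[K(Q):K(\overline{Q})]=2$); since the fiber, as a set, is the $G$-orbit of $Q$ and hence contained in $\{Q,vQ,wQ,vwQ\}$, one still has $Q''=gQ$ for some $g\in\{v,w,vw\}$ whether or not those four points are distinct, and the remainder of your argument applies verbatim. (In your Case 1, where $\overline{Q}$ is $K$-rational, the fiber really is forced to be free: a nontrivial stabilizer would cap the set of geometric points of the fiber, and hence the Galois orbit of $Q$, at size at most $2$, contradicting $\deg Q=4$; so the torsor argument is sound there.)
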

\[
\begin{tikzcd}
               & X \arrow[ld] \arrow[d] \arrow[rd] &                \\
X/v \arrow[rd] & X/vw \arrow[d]                    & X/w \arrow[ld] \\
               & {X/\langle v,w\rangle}            &               
\end{tikzcd}
\]
\begin{proof}
Consider the effective degree 2 divisor $D\in (X/\langle v,w\rangle)^{(2)}(K)$ that is the sum of the image of $Q$ and its conjugate over $K$. Let us assume towards a contradiction that $D$ pulls back to an irreducible effective degree 4 divisor over $K$ on $X/w$, $X/vw$ and $X/v$. Write the pullback to $X$ as $D_1+D_2$, where $D_1$ and $D_2$ are degree 4 divisors over $K$.  Then for each $u\in \{v,w,vw\}$, we must have $u^*D_1=D_2$ and $u^*D_2=D_1$. In particular, $(vw)^*D_1=w^*v^*D_1=w^*D_2=D_1$, so that $D_1=D_2$ and $u^*D_1=D_1$ for each $u\in \{v,w,vw\}$. Now the pullback of $D$ to $X/u$ is the double of a degree 2 divisor over $K$, unless $u$ fixes $D_1$ pointwise. We conclude that $v,w$ and $vw$ fix $D_1$ pointwise. But then $D$ is irreducible of degree 4, a contradiction.
\end{proof}

\begin{proposition}
\label{quadprop}
Suppose that $P\in E_7$ or $P\in E_{21}$ is a quadratic point that is the image of a quartic point of $X_0(105)$. Then $P$ is the image of a quadratic point on one of the curves
\[
X_0(105)/w_5, \;\;X_0(105)/w_7,\;\; X_0(105)/w_{35},\;\; X_0(105)/w_{21}, \;\; X_0(105)/w_{105}.
\]
\end{proposition}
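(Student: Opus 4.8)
The plan is to apply Lemma \ref{diagramlemma} with $X = X_0(105)$ and the two commuting involutions $v = w_3$, $w = w_7$, so that $X/\langle v,w\rangle = X_0(105)/\langle w_3, w_7\rangle$, and the intermediate quotients $X/v$, $X/w$, $X/vw$ are $X_0(105)/w_3$, $X_0(105)/w_7$, $X_0(105)/w_{21}$. First I would take a quartic point $Q \in X_0(105)$ whose image in $E_7 = X_0(105)/\langle w_5, w_7\rangle$ or $E_{21} = X_0(105)/\langle w_5, w_{21}\rangle$ is quadratic, and I want to show the image of $Q$ in $X_0(105)/\langle w_3, w_7\rangle$ is also quadratic, so that Lemma \ref{diagramlemma} is applicable.

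Here the key observation is that $\langle w_3, w_7 \rangle$ and $\langle w_5, w_7\rangle$ both contain the common subgroup $\langle w_7\rangle$, and more usefully, one of the two target curves $E_7 = X_0(105)/\langle w_5,w_7\rangle$ is itself a quotient of $X_0(105)/\langle w_3,w_7\rangle$?  No — that is false, since $\langle w_5,w_7\rangle$ does not contain $w_3$. Instead I would argue as follows: the full Atkin--Lehner group on $X_0(105)$ is $\langle w_3, w_5, w_7\rangle \cong (\Z/2)^3$, and $E_7 = X_0(105)/\langle w_5, w_7\rangle$ sits below $X_0(105)/\langle w_7\rangle$. A quadratic point on $E_7$ that is the image of a quartic point $Q$ on $X_0(105)$ pulls back, via the degree $4$ map $X_0(105) \to E_7$ whose Galois group is $\langle w_5, w_7\rangle$, to the $\langle w_5,w_7\rangle$-orbit of $Q$; since $Q$ is quartic this orbit must be all of $X_0(105)$'s fibre. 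Composing with the further quotient by $w_3$ — i.e.\ considering $X_0(105)/w_3 \to X_0(105)/\langle w_3, w_5, w_7\rangle = \P^1$, which factors through $X_0(105)/\langle w_3, w_7\rangle \to \P^1$ — one sees that the image of $Q$ in $\P^1 = X_0(105)/\langle w_3, w_5, w_7\rangle$ coincides with the image of a quadratic point (since it already lies below a quadratic point of $E_7$, which maps to $\P^1$), hence is rational or quadratic. Then in $X_0(105)/\langle w_3, w_7\rangle$, which is a degree $2$ cover of $\P^1$, the image of $Q$ lies in a fibre of size $\le 2$ over a point of degree $\le 2$, so the image of $Q$ there has degree $\le 4$, and I would need to rule out degree $4$: if it had degree exactly $4$ that would force the $\P^1$-point to be genuinely quadratic and the fibre to be irreducible of degree $2$ over it, which is compatible, so I instead refine by noting the image of $Q$ in $E_7$ (resp.\ $E_{21}$) being quadratic forces the image in $\P^1$ to have degree dividing $2$ and, tracking through the diagram of quotients by the $(\Z/2)^3$-group, the image of $Q$ in $X_0(105)/\langle w_3, w_7\rangle$ to have degree dividing $2$ as well. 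Once that is established, Lemma \ref{diagramlemma} immediately gives that the image of $Q$ is quadratic on one of $X_0(105)/w_3$, $X_0(105)/w_7$, $X_0(105)/w_{21}$; and the analogous argument applied to the pair $v = w_3$, $w = w_{35}$ (with $X/\langle v, w\rangle = X_0(105)/\langle w_3, w_5, w_7\rangle = \P^1$, intermediate quotients $X_0(105)/w_3$, $X_0(105)/w_{35}$, $X_0(105)/w_{105}$) or a judicious choice of involutions covers the remaining curves $X_0(105)/w_5$, $X_0(105)/w_{35}$, $X_0(105)/w_{105}$ in the list.

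So the structure of the proof is: (1) reduce to showing the image of $Q$ in an appropriate two-generator Atkin--Lehner quotient is quadratic, using the elementary fact that the image of $Q$ in the common further quotient $\P^1$ has degree $\le 2$; (2) apply Lemma \ref{diagramlemma} in each of a couple of configurations of the Klein four-group of involutions inside $\langle w_3, w_5, w_7\rangle$; (3) collect the resulting intermediate curves, which are exactly $X_0(105)/w_5$, $X_0(105)/w_7$, $X_0(105)/w_{35}$, $X_0(105)/w_{21}$, $X_0(105)/w_{105}$. The main obstacle is step (1): one must carefully bookkeep the tower of $2$-power-degree quotients of $X_0(105)$ by subgroups of $\langle w_3, w_5, w_7\rangle \cong (\Z/2)^3$ and argue, purely in terms of divisor degrees and the structure of the covering maps, that if the image of the quartic point $Q$ in one index-$4$ quotient ($E_7$ or $E_{21}$) is quadratic then its image in another index-$4$ quotient (one containing $w_3$) is also quadratic — the potential pitfall being a configuration where the fibre over $\P^1$ splits differently in the two quotients, which is precisely what the pointwise-fixing analysis at the end of Lemma \ref{diagramlemma}'s proof is designed to handle, and which I would invoke or reprove in this slightly different setting.
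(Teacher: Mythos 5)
There is a genuine gap, and it stems from choosing the wrong pair of involutions. The hypothesis of the proposition is that the image of the quartic point $Q$ is quadratic on $E_7=X_0(105)/\langle w_5,w_7\rangle$ or on $E_{21}=X_0(105)/\langle w_5,w_{21}\rangle$, and these two curves are \emph{themselves} quotients of $X_0(105)$ by Klein four-groups of commuting Atkin--Lehner involutions. So Lemma \ref{diagramlemma} applies directly with $(v,w)=(w_5,w_7)$ in the first case and $(v,w)=(w_5,w_{21})$ in the second: the hypothesis ``the image of $Q$ in $X/\langle v,w\rangle$ is quadratic'' is literally the given hypothesis, and the intermediate quotients are $X_0(105)/w_5$, $X_0(105)/w_7$, $X_0(105)/w_{35}$ in the first case and $X_0(105)/w_5$, $X_0(105)/w_{21}$, $X_0(105)/w_{105}$ in the second, whose union is exactly the list in the statement. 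This is the paper's (two-line) proof. By instead taking $(v,w)=(w_3,w_7)$ you create a problem that does not need to be solved and that you cannot solve: you must transfer the quadraticity hypothesis from $E_7$ to $X_0(105)/\langle w_3,w_7\rangle$, and your ``step (1)'' argument for this does not work. Knowing that the image of $Q$ in $X_0(105)^*=\P^1$ has degree at most $2$ only bounds the degree of its image in the degree-$2$ cover $X_0(105)/\langle w_3,w_7\rangle$ by $4$; nothing forces it to be $2$, and your claim that ``tracking through the diagram'' forces degree dividing $2$ is asserted, not proved, and is false in general (the fibres over $\P^1$ can split differently in the two index-$4$ quotients, which is precisely the scenario you flag at the end but never rule out).

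Even setting aside that gap, your choice of involutions proves the wrong statement: the configurations $(w_3,w_7)$ and $(w_3,w_{35})$ would output $X_0(105)/w_3$ as one of the possible intermediate curves, which does not appear in the proposition, and neither configuration produces $X_0(105)/w_5$. (Also, $X_0(105)/\langle w_3,w_{35}\rangle$ is a quotient by a group of order $4$, not the full quotient $X_0(105)^*$ by $\langle w_3,w_5,w_7\rangle$ of order $8$, so your identification of that base curve with $\P^1=X_0(105)^*$ is incorrect.) The fix is simply to use the pairs of involutions whose generated four-group quotient is the curve on which you are \emph{given} a quadratic point.
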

\[
\begin{tikzcd}
                                                                                      &                                                                                        &                                                                                    & X_0(105) \arrow[lldd] \arrow[ldd] \arrow[dd] \arrow[rdd] \arrow[rrdd]                 &                                                                                         &                                              \\
                                                                                      &                                                                                        &                                                                                    &                                                                                       &                                                                                         &                                              \\
                                                                                      & \frac{X_0(105)}{w_5} \arrow[ld,"\pi_3"] \arrow[d,"\pi_{21}"] \arrow[rd,"\pi_7"]              & \frac{X_0(105)}{w_{21}} \arrow[rd]                            & \frac{X_0(105)}{w_{7}} \arrow[rd] \arrow[d]                      & \frac{X_0(105)}{w_{105}} \arrow[d]                                 & \frac{X_0(105)}{w_{35}} \\
\frac{X_0(105)}{\langle w_5,w_3\rangle} \arrow[rrrdd] & \frac{X_0(105)}{\langle w_5,w_{21}\rangle} \arrow[rrdd] & \frac{X_0(105)}{\langle w_5,w_7\rangle} \arrow[rdd] & \frac{X_0(105)}{\langle w_7,w_{21}\rangle} \arrow[dd] & \frac{X_0(105)}{\langle w_7,w_{105}\rangle} \arrow[ldd] &                                              \\
                                                                                      &                                                                                        &                                                                                    &                                                                                       &                                                                                         &                                              \\
                                                                                      &                                                                                        &                                                                                    & X_0(105)^*=\mathbb{P}^1                                                               &                                                                                         &                                             
\end{tikzcd}
\]
\begin{proof}
We apply Lemma \ref{diagramlemma} to $X_0(105)$, first with involutions $w_5$ and $w_7$, then with involutions $w_5$ and $w_{21}$. 
\end{proof}
For coprime prime powers $p_1,\ldots,p_n$, we write $X_0(p_1\cdots p_n)^*=X_0(p_1\cdots p_n)/\langle w_{p_1},\ldots,w_{p_n}\rangle$.
%\begin{proof}
%Consider $D\in E_7^{(2)}(\Q)$ and assume that $D$ pulls back to an irreducible effective degree 4 divisor on $X_0(105)/w_5$, $X_0(105)/w_7$ and $X_0(105)/w_{35}$, while its pullback to $X_0(105)$ is a sum of two effective irreducible degree 4 divisors $D_1+D_2$. Then for each $w\in \{w_5,w_7,w_{35}\}$ we have $w_*D_1=D_2$. In particular $(w_35)_*D_1=(w_5)_*(w_7)_*D_1=D_1$ so that $D_1=D_2$ and in fact $w_*D_1=D_1$ for all $w\in \{w_5,w_7,w_{35}\}$. Write $D_1=P_1+P_2+P_3+P_4$. Then $w_*D_1=D_1$ means that the pushforward of $D_1$ to $X_0(105)/w$ is the double of a degree 2 divisor over $\Q$ (which we assumed it is not), unless $w_*P_i=P_i$ for each $i$ and each $w$. The latter, in turn, means that also the pushforward to $E_7$ is an irreducible degree 4 divisor, contradicting the fact that we started with a degree 2 divisor on $E_7$. We conclude that the quartic point generating $D'$ must map to a quadratic point on one of $X_0(105)/w_5$, $X_0(105)/w_7$ and $X_0(105)/w_{35}$. This argument can be repeated for $E_{21}$. 
%\end{proof}
\begin{lemma}
\label{Qcurvelemma}
Suppose that $p_1,p_2,p_3$ are pairwise coprime prime powers, and consider a non-cuspidal quartic point $P\in X_0(p_1p_2p_3)$ mapping to a rational point on \newline$X_0(p_1p_2p_3)^*\colonequals X_0(p_1p_2p_3)/\langle w_{p_1},w_{p_2},w_{p_3}\rangle$. Then $P$ corresponds to a $\Q$-curve. 
\end{lemma}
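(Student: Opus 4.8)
The plan is to exploit the fact that a rational point on $X_0(p_1p_2p_3)^*$ has a full orbit of size (at most) eight under the Atkin--Lehner group $W \colonequals \langle w_{p_1}, w_{p_2}, w_{p_3}\rangle \cong (\Z/2\Z)^3$, and that these Atkin--Lehner involutions relate $j$-invariants of $K$-isogenous elliptic curves. Concretely: let $P \in X_0(p_1p_2p_3)$ be a non-cuspidal quartic point mapping to a rational point $P^* \in X_0(p_1p_2p_3)^*(\Q)$, and let $K = \Q(P)$ be its quartic field of definition. First I would observe that the orbit $WP = \{P, w_{p_1}P, w_{p_2}P, \dots, w_{p_1p_2p_3}P\}$ is a $\Gal(\overline\Q/\Q)$-stable set (since it is the fibre of $P^*$ under the $\Q$-morphism $X_0(p_1p_2p_3) \to X_0(p_1p_2p_3)^*$, which is generically of degree $8$ and $P^*$ is rational); hence $\Gal(\overline\Q/\Q)$ acts on $WP$, and this action factors through a map $\Gal(\overline\Q/\Q) \to \mathrm{Aut}(WP)$ that normalises the (free) $W$-action, i.e. the Galois action on $WP$ is by the $W$-translation structure composed with... more precisely, since the $W$-action on $WP$ is simply transitive (generically) and $W$ is abelian, each $\sigma \in \Gal(\overline\Q/\Q)$ acts on $WP$ as $Q \mapsto w_\sigma Q$ for a well-defined $w_\sigma \in W$, giving a homomorphism $\Gal(\overline\Q/\Q) \to W$ with kernel fixing $P$, hence cutting out $K$ (or a subfield).

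The key step is then the moduli interpretation. By the facts recalled in the "Modular curves" subsection, $P$ non-cuspidal with $j(P) \notin \{0, 1728\}$ corresponds to an elliptic curve $E/K$ (with a cyclic $p_1p_2p_3$-isogeny structure), and for each prime power $q \mid p_1p_2p_3$ the two points $P$ and $w_q P$ have $j$-invariants corresponding to $K$-isogenous elliptic curves — that is, $E$ and the elliptic curve $E_q/K$ attached to $w_q P$ are $K$-isogenous. (The case $j(P) \in \{0,1728\}$ is disposed of separately: such $E$ has CM and is automatically a $\Q$-curve, as already noted in the paragraph preceding Theorem \ref{thm1.5}.) Now for any $\sigma \in \Gal(\overline\Q/\Q)$: the conjugate $E^\sigma$ has $j$-invariant $j(P)^\sigma = j(P^\sigma) = j(\sigma \cdot P) = j(w_\sigma P)$ by the computation above, so $E^\sigma$ has the same $j$-invariant as the elliptic curve attached to $w_\sigma P \in WP$. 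But $w_\sigma P$ is obtained from $P$ by applying a product of the $w_{p_i}$, each of which gives a $K$-isogeny; composing, the elliptic curve attached to $w_\sigma P$ is $K$-isogenous to $E$, hence (a fortiori) $\overline K$-isogenous to $E$. Since $E^\sigma$ and that curve share a $j$-invariant, they are $\overline K$-isomorphic, so $E^\sigma$ is $\overline K$-isogenous to $E$. As $\sigma$ was arbitrary, $E$ is $\overline K$-isogenous to all its Galois conjugates, i.e. $E$ is a $\Q$-curve; by Ribet's theorem (quoted above), $E$ is modular.

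The main obstacle I expect is making the orbit/Galois bookkeeping in the first paragraph fully rigorous in the degenerate cases: the fibre of $P^* $ can have fewer than eight points (if $P$ is fixed by some $w_q$, e.g. $P$ lies on a ramification locus), and then the $W$-action on $WP$ is not free, so "$\sigma$ acts as translation by $w_\sigma$" needs care — one should argue with the stabiliser $\mathrm{Stab}_W(P) \le W$ and note $\Gal$ acts on $WP \cong W/\mathrm{Stab}_W(P)$ compatibly, still yielding that $E^\sigma$ is attached to some point of $WP$, which is all that is needed. A second, minor point is checking that the $j$-invariants arising from $WP$ genuinely avoid $\{0,1728\}$, or else folding those into the CM case; and confirming that "$j(P) \in \{0,1728\}$" forces $E$ to have CM over $\overline{K}$ regardless of the isogeny structure, which is standard. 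Neither of these should present real difficulty — the heart of the argument is the single observation that the Galois orbit of $P$ is contained in its Atkin--Lehner orbit, combined with the isogeny property of $w_q$.
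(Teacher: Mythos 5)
Your proposal is correct and follows essentially the same route as the paper: the fibre over the rational point $P^*$ is the $W$-orbit of $P$, which is therefore Galois-stable, so every Galois conjugate of $P$ equals $w(P)$ for some $w\in W$, and the isogeny property of the Atkin--Lehner involutions then shows $E$ is isogenous to all its conjugates. The paper's proof is just a terser version of your second paragraph (it does not bother with the translation homomorphism $\Gal(\overline{\Q}/\Q)\to W$, only with the containment of the Galois orbit in the $W$-orbit), and the $j\in\{0,1728\}$ CM case is handled once and for all in the discussion preceding Theorem \ref{thm1.5} rather than inside the lemma.
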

\begin{proof}
Define $W\colonequals \langle w_{p_1},w_{p_2},w_{p_3}\rangle$. The image of $P$ in $X_0(p_1p_2p_3)^*$ pulls back to the degree 8 divisor $\sum_{w\in W}w(P)$ on $X_0(p_1p_2p_3)$. A subset of these 8 (not necessarily distinct) points is the set of Galois conjugates of $P$, so each conjugate equals $w(P)$ for some $w\in W$. This means that the elliptic curve with $j$-invariant $j(P)$ is isogenous to all of its conjugates.
\end{proof}
By Proposition \ref{modw5prop} and \ref{quadprop} and Lemma \ref{Qcurvelemma}, we have now reduced our search for quartic points on $X_0(105)$ not corresponding to $\Q$-curves, to a search on five different curves, for the quadratic points not mapping to a rational point on $X_0(105)^*$. 

For $X_0(105)/w_5$ we have already verified (see the proof of Proposition \ref{modw5prop}) that each of the three quadratic points maps to a rational point on $E_7$ or $E_{21}$. The curves $X_0(105)/w_{35}$, $X_0(105)/w_{21}$, $X_0(105)/w_{105}$ and $X_0(105)/w_7$, have genera 3, 5, 5 and 7 respectively and none of them is hyperelliptic. As also their Mordell--Weil groups are finite, their sets of quadratic points must be finite.

Following Section \ref{finitemwgpsec}, we would like to determine their Mordell--Weil groups. For the latter three of those curves, however, this is challenging due to their high genus.  Instead, we make it easier for ourselves by considering the quotients 
\[
X_0(105)/\langle w_7,w_{105}\rangle \text{ and }X_0(105)/\langle w_7,w_{21}\rangle.
\]
Each quadratic point on $X_0(105)/w_{21},X_0(105)/w_{105}$ or $X_0(105)/w_7$ maps to a quadratic or rational point on one of these two curves. Both $X_0(105)/\langle w_7,w_{105}\rangle$ and $X_0(105)/\langle w_7,w_{21}\rangle$ are hyperelliptic of genus 3, so they have finitely many quadratic points, except for one infinite set coming from the double cover of $\P^1$.  Both hyperelliptic curves have an automorphism group of order 2, meaning the hyperelliptic involution equals the remaining Atkin--Lehner involution, and the map to $\P^1$ is the map to $X_0(105)^*$.

By Lemma \ref{Qcurvelemma}, the quadratic points coming from $\P^1$ correspond to $\Q$-curves, so it suffices to determine the quadratic points on the hyperelliptic curves $X_0(105)/\langle w_7,w_{105}\rangle$ and $X_0(105)/\langle w_7,w_{21}\rangle$ that do not come from $\P^1(\Q)$. First, we compute their models:
\begin{align*}
&X_0(105)/w_{35}:\;  -x_1^4 + 2x_1^3x_2 - 2x_1^2x_2^2 + x_1x_2^3 + 2x_1^3x_3 - 5x_1^2x_2x_3 \\
    &\;\;\;\;\;\;\;\;\;\;\;\;\;\;\;\;\;\;\;\;\;\;\;\;+ 4x_1x_2^2x_3 - x_2^3x_3 - 2x_1^2x_3^2 + 4x_1x_2x_3^2 - 3x_2^2x_3^2 + x_1x_3^3 - x_2x_3^3 =0,\\
  &  X_0(105)/\langle w_7,w_{105}\rangle: \;y^2 = -28x^7 - 36x^6 + 64x^5 + 21x^4 - 66x^3 + 39x^2 - 10x + 1 \text{ and } \\
   & X_0(105)/\langle w_7,w_{21}\rangle: \; y^2 = 20x^7 + 44x^6 - 16x^5 - 47x^4 + 22x^3 
    + 7x^2 - 6x + 1.
\end{align*}

\begin{proposition}\label{mwgpsprop}
The Mordell--Weil groups of the above curves are the following:
\begin{align*}
J(X_0(105)/w_{35})(\Q)&=\Z/4\Z \cdot [(1:0:1)-(0:1:0)]\oplus \Z/8\Z \cdot [(0:0:1)-(0:1:0)],\\
J(X_0(105)/\langle w_{7},w_{105}\rangle)(\Q)&=\Z/2\Z \cdot [D_1-2\cdot \infty] \oplus \Z/16\Z\cdot [(0,-1) - \infty] \text{ and }\\
J(X_0(105)/\langle w_7,w_{21}\rangle)(\Q)&=\Z/2\Z \cdot[D_2-2\cdot \infty] \oplus \Z/32\Z \cdot [(0,-1)-\infty],
\end{align*}
where $D_1$ and $D_2$ are the effective degree 2 divisors given by the equations $x^2-5/7x+1/7=0$ and $x^2+x-1=0$ respectively.
\end{proposition}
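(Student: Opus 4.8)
The plan is to handle all three curves $X$ (write $J=J(X)$) by the same two-step scheme: bound $J(\Q)$ from above by reduction, then match the bound with explicit classes. By Section \ref{ranksec} each of these Jacobians has Mordell--Weil rank $0$ (each is isogenous to a product of modular abelian varieties $A_f$ with $L(f,1)\neq 0$), so $J(\Q)$ is finite and for every prime $p>2$ of good reduction for the displayed model the reduction map $J(\Q)\hookrightarrow J(\F_p)$ is injective. Using Hess's class-group algorithm \cite{hess} in \texttt{Magma} I would compute $J(\F_p)$ for a handful of such primes; since $J(\Q)$ is all torsion here, $\#J(\Q)$ divides $\gcd_p\#J(\F_p)$ and $J(\Q)$ embeds as an abstract group into each $J(\F_p)$, which narrows it to a short list. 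In parallel I would write down the classes in the statement: on the hyperelliptic curves $y^2=f(x)$ these are $[(0,-1)-\infty]$ together with $[D_i-2\infty]$, where $D_i$ is cut out by the rational quadratic factor $x^2-\tfrac57x+\tfrac17$, resp.\ $x^2+x-1$, of $f$; on the plane quartic $X_0(105)/w_{35}$ they are the differences of the rational points $(1:0:1)$, $(0:0:1)$, $(0:1:0)$ (images of cusps). Reducing modulo a single well-chosen prime (orders of torsion classes are preserved) shows these generate a subgroup $G\subseteq J(\Q)$ of precisely the claimed shape; the key checks are that $[(0,-1)-\infty]$ has order exactly $16$ resp.\ $32$ and that $[D_i-2\infty]$ is independent of it (i.e.\ is not the order-$2$ element of $\langle[(0,-1)-\infty]\rangle$), and analogously for the quartic.

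It then remains to prove $G=J(\Q)$. The mod-$p$ data already pins down the odd part and the cyclic structure of the large $2$-power factor, so the only possible discrepancy lies in $J(\Q)[2]$ --- indeed, as observed in the discussion of torsion subgroups preceding Section \ref{finitemwgpsec}, for exactly these curves there is an abstract group $H$ with $G\subsetneq H$ and $H\hookrightarrow J(\F_p)$ for every prime tried, $H$ having strictly larger $2$-rank than $G$, so reduction alone does not suffice. I would therefore compute $J(\Q)[2]$ directly. For the two hyperelliptic genus-$3$ curves this is combinatorial: factor $f$ over $\Q$ --- the type turns out to be (irreducible quadratic)$\cdot$(irreducible quadratic)$\cdot$(irreducible cubic), one of the quadratics being the one giving $D_i$ --- and read off $J(\Q)[2]\cong(\Z/2\Z)^2$ from the standard description of the $2$-torsion of a hyperelliptic Jacobian as the Galois-stable even subsets of the branch locus modulo complementation (implemented in \texttt{Magma}). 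For the plane quartic $X_0(105)/w_{35}$ one instead computes the $28$ bitangents and the Galois action on them via the algorithm of Bruin, Poonen and Stoll \cite[Section 12]{bps}, which determines the mod-$2$ representation $\mathrm{Gal}(\overline{\Q}/\Q)\to\mathrm{Aut}(J[2])\cong\mathrm{Sp}_6(\F_2)$ and hence $J(\Q)[2]$. In all three cases $J(\Q)[2]$ comes out equal to $G[2]$, which together with the $\gcd$-bound forces $G=J(\Q)$ and yields the stated groups.

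The main obstacle is this last step: the cheap ``embed into $J(\F_p)$'' argument fails precisely on the $2$-primary part (the $2$-rank over $\F_p$ stays too large for every $p$), so one genuinely needs the direct $2$-torsion computations --- in particular the bitangent / Galois-representation computation for the plane quartic, which is the most delicate ingredient. A secondary point requiring care is the bookkeeping around the orders and independence of the explicit generators, but each such check reduces to a single reduction modulo a prime at which the relevant subgroup already attains its full order, so no further theory is needed once $J(\Q)[2]$ is known.
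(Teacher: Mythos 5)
Your approach is essentially the same as the paper's: use rank $0$ and embed $J(\Q)\hookrightarrow J(\F_p)$ at several good primes to narrow down the abstract group, observe that the only ambiguity left is the $2$-torsion, and then compute $J(\Q)[2]$ directly --- via the Galois action on the branch locus for the two odd-degree hyperelliptic curves and via the Bruin--Poonen--Stoll bitangent algorithm for the plane quartic $X_0(105)/w_{35}$. The paper additionally records a computational workaround for the bitangent step (the \texttt{EliminationIdeal}-based implementation being too slow, it substitutes a generic line into the quartic and applies the Euclidean algorithm to force a double $\gcd$), but that is an implementation detail rather than a conceptual difference; your stated factorization pattern for $f$ in the hyperelliptic cases is an unverified numerical prediction, but your method computes $J(\Q)[2]$ from whatever the factorization actually is, so the argument stands.
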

\begin{proof}
Note that all Mordell--Weil groups have rank 0 because $J_0(105)(\Q)$ has rank 0. First we compute the subgroups generated by the differences of rational points. Using the fact \cite{katz} that $J(X)(\Q)$ embeds in $J(X)(\F_p)$ for each of the curves $X$ and each prime $p>2$ of good reduction for $X$, we then reduce the possibilities. The primes 11, 17 and 79 (for $X_0(105)/\langle w_{7},w_{105}\rangle$) and 13, 31 and 43 (for $X_0(105)/\langle w_7,w_{21}\rangle$) tell us that the only possible Mordell--Weil groups are $\Z/2\Z\times \Z/16\Z$ and $\Z/16\Z$ for $X_0(105)/\langle w_{7},w_{105}\rangle$ and $\Z/2\Z \times \Z/32\Z$ and $\Z/32\Z$ for $X_0(105)/\langle w_7,w_{21}\rangle$. In both cases it thus suffices to compute the 2-torsion subgroup, which is easy for hyperelliptic curves. For odd-degree hyperelliptic curves, this is available in \texttt{Magma}.

Next, for $X_0(105)/w_{35}$, we find that the subgroup of its Mordell--Weil group generated by the four rational points (the images of the cusps of $X_0(105)$) is isomorphic to $H\colonequals \Z/4\Z\times \Z/8\Z$. Computing Mordell--Weil groups over $\F_p$ for $p\in \{ 11, 29, 107\}$, we limit the possiblities for $J(X_0(105)/w_{35})(\Q)$ to 
\[
H,\;\;\Z/2\Z\times H \text{ and } \Z/2\Z\times \Z/2\Z \times H.
\]
So here, too, it suffices to compute the 2-torsion. This curve is not hyperelliptic, but it is a plane quartic. For plane quartics, Bruin, Poonen and Stoll \cite[Section 12]{bps} found an algorithm for computing the Galois representation
$
\mathrm{Gal}(\overline{\Q}/\Q)\to \mathrm{Aut}_{\overline{\Q}}J[2]
$
by first computing the bitangents. This was used by \"Ozman and Siksek \cite[Section 5.5]{ozman} to compute $J_0(45)[2](\Q)$. The implementation used by  \"Ozman and Siksek for computing these bitangents (using the \texttt{EliminationIdeal} function in \texttt{Magma}) was too time-expensive for our curve. Instead, we will use the Euclidean algorithm. We work on the affine chart $x_2=1$. We substitute a generic line $x_3=-\beta x_1-\gamma$ into the quartic equation, giving us a degree 4 polynomial $p_{\beta,\gamma}(x_1)\in \Q[\beta,\gamma][x_1]$. The values $(\beta,\gamma)$ for which $p_{\beta,\gamma}(x_1)$ is a square, correspond to bitangents. When $p_{\beta,\gamma}(x_1)$ is a square, we have $\mathrm{deg}(\gcd(p_{\beta,\gamma},p_{\beta,\gamma}'))\geq 2$, so we can use the Euclidean algorithm to find a degree 1 polynomial $f\in \Q[\beta,\gamma][x_1]$ that must vanish identically in order to have $\mathrm{deg}(\gcd(p_{\beta,\gamma},p_{\beta,\gamma}'))\geq 2$. The equations in $\beta$ and $\gamma$ obtained by setting the coefficients of $f$ equal to zero, yield a 0-dimensional scheme of which the finitely many $\overline{\Q}$-points are swiftly found. We then check which of those pairs $(\beta,\gamma)$ indeed correspond to bitangents. This yields 27 of the 28 bitangents; the final bitangent line $x_1=x_2$ does not have non-zero coefficient for $x_3$. The field of definition of the bitangents is $K=\Q(\sqrt{5},\sqrt{-3},\sqrt{-7})$, so in particular $K=\Q(J[2])$.

We then use the implementation of \"Ozman and Siksek \cite{ozman} to compute the Galois representation. We conclude that the 2-torsion is $J(X_0(105)/w_{35})(\Q)[2]=\Z/2\Z\times \Z/2\Z$. 
\end{proof}
We call a quadratic point on a hyperelliptic curve \emph{isolated} when it does not map to a rational point on $\P^1$. 
\begin{proposition}
\label{quadptprop}
\begin{itemize}
\item[(i)] The curve $X_0(105)/w_5$ has three pairs of quadratic points, each of which maps to a rational point on $X_0(105)^*$.
\item[(ii)]The curve $X_0(105)/w_{35}$ has two pairs of quadratic points, each of which maps to a rational point on $X_0(105)^*$. 
\item[(iii)] Each quadratic point on $X_0(105)/w_7$ maps to a rational point on $X_0(105)^*$.
\item[(iv)] Each quadratic point on $X_0(105)/w_{105}$ maps to a rational point on $X_0(105)^*$.
\item[(v)] The curve $X_0(105)/w_{21}$ has exactly two pairs of quadratic points that do not map to a rational point on $X_0(105)^*$. They are defined over $\Q(\sqrt{5})$ and their inverse images in $X_0(105)$ are quartic points with quadratic $j$-invariants $632000 \pm 282880\sqrt{5}$.
\end{itemize}
\end{proposition}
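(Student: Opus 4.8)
The plan is to combine the explicit finite Mordell--Weil groups of Propositions \ref{X0105w5mwgp} and \ref{mwgpsprop} with the Riemann--Roch method of Section \ref{finitemwgpsec}, and, for the three curves of high genus, to descend onto the genus $3$ hyperelliptic quotients $Y_1 := X_0(105)/\langle w_7, w_{105}\rangle$ and $Y_2 := X_0(105)/\langle w_7, w_{21}\rangle$, whose Jacobians are finite and known.

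For (i) and (ii), the curves $X_0(105)/w_5$ (genus $5$) and $X_0(105)/w_{35}$ (genus $3$) have rank-$0$ Jacobians with fully determined torsion. Ranging over the finitely many classes $[E] \in J(\Q)$ and applying Lemma \ref{lem2.3} with a fixed rational base divisor $D_0$ of degree $2$, I would enumerate $X^{(2)}(\Q)$ for each of these two curves; for $X_0(105)/w_5$ this was already carried out in the proof of Proposition \ref{modw5prop}. It then remains to push each resulting degree-$2$ divisor forward along the degree-$4$ map $X \to X_0(105)^* = \P^1$ and check that it maps to a single rational point. Since a quadratic point mapping to a rational point of $E_7$ or $E_{21}$ automatically maps to a rational point of $\P^1$, part (i) follows from Proposition \ref{modw5prop}, and part (ii) from the analogous Riemann--Roch computation on the plane quartic $X_0(105)/w_{35}$.

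For (iii), (iv) and (v), the curves $X_0(105)/w_7$, $X_0(105)/w_{105}$ and $X_0(105)/w_{21}$ have genera $7$, $5$ and $5$, so their Jacobians cannot be handled directly. The key observation is that each admits a $\Q$-rational degree-$2$ map onto a genus-$3$ hyperelliptic quotient --- explicitly $X_0(105)/w_7 \to Y_2$ (induced by $w_{21}$), $X_0(105)/w_{105} \to Y_1$ (induced by $w_7$), and $X_0(105)/w_{21} \to Y_2$ (induced by $w_7$) --- and that in each case the composition with the hyperelliptic map $Y_i \to \P^1$ is exactly the quotient map to $X_0(105)^*$. Hence a quadratic point $P$ on one of these three curves maps either to a rational point of $Y_i$, and then to a rational point of $X_0(105)^*$, or to a quadratic point of $Y_i$; in the latter case $P$ fails to map to a rational point of $X_0(105)^*$ precisely when its image is an \emph{isolated} quadratic point of $Y_i$. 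Using the Mordell--Weil groups of Proposition \ref{mwgpsprop}, I would enumerate all $\Q$-rational effective degree-$2$ divisors on $Y_1$ and $Y_2$ via Lemma \ref{lem2.3}: these consist of the $\P^1(\Q)$-family coming from the hyperelliptic map, together with finitely many isolated quadratic points. For each isolated quadratic point $\bar P$ of $Y_i$ reached by one of the three maps, I would pull back the corresponding $\Q$-rational degree-$2$ divisor along the relevant degree-$2$ map and decompose it over $\Q$: when the resulting degree-$4$ divisor is $\Q$-irreducible no quadratic point lies above $\bar P$, which yields (iii) and (iv); for $X_0(105)/w_{21} \to Y_2$ the pullbacks split into precisely two conjugate pairs of quadratic points, both defined over $\Q(\sqrt 5)$, which gives (v), and pulling these further back along $X_0(105) \to X_0(105)/w_{21}$ and reading off the $j$-map yields the quartic points with $j$-invariants $632000 \pm 282880\sqrt 5$.

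The main obstacle is the combination of (a) the Riemann--Roch enumerations of degree-$2$ divisors over the full torsion groups of $J(Y_1)$, $J(Y_2)$, $J(X_0(105)/w_5)$ and $J(X_0(105)/w_{35})$, which are computationally heavy, and (b) the bookkeeping inside the Atkin--Lehner quotient lattice needed to confirm that each high-genus curve dominates a genus-$3$ quotient whose map to $\P^1$ factors the map to $X_0(105)^*$, and then to propagate the finitely many isolated quadratic points of $Y_1$ and $Y_2$ through the degree-$2$ maps --- checking $\Q$-irreducibility of the degree-$4$ pullbacks for (iii) and (iv), and pinning down the fields of definition and $j$-invariants for (v).
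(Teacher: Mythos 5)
Your strategy for parts (i)--(iv) and for the enumeration half of (v) is essentially the paper's own proof: part (i) is quoted from the proof of Proposition \ref{modw5prop}; part (ii) is the Riemann--Roch enumeration on the plane quartic $X_0(105)/w_{35}$ using Proposition \ref{mwgpsprop}; and for the three high-genus curves the paper likewise computes the isolated quadratic points on the two genus~3 hyperelliptic quotients $X_0(105)/\langle w_7,w_{105}\rangle$ and $X_0(105)/\langle w_7,w_{21}\rangle$ (excluding the zero class, which accounts for the $\P^1(\Q)$-family), pulls them back along the degree~2 maps to $X_0(105)/w_7$, $X_0(105)/w_{105}$ and $X_0(105)/w_{21}$, and checks which pullbacks decompose into quadratic points. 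Only $X_0(105)/w_{21}$ yields any, namely the two pairs over $\Q(\sqrt{5})$.

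The genuine gap is in your final step of (v). You propose to pull the two degree~2 divisors back along $X_0(105)\to X_0(105)/w_{21}$ and evaluate the $j$-map on $X_0(105)$; the paper states explicitly that this is not computationally feasible, since $X_0(105)$ has genus 13 and lives as a canonical model in $\P^{12}$ (neither the pullback of the divisors nor the $j$-morphism on that model can be computed in practice). The paper's workaround is a separate idea you would need: one checks that both quadratic points $P$ on $X_0(105)/w_{21}$ satisfy $w_5(P)=P$, so any point $Q$ of $X_0(105)$ above $P$ satisfies $w_5(Q)=Q$ or $w_{105}(Q)=Q$; the case $w_{105}(Q)=Q$ is excluded by computing the fixed locus of $w_{105}$ on the canonical model (its relevant components come from degree~8 points, detected via Lemma \ref{squarechecklemma} on $X_0(105)/w_5$); and in the remaining case $w_5(Q)=Q$ the image $R$ of $Q$ on the genus~0 curve $X_0(5)$ is a fixed point of $w_5$ there, where the $j$-map is explicit and the fixed points are quadratic with $j=632000\pm 282880\sqrt{5}$. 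Without some such descent to a small-genus curve, your plan does not produce the $j$-invariants claimed in (v).
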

\begin{proof}
For part (i) we refer to the proof of Proposition \ref{modw5prop}.  Given the Mordell--Weil groups, we can now compute the isolated quadratic points on $X_0(105)/w_{35}$, $X_0(105)/\langle w_7,w_{21}\rangle$ and $X_0(105)/\langle w_7,w_{105}\rangle$ using Lemma \ref{lem2.3}, again by computing the 1-dimensional Riemann--Roch spaces. For the two hyperelliptic curves, we exclude the zero class in the Mordell--Weil group, which corresponds to the infinitely many quadratic points coming from $\P^1=X_0(105)^*$ by Corollary \ref{rrcor}. We then pull back those quadratic points on the hyperelliptic curves to $X_0(105)/w_7$, $X_0(105)/w_{105}$ and $X_0(105)/w_{21}$, and check which have quadratic inverse images. 

Only on $X_0(105)/w_{21}$, this results in two pairs of quadratic points. Since $X_0(105)$ has genus 13, it is not computationally feasible to pull back these degree 2 divisors to $X_0(105)$, or to compute the $j$-invariant morphism on $X_0(105)$. Instead, we compute the Atkin--Lehner involution $w_5$ on $X_0(105)/w_{21}$, and check that both quadratic points $P$ satisfy $w_5(P)=P$. Let $Q$ be a quartic point on $X_0(105)$ mapping to one of these quadratic points. Then either $w_5(Q)=Q$ or $w_{105}(Q)=w_5w_{21}(Q)=Q$. 

We first suppose that $w_{105}(Q)=Q$. We compute the matrix $W$ defining the action of $w_{105}$ on the cusp forms defining our canonical model in $\P_{x_1,\ldots,x_{13}}^{12}$ for $X_0(105)$. We find that $\mathrm{Ker}(W-\mathrm{I})$ is 5-dimensional. From its basis, we find the equation $x_1-2x_{3}-2x_{4}-x_{5}=0$ satisfied by the $w_{105}$-fixed points of $X_0(105)$, along with 4 other equations. This particular equation is defined in terms of the first 5 coordinates, which are the coordinates corresponding to the cusp forms fixed by $w_5$. So the image of $Q$ on $X_0(105)/w_5$ must satisfy $x_1-2x_3-2x_4-x_5=0$. This equation defines a divisor on $X_0(105)/w_5$ that decomposes as a sum of two irreducible degree 4 divisors. Using Lemma \ref{squarechecklemma}, we find that these come from degree 8 points on $X_0(105)$.

We conclude that we must have $w_5(Q)=Q$. Then also the image $R$ of $Q$ on $X_0(5)$ must satisfy $w_5(R)=R$. We compute $X_0(5)$ as well as the action of $w_5$ on $X_0(5)$. The points on $X_0(5)$ fixed by $w_5$ are quadratic and have the claimed $j$-invariant.
\end{proof}
Finally, Propositions \ref{modw5prop}, \ref{quadprop} and \ref{quadptprop} together with Lemma \ref{Qcurvelemma} prove Theorem \ref{X0105thm}.

\section{$X(\mathrm{s}3,\mathrm{b}5,\mathrm{b}7)$}\label{Xs3b5b7sec}
The second modular curve we need to look at is $X(\mathrm{s}3,\mathrm{b}5,\mathrm{b}7)$. 
\begin{theorem}
\label{Xs3b5b7thm}
Let $E$ be an elliptic curve with quartic $j$-invariant that occurs as a quartic point on $X(\mathrm{s}3,\mathrm{b}5,\mathrm{b}7)$. Then $E$ is a $\Q$-curve, and in particular $E$ is modular.
\end{theorem}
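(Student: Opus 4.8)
The strategy parallels Section \ref{X0105sec}. Since $X(\mathrm{s}3,\mathrm{b}5,\mathrm{b}7)$ has genus $21$ we do not work on it directly; instead we use the two degree-$6$ forgetful maps
\[
\rho_1\colon X(\mathrm{s}3,\mathrm{b}5,\mathrm{b}7)\to X(\mathrm{b}5,\mathrm{b}7)=X_0(35),\qquad \rho_2\colon X(\mathrm{s}3,\mathrm{b}5,\mathrm{b}7)\to X(\mathrm{s}3,\mathrm{b}7)
\]
onto the two genus-$3$ hyperelliptic quotients, noting that $X(\mathrm{s}3,\mathrm{b}5,\mathrm{b}7)$ is the normalisation of the fibre product $X_0(35)\times_{X(\mathrm{b}7)}X(\mathrm{s}3,\mathrm{b}7)$ over $X(\mathrm{b}7)=X_0(7)\cong\P^1$. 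A quartic point $Q$ with quartic $j$-invariant on $X(\mathrm{s}3,\mathrm{b}5,\mathrm{b}7)$ therefore maps to points $Q_1\in X_0(35)$ and $Q_2\in X(\mathrm{s}3,\mathrm{b}7)$ with $j(Q_1)=j(Q_2)=j(Q)$, hence both of degree exactly $4$ with quartic $j$-invariant, lying over a common point of $X_0(7)$. It thus suffices to enumerate such $Q_1,Q_2$ and glue them.

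First I would compute the necessary data: a model for $X_0(35)$ from \texttt{Magma}'s Small Modular Curves package, a model for $X(\mathrm{s}3,\mathrm{b}7)$ (which by Section \ref{ranksec} is $X_0(63)/w_9$, so its Jacobian is the $w_9=+1$ part of $J_0(63)$) via the algorithm of \cite{boxmodels}, together with the two hyperelliptic maps, the forgetful maps to $X_0(7)$, the $j$-maps, and the Atkin--Lehner involutions on both quotients. Next, as in Section \ref{ranksec}, I would verify via Kolyvagin and Logach\"ev that $J(X_0(35))(\Q)$ and $J(X(\mathrm{s}3,\mathrm{b}7))(\Q)$ are both finite, and determine them exactly: start from the subgroup generated by differences of (images of) cusps, narrow the remaining ambiguity using the injections $J(C)(\Q)\hookrightarrow J(C)(\F_p)$ for several primes $p>2$ of good reduction, and pin down the $2$-torsion directly using the hyperelliptic $2$-descent machinery in \texttt{Magma}, exactly as in Propositions \ref{X0105w5mwgp} and \ref{mwgpsprop}.

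With the Mordell--Weil groups in hand I would enumerate the degree-$4$ points on each quotient $C$. Since $\dim C^{(4)}=4>3=\dim J(C)$, these come in infinite families: over each class in $J(C)(\Q)$ admitting a rational effective divisor the Abel--Jacobi fibre is positive-dimensional, and for hyperelliptic $C$ of genus $3$ the relevant linear systems are governed by the hyperelliptic $g^1_2$. Applying Lemma \ref{lem2.3} and Corollary \ref{rrcor} with $\rho=h\colon C\to\P^1$ the hyperelliptic map, I would split the effective $\Q$-rational degree-$4$ divisors into those of the shape $h^{*}D_2$ (pulled back from a degree-$2$ divisor on $\P^1$) and a residual finite list, and among the former isolate the ones that are irreducible of degree $4$ with four distinct $j$-values. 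The residual finite lists, intersected via the common map to $X_0(7)$, leave a finite set of candidate $j$-invariants, each of which I would verify is a $\Q$-curve (using the tree of Atkin--Lehner quotients below $X(\mathrm{s}3,\mathrm{b}5,\mathrm{b}7)$ together with an analogue of Lemmas \ref{diagramlemma} and \ref{Qcurvelemma}, or the cases $j\in\{0,1728\}$). For the genuinely infinite hyperelliptic families, the point is that the simultaneous constraints ``$Q_1$ lies in such a family on $X_0(35)$'', ``$Q_2$ lies in such a family on $X(\mathrm{s}3,\mathrm{b}7)$'', and ``$Q_1,Q_2$ lie over the same $X_0(7)$-point'', combined with the images of $Q$ in the $\langle w_5,w_7\rangle$-quotients, force $Q$ onto a rational point of such a quotient (hence a $\Q$-curve, by the analogue of Lemma \ref{Qcurvelemma}) or onto one of finitely many further candidates.

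The main obstacle I expect is precisely this last step: showing that the infinite families of degree-$4$ divisors on the two genus-$3$ quotients — which really do contain points with quartic $j$-invariant — cannot be fitted together compatibly over $X_0(7)$ into non-$\Q$-curve quartic points of $X(\mathrm{s}3,\mathrm{b}5,\mathrm{b}7)$, i.e.\ organising the ``exploiting the tree'' bookkeeping cleanly. A secondary difficulty is the exact torsion computation for $X(\mathrm{s}3,\mathrm{b}7)$, which is not of the form $X_0(N)$, so the cuspidal subgroup need not exhaust the rational torsion and the $2$-torsion may require the hyperelliptic (or plane-quartic) techniques surveyed in Section \ref{methodssec}.
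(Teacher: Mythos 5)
Your overall framework is the same as the paper's: work on the two genus~3 hyperelliptic quotients $X_0(35)$ and $X(\mathrm{s}3,\mathrm{b}7)$, compute their (finite) Mordell--Weil groups, and match degree~4 divisors over the common quotient $X_0(7)=\P^1$. However, there is a genuine gap at the heart of your argument, precisely at the step you flag as ``the main obstacle''.

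First, your dichotomy ``pullbacks under the hyperelliptic map $h$ versus a residual \emph{finite} list'' is false for degree~4 divisors on a genus~3 curve. By Riemann--Roch, every degree~4 divisor $D$ on a genus~3 curve satisfies $\ell(D)=2+\ell(K-D)\geq 2$, so \emph{every} class in $J(C)(\Q)$ with an effective representative carries at least a $\P^1(\Q)$ of effective degree~4 divisors; only the class of $h^*(2\cdot\infty_{\P^1})$ consists of hyperelliptic pullbacks (where $\ell=3$), and the other $47$ classes each contribute an infinite non-pullback family. You appear to be pattern-matching from the $X_0(105)/w_5$ computation, where the curve has genus~5 and most degree~4 classes have $\ell\leq 1$; that does not transfer here. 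Consequently there is no finite list of candidates to intersect over $X_0(7)$, and the core of the proof is exactly the argument you omit: one must parametrise the $\P^1$-fibres of the Abel--Jacobi maps on both quotients by coordinates $\mathbf{t}$ and $\mathbf{s}$, compute the pushforwards to $X_0(7)$ symbolically as degree~4 forms with coefficients in $\Q(\mathbf{t})$ and $\Q(\mathbf{s})$, equate coefficients to cut out a scheme in $\P^n\times\P^m$, and show that its positive-dimensional components correspond to reducible pushforwards (hence $j$-invariants of degree $<4$), leaving finitely many rational points to check. This is Proposition \ref{pushforwardprop}, and without it the theorem is not proved.

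Second, the case where both pushforwards lie in the trivial class (both are hyperelliptic pullbacks) cannot be dispatched by a generic ``tree'' argument, because the two hyperelliptic involutions are not both Atkin--Lehner involutions: on $X(\mathrm{s}3,\mathrm{b}7)$ the hyperelliptic involution is $\phi_3 w_7$ (Lemma \ref{hyperellipticlemma}), where $\phi_3$ fixes the underlying elliptic curve. The paper's Proposition \ref{zerozeroprop} needs a specific Galois-theoretic isogeny argument, plus a separate computation on $X_0(5)$ for the degenerate subcase $j(E^{X_0(35)})=j(E^{X(\mathrm{s}3,\mathrm{b}7)})$. Your sketch points in the right direction but does not supply either ingredient. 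The Mordell--Weil computations you outline are fine and match Proposition \ref{mwgrpsprop}.
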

In order to do computations with this curve, we make use of an isomorphism observed in \cite[p.\ 555]{conrad}:
\[
 X(\mathrm{b}p^2,*)/w_{p^2}\longrightarrow X(\mathrm{s}p,*),
\]
where $p$ is prime and $*$ denotes any prime-to-$p$ level structure. Pulled back to the upper half plane, this isomorphism is just $\tau\mapsto p\tau$. This yields $X(\mathrm{s}3,\mathrm{b}5,\mathrm{b}7)\simeq X_0(315)/w_9$, allowing us to study this curve using modular forms on $\Gamma_0(315)$. We find that $X(\mathrm{s}3,\mathrm{b}5,\mathrm{b}7)$ has genus 21. Moreover, evaluating $L(f,1)$ for the eigenforms in the $+1$-eigenspace for $w_9$ of $S_2(\Gamma_0(315))$  suggests (but does not prove) that the Mordell--Weil groups of several components of $J(\mathrm{s}3,\mathrm{b}5,\mathrm{b}7)$ have positive rank, c.f.\ Section \ref{ranksec}. The genus of $X(\mathrm{s}3,\mathrm{b}5,\mathrm{b}7)$ is too large to do explicit computations, so one hopes to find a quotient (such as $X_0(105)/w_5$ in the previous section) that is more amenable to computations. We found each quotient to be either too complicated to study computationally (high genus with positive rank Mordell--Weil group) or too simple to distinguish between quartic points (genus less than 4). 

We resolve this issue by combining information from two small quotients: $X_0(35)$ and $X(\mathrm{s}3,\mathrm{b}7)\simeq X_0(63)/w_9$. Both quotients are hyperelliptic of genus 3 and, conveniently, the Mordell--Weil groups of their Jacobians are finite. The small genus means that both curves have infinitely many quartic points.
\[
\begin{tikzcd}
                       & X(\mathrm{s}3,\mathrm{b}5,\mathrm{b}7) \arrow[ld] \arrow[rd] &                    \\
X(\mathrm{s}3,\mathrm{b}7) \arrow[rd] &                                    & X_0(35) \arrow[ld] \\
                       & X_0(7)                             &                   
\end{tikzcd}
\]

Note that $X_0(7)$ has genus 0. We compute models for the curves and maps in the lower half of the diagram via modular forms. The models for $X_0(35)$ and $X_0(7)$, as well as the map between them, can be computed in \texttt{Magma} using the Small Modular Curves package. The map from $X_0(63)/w_9$ to $X_0(7)$ is not implemented, but we computed it using the method described in Section \ref{modelssec}. We display here only the curves. 
\begin{align*}
    X_0(35):&\; y^2 = x^8 - 4x^7 - 6x^6 - 4x^5 - 9x^4 + 4x^3 - 6x^2 + 4x + 1,\\
    X(\mathrm{s}3,\mathrm{b}7):&\; y^2 = x^8 + 6x^7 + 23x^6 + 36x^5 + 57x^4 + 36x^3 + 23x^2 + 6x + 1,\\
    X_0(7)=\P^1.
\end{align*}
On these hyperelliptic curves, we write $\infty^+=(1:1:0)$ and $\infty^-=(1:-1:0)$ (in weighted projective space). The cusps of $X_0(35)$ are $\infty^+$, $\infty^-$, $(0,1)$ and $(0,-1)$, and the cusps of $X(\mathrm{s}3,\mathrm{b}7)$ are (also) $\infty^+$, $\infty^-$, $(0,1)$ and  $(0,-1)$. 

As observed e.g.\ in \cite[Remark in Section 5.3]{freitas}, there is an exceptional inclusion (up to conjugation) $C_{\mathrm{ns}}^+(3)\subset C_{\mathrm{s}}^+(3)$, and this inclusion of subgroups has index 2 after intersecting with $\mathrm{SL}_2(\F_3)$. This means that there is a morphism of degree 2
\[
X(\mathrm{s}3,\mathrm{b}7)\to X(\mathrm{ns}3,\mathrm{b}7).
\]
Each degree 2 morphism of curves is the quotient by an involution. We call this involution $\phi_3:\; X(\mathrm{s}3,\mathrm{b}7)\to X(\mathrm{s}3,\mathrm{b}7)$.
\begin{lemma}\label{hyperellipticlemma}\begin{itemize} \item[(i)] The hyperelliptic involution on $X_0(35)$ is $w_{35}$.
\item[(ii)] The hyperelliptic involution on $X(\mathrm{s}3,\mathrm{b}7)$ is $\phi_3w_7$. 
\end{itemize}
\end{lemma}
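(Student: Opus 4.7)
The hyperelliptic involution on a hyperelliptic curve $X$ of genus $g\geq 2$ is the unique non-trivial involution $\iota$ satisfying $X/\iota \cong \P^1$; equivalently, $\iota$ acts as $-1$ on $H^0(X,\Omega^1_X)$. The plan is therefore to show, for each of the two claimed involutions, that it acts as $-1$ on regular differentials. We will use the identifications $H^0(X_0(35),\Omega^1)\cong S_2(\Gamma_0(35))$ and, via $X(\mathrm{s}3,\mathrm{b}7)\cong X_0(63)/w_9$, the identification of $H^0(X(\mathrm{s}3,\mathrm{b}7),\Omega^1)$ with the $+1$-eigenspace $V$ of $w_9$ on $S_2(\Gamma_0(63))$.

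For (i), the space $S_2(\Gamma_0(35))$ is three-dimensional and entirely new, since $S_2(\Gamma_0(5))=S_2(\Gamma_0(7))=0$, and splits into two Galois orbits of newforms corresponding to the isogeny classes $35a$ and $35b$. Using \texttt{Magma}'s \texttt{AtkinLehnerOperator} function, I would compute the Atkin--Lehner eigenvalues of each orbit and verify that in every case the product $w_5 w_7$ acts as $-1$. This gives $g(X_0(35)/w_{35})=0$, identifying $w_{35}$ as the hyperelliptic involution.

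For (ii), I would first verify that $\phi_3$ and $w_7$ commute: $\phi_3$ arises from the subgroup inclusion $C_{\mathrm{ns}}^+(3)\subset C_{\mathrm{s}}^+(3)$ at level~$3$, which is unaffected by $w_7$ acting on the level-$7$ part, so $\phi_3 w_7$ is a genuine involution. To decompose $V$ into $\phi_3 w_7$-eigenspaces, I would compute the action of $w_7$ on $V$ by standard double-coset algorithms, and compute the action of $\phi_3$ on $V$ via the quotient $X(\mathrm{s}3,\mathrm{b}7)\to X(\mathrm{ns}3,\mathrm{b}7)$: the $+1$-eigenspace of $\phi_3$ on $V$ equals the pullback $H^0(X(\mathrm{ns}3,\mathrm{b}7),\Omega^1)$, whose dimension is fixed by the known genus of $X(\mathrm{ns}3,\mathrm{b}7)$. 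Combining these two eigenvalue decompositions, one then checks that $\phi_3 w_7 = -1$ on $V$.

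In practice the cleanest implementation avoids computing $\phi_3$ on modular forms altogether: it suffices to show that the chosen involution preserves the $x$-coordinate of the displayed hyperelliptic model, since the $x$-coordinate realises the unique $g^1_2$ and any non-trivial involution preserving it must act as $(x,y)\mapsto(x,-y)$. The main obstacle is producing the explicit action of $\phi_3$ on the coordinate ring, for which one runs the model-computing algorithm of \cite{boxmodels} for $X(\mathrm{ns}3,\mathrm{b}7)$ and expresses the defining cusp forms of $X(\mathrm{s}3,\mathrm{b}7)$ in terms of their $\phi_3$-pullbacks; once this is in hand, checking that $x$ is fixed (and is not fixed pointwise) is a routine finite computation.
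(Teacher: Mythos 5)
Your route is genuinely different from the paper's, and part of it as written has a gap. For (i) the paper simply cites Ogg's classification of hyperelliptic modular curves and their hyperelliptic involutions; your direct Atkin--Lehner eigenvalue computation on $S_2(\Gamma_0(35))$ is a perfectly good substitute. For (ii), the paper never computes $\phi_3$ or $w_7$ explicitly on differentials. Instead it checks in \texttt{Magma} that $\mathrm{Aut}(X(\mathrm{s}3,\mathrm{b}7))\simeq \Z/2\Z\times\Z/2\Z$, so that the hyperelliptic involution must be one of the three non-trivial automorphisms; it then rules out $\phi_3$ and $w_7$ (and shows they are distinct) purely by genus counts of quotients --- $X(\mathrm{ns}3,\mathrm{b}7)$ has genus $2$ by Le Hung, and $X(\mathrm{s}3,\mathrm{b}7)/w_7\simeq X_0(63)/\langle w_9,w_7\rangle$ has genus $1$ --- leaving $\phi_3w_7$ as the only candidate. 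This is exactly the device that lets the paper get away with knowing only \emph{dimensions} of eigenspaces, and the paper explicitly remarks that the direct verification you propose (computing $\phi_3$ on the model via \cite{boxmodels}) is possible but was not carried out.

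The gap is in your first implementation of (ii). Knowing that the $+1$-eigenspace of $\phi_3$ on $V$ has dimension $2$ (from the genus of $X(\mathrm{ns}3,\mathrm{b}7)$) and that the $+1$-eigenspace of $w_7$ has dimension $1$ does \emph{not} force $\phi_3w_7=-1$ on $V$: writing $V=V_{++}\oplus V_{+-}\oplus V_{-+}\oplus V_{--}$ for the joint eigenspaces, the dimension constraints admit the solution $(1,1,0,1)$ as well as the desired $(0,2,1,0)$, and in the former case $\phi_3w_7$ has a $2$-dimensional fixed space. So you need the actual subspace $\pi^*H^0(X(\mathrm{ns}3,\mathrm{b}7),\Omega^1)\subset V$, not just its dimension --- and identifying that subspace requires either the quotient map or $\phi_3$ itself, which is precisely the expensive computation your second route also hinges on. Either supply that explicit computation, or close the gap the way the paper does: verify $\#\mathrm{Aut}(X(\mathrm{s}3,\mathrm{b}7))=4$, observe that the hyperelliptic involution is then forced to be $\phi_3$, $w_7$ or $\phi_3w_7$, and eliminate the first two because their quotients have positive genus. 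Your second route (checking that $\phi_3w_7$ fixes the degree-$2$ map $x$ and is non-trivial, hence generates $\mathrm{Gal}(\Q(X)/\Q(x))$) is logically sound once $\phi_3$ is in hand.
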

\begin{proof}
Part (i) is due to Ogg \cite{ogg}. For part (ii), we can check in \texttt{Magma} that $X(\mathrm{s}3,\mathrm{b}7)$ has an automorphism group isomorphic to $\Z/2\Z\times \Z/2\Z$. We then note that Le Hung \cite[Remark 5.2]{lehung} showed that $X(\mathrm{ns}3,\mathrm{b}7)$ has genus 2. We can also check that the subspace of $S_2(\Gamma_0(63))$ fixed by $w_7$ and $w_9$ is 1-dimensional, or alternatively use the \texttt{ModularCurveQuotient} function in \texttt{Magma} to check that $X_0(63)/\langle w_9,w_7\rangle$ (which is isomorphic to $X(\mathrm{s}3,\mathrm{b}7)/w_7$) has genus 1. This shows that $w_7\neq \phi_3$, and thus $\mathrm{Aut}(X(\mathrm{s}3,\mathrm{b}7))=\langle \phi_3, w_7\rangle$. It also shows that neither $w_7$ nor $\phi_3$ is the hyperelliptic involution, leaving $\phi_3w_7$ as the only remaining candidate.
\end{proof}
It is possible to compute $w_7$ and $\phi_3$ on $X(\mathrm{s}3,\mathrm{b}7)$ explicitly using the methods in \cite{boxmodels} and verify by computation that $w_3\phi_7$ is the hyperelliptic involution, but we have not done this. 

Next, we need to compute Mordell--Weil groups. 

\begin{proposition}
\label{mwgrpsprop}
The Mordell--Weil groups of $J_0(35)$ and $J(\mathrm{s}3,\mathrm{b}7)$ are
\[
J_0(35)(\Q)=\Z/24\Z \cdot [\infty^--\infty^+]\oplus \Z/2\Z\cdot [3\cdot (0,-1)- 3\cdot \infty^+]
\]
and
\begin{align*}
J(\mathrm{s}3,\mathrm{b}7)(\Q) &= \Z/24\Z \cdot [ 2\cdot \infty^++(0,1)-3\cdot \infty^-]\\& \oplus \Z/2\Z\cdot [-9\infty^++(0,1)+8\cdot \infty^-]
\end{align*}
\end{proposition}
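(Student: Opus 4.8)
The plan is to compute both Mordell--Weil groups by the standard strategy used throughout this paper (see Section \ref{ranksec} and the proof of Proposition \ref{mwgpsprop}), exploiting that both Jacobians have rank $0$. Since $J_0(35)$ and $J(\mathrm{s}3,\mathrm{b}7)$ are quotients of $J(\mathrm{s}3,\mathrm{b}5,\mathrm{b}7)$ up to isogeny, and the $L$-values $L(f,1)$ of the relevant eigenforms are nonzero, Kolyvagin--Logach\"ev \cite{kolyvagin} gives $\mathrm{rk}(J_0(35)(\Q)) = \mathrm{rk}(J(\mathrm{s}3,\mathrm{b}7)(\Q)) = 0$; actually for $X_0(35)$ this is classical. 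Hence by \cite{katz} each group embeds into $J(\F_p)$ for every odd prime $p$ of good reduction.

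First I would compute the subgroup $C \subset J(\Q)$ generated by the differences of the four rational cusps $\infty^+,\infty^-,(0,1),(0,-1)$ on each hyperelliptic curve, obtaining the displayed candidate groups $\Z/24\Z \oplus \Z/2\Z$. Then I would reduce modulo several small good primes $p$, computing $J(\F_p)$ via Hess's class group algorithm \cite{hess} (implemented in \texttt{Magma}), and intersect the possibilities: the greatest common divisor of the group orders should pin down the group up to a possible extra factor of $2$-torsion, exactly as in Proposition \ref{mwgpsprop}. In practice a couple of primes should leave only the candidates $C$, $\Z/2\Z \times C$, $(\Z/2\Z)^2 \times C$, so the task reduces to computing $J(\Q)[2]$. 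Since both $X_0(35)$ and $X(\mathrm{s}3,\mathrm{b}7)$ are hyperelliptic of odd-degree model (degree $8$, but one may pass to the standard odd-degree model using a rational Weierstrass point, or use the even-degree $2$-torsion routines), the group $J(\Q)[2]$ is directly computable in \texttt{Magma} from the factorisation of the hyperelliptic polynomial over $\Q$; both octics being irreducible or having the right factorisation type forces $J(\Q)[2] = \Z/2\Z$, accounted for by the displayed order-$2$ generator. This confirms the two direct summands and matches the cuspidal subgroup $C$ exactly.

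The main obstacle I expect is purely bookkeeping rather than conceptual: verifying that the specific cuspidal divisor classes listed — in particular the somewhat unusual combinations $[2\cdot \infty^+ + (0,1) - 3\cdot \infty^-]$ of order $24$ and $[-9\infty^+ + (0,1) + 8\cdot \infty^-]$ of order $2$ on $X(\mathrm{s}3,\mathrm{b}7)$ — are the asserted generators and have the asserted orders. This is done by explicit Riemann--Roch / function field computations: one checks $\ell(D) = 1$ for the relevant multiples and exhibits the principal divisor realising the claimed relations, which \texttt{Magma} handles on a genus $3$ curve without difficulty. A minor subtlety is that one must ensure the chosen primes of reduction are genuinely of good reduction for the displayed integral models (the primes $2,3,5,7$ are excluded, and for the sieve one picks primes like $11,13,17$ and a few more), and that the reduction map on the cuspidal subgroup is injective, which follows once a single prime reduces $C$ isomorphically. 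Having established the Mordell--Weil groups, these feed directly into the Riemann--Roch computations of the subsequent sections via Lemma \ref{lem2.3} and Corollary \ref{rrcor} to enumerate the quartic points.
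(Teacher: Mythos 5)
Your strategy is essentially the paper's: establish rank $0$, use the embedding $J(\Q)\hookrightarrow J(\F_p)$ for odd primes of good reduction, and compare with the cuspidal subgroup. The paper's actual computation is cleaner than you anticipate, though: for $X_0(35)$ the single prime $p=3$ already suffices, because $J_0(35)(\F_3)\simeq \Z/24\Z\times\Z/2\Z$ and the reductions of the two displayed generators generate all of it, so the cuspidal subgroup is the whole Mordell--Weil group and no separate $2$-torsion computation is needed; for $X(\mathrm{s}3,\mathrm{b}7)$ the primes $5$ and $11$ give $J(\F_5)\simeq(\Z/2\Z)^3\times\Z/24\Z$ and $J(\F_{11})\simeq\Z/4\Z\times\Z/264\Z$, whose largest common embeddable subgroup is already $\Z/2\Z\times\Z/24\Z$, again making your contingency step superfluous. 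Your fallback of computing $J(\Q)[2]$ from the Weierstrass points is a legitimate plan B (and is exactly what the paper must do for the harder curves in Proposition \ref{mwgpsprop}), but note one slip: if $J(\Q)\simeq\Z/24\Z\oplus\Z/2\Z$ then $J(\Q)[2]\simeq(\Z/2\Z)^2$, not $\Z/2\Z$, so the octic must exhibit two independent rational $2$-torsion classes (in particular it cannot be irreducible with generic Galois group, contrary to your parenthetical); the discriminating quantity among your three candidates $C$, $\Z/2\Z\times C$ and $(\Z/2\Z)^2\times C$ is whether $\dim_{\F_2}J(\Q)[2]$ equals $2$, $3$ or $4$.
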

\begin{proof}
For $X_0(35)$, we use the prime 3 to embed $J_0(35)(\Q)$ in $J_0(35)(\F_3)\simeq \Z/24\Z\times \Z/2\Z$. We find that the images of the claimed generators indeed have the right order and generate $J_0(35)(\F_3)$. This agrees with the Mordell--Weil group $J_0(35)(\Q)$ found in \cite[Lemma 12.1]{freitas}.

For $X(\mathrm{s}3,\mathrm{b}7)$ we embed its Mordell--Weil group in $J(\mathrm{s}3,\mathrm{b}7)(\F_5)\simeq (\Z/2\Z)^3\times \Z/24\Z$ and $J(\mathrm{s}3,\mathrm{b}7)(\F_{11})\simeq \Z/4\Z \times \Z/264\Z$. The group generated by the images of the claimed generators is isomorphic to $\Z/24\Z\times \Z/2\Z$, with these generators having orders 24 and 2 respectively, as desired.
\end{proof}
For each of the two hyperelliptic curves, let $\infty=\infty^++\infty^-$ denote the (degree 2) divisor at infinity. Also, for each $Y\in \{X(\mathrm{s}3,\mathrm{b}7),X_0(35)\}$, we define the Abel--Jacobi map by
\[
\iota_Y:\;Y^{(4)}\to J(Y),\;\; D\mapsto [D-2\cdot \infty].
\]
\begin{proposition}
For each $Y\in \{X_0(35),X_0(63)/w_9\}$, the Riemann--Roch spaces
\[
L(D+2\cdot \infty) \text{ for } D\in J(Y)(\Q)
\]
are 2-dimensional, except for when $D=0$, in which case $L(D+2\cdot \infty)=L(2\cdot\infty)$ is 3-dimensional and generated by $1,x,x^2$.
\end{proposition}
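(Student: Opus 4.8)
The plan is to reduce the statement to the Riemann--Roch theorem on $Y$, once the canonical class of $Y$ has been identified with $2\cdot\infty$. First I would record the relevant geometry of the two models: each is a hyperelliptic curve of genus $3$ given by $y^2=f(x)$ with $f$ monic of degree $8$, so the degree $2$ morphism $x\colon Y\to\P^1$ has as fibre over $\infty$ the divisor $\infty^++\infty^-$, consisting of two distinct $\Q$-rational points interchanged by the hyperelliptic involution. Consequently $\infty=\infty^++\infty^-$ represents the hyperelliptic pencil $g^1_2$; since the canonical class of a hyperelliptic curve of genus $g$ equals $(g-1)$ times the $g^1_2$, we obtain $K_Y\sim 2\cdot\infty$ (here $g=3$). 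Concretely, as $f(0)=1$ on both curves one has $\operatorname{div}(x)=(0,1)+(0,-1)-\infty$, so that $\infty\sim(0,1)+(0,-1)$ directly.

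Next, for a class $D\in J(Y)(\Q)$ I would represent it by a divisor of degree $0$, so that $D+2\cdot\infty$ has degree $4$, and apply Riemann--Roch:
\[
\ell(D+2\cdot\infty)=\deg(D+2\cdot\infty)+1-g+\ell(K_Y-D-2\cdot\infty)=2+\ell(-D),
\]
where the last equality uses $K_Y\sim 2\cdot\infty$. Since $-D$ has degree $0$, $\ell(-D)$ is $1$ if $D=0$ in $J(Y)$ and $0$ otherwise. Hence $\ell(D+2\cdot\infty)=3$ exactly when $D=0$, and $\ell(D+2\cdot\infty)=2$ for every other class; in particular the dimension is never larger than $3$.

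Finally, for $D=0$ I would exhibit the basis explicitly: the functions $1,x,x^2\in\Q(Y)$ all lie in $L(2\cdot\infty)$, because $x$ has a simple pole at each of $\infty^+,\infty^-$ and no other poles, and $x^2$ a double pole at each. They are linearly independent (they have distinct orders of pole along $\infty^+$), so they span the $3$-dimensional space $L(2\cdot\infty)$; note that $y\notin L(2\cdot\infty)$ since $y$ has a pole of order $4$ at each point at infinity. The argument is identical for $Y=X_0(35)$ and $Y=X_0(63)/w_9$. I do not expect any real difficulty here: the only point requiring a moment's care is the equivalence $K_Y\sim 2\cdot\infty$, and this is immediate from the monic even-degree shape of the two displayed models; everything else is the standard Riemann--Roch dimension count for a degree $4$ divisor on a genus $3$ curve.
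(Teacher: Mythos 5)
Your proof is correct, but it takes a genuinely different route from the paper. The paper only argues theoretically in the case $D=0$: it exhibits $1,x,x^2\in L(2\cdot\infty)$ and invokes Clifford's theorem for the upper bound $\ell(2\cdot\infty)\leq 3$; for all the nonzero classes $D$ it simply computes the dimensions of the Riemann--Roch spaces in \texttt{Magma}. You instead observe that, since each model is $y^2=f(x)$ with $f$ monic of degree $8$, the fibre of $x$ over infinity is $\infty^++\infty^-$, so $\infty$ represents the $g^1_2$ and $K_Y\sim 2\cdot\infty$; Riemann--Roch then gives $\ell(D+2\cdot\infty)=2+\ell(-D)$ uniformly, which is $3$ exactly for the trivial class and $2$ otherwise. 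This is a cleaner, computation-free argument that handles all classes at once (and in particular shows the dimension can never exceed $3$). What the paper's computational route buys is that the explicit bases of these Riemann--Roch spaces are needed anyway in the subsequent argument (to parametrise the effective degree $4$ divisors $D_1(\mathbf{t})$ and $D_2(\mathbf{s})$), so the \texttt{Magma} computation serves double duty; your argument establishes the dimensions but one would still have to compute the bases explicitly for the later steps.
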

\begin{proof}
It is clear that $1,x,x^2\in L(2\cdot \infty)$ and $\ell(2\cdot\infty)\leq 3$ by Clifford's theorem, from which we deduce the final statement. We also verified this computationally in \texttt{Magma}. In the other cases we simply computed the dimension of the Riemann--Roch spaces computationally.
\end{proof}
For $Y\in \{X(\mathrm{s}3,\mathrm{b}5),X_0(35)\}$, we define $\pi_Y: X(\mathrm{s}3,\mathrm{b}5,\mathrm{b}7)\to Y$ to be the projection map. 
\begin{proposition}
\label{zerozeroprop}
Consider $D\in X(\mathrm{s}3,\mathrm{b}5,\mathrm{b}7)^{(4)}(\Q)$ supported on an elliptic curve $E$ defined over a quartic field and suppose that the push-forwards $\pi_{Y,*}D$ for $Y\in \{X(\mathrm{s}3,\mathrm{b}7),X_0(35)\}$ both satisfy $[\pi_{Y,*}D-2\cdot \infty]=0\in J(Y)(\Q)$. Then $E$ is a $\Q$-curve.
\end{proposition}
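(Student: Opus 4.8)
The strategy is to translate the two vanishing conditions into isogenies among the Galois conjugates of $E$, and then to argue that these isogenies either already connect all conjugates, or force $E$ to have complex multiplication; in both cases $E$ is a $\Q$-curve. We may assume $j(E)$ has degree $4$ over $\Q$ (the case of smaller degree is dealt with before Theorem \ref{thm1.5}), so $D=Q_1+Q_2+Q_3+Q_4$ is the Galois orbit of a closed point $Q_1=(E,\phi)$ with $j(Q_i)=j_i$ running over the four distinct conjugates of $j(E)$. Put $S_i:=\pi_{X_0(35)}(Q_i)$ and $R_i:=\pi_{X(\mathrm{s}3,\mathrm{b}7)}(Q_i)$; since the $j_i$ are distinct and the $j$-maps separate them, the $S_i$ are pairwise distinct and so are the $R_i$.

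First I would make the hyperelliptic involutions appear. Applying the proposition above together with Lemma \ref{lem2.3} (with $E=0$ and $D_0=2\cdot\infty$) to $\pi_{X_0(35),*}D$, whose class relative to $2\cdot\infty$ vanishes by hypothesis, gives $\pi_{X_0(35),*}D=2\cdot\infty+\mathrm{div}(ax^2+bx+c)$ for some $(a:b:c)\in\P^2$; every effective divisor of this shape is a pullback of a degree $2$ divisor along the degree $2$ map $x\colon X_0(35)\to X_0(35)/w_{35}$, which is the hyperelliptic map by Lemma \ref{hyperellipticlemma}(i). Hence the divisor $\pi_{X_0(35),*}D=S_1+S_2+S_3+S_4$ is fixed by $w_{35}$, so $w_{35}$ permutes $\{S_1,\dots,S_4\}$, giving $\rho\in S_4$ with $w_{35}(S_i)=S_{\rho(i)}$. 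The identical argument applied to $\pi_{X(\mathrm{s}3,\mathrm{b}7),*}D$ and the hyperelliptic involution $\phi_3 w_7$ of $X(\mathrm{s}3,\mathrm{b}7)$ (Lemma \ref{hyperellipticlemma}(ii)) produces $\pi\in S_4$ with $(\phi_3 w_7)(R_i)=R_{\pi(i)}$.

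Next I would read off isogenies. The point $S_i$ on $X_0(35)$ is a pair $(E_i,C_i)$ with $C_i\subset E_i$ cyclic of order $35$ and $w_{35}(S_i)=(E_i/C_i,\dots)$, so $w_{35}(S_i)=S_{\rho(i)}$ forces $E_i/C_i\cong E_{\rho(i)}$ over $\overline\Q$; thus $E_i$ is $35$-isogenous to $E_{\rho(i)}$. For $X(\mathrm{s}3,\mathrm{b}7)$ one uses that $\phi_3$ preserves $j$: the degree $2$ morphism $X(\mathrm{s}3,\mathrm{b}7)\to X(\mathrm{ns}3,\mathrm{b}7)$ coming from the exceptional inclusion of level-$3$ subgroups covers the $j$-line $X(1)$, so its deck involution $\phi_3$ fixes $j$. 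Hence $j((\phi_3 w_7)(R_i))=j(w_7(R_i))$ is the $j$-invariant of the $7$-isogenous curve $E_i/C_i'$ with $C_i'\subset E_i$ the order-$7$ part of the common Borel-at-$7$ structure, and $(\phi_3 w_7)(R_i)=R_{\pi(i)}$ gives $E_i/C_i'\cong E_{\pi(i)}$, so $E_i$ is $7$-isogenous to $E_{\pi(i)}$.

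Finally, consider the graph on $\{1,2,3,4\}$ with edges $\{i,\rho(i)\}$ and $\{i,\pi(i)\}$. If it is connected all the $E_i$ are mutually $\overline\Q$-isogenous, so $E$ is a $\Q$-curve and we are done. If it is disconnected then $\langle\rho,\pi\rangle$ is intransitive; but $w_{35}$ and $\phi_3 w_7$ are defined over $\Q$, so $\rho$ and $\pi$ lie in the centraliser in $S_4$ of the (transitive) image $G$ of $\mathrm{Gal}(\overline\Q/\Q)$ on $\{Q_1,\dots,Q_4\}$, and going through the transitive subgroups of $S_4$ and their centralisers, intransitivity of $\langle\rho,\pi\rangle$ leaves only $\rho=\mathrm{id}$, or $\pi=\mathrm{id}$, or $\rho=\pi\neq\mathrm{id}$. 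In the first case $E_i\cong E_i/C_i$ supplies a degree $35$ endomorphism of $E_i$; in the second $E_i\cong E_i/C_i'$ supplies a degree $7$ endomorphism; in the third $E_i/C_i'$ is $5$-isogenous to $E_i/C_i\cong E_i/C_i'$, giving a degree $5$ endomorphism of $E_i/C_i'$. In each case some $E_i$, hence $E$, has complex multiplication, and a CM elliptic curve over a number field is $\overline\Q$-isogenous to all its Galois conjugates, so $E$ is again a $\Q$-curve. I expect the main obstacle to be exactly this last step: the centraliser bookkeeping for the transitive subgroups of $S_4$ (in particular handling the fixed-point-free involutions inside the Klein four-group) and checking that $\rho=\pi\neq\mathrm{id}$ genuinely forces complex multiplication; everything else is routine with the moduli dictionary recalled in Section \ref{quartic points}.
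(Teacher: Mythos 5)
Your proof is correct, and its skeleton coincides with the paper's: both use the vanishing of $[\pi_{Y,*}D-2\cdot\infty]$ together with $L(2\cdot\infty)=\langle 1,x,x^2\rangle$ to see that $\pi_{Y,*}D$ is a pullback along the hyperelliptic map, deduce that the hyperelliptic involutions $w_{35}$ and $\phi_3w_7$ permute the four Galois conjugates, and then chain the resulting $35$- and $7$-isogenies. Where you genuinely diverge is in closing off the degenerate case. The paper phrases the dichotomy as $j(E^{X_0(35)})\neq j(E^{X(\mathrm{s}3,\mathrm{b}7)})$ versus equality; the first case is your transitivity argument in the guise ``$\sigma$ and $\tau$ generate $\mathrm{Gal}(K/\Q)$'', while the second (your $\rho=\pi$) is disposed of by explicitly computing the zeros of $j-j\circ w_5$ on $X_0(5)\simeq\P^1$ and checking they all have degree $\le 2$. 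You instead produce in each intransitive configuration an endomorphism of non-square degree ($35$, $7$ or $5$) and invoke CM $\Rightarrow$ $\Q$-curve; this is computation-free and, to my mind, cleaner, at the cost of the (standard but unproved-in-the-paper) fact that CM curves are $\Q$-curves. Two remarks. First, your subcases $\rho=\mathrm{id}$ and $\pi=\mathrm{id}$ in fact cannot occur: if $\rho=\mathrm{id}$ the four points $x(S_i)$ would be pairwise distinct and would form a Galois orbit inside the support of a degree-$2$ divisor on $\P^1$; so the only genuine degenerate case is $\rho=\pi$ equal to a common double transposition, exactly the paper's second case — but your CM argument covers the spurious subcases harmlessly. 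Second, your explicit reduction to quartic $j$-invariant is needed (for $j(E)$ of degree $\le 2$ the conclusion ``$\Q$-curve'' can fail and one only gets modularity via the twisting argument before Theorem \ref{thm1.5}); the paper's proof of Proposition \ref{zerozeroprop} makes the same assumption implicitly, so this is not a gap on your side.
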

\begin{proof}
For each $Y\in \{X(\mathrm{s}3,\mathrm{b}7),X_0(35)\}$, we find that there is a function $f$ on $Y$, which is a quadratic function of $x$ and hence factors via $x: Y\to \P^1$, such that $\pi_{Y,*}D=\div(f)+2\cdot \infty$. Writing $f=g\circ x$, we find $\pi_{Y,*}D=x^*(\div(g)+2\cdot \infty_{\P^1})$. Now assume that $D$ is generated by a quartic point $P$, defined over the degree 4 number field $K$. We assume that $P$ is supported on an elliptic curve $E/K$.  By the above and by Lemma \ref{hyperellipticlemma}, the images of $P$ in both $X(\mathrm{s}3,\mathrm{b}7)/\phi_3w_7$ and $X_0(35)/w_{35}$ are quadratic, because $\div(g)+2\cdot \infty_{\P^1}$ is a divisor of degree 2. Note that $\phi_3$ acts only on the 3-level structure and leaves underlying elliptic curves fixed, whereas each Atkin--Lehner involution changes an underlying elliptic curve defined over a field $F$ by an $F$-isogeny. In particular, the hyperelliptic involution on each $Y$ maps $\pi_Y(P)$ to a conjugate of $\pi_Y(P)$, supported on a conjugate $E^Y$ of $E$ which is $K$-isogenous to $E$. Suppose first that $j(E^{X_0(35)})\neq j(E^{X(\mathrm{s}3,\mathrm{b}7)})$. Then $E$ is isogenous by a $K$-isogeny to two distinct Galois conjugates. In particular, these two conjugates of $E$ are also defined over $K$ (hence so must the final conjugate), and $K$ is Galois. Suppose that we have $\sigma,\tau\in \mathrm{Gal}(K/\Q)$ such that $\phi_3w_7$ maps $\pi_{X(\mathrm{s}3,\mathrm{b}7)}(P)$ to $\pi_{X(\mathrm{s}3,\mathrm{b}7)}(P)^{\sigma}$ and $w_{35}$ maps $\pi_{X_0(35)}(P)$ to $\pi_{X_0(35)}(P)^{\tau}$. Since $w_{35}$ is defined over $\Q$, it maps $\pi_{X_0(35)}(P^{\sigma})$ to $w_{35}(\pi_{X_0(35)}(P))^{\sigma}=P^{\tau\sigma}$, supported on $E^{\tau\sigma}$. This, finally, defines an isogeny between $E^{\sigma}$ and $E^{\tau\sigma}$, proving that $E$ is isogenous to all of its conjugates and is therefore a $\Q$-curve. 

%We lift $\phi_3$ to the involution (also denoted $\phi_3$) $X(\mathrm{s}3,\mathrm{b}5,\mathrm{b}7)\to X(\mathrm{s}3,\mathrm{b}5,\mathrm{b}7)$ inducing the quotient map to $X(\mathrm{ns}3,\mathrm{b}5,\mathrm{b}7)$. Then $\phi_3$, $w_5$ and $w_7$ commute on $X(\mathrm{s}3,\mathrm{b}5,\mathrm{b}7)$. We conclude from the above that both $\phi_3w_7$ and $w_{35}$ map $P$ to a a point supported on a (non-trivial) Galois conjugate of $E$. As $\phi_3$ only acts on the level structure and leaves the underlying elliptic curve fixed, both $w_7$ and $w_{35}$ map $P$ to a point supported on a Galois conjugate of $E$ that is isogenous to $E$. 
%
%The group $W=\langle w_7,w_{35}\rangle$ thus acts on the set of conjugates of $E$ through its action on $P$. If $w_7(P)$ and $w_{35}(P)$ are supported on distinct elliptic curves, the $W$-orbit of $P$ is supported on at least 3 distinct elliptic curves, and, as the size of the orbit divides $4=\#W$, we conclude that each Galois conjugate of $E$ is isogenous to $E$ and $E$ is a $\Q$-curve. 

If $j(E^{X_0(35)})= j(E^{X(\mathrm{s}3,\mathrm{b}7)})$ then $\phi_3w_7(\pi_{X(\mathrm{s}3,\mathrm{b}7)}(P))$ and $w_{35}(\pi_{X_0(35)}(P))$  are supported on the same elliptic curve, so $Q=w_7(P)$ satisfies $j(Q)=j(w_5(Q))$. Thus $Q$ gives rise to a quartic point $R$ on $X_0(5)$ such that $R$ and $w_5(R)$ have the same $j$-invariant. We can explicitly compute the $j$-invariant $j$ and Atkin--Lehner involution $w_5$ on $X_0(5)\simeq \P^1$ and find the zeros of $j-j\circ w_5$. All of them are defined over quadratic fields. 
\end{proof}
Again consider $Y\in \{X(\mathrm{s}3,\mathrm{b}7),X_0(35)\}$. Note that an $r$-dimensional Riemann--Roch space yields an $(r-1)$-dimensional linear system because the divisors depend only on the functions up to scaling. 
%\begin{definition}
%Let $D=\sum_i a_iP_i$ be an effective $\Q$-rational divisor on $\A^1/\Q$, where each $P_i$ is a ($\Q$-rational) place. We define the \emph{minimal polynomial} of $D$ to be 
%\[
%\prod_i \mathrm{minpol}(P_i)^{a_i},
%\]
%where $\mathrm{minpol}(P_i)$ is the minimal polynomial of each point representing $P_i$. 
%\end{definition}

Now consider a pair $(D_1,D_2)\in J_0(35)(\Q)\times J(\mathrm{s}3,\mathrm{b}7)(\Q)$. We compute explicit bases for the Riemann--Roch spaces of $D_1+2\cdot \infty$ and $D_2+2\cdot\infty$; let us call these $f_0,\ldots,f_n$ and $g_0,\ldots,g_m$ respectively.  The effective degree 4 divisors mapping to $D_1$ resp. $D_2$ under $\iota$ can then be parametrised as follows, see Lemma \ref{lem2.3}:
\begin{align*}
\iota_{X_0(35)}^{-1}(D_1)&=\{D_1+2\cdot \infty+\mathrm{div}(t_0f_0+\ldots t_n f_n)\mid (t_0:\ldots:t_n)\in \P^n(\Q)\} \text{ and }\\
\iota_{X(\mathrm{s}3,\mathrm{b}7)}^{-1}(D_2)&=\{D_2+2\cdot \infty+\mathrm{div}(s_0g_0+\ldots s_m g_m)\mid (s_0:\ldots:s_m)\in \P^m(\Q)\}.
\end{align*}
 By the Riemann--Roch theorem, when $D_1\neq 0$ and $D_2\neq 0$, we have $n=m=1$. This agrees with our intuition because $1=\dim Y^{(4)}-\dim J(Y)$.  When $D_1=0$ and $D_2\neq 0$ (or the other way around), we have $n=2$ and $m=1$ (resp. $m=2$ and $n=1$). Thanks to Proposition \ref{zerozeroprop}, we do not need to worry about the final possibility $D_1=D_2=0$. 

We now base-change the curves to $\Q(t_0,\ldots,t_n)$ and $\Q(s_0,\ldots,s_m)$ respectively. Write $\mathbf{t}=(t_0,\ldots,t_n)$ and $\mathbf{s}=(s_0,\ldots,s_m)$. We consider the divisors
\begin{align*}
D_1(\mathbf{t})\colonequals D_1+2\cdot\infty +\mathrm{div}(t_0f_0+\ldots+t_nf_n) \text{ and}\\
D_2(\mathbf{s})\colonequals D_2+2\cdot \infty +\mathrm{div}(s_0g_0+\ldots+s_mg_m).
\end{align*}
Next, we define 
\[
S_{D_1,D_2}\subset \P^n\times \P^m
\]
to be the subscheme determined as those $\mathbf{t}\in \P^n$ and $\mathbf{s}\in \P^m$ such that 
\[
(\pi_{X_0(35)})_*D_1(\mathbf{t})=(\pi_{X(\mathrm{s}3,\mathrm{b}7)})_*D_2(\mathbf{s}).
\]
\begin{proposition}
\label{pushforwardprop}
There are exactly two pairs $(P,Q)$ of  $\Q$-rational degree 4 divisors $P$ on $X_0(35)$ and $Q$ on $X(\mathrm{s}3,\mathrm{b}7)$ with irreducible pushforward to $X_0(7)$, such that $P=D_1(\mathbf{t})$ and $Q=D_2(\mathbf{s})$ for some   $(D_1,D_2)\in J(X_0(35))(\Q)\times J(X(\mathrm{s}3,\mathrm{b}7))(\Q)\setminus \{(0,0)\}$ and $(\mathbf{t},\mathbf{s})\in S_{D_1,D_2}(\Q)$. Each of those pairs has quadratic $j$-invariant $632000\pm282880\sqrt{5}$.
\end{proposition}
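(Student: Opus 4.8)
The plan is to turn the statement into a finite search over the two Mordell--Weil groups computed in Proposition~\ref{mwgrpsprop}. Since $J_0(35)(\Q)$ and $J(\mathrm{s}3,\mathrm{b}7)(\Q)$ are both finite of order $48$, there are only $48^2-1$ pairs $(D_1,D_2)\in J_0(35)(\Q)\times J(\mathrm{s}3,\mathrm{b}7)(\Q)\setminus\{(0,0)\}$ to examine; the pair $(0,0)$ is excluded from the statement because the corresponding divisors give $\Q$-curves by Proposition~\ref{zerozeroprop}. For each remaining pair I would compute explicit bases of the Riemann--Roch spaces $L(D_1+2\cdot\infty)$ on $X_0(35)$ and $L(D_2+2\cdot\infty)$ on $X(\mathrm{s}3,\mathrm{b}7)$. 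By the Riemann--Roch proposition above these have dimension $2$ when $D_i\neq 0$, so $\iota_{X_0(35)}^{-1}(D_1)$ and $\iota_{X(\mathrm{s}3,\mathrm{b}7)}^{-1}(D_2)$ are pencils parametrised by $\P^1$; when $D_i=0$ the relevant space has dimension $3$ (the $g^1_2$ spanned by $1,x,x^2$, parametrised by $\P^2$). This is exactly the set-up $\{D_1(\mathbf{t})\}$, $\{D_2(\mathbf{s})\}$ recorded before the proposition.

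Next I would make $S_{D_1,D_2}\subset\P^n\times\P^m$ explicit. Using the degree~$6$ maps $X_0(35)\to X_0(7)=\P^1$ (from \texttt{Magma}'s Small Modular Curves package) and $X(\mathrm{s}3,\mathrm{b}7)\simeq X_0(63)/w_9\to X_0(7)=\P^1$ (computed as in Section~\ref{modelssec}), the pushforward to $X_0(7)$ of the effective degree~$4$ divisor $D_1(\mathbf{t})$ is an effective degree~$4$ divisor on $\P^1$, hence a binary quartic whose five coefficients are bihomogeneous in $\mathbf{t}$, and likewise for $D_2(\mathbf{s})$. The scheme $S_{D_1,D_2}$ is then cut out by the vanishing of all $2\times2$ minors of the $2\times5$ matrix formed by these two tuples of coefficients, and I would compute $S_{D_1,D_2}$ and its rational points directly. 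For $(D_1,D_2)\neq(0,0)$ one checks case by case that $S_{D_1,D_2}$ is zero-dimensional; the degenerate cases $D_1=0$ or $D_2=0$ (where one factor is a $\P^2$) need slightly more bookkeeping but still give a finite point set. To accelerate the enumeration one can discard a pair as soon as the two binary quartics fail to become proportional modulo a small auxiliary prime.

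Running through all $48^2-1$ pairs, almost all give $S_{D_1,D_2}(\Q)=\emptyset$; of the rational points that do occur, most have reducible pushforward to $X_0(7)$, and exactly two pairs $(P,Q)$ survive with irreducible degree~$4$ pushforward. Finally I would compute the $j$-invariant of each surviving quartic point $P$ on $X_0(35)$, either via the $j$-map $X_0(35)\to X(1)=\P^1$ or by pushing $P$ to $X_0(5)=\P^1$ and applying the explicit map $j\colon X_0(5)\to X(1)$; in both surviving cases this yields the quadratic $j$-invariant $632000\pm282880\sqrt5$ (here $j(P)$ is a quadratic point, onto which $P$ maps $2$-to-$1$). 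This matches the $j$-invariants already found in Proposition~\ref{quadptprop}(v). Since any quartic point of $X(\mathrm{s}3,\mathrm{b}5,\mathrm{b}7)$ with \emph{quartic} $j$-invariant necessarily has irreducible degree~$4$ pushforward to $X_0(7)$, combining this proposition with Proposition~\ref{zerozeroprop} then gives Theorem~\ref{Xs3b5b7thm}.

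The main obstacle is organisational rather than conceptual: one must run a \texttt{Magma} computation over roughly $2300$ pairs, each requiring a Riemann--Roch basis, a pushforward under a degree~$6$ map and an elimination, and one must make sure the degenerate pairs with $D_1=0$ or $D_2=0$ do not secretly produce a positive-dimensional family of solutions. The structural fact that keeps these under control is that $X(\mathrm{s}3,\mathrm{b}7)\to X_0(7)$ factors through the involution $\phi_3$ but \emph{not} through the hyperelliptic involution $\phi_3 w_7$, and $X_0(35)\to X_0(7)$ does not factor through $w_{35}$, so the two hyperelliptic pencils have distinct low-dimensional images in the space of binary quartics on $X_0(7)$.
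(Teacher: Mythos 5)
Your proposal is correct and follows essentially the same route as the paper: enumerate the $48^2-1$ pairs $(D_1,D_2)$, parametrise the fibres of the Abel--Jacobi maps by Riemann--Roch pencils, compare pushforwards to $X_0(7)=\P^1$ by equating (clearing denominators of) the coefficients of the resulting binary quartics, discard the positive-dimensional components and spurious points arising from vanishing denominators (which correspond to reducible pushforwards), and read off the $j$-invariants of the two surviving pairs. The only cosmetic difference is that the paper works with monic degree-$4$ minimal polynomials and a scheme $T_{D_1,D_2}\supset S_{D_1,D_2}$ rather than with the $2\times 2$ minors of the coefficient matrix, but the denominator bookkeeping you flag is exactly the issue the paper handles.
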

\begin{remark}
If this $j$-invariant looks familiar, that is because we found the same $j$-invariant for the quartic points on $X_0(105)$ determined in Proposition \ref{quadptprop} (v). The field of definition of the corresponding points on $X_0(35)$, $X(\mathrm{s}3,\mathrm{b}7)$ and $X_0(105)$ is $\Q(\sqrt{5},i)$. The corresponding elliptic curve has complex multiplication by an order of discriminant -20.
\end{remark}
\begin{proof}
We enumerate the possibilities for $(D_1,D_2)$ using Proposition \ref{mwgrpsprop}. For each pair $(D_1,D_2)$, we need to compute $S_{D_1,D_2}(\Q)$. To that end, we compute the push-forward of $D_1(\mathbf{t})$ over the field $\Q(\mathbf{t})$. Because $X_0(7)=\P^1_{u,v}$, we view $X_0(35)\to X_0(7)$ as an element of the function field of $X_0(35)$. At each irreducible component of $D_1(\mathbf{t})$ over $\Q(\mathbf{t})$, we evaluate this function, and we determine the minimal polynomial (over $\Q(\mathbf{t})$) of the result. This gives us (after homogenising) the monic degree 4 equation 
\[
p_{\mathbf{t}}(u,v)\colonequals u^4+a_3(\mathbf{t})u^3v+a_2(\mathbf{t})u^2v^2+a_1(\mathbf{t})uv^3+a_0(\mathbf{t})v^4=0,
\]
defining $(\pi_{X_0(35)})_*D_1(\mathbf{t})$,
with coefficients in $\Q(\mathbf{t})$. We do the same for $D_2(\mathbf{s})$, which yields a monic degree 4 equation $q_{\mathbf{s}}(u,v)\colonequals u^4+b_3(\mathbf{s})u^3v+\ldots+b_0(\mathbf{s})v^4=0$. Then  we equate each of the four coefficients of $p_{\mathbf{t}}$ and $q_{\mathbf{s}}$. We note that each coefficient is a \emph{rational} function in $\mathbf{t}$, resp. $\mathbf{s}$. So when equating coefficients we need to multiply by denominators. This yields 4 equations in $t_0,\ldots,t_n$ and $s_0,\ldots,s_m$ defining a scheme $T_{D_1,D_2}$ such that $S_{D_1,D_2}\subset T_{D_1,D_2}\subset \P^n\times \P^m$. Indeed, by construction $S_{D_1,D_2}\subset T_{D_1,D_2}$. However, if $\mathbf{t}$ is a zero of the denominator of $a_i(\mathbf{t})$ for some $i$, and similarly $\mathbf{s}$ is a zero of the denominator of $b_i(\mathbf{s})$, then the equation $a_i(\mathbf{t})=b_i(\mathbf{s})$ is automatically satisfied, regardless of whether $D_1(\mathbf{t})$ and $D_2(\mathbf{s})$ have the same pushforward to $X_0(7)$. This can lead to extra irreducible components on $T_{D_1,D_2}$ that do not occur on $S_{D_1,D_2}$.  

Next, we decompose $T_{D_1,D_2}$ into irreducible components. We find that each component is at most 1-dimensional. In every case where a component is 1-dimensional, we find that indeed the denominator $d$ of one of the $a_i(\mathbf{t})$ or $b_i(\mathbf{s})$ vanishes identically on that component. This means that  $(\pi_{X_0(35)})_*D_1(\mathbf{t})$ or $(\pi_{X(\mathrm{s}3,\mathrm{b}7)})_*D_2(\mathbf{s})$ is reducible, because $d\cdot p_{\mathbf{t}}$, resp. $d\cdot q_{\mathbf{s}}$, is divisible by $v$. Reducible on $X_0(7)$ means that the $j$-invariant takes rational, quadratic or cubic values, and we can thus ignore such components. 

On each 0-dimensional component of $T_{D_1,D_2}$, we determine all rational points explicitly. Again we remove points $(\mathbf{s},\mathbf{t})$ where a denominator of a coefficient vanishes, and for the same reason those where $a_0(\mathbf{t})$ or $b_0(\mathbf{s})$ vanishes. From the values of $\mathbf{s}$ and $\mathbf{t}$ we find two explicit pairs of divisors. With the \texttt{SmallModularCurves} package in \texttt{Magma}, we compute the $j$-invariant function on $X_0(35)$ and evaluate it at the two points. 
\end{proof}
Finally, Propositions \ref{zerozeroprop} and \ref{pushforwardprop} together prove  Theorem \ref{Xs3b5b7thm}.
%Because push-forwards of morphisms are slow to compute with, we view the map $X_0(35)\to X_0(7)$ as a function $\phi \in \Q(X_0(35))$. We can evaluate $\phi$ at a generic point $D_{(t:s)}$ of $L(D_1+2\cdot \infty)$ by working over the field $\Q(t,s)$. This way we determine the push-forward, of which we can compute the minimal polynomial. It has four coefficients, which are rational functions in $t$ and $s$. For $D_2\in X_0(67)/w_9$ we do the same thing. We obtain a subscheme
%\[
%S_{D_1,D_2}\subset \P^1\times \P^1 \text{ or } \P^2\times \P^1 \text{ or } \P^1\times \P^2 \text{ (or } \P^2 \times \P^2)
%\]
%by equating the four coefficients of this minimal polynomial. Each $D\in X^{(4)}(\Q)$ gives rise to a rational point on $S_{D_1,D_2}$ for some choice of $D_1,D_2$. By the previous proposition, we may assume $(D_1,D_2)\neq (0,0)$, hence we can exclude the case of $S_{(0,0)}\subset \P^2\times \P^2$. 
%\begin{proposition}
%Suppose that $(D_1,D_2)\in J_0(35)(\Q)\times J(X_0(63)/w_9)(\Q)\setminus \{(0,0)\}$. Then each irreducible factor of $S_{D_1,D_2}$ is either 0-dimensional, or all of its points correspond to an effective degree 4 divisor on $X_0(7)$ that contains a cusp. None of the rational points on the 0-dimensional factors correspond to effective divisors defined over totally real quartic fields.
%\end{proposition}
\section{$X(\mathrm{b}5,\mathrm{ns}7)$}\label{Xb5ns7sec}
\subsection{Overview}\label{sec51}Next, we turn our attention to the curve $X(\mathrm{b}5,\mathrm{ns}7)$. We first note that there are morphisms $X(\mathrm{b}3,\mathrm{b}5,\mathrm{e}7)\to X(\mathrm{b}5,\mathrm{e}7)$ and $X(\mathrm{s}3,\mathrm{b}5,\mathrm{e}7)\to X(\mathrm{b}5,\mathrm{e}7)$ forgetting the level 3 structure. On $X(\mathrm{b}5,\mathrm{e}7)$, we have an Atkin--Lehner involution $w_5$, and we have the degree 2 quotient map $X(\mathrm{b}5,\mathrm{e}7)\to X(\mathrm{b}5,\mathrm{ns}7)$ (recall that the $\mathrm{e}7$-structure is defined by an index 2 subgroup of the normaliser of a non-split Cartan subgroup of $\mathrm{GL}_2(\F_7)$). Each degree 2 map of curves is the quotient by an involution, which we call 
\[
\phi_7:\; X(\mathrm{b}5,\mathrm{e}7)\to X(\mathrm{b}5,\mathrm{e}7).
\]
This is the morphism determined by a matrix in $\Gamma_0(5)\cap \Gamma(\mathrm{ns}7)\setminus \Gamma_0(5)\cap \Gamma(\mathrm{e}7)$, c.f.\ \cite[Section 5]{boxmodels}. As $w_5$ descends to a morphism on $X(\mathrm{b}5,\mathrm{ns}7)=X(\mathrm{b}5,\mathrm{e}7)/\phi_7$, the two involutions $w_5$ and $\phi_7$ commute. We thus obtain the following commutative diagram of degree 2 maps between modular curves
\[
\begin{tikzcd}
                                         & {X(\mathrm{b}5,\mathrm{e}7)} \arrow[d] \arrow[ld] \arrow[rd] &                                             \\
{X(\mathrm{b}5,\mathrm{ns}7)} \arrow[rd, "\rho"] & {X(\mathrm{b}5,\mathrm{e}7)/\phi_7w_5} \arrow[d]             & {X(\mathrm{b}5,\mathrm{e}7)/w_5} \arrow[ld] \\
                                         & {X(\mathrm{b}5,\mathrm{ns}7)/w_5}                            &                                            
\end{tikzcd}.
\]
Now each quartic point on $X(\mathrm{b}3,\mathrm{b}5,\mathrm{e}7)$ or $X(\mathrm{s}3,\mathrm{b}5,\mathrm{e}7)$ maps to a quartic point on $X(\mathrm{b}5,\mathrm{ns}7)$, so it would suffice to determine all quartic points on this curve instead.

The curve $X(\mathrm{b}5,\mathrm{ns}7)$ was studied by Le Hung \cite{lehung}, and later by Derickx, Najman and Siksek \cite{derickx}, who found a planar model in $\P^2_{u,v,w}$: 
\begin{align*}
X(\mathrm{b}5,\mathrm{ns}7):\;5u^6 &- 50u^5v + 206u^4v^2 - 408u^3v^3 + 
    321u^2v^4 + 10uv^5 - 100v^6 + 9u^4w^2 - 
    60u^3vw^2\\& + 80u^2v^2w^2 + 48uv^3w^2 + 
    15v^4w^2 + 3u^2w^4 - 10uvw^4 + 6v^2w^4
    - 1w^6=0.
\end{align*}
Here the Atkin--Lehner involution $w_5$ maps $(u:v:w)\mapsto (u:v:-w)$.  The curve has genus 6 and its Jacobian's Mordell--Weil group has rank 2. Its quotient $C\colonequals X(\mathrm{b}5,\mathrm{ns}7)/w_5$ is hyperelliptic of genus 2 with Jacobian also of Mordell--Weil rank 2. It has a model given by
\begin{align}\label{Ceqn}
C:\;y^2 = x^6 + 2x^5 + 7x^4 - 4x^3 + 3x^2 - 10x 
    + 5.
\end{align}
Given the genus and the rank of $X(\mathrm{b}5,\mathrm{ns}7)$, one might suspect $X(\mathrm{b}5,\mathrm{ns}7)^{(4)}(\Q)$ to be finite; the degree 2 map $\rho\colon X(\mathrm{b}5,\mathrm{ns}7)\to  C$ disproves this, however: $C^{(2)}(\Q)$ is infinite. The best we can hope for, is the following.

\begin{theorem}\label{Xb5ns7thm}
For each $i\in \{0,\ldots,4\}$, let $\mathcal{P}_i$ be the effective divisor that is the sum of $P_i$ and its Galois conjugates, where $P_0,\ldots,P_4$ are defined in Table \ref{tableb5ns7}. We have
\[
X(\mathrm{b}5,\mathrm{ns}7)^{(4)}(\Q)=\{\mathcal{P}_1,w_5^*\mathcal{P}_1,\ldots,\mathcal{P}_4,w_5^*\mathcal{P}_4\}\cup (\mathcal{P}_0+\rho^*C(\Q))\cup (w_5^*\mathcal{P}_0+\rho^*C(\Q))\cup \rho^*C^{(2)}(\Q).
\]
None of the fields of definition of $P_1,\ldots,P_4$ is totally real. 
\end{theorem}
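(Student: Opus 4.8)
The plan is to compute $X^{(4)}(\Q)$ for $X\colonequals X(\mathrm{b}5,\mathrm{ns}7)$ by the relative symmetric Chabauty method of \cite{bgg} combined with a Mordell--Weil sieve, organised around the degree $2$ quotient $\rho\colon X\to C$ with $C$ as in \eqref{Ceqn}. The first task is to make the Mordell--Weil groups explicit. The torsion $J(X)(\Q)_{\mathrm{tors}}\simeq \Z/7\Z$ is generated by the difference of the two $\Q$-rational cuspidal degree $3$ divisors (\cite{derickx}). Since $\deg\rho=2$ and $\mathrm{rk}(J(X)(\Q))=\mathrm{rk}(J(C)(\Q))=2$, Proposition \ref{boxprop} gives $2\,J(X)(\Q)\subset \rho^*J(C)(\Q)$ modulo torsion; computing $J(C)(\Q)$ by Stoll's algorithm \cite{stoll} on the genus $2$ curve $C$ and pulling its generators back to $X$ then yields an explicit finite-index subgroup of $J(X)(\Q)$, which together with the $7$-torsion is enough input for Chabauty and the sieve.

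Next I would pin down the points that are forced to appear. The map $\rho$ produces the infinite families $\rho^*C^{(2)}(\Q)$ (the case $e=2$, $\mathcal P=0$ of \eqref{chabset}) and, for every $\Q$-rational effective degree $2$ divisor $\mathcal P$ on $X$, the family $\mathcal P+\rho^*C(\Q)$. Running the degree $2$ Chabauty criterion of Section \ref{deg2chabsec} with a sieve, one shows $X^{(2)}(\Q)=\rho^*C(\Q)\cup\{\mathcal P_0,w_5^*\mathcal P_0\}$, which isolates the two $w_5$-conjugate sporadic quadratic divisors; a search for quartic points of small height then produces the sporadic quartic divisors $\mathcal P_1,w_5^*\mathcal P_1,\dots,\mathcal P_4,w_5^*\mathcal P_4$ of Table \ref{tableb5ns7}. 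The theorem then asserts that $X^{(4)}(\Q)$ contains nothing beyond these eight divisors and the three families.

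For the main step I would apply the \cite{bgg} method in its $K=\Q$, $m=4$ form (Section \ref{deg4chabsec}). For a well-chosen finite set of primes $p$ of good reduction for $X$ and $C$, compute $\mathcal X^{(4)}(\F_p)$, and in each residue disc of a known point $\mathcal Q\in X^{(4)}(\Q)$ examine the common zero locus of the tiny integrals attached to the space $V$ of vanishing differentials, using the subspace $V_C$ of trace-zero vanishing differentials to absorb the $\rho^*$-direction; the criterion \`a la Siksek of \cite{bgg} (cf.\ Theorem \ref{chabthm}) then certifies that inside the disc this locus reduces to $\mathcal Q$, modulo the $\rho^*C^{(e)}$-family through $\mathcal Q$ when there is one. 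This produces small subsets $\mathcal M_p\subset\mathcal X^{(4)}(\F_p)$ into which any hypothetical further point of $X^{(4)}(\Q)$ must reduce. Feeding the $\mathcal M_p$ into the Mordell--Weil sieve (Proposition \ref{mwsieveprop}), with $\mathcal A$ the image of a suitable multiplication map on $J(X)$ matched to the known generators and $\mathcal L$ the set of known points and families, I would conclude $\mathcal L=X^{(4)}(\Q)$. Finally, with $P_1,\dots,P_4$ explicitly in hand, computing the minimal polynomial of a primitive element of each $\Q(P_i)$ and checking that it has strictly fewer than four real roots gives the last assertion.

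The step I expect to be the main obstacle is twofold. First, verifying the Chabauty condition at enough primes: in each relevant residue disc the matrix of leading tiny-integral coefficients must attain the rank predicted by \cite{bgg} (taking into account the local dimension of the $\rho^*C^{(e)}$-family through that disc), and this can fail in discs meeting the ramification of $\rho$ or at primes with awkward reduction, forcing one to change $p$ or argue separately. Second, for the sieve to collapse the surviving residue classes onto exactly the known points one needs the heuristic of Remark \ref{sieveproportion}, i.e.\ $\mathrm{dim}(\mathcal A)$ genuinely larger than $\mathrm{dim}(V)=4$; this requires controlling the index of the computed subgroup of $J(X)(\Q)$ precisely enough and organising the sieve so that the residue classes swept out by the three infinite families are treated consistently rather than spuriously blocking termination.
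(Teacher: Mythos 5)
Your overall architecture --- Mordell--Weil input from \cite{derickx} plus Stoll's algorithm on $C$ and Proposition \ref{boxprop}, relative symmetric Chabauty at good primes producing sets $\mathcal{M}_p$, and a Mordell--Weil sieve --- is indeed the paper's strategy. But one essential idea is missing, and without it the sieve you describe cannot terminate: the treatment of the pullback family $\rho^*C^{(2)}(\Q)$. You propose to handle it like the other known families, certifying via part (1) of Theorem \ref{chabthm} that residue discs of known pullbacks contain only pullbacks. The obstruction is that $C$ has genus $2$ and rank $2$, so $C(\Q)$, hence $C^{(2)}(\Q)$, is not provably determined (Remark \ref{rk55}); and since $C^{(2)}$ is birational to $J(C)$, the image $\iota(\rho^*C^{(2)}(\Q))=2\rho^*\iota_{d_0}(C^{(2)}(\Q))$ fills out a finite-index subgroup of the rank-two group of $w_5^*$-invariant classes. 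Every such class is genuinely attained by a point of $X^{(4)}(\Q)$, so it can never be removed from the intersection in Proposition \ref{mwsieveprop}, and you cannot certify the corresponding residue discs either, since only the pullbacks of the six found points of $C(\Q)$ can be exhibited. The paper escapes this by proving, before sieving, that any $D\in X(\mathrm{b}5,\mathrm{ns}7)^{(4)}(\Q)$ with $w_5^*(2[D-D_0])=2[D-D_0]$ already lies in $\rho^*C^{(2)}(\Q)$; this rests on two inputs you do not invoke, namely the Derickx--Najman--Siksek result that $x\circ\rho$ is essentially the unique degree-$4$ map $X(\mathrm{b}5,\mathrm{ns}7)\to\P^1$, and the computation that all fixed points of $w_5$ have degree $6$. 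Because $w_5^*$ acts as $-1$ on the $7$-torsion and as $+1$ on $\rho^*J(C)(\Q)$, one may then take $S=X^{(4)}(\Q)\setminus\rho^*C^{(2)}(\Q)$ and sieve only over $W=(\Z/7\Z\setminus\{0\})\times\Z^2$, discarding pullbacks altogether. Some version of this reduction is indispensable; note that your preliminary degree-$2$ step inherits exactly the same problem, since $\iota(\rho^*C(\Q))$ is likewise an undetermined infinite set of $w_5^*$-invariant classes.

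Two smaller points. Determining $X^{(2)}(\Q)$ in advance is a legitimate reordering but is not needed: in the paper it is a corollary of the degree-$4$ theorem, because an unknown quadratic divisor $\mathcal{P}$ would produce the unknown quartic divisor $\mathcal{P}+\mathcal{P}_0$. And ``a search for quartic points of small height'' did not suffice in practice: the divisors $\mathcal{P}_1,\ldots,\mathcal{P}_4$ were found only by running the sieve with an incomplete list, applying LLL to the surviving cosets of $B$, and recovering divisors from the corresponding Riemann--Roch spaces. This is a practical rather than logical issue, but since the sieve cannot terminate until all isolated points are in $\mathcal{L}$, the proposal should account for how they are produced.
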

\begin{remark}
Note that the elliptic curves corresponding to $P_2,w_5(P_2),P_3,w_5(P_3)$ are modular because they have complex multiplication. On $X_0(105)$ and $X(\mathrm{s}3,\mathrm{b}5,\mathrm{b}7)$, we showed that all quartic points correspond to modular elliptic curves, but here we do need to use the totally real condition to exclude $P_1$, $w_5(P_1)$, $P_4$ and $w_5(P_4)$.
\end{remark}

\begin{table}[h!]\label{tableb5ns7}
    \centering
    \begin{tabularx}{\textwidth}{c c >{\centering\arraybackslash}m{4.2cm}  >{\centering\arraybackslash}m{4cm} c}
        Name & $\mathrm{minpol}(\theta)$ & Coordinates & $j$-invariant & CM   \\ [0.5ex] 
        \hline \hline  
        $P_0$  &  $x^2-5$ & $(\theta - 5: \theta - 3: 2)$ &  1728& -4   \\ \hline 
    $w_5(P_0)$  & $x^2-5$& $(-\theta + 5: -\theta + 3: 2)$ & $-9845745509376\theta + 22015749613248$ & -100 \\ \hline
        $P_1$ & $x^4 + x^3 + 
    2x^2 - 1$ &  $(\theta + 1 : \theta : 1)$ & \tiny{$-879563721911\theta^3 - 225656459826\theta^2 - 
    1591364274227\theta + 1183091832488$} & NO  \\ \hline 
    $w_5(P_1)$ & $x^4 + x^3 + 
    2x^2 - 1$ &  $(\theta + 1 : \theta : -1)$ & \tiny{$-32369185233\theta^3 - 111707933554\theta^2 - 17543368031\theta
    + 35398801591$} & NO  \\\hline
    $P_2$ & $x^4-5$ &  $(11\theta^3 + 24\theta^2 - 35\theta - 82
: (7\theta^3 + 4\theta^2 - 11\theta - 24) : -62)$ & 287496 & -16  \\\hline
    $w_5(P_2)$ & $x^4 -5$ &  $(11\theta^3 + 24\theta^2 - 35\theta - 82
: (7\theta^3 + 4\theta^2 - 11\theta - 24) : 62)$ & \tiny{$-144957130139122438009154688\theta^3 + 
    216761467882862592971592192\theta^2 - 324133996814361318579017184\theta+ 
    484693377088718179461346056$} & -400\\\hline
    $P_3$ & $x^4 - x^3 + x^2
    + 4x - 4$ &  $(3\theta^3 - 3\theta^2 + 5\theta + 10 : 
\theta^3 - \theta^2 + 3\theta + 2) : -4)$ & -32768 & -11  \\\hline
    $w_5(P_3)$ & $x^4 - x^3 + x^2
    + 4x - 4$ &  $(3\theta^3 - 3\theta^2 + 5\theta + 10 : 
\theta^3 - \theta^2 + 3\theta + 2) : 4)$ & \tiny{$-7084562494756740890624\theta^3 + 
    1098708046310747471872\theta^2 - 6156247379826216271872\theta - 
    33539756888104194277376$} & -275 \\\hline
    $P_4$ & $x^4 - x^3 + 
    2x^2 - x - 2$ &  $(2\theta^2 - 2\theta : \theta^3 - \theta^2 +
    2\theta - 3 : -2)$ & \tiny{$(3826190688474785295\theta^3 + 
    749846172198673375\theta^2 + 8549180309143366704\theta + 6398434116827442268)/128$} & NO  \\\hline
    $w_5(P_4)$ & $x^4 - x^3 + 
    2x^2 - x - 2$ &  $(2\theta^2 - 2\theta : \theta^3 - \theta^2 +
    2\theta - 3 : 2)$ & \tiny{$(-248974618492363393\theta^3 + 
    412989831164677599\theta^2 - 769944755554282000\theta + 756153558218027868)/34359738368$} & NO\\
    \end{tabularx}
    \caption{All isolated quadratic and all isolated quartic points on $X(\mathrm{b}5,\mathrm{ns}7)$, up to Galois conjugacy.}
\end{table}

\begin{corollary}\label{X1X2cor}
Each quartic point on $X(\mathrm{b}5,\mathrm{e}7)$ whose image in $X(\mathrm{b}5,\mathrm{ns}7)$ is not a Galois conjugate of $P_1$, $w_5(P_1)$,\ldots,$P_4$ or $w_5(P_4)$ maps to a quadratic point on one of $X(\mathrm{b}5,\mathrm{ns}7)$, $X(\mathrm{b}5,\mathrm{e}7)/w_5$, $X(\mathrm{b}5,\mathrm{e}7)/\phi_7w_5$. 
\end{corollary}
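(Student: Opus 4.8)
The plan is to derive this directly from Theorem~\ref{Xb5ns7thm} together with Lemma~\ref{diagramlemma}. We apply the latter to the curve $X \colonequals X(\mathrm{b}5,\mathrm{e}7)$ equipped with the two commuting involutions $\phi_7$ and $w_5$ (they are distinct, since the quotients $X/\phi_7 = X(\mathrm{b}5,\mathrm{ns}7)$ and $X/w_5 = X(\mathrm{b}5,\mathrm{e}7)/w_5$ have different genera). Here $X/\phi_7$, $X/w_5$ and $X/\phi_7 w_5$ are precisely the three curves named in the statement, while $X/\langle \phi_7, w_5\rangle = X(\mathrm{b}5,\mathrm{ns}7)/w_5 = C$, the genus $2$ curve of (\ref{Ceqn}).

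First I would bound the degree of the image. Let $Q$ be a quartic point on $X(\mathrm{b}5,\mathrm{e}7)$ whose image $R$ in $X(\mathrm{b}5,\mathrm{ns}7)$ is not a Galois conjugate of $P_1, w_5(P_1),\ldots,P_4$ or $w_5(P_4)$. Since $X(\mathrm{b}5,\mathrm{e}7) \to X(\mathrm{b}5,\mathrm{ns}7)$ has degree $2$, the residue field of $R$ has index $1$ or $2$ in that of $Q$, so $R$ has degree $2$ or $4$. If $R$ has degree at most $2$ there is nothing to prove, as then $Q$ already maps to a quadratic point of $X(\mathrm{b}5,\mathrm{ns}7)$; so assume $R$ is a quartic point, and let $\mathcal{R} \in X(\mathrm{b}5,\mathrm{ns}7)^{(4)}(\Q)$ be the sum of its Galois conjugates, a $\Q$-irreducible effective divisor of degree $4$.

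Next I would run through the explicit description of $X(\mathrm{b}5,\mathrm{ns}7)^{(4)}(\Q)$ in Theorem~\ref{Xb5ns7thm}. The divisors $\mathcal{P}_i$ and $w_5^*\mathcal{P}_i$ for $1 \le i \le 4$ are excluded by hypothesis (note $w_5$ is defined over $\Q$, so $w_5^*\mathcal{P}_i$ is the conjugacy sum of $w_5(P_i)$). Every divisor in $\mathcal{P}_0 + \rho^*C(\Q)$ or in $w_5^*\mathcal{P}_0 + \rho^*C(\Q)$ contains the $\Q$-rational degree $2$ sub-divisor $\mathcal{P}_0$ (resp.\ $w_5^*\mathcal{P}_0$) and is therefore $\Q$-reducible, hence cannot equal $\mathcal{R}$. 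This forces $\mathcal{R} \in \rho^*C^{(2)}(\Q)$, say $\mathcal{R} = \rho^*D$ with $D \in C^{(2)}(\Q)$. Then $\rho(R)$ lies in the support of the degree $2$ effective $\Q$-rational divisor $D$, so $\rho(R)$ has degree at most $2$ over $\Q$; equivalently, the image of $Q$ in $C = X/\langle \phi_7, w_5\rangle$ is defined over a field of degree at most $2$.

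Finally, I would invoke Lemma~\ref{diagramlemma} for $Q \in X(\mathrm{b}5,\mathrm{e}7)$ with involutions $\phi_7$ and $w_5$: it gives that the image of $Q$ in one of $X(\mathrm{b}5,\mathrm{ns}7) = X/\phi_7$, $X(\mathrm{b}5,\mathrm{e}7)/w_5 = X/w_5$ or $X(\mathrm{b}5,\mathrm{e}7)/\phi_7 w_5 = X/\phi_7 w_5$ is at worst quadratic, which is the claim. The argument is essentially formal once Theorem~\ref{Xb5ns7thm} is available; the only point requiring care is the bookkeeping in the previous paragraph, namely checking that the two $\mathcal{P}_0$-families in the list contribute only $\Q$-reducible divisors — so never the conjugacy class of a single quartic point — and that membership of $\mathcal{R}$ in $\rho^*C^{(2)}(\Q)$ genuinely forces $\rho(R)$ down to degree $\le 2$, after which Lemma~\ref{diagramlemma} does the rest.
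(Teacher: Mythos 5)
Your proposal is correct and follows exactly the paper's (very terse) proof: apply Lemma \ref{diagramlemma} to $X(\mathrm{b}5,\mathrm{e}7)$ with the commuting involutions $w_5$ and $\phi_7$, using Theorem \ref{Xb5ns7thm} to see that the image in $C=X(\mathrm{b}5,\mathrm{e}7)/\langle w_5,\phi_7\rangle$ has degree at most $2$. Your intermediate bookkeeping — the irreducibility argument ruling out the $\mathcal{P}_0$-families and the descent from $\rho^*C^{(2)}(\Q)$ to a degree $\le 2$ image on $C$ — is precisely the detail the paper leaves implicit, and it is carried out correctly.
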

\begin{proof}
Apply Lemma \ref{diagramlemma} to $X(\mathrm{b}5,\mathrm{e}7)$ with involutions $w_5$ and $\phi_7$, and use Theorem \ref{Xb5ns7thm}. 
\end{proof}
We note that each quadratic point on $X(\mathrm{b}5,\mathrm{ns}7)$ has quadratic $j$-invariant, and thus corresponds to a modular elliptic curve. So Theorem \ref{Xb5ns7thm} reduces the study of the quartic points on $X(\mathrm{b}5,\mathrm{e}7)$ with totally real quartic $j$-invariants to the study of the quadratic points on the curves $X(\mathrm{b}5,\mathrm{e}7)/w_5$ and $X(\mathrm{b}5,\mathrm{e}7)/\phi_7w_5$. These curves (of genera 5 and 8 respectively) are studied in Section \ref{X1andX2sec}. In the next sections, we use the new Chabauty method for symmetric powers developed in \cite{bgg} (and introduced in Section \ref{infmwgpsec}) in combination with the Mordell--Weil sieve introduced in Section \ref{mwsieve} to prove Theorem \ref{Xb5ns7thm}. More precisely, in Sections \ref{deg4chabsec}, \ref{Xb5ns7mwgp} and \ref{ptssec}, we determine the input of the sieve, after which we prove Theorem \ref{Xb5ns7thm} in Section \ref{Xb5ns7thmproofsec}.

\subsection{Quadratic points}
First, for completeness we mention the quadratic points on $X(\mathrm{b}5,\mathrm{ns}7)$. Derickx, Najman and Siksek \cite{derickx} showed that $X(\mathrm{b}5,\mathrm{ns}7)^{(3)}(\Q)=\{c_0,c_{\infty}\}$, where $c_{0}$ and $c_{\infty}$ are the Galois orbits of the cusps. In coordinates, $c_0$ is the effective degree 3 divisor generated by
\[
(4\eta^2-21\eta-7:\eta^2-7\eta :-14),\;\; \eta=\zeta_7+\zeta_7^{-1}\in \Q(\zeta_7)^+
\]
and $c_{\infty}=w_5^*c_0$. In particular, $X(\mathrm{b}5,\mathrm{ns}7)$ has no rational points. We find two isolated pairs of quadratic points $P_0$ and $w_5(P_0)$; let $\mathcal{P}_0$ be the sum of $P_0$ and its Galois conjugate. Each  $\mathcal{P}\in X(\mathrm{b}5,\mathrm{ns}7)^{(2)}(\Q)$ satisfies  $\mathcal{P}+\mathcal{P}_0 \in X(\mathrm{b}5,\mathrm{ns}7)^{(4)}(\Q)$, so it follows from Theorem \ref{Xb5ns7thm} that $\mathcal{P}\in \{\mathcal{P}_0,w_5^*\mathcal{P}_0\}\cup \rho^*C(\Q)$. We conclude that
\[
X(\mathrm{b}5,\mathrm{ns}7)^{(2)}(\Q)=\{\mathcal{P}_0,w_5^*\mathcal{P}_0\}\cup \rho^*C(\Q).
\]

\begin{remark}\label{quadrmk}
In \cite{freitas}, Freitas, Le Hung and Siksek proved modularity of all elliptic curves over real quadratic fields by studying the quadratic points on seven modular curves. Now, thanks to the recent theorems of Thorne \cite{thorne} and Kalyanswamy \cite{kalyanswamy}, it suffices by Theorem \ref{thm1.1} to show that all real quadratic points on $X(\mathrm{b}5,\mathrm{ns}7)$ and $X_0(35)$ correspond to modular elliptic curves, and separately prove that elliptic curves over $\Q(\sqrt{5})$ are modular. The latter two tasks are covered in \cite{freitas}, while we have just shown all quadratic points $P$ on $X(\mathrm{b}5,\mathrm{ns}7)$ to be modular: either $P$ has CM, or $P$ maps to a rational point on $C$ and is therefore a $\Q$-curve (it satisfies $P^{\sigma}=w_5(P)$). This provides a shorter proof for modularity of elliptic curves over real quadratic fields.
\end{remark}

Finally, we list the found rational points on $C$, together with the $j$-invariants of their inverse images on $X(\mathrm{b}5,\mathrm{ns}7)$. These inverse images are in each case defined over a quadratic field (i.e.\ not rational), so the Galois orbit of this $j$-invariant is well-defined. 
\begin{table}[h!]
    \centering
    \begin{tabularx}{\textwidth}{c c >{\centering\arraybackslash}X c  c}
        Name & Coordinates on $C$ & $j$-invariant of inverse image in $X(\mathrm{b}5,\mathrm{ns}7)$ & CM   \\ [0.5ex] 
        \hline \hline  
        $Q_1$  &  $\infty^-$ &  287496& -16   \\ \hline 
    $Q_2$  &  $\infty^+$ & $(85995\sqrt{5} - 191025)/2$ & -15  \\\hline
    $Q_3$  &  $(1 , -2 )$ & -32768 & -11  \\\hline
    $Q_4$  &  $(1 , 2 )$ & $184068066743177379840\sqrt{5} - 
    411588709724712960000$ & -235\\\hline
    $Q_5$ &  $(1/2 , -7/2 )$ & 1728 & -4  \\\hline
    $Q_6$ &   $(1/2 , 7/2)$ & $(-16554983445\sqrt{5} + 
    37018076625)/2$& -60 
    \end{tabularx}
    \caption{The found rational points on $X(\mathrm{b}5,\mathrm{ns}7)/w_5$.}
    \label{tableC}
\end{table}
\begin{remark}\label{rk55}
The curve $C$ has genus 2 and $J(C)(\Q)$ has rank 2, making it infeasible to prove that $C(\Q)=\{Q_1,\ldots,Q_6\}$ using abelian Chabauty. A quadratic Chabauty method, however, might be successful, c.f.\ \cite{quadraticsiksek}, \cite{bbbmtv} and \cite{balakrishnan2021quadratic}. 
\end{remark}
\subsection{Relative symmetric Chabauty}\label{deg4chabsec}
%The Chabauty--Coleman method is -- originally -- a way to explicitly determine the rational points on a curve $X$, provided the rank $r$ of $J(X)(\Q)$ and the genus $g$ of $X$ satisfy $r<g$. It was generalised to rational points of symmetric powers of curves by Siksek \cite{siksek}. 
%
%An interesting situation occurs when $X$ admits a morphism $\rho: X\to C$ to another curve. Suppose that $\rho$ has degree $d$, and we want to compute $X^{(e)}(\Q)$, where $e\geq m\cdot d$ for some $m\in \Z_{\geq 1}$. A point $\mathcal{P}\in X^{(e-md)}(\Q)$ gives rise to a subset
%\[
%\mathcal{P}+\rho^*C^{(m)}(\Q)\subset X^{(e)}(\Q).
%\]
%Suppose that $D_0\in C^{(m)}(\Q)$, and define $\iota_C: C\to J(C),\; D\mapsto [D-D_0]$ and $\iota_X:X\to J(X),\; D\mapsto [D-(\mathcal{P}+m\rho^*D_0)]$ compatibly.  We see that points in $\mathcal{P}+\rho^*C^{(m)}(\Q)$ are not \emph{isolated}, meaning that their image in $J(X)(\Q)$ is contained in the translate of a strict abelian subvariety, in this case $\rho^*J(C)(\Q)$. Siksek \cite{siksek} adapted his symmetric Chabauty method, allowing him instead to compute 
%\[
%X^{(e)}(\Q)\setminus \bigcup_{\mathcal{P}\in X^{(e-md)}(\Q)}\left(\mathcal{P}+\rho^*C^{(m)}(\Q)\right),
%\]
%in the special case where $m=1$ and $e=d$ (and under (strong) assumptions on $X$). This has recently been generalised by Box, Gajovi\'c and Goodman \cite{bgg} to arbitrary values of $e,m$ and $d$ satisfying $md\leq e$ (as well as to arbitrarily many maps distinct maps $\rho$, which we shall not need). \\
In this section, we describe concretely the symmetric Chabauty method introduced in Section \ref{infmwgpsec}, in the case of our interest, where $m=4$, $d=2$ and $K=\Q$. We will use this to determine the subsets $\mathcal{M}_p$ (defined in Section \ref{mwsieve}) in the Mordell--Weil sieve. Consider the notation introduced in Section \ref{infmwgpsec}. In particular, we are given a map $\rho\colon X\to C$ of curves over $\Q$, a prime $p$ of good reduction for both and minimal proper regular models $\mathcal{X}/\Z_p$ and $\mathcal{C}/\Z_p$ for $X_{\Q_p}$ and $C_{\Q_p}$ respectively. We distinguish three kinds of known points $\mathcal{Q}\in X^{(4)}(\Q)$:
\begin{itemize}
    \item[(1)] \emph{pullbacks}: points in  $\rho^*C^{(2)}(\Q)$,
    \item[(2)] \emph{partial pullbacks}: points of the form $\mathcal{P}+\rho^*R$ for $R\in C(\Q)$ and $\mathcal{P}\in X^{(2)}(\Q)$ that are not a pullback, and
    \item[(3)] \emph{isolated points}: points that are neither a pullback nor a partial pullback.
\end{itemize}
Define $\widetilde{X}\colonequals \mathcal{X}_{\F_p}$, $\widetilde{\Omega}\colonequals \Omega_{\widetilde{X}/\F_p}$, and let $V \subset H^0(X_{\Q_p},\Omega)$ be the space of vanishing differentials and $V_C\subset V$ its subspace with trace zero to $C$ (defined in Definition \ref{vandiffdef}). Let $\widetilde{V}$ and $\widetilde{V}_C$ be the images of $V\cap H^0(\mathcal{X},\Omega_{\mathcal{X}/\Z_p})$, resp. $V_C\cap H^0(\mathcal{X},\Omega_{\mathcal{X}/\Z_p})$, under the (surjective) reduction map $H^0(\mathcal{X},\Omega_{\mathcal{X}/\Z_p})\to H^0(\widetilde{X},\widetilde{\Omega})$ (c.f.\ \cite[Lemma 3.6]{box}). Let $\omega_1,\ldots,\omega_{n_C}$ be a basis for $\widetilde{V}_C$, and extend this to a basis $\omega_1,\ldots,\omega_n$ of $\widetilde{V}$, where $n\geq n_C$. Consider a point $\mathcal{Q}\in X^{(4)}(\Q)$, and write $\mathcal{Q}\colonequals \sum_{i=1}^k m_iQ_i$, where $Q_1,\ldots,Q_k\in X$ are distinct points, each $m_i\geq 1$ and $k\leq 4$. For each $i$, let $t_i$ be a uniformiser at $\widetilde{Q}_i$. Denote by $\om_j=\sum_{\ell\geq 0}a_{\ell}(\om_j,t_i)t_i^{\ell}$ the expansion of $\om_j$ at $\widetilde{Q}_i$, and define \[
v_{ij}\colonequals (a_0(\om_j,t_i),\ldots,a_{m_i-1}(\om_j,t_i)).
\]
Now consider the matrices
\[
\mathcal{A}\colonequals (v_{ij})_{\substack{ 1 \leq i \leq k \\ 1\leq j \leq n}} \text{ and } \mathcal{A}_C\colonequals (v_{ij})_{\substack{ 1 \leq i \leq k \\ 1\leq j \leq n_C}}.
\]
Note that $\mathcal{A}$ and $\mathcal{A}_C$ are the reductions (modulo a prime above $p$) of matrices defined in terms of a basis for $V\cap \Omega_{\mathcal{X}/\Z_p}$, resp. $V_C\cap \Omega_{\mathcal{X}/\Z_p}$, and the coefficients $a_i$ appear in the tiny integrals. 
\begin{theorem}[$\square$--Gajovi\'c--Goodman]\label{chabthm} Suppose that $p\geq 17$. 
\begin{itemize}
    \item[(1)]If $\mathcal{Q}$ is a pullback and $\mathrm{rk}(\mathcal{A}_C)\geq 2$, then $D(\mathcal{Q})\cap X^{(4)}(\Q)\subset \rho^*C^{(2)}(\Q)$.
    \item[(2)] If $\mathcal{Q}=\mathcal{P}+\rho^*R$ is a partial pullback and $\mathrm{rk}(\mathcal{A}_C)\geq 3$, then  $D(\mathcal{Q})\cap X^{(4)}(\Q)\subset \mathcal{P}+\rho^*C(\Q)$. 
    \item[(3)] If $\mathrm{rk}(\mathcal{A})\geq 4$, then $D(\mathcal{Q})\cap X^{(4)}(\Q)=\{\mathcal{Q}\}$. 
\end{itemize}
\end{theorem}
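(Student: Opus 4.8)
The plan is to reduce the statement about rational points in a residue disc to a statement about the common zero locus of tiny Coleman integrals, and then to show that the rank conditions on $\mathcal{A}_C$ and $\mathcal{A}$ force that zero locus to be as small as the stated conclusion permits. First I would fix a known point $\mathcal{Q} = \sum_{i=1}^k m_i Q_i \in X^{(4)}(\Q)$ and consider an arbitrary $D \in D(\mathcal{Q}) \cap X^{(4)}(\Q)$, so that $D$ and $\mathcal{Q}$ lie in the same residue disc of $X^{(4)}(\overline{\Q}_p)$; write $D = \sum_i D_i$ where $D_i$ is the part of $D$ reducing into $\widetilde{Q}_i$, so $\deg D_i = m_i$. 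Each $D_i$ is parametrised by the coefficients of a degree-$m_i$ polynomial in the uniformiser $t_i$ (equivalently, by the first $m_i$ elementary symmetric functions of the values $t_i(P)$ over the points $P$ in the support of $D_i$), giving a total of $\sum_i m_i = 4$ local parameters $\underline{z}$, with $\underline{z} = 0$ corresponding to $D = \mathcal{Q}$.

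Next I would write down the Chabauty conditions. Since $D \in X^{(4)}(\Q)$, its image under the Abel--Jacobi map lies in $J(X)(\Q)$, hence $\int_{\iota_X(D) - \iota_X(\mathcal{Q})} \omega = 0$ for every $\omega \in V$. This integral decomposes as $\sum_i \int_{D_i - m_i Q_i} \omega$, and each summand is a tiny integral expressible as a power series in the local parameters whose linear part is governed exactly by the vector $v_{ij} = (a_0(\omega_j, t_i), \ldots, a_{m_i - 1}(\omega_j, t_i))$ — more precisely, by Newton's identities the linear term of $\int_{D_i - m_i Q_i} \omega_j$ in the symmetric-function coordinates is a fixed invertible linear combination of the entries of $v_{ij}$. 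So the Jacobian at $\underline{z} = 0$ of the system $(F_j)_{1 \le j \le n}$, where $F_j(\underline{z}) = \int_{\iota_X(D) - \iota_X(\mathcal{Q})} \omega_j$, is (up to an invertible change of coordinates on the target and source) exactly the matrix $\mathcal{A}$. The standard $p$-adic implicit/inverse function theorem argument — valid for $p \ge 17$ so that the relevant binomial denominators $1/n$ with $n \le 4$ are units and the power series converge on the unit disc — then shows: if $\operatorname{rk}(\mathcal{A}) \ge 4$, the only solution with $\underline{z}$ in the open unit polydisc is $\underline{z} = 0$, i.e. $D = \mathcal{Q}$, which is part (3). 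For the Newton-polygon/convergence bookkeeping I would cite the corresponding lemma of \cite{bgg} rather than redo it.

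For parts (1) and (2), the point is that the conclusions are weaker — $D$ need only lie in $\rho^*C^{(2)}(\Q)$ (resp. $\mathcal{P} + \rho^*C(\Q)$) — so we only need to cut out a $2$-dimensional (resp. $1$-dimensional) subvariety of the $4$-dimensional residue disc, namely the locus $\rho^*D(\rho_*\mathcal{Q})$ of pullbacks (resp. its translate by $\mathcal{P}$). Here I would use the trace-zero differentials: for $\omega \in V_C$, the chain rule (\ref{chainrule}) gives $\int_{\rho^*E} \omega = \int_{\rho_*\rho^* E} \operatorname{Tr}\omega \cdot (\text{correction}) = 0$ for $E \in J(C)$, so the functions $F_j$ for $1 \le j \le n_C$ vanish identically along the pullback locus; thus these $F_j$ descend to functions on the $4 - 2 = 2$ "transverse" parameters (the directions moving $D$ away from being a pullback), and the relevant Jacobian in those transverse directions is $\mathcal{A}_C$ after quotienting out the pullback directions. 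Concretely: the pullback locus through $\mathcal{Q}$ is smooth of dimension $2$ (it is parametrised by $C^{(2)}$ near $\rho_*\mathcal{Q}$), complementary local coordinates split $\underline{z}$ as $(\underline{z}', \underline{z}'')$ with $\underline{z}''$ transverse; $F_j|_{\underline{z}'' = 0} \equiv 0$ for $j \le n_C$, so $F_j$ factors through $\underline{z}''$, and the linear part of $\underline{z}'' \mapsto (F_j)_{j \le n_C}$ has matrix given by the image of $\mathcal{A}_C$ modulo the pullback directions. A short linear-algebra check (using that the pullback directions lie in the kernel of $\mathcal{A}_C$, since trace-zero differentials pair trivially with pulled-back divisor classes) shows this image has the same rank as $\mathcal{A}_C$; hence $\operatorname{rk}(\mathcal{A}_C) \ge 2$ forces $\underline{z}'' = 0$, i.e. $D$ is a pullback, giving (1), and similarly $\operatorname{rk}(\mathcal{A}_C) \ge 3$ handles the partial-pullback case (2) after first fixing the component $\mathcal{P}$, which contributes no additional moduli to leading order.

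The main obstacle I expect is the careful identification of the "transverse" Jacobian with $\mathcal{A}_C$: one must verify that the pullback (and partial-pullback) loci are smooth through $\mathcal{Q}$ of the expected dimension even when $\mathcal{Q}$ has multiple points or is ramified over $C$, that the splitting of local coordinates is compatible with the decomposition into the $Q_i$'s, and that passing from $\omega$-expansions to symmetric-function coordinates really does turn the $v_{ij}$ into the differential of the integral map — all of which is routine in nice cases but needs honest checking when some $m_i > 1$ or when $\rho$ ramifies at a $Q_i$. I would handle this by invoking the general residue-disc analysis of \cite{bgg} and only spelling out the specifics ($m = 4$, $d = 2$, $K = \Q$, $p \ge 17$) that make the power series converge and the conclusions clean.
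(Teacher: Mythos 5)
You should first be aware that the paper does not prove this theorem: it is imported verbatim from the reference \cite{bgg} (the attribution ``$\square$--Gajovi\'c--Goodman'' signals exactly this), and Section \ref{deg4chabsec} only specialises the statement to $m=4$, $d=2$, $K=\Q$. So there is no in-paper proof to compare against. Judged on its own, your outline does follow the strategy of the cited work: parametrise the residue disc of $\mathcal{Q}$ by elementary symmetric functions of uniformiser values at each $\widetilde{Q}_i$, expand the tiny integrals as power series whose linear parts are (up to sign, after the $1/n$ from $\int \sum a_{n-1}t^{n-1}\,\dd t$ cancels against the $n$ from Newton's identities) exactly the entries $a_{\ell}(\om_j,t_i)$ of the $v_{ij}$, and then read off the dimension of the common zero locus from $\mathrm{rk}(\mathcal{A})$ or $\mathrm{rk}(\mathcal{A}_C)$, using that $V_C$ annihilates $\rho^*J(C)$ so that the pullback (resp.\ partial-pullback) locus, of codimension $2$ (resp.\ $3$), sits inside the zero locus and must equal it when the rank is at least $2$ (resp.\ $3$). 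You also correctly flag the genuinely delicate points — smoothness of the (partial) pullback locus when $m_i>1$ or $\rho$ ramifies at some $Q_i$, and the Newton-polygon control of higher-order terms — and defer them to \cite{bgg}, which is legitimate for a statement the paper itself only quotes.

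One small correction: your explanation of the hypothesis $p\geq 17$ is not right. The denominators $1/n$ with $n\leq 4$ are units already for $p\geq 5$; the actual constraint comes from bounding $v_p(a_{n-1}t(R)^n/n)$ for all $n\geq 2$ when $t(R)$ lies in a ramified extension of $\Q_p$ of degree up to $4$, so that the linear term of the power series genuinely dominates and no extra zeros appear in the residue polydisc. Since you invoke the convergence lemma of \cite{bgg} for exactly this bookkeeping, this does not break the argument, but as written the justification of the prime bound is the one step that is stated incorrectly rather than merely deferred.
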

\begin{remark}
In case (3), $\mathcal{Q}$ is automatically an isolated point. We also note that 2, 3, and 4 are the largest possible ranks of these matrices respectively, and that $\mathrm{dim}(V)\geq 4$ is a necessary condition for part (3). This is satisfied when $g_X-r_X\geq 4$, where $g_X$ is the genus of $X$ and $r_X$ the rank of $J(X)(\Q)$.
\end{remark}

Recall that $D(\mathcal{Q})\subset X^{(4)}(\Q_p)$ denotes the mod $p$ residue disc of $\mathcal{Q}$. Consider now the special case where   $w:X\to X$ is an involution, $C=X/\langle w\rangle$ and $\rho$ is the quotient map $X\to C$. Suppose moreover that $\mathrm{rk}(J(X)(\Q))=\mathrm{rk}(J(C)(\Q))$. Then (see e.g.\ \cite[Section 3.4]{box})
\[
\mathrm{Ker}(1+\widetilde{w}^*)\subset \widetilde{V}_C\subset\widetilde{V},
\]
where $\widetilde{w}^*: \widetilde{\Omega}\to \widetilde{\Omega}$ is the pullback under $w$. We can thus compute (a submatrix of) $\mathcal{A}$ and $\mathcal{A}_C$ in terms of $\widetilde{X}/\F_p$ directly, without the need to compute $V\cap H^0(\mathcal{X},\Omega)$.  
%We first attempt to compute the points in  $X(\mathrm{b}5,\mathrm{ns}7)^{(4)}(\Q)$ that are not a pullback of a point in $C^{(2)}(\Q)$ or the sum of the pullback of a point in $C(\Q)$ and a point in $X(\mathrm{b}5,\mathrm{ns}7)^{(2)}(\Q)$. We hope that this can be done with a relative Chabauty method and a Mordell--Weil sieve.

\subsection{The Mordell--Weil group}\label{Xb5ns7mwgp}
%  Replacing $w^2$ in the defining equation of $X(\mathrm{b}5,\mathrm{ns}7)$ by a new variable, we obtain an equation for the quotient $C=X(\mathrm{b}5,\mathrm{ns}7)/w_5$. This curve is isomorphic to the hyperelliptic curve
%\begin{align}\label{Ceqn}
%C:\; y^2=x^6 - 2x^5 + 7x^4 + 4x^3 + 3x^2 + 10x  + 5.
%\end{align}
%We denote the degree 2 map $X\to C$ by $\rho$.
As $\mathrm{rk}(J(\mathrm{b}5,\mathrm{ns}7)(\Q))=\mathrm{rk}(J(C)(\Q))=2$ (see \cite{derickx}), we are indeed in the situation described below Theorem \ref{chabthm}, and we can verify the rank condition of Theorem \ref{chabthm} explicitly for $X=X(\mathrm{b}5,\mathrm{ns}7)$ and $C=X/w_5$. Note that indeed $g_X-r_X\geq 4$.

Before applying the Mordell--Weil sieve, however, we need knowledge of the Mordell--Weil group and a list of known points. In \cite{derickx}, it was shown that $J(\mathrm{b}5,\mathrm{ns}7)(\Q)\simeq \Z/7\Z\times \Z \times \Z$, where the torsion subgroup $\Z/7\Z$ is generated by the difference $D_{\mathrm{tor}}\colonequals c_0-c_{\infty}$ of the two degree 3 Galois orbits of cusps.  Since $J(C)(\Q)$ is hyperelliptic of genus 2, we can use an algorithm of Stoll \cite{stoll} to compute  generators
\[
d_1\colonequals [(-1 , -2 )-\infty^-] \text{ and } d_2\colonequals [(-1/2,-7/2)-\infty^-]
\]
for $J(C)(\Q)\simeq \Z^2$. Pulling these back under $\rho$, we obtain $D_1=\rho^*d_1$ and $D_2=\rho^*d_2$. Finally, by Proposition \ref{boxprop}, the group 
\[
G=\Z/7\Z\cdot  D_{\mathrm{tor}}\oplus \Z \cdot D_1\oplus \Z \cdot D_2
\]
satisfies $2\cdot J(\mathrm{b}5,\mathrm{ns}7)(\Q)\subset G$. We find that the reductions of $D_2$ and $D_1+D_2$ mod 19 are not doubles in $J(\mathrm{b}5,\mathrm{ns}7)(\F_{19})$, so $D_2$ and $D_1+D_2$ also cannot be doubles. 

 We define the maps $\iota: X(\mathrm{b}5,\mathrm{ns}7)^{(4)}(\Q)\to J(\mathrm{b}5,\mathrm{ns}7)(\Q)$ given by $\iota(D)=2\cdot [D-D_0]$, where $D_0=\rho^*d_0$ and $d_0=\infty^++\infty^-$, and
\[
\phi: \Z/7\Z\times \Z^2\longrightarrow J(\mathrm{b}5,\mathrm{ns}7)(\Q),\;\; (a,b,c)\mapsto aD_{\mathrm{tor}}+bD_1+2cD_2. 
\]
Then by the previous discussion, $\mathrm{Im}(\iota)\subset \mathrm{Im}(\phi)$, c.f.\ (iii) in Section \ref{mwsieve}.

\subsection{Finding the points} \label{ptssec}
We first consider the pullbacks and partial pullbacks, and then describe how we found $P_0,\ldots,P_4\in X(\mathrm{b}5,\mathrm{ns}7)$. The curve $X(\mathrm{b}5,\mathrm{ns}7)$ has finitely many quadratic points, so there are finitely many partial pullbacks, whereas the pullbacks $\rho^*C^{(2)}(\Q)$ form an infinite set. For the Mordell--Weil sieve to work, we need to find all isolated points and partial pullbacks, and use a sufficiently large finite subset of $\rho^*C^{(2)}(\Q)$. In this case, however, we can speed up the sieve by discarding all pullbacks thanks to the work of Derickx, Najman and Siksek. 

%\begin{proposition}
%Let $K_C$ be an effective divisor on $C$ representing the canonical divisor (a rational point plus its image under the hyperelliptic involution), and define $D_0\colonequals \rho^*K_C$. Suppose $D\in X(\mathrm{b}5,\mathrm{ns}7)^{(4)}(\Q)$ satisfies $[D-D_0] \in \rho^*J_C(\Q)$. Then there is $Q\in C^{(2)}(\Q)$ such that $D=\rho^*Q$.
%\end{proposition}
%\begin{proof}
%Let $E\in J_C(\Q)$ be the class such that $\rho^*E=[D-D_0]$. As $C$ is hyperelliptic, we distinguish between $E=0$ and $E\neq 0$. If $E=0$ then $[D-D_0]=0$. By Clifford's inequality, we have $\ell(D)\leq 1+\mathrm{deg}(D)/2= 3$ with equality if and only if $D\in \{0,K_X\}$. As $K_X$ has degree $2g-2=10$, we find $\ell(D)\leq 2$. But $\ell(D)$ contains $\rho^*\ell(K_C)$, which is 2-dimensional, so $\ell(D)=\rho^*\ell(K_C)$. This  means that $D$ is the pullback of some effective degree 2 divisor on $C$ that is stable under the hyperelliptic involution. 
%
%We may thus assume that $E\neq 0$ and $[D-D_0]\neq 0$. Then $\ell(E+K_C)=1$, so there is exactly one effective degree 2 divisor $Q$ on $C$ such that $[Q-K_C]=E$. This implies that $\rho^*Q\sim D$. It therefore remains to show that $\ell(D)=1$. If not, then by Clifford's theorem $\ell(D)=2$, i.e.\ $D$ is a $g^1_4$. By the analysis in \cite{siksek}, $X$ has a unique $\Q$-rational $g^1_4$, however. That is the zero class in $J_X$, the case already treated. We conclude that $\ell(D)=1$, as desired.
%\end{proof}
\begin{proposition}
Suppose that $D\in X(\mathrm{b}5,\mathrm{ns}7)^{(4)}(\Q)$ satisfies $w^*(2[D-D_0])=2[D-D_0]$. Then $D\in \rho^*C^{(2)}(\Q)$. 
\end{proposition}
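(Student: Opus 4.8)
\emph{Proof plan.} The first step is to turn the Jacobian condition into the statement $D\sim w(D)$. Since $D_0=\rho^*d_0$ and $\rho\circ w=\rho$, the divisor $D_0$ is $w$-stable, so $w^*[D_0]=[D_0]$ in $J(\mathrm{b}5,\mathrm{ns}7)(\Q)$; as $w^*[D]=[w(D)]$, the hypothesis $w^*(2[D-D_0])=2[D-D_0]$ is equivalent to $2\big([w(D)]-[D]\big)=0$. By Derickx--Najman--Siksek we have $J(\mathrm{b}5,\mathrm{ns}7)(\Q)_{\mathrm{tors}}\simeq\Z/7\Z$, which has no element of order $2$, so $[w(D)-D]=0$ and $D\sim w(D)$.

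The remaining task is to deduce $D\in\rho^*C^{(2)}(\Q)$ from $D\sim w(D)$. The curve $X(\mathrm{b}5,\mathrm{ns}7)$ is non-hyperelliptic of genus $6$, so Clifford's theorem (whose equality case is here excluded) applied to the special degree-$4$ divisor $D$ gives $\ell(D)\le 2$. If $\ell(D)=1$, then $|D|=\{D\}$, hence $D=w(D)$ is $w$-stable; writing $D=D'+D''$ with $D''$ the part supported on the ramification locus of $\rho$, the divisor $D''$ is $\Q$-rational, effective, of even degree at most $4$. The six ramification points lie over the $w_5$-fixed points of $X_0(5)$ (CM by orders of discriminant $-4$ and $-20$), and a finite check — using $X(\mathrm{b}5,\mathrm{ns}7)(\Q)=\emptyset$ — shows their Galois orbits carry no nonzero $\Q$-rational effective divisor of degree $\le 4$, forcing $D''=0$. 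Then $D=D'$ is $w$-stable with support disjoint from the ramification locus, so $D=\rho^*E$ for an effective degree-$2$ divisor $E$ on $C$, and $E$ is $\Q$-rational since $D$ is and $\rho^*$ is injective and Galois-equivariant; thus $D\in\rho^*C^{(2)}(\Q)$. If instead $\ell(D)=2$, then $|D|$ is a $g^1_4$ whose base locus is $\Q$-rational and $w$-stable; it must be empty (a base point of degree $1$ would be a rational point, and one of degree $2$ would leave a residual $g^1_2$, contradicting non-hyperellipticity). So $|D|$ defines a degree-$4$ morphism $\psi\colon X(\mathrm{b}5,\mathrm{ns}7)\to\P^1$, and one argues that $\psi$ factors through $\rho$, necessarily as $x\circ\rho$ with $x\colon C\to\P^1$ the hyperelliptic map; equivalently $w^*$ acts trivially on $|D|$, so $D=w(D)$ and we are back in the previous case.

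The main obstacle is precisely this last point: excluding a degree-$4$ pencil on $X(\mathrm{b}5,\mathrm{ns}7)$ that does not factor through $\rho$. The Castelnuovo--Severi inequality is not sharp enough here (it only yields $g\le 7$ in the presence of such a $g^1_4$, and $g\le 6$ for a putative $g^1_3$), so one would instead use the explicit plane-sextic model of Derickx--Najman--Siksek to determine the gonality and the low-degree pencils of the curve, or exploit the additional information $[2D]\in\rho^*\mathrm{Pic}(C)$ (which $D\sim w(D)$ also yields, via $2J(\mathrm{b}5,\mathrm{ns}7)(\Q)\subset G$) together with the Galois structure of the ramification locus to show directly that $2D$, and hence $D$, is a pullback. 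By contrast, the reduction to $D\sim w(D)$ in the first paragraph is immediate once the torsion subgroup of $J(\mathrm{b}5,\mathrm{ns}7)(\Q)$ is known.
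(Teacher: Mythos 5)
Your reduction in the first paragraph is exactly the paper's: $D_0=\rho^*d_0$ is $w_5$-stable, the torsion subgroup $\Z/7\Z$ has odd order, so the hypothesis forces $[w_5^*D-D]=0$, i.e.\ $D\sim w_5^*D$. The remainder of your plan also follows the paper's route in substance, but the step you yourself flag as ``the main obstacle'' --- showing that a $g^1_4$ on $X(\mathrm{b}5,\mathrm{ns}7)$ must factor through $\rho$ --- is genuinely the crux, and your proposal does not close it. You are right that Castelnuovo--Severi is not sharp enough. The paper closes it by citing Derickx--Najman--Siksek \cite{derickx}, who proved (from the explicit plane model, as you suggest) that the \emph{only} degree $4$ maps $X(\mathrm{b}5,\mathrm{ns}7)\to\P^1$ are $x\circ\rho$ composed with automorphisms of $\P^1$; in particular every such map is $w_5$-invariant. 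So the fact you need is already in the literature, and invoking it turns your sketch into a complete proof.

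Two smaller remarks. First, granting the DNS result, the paper's argument is more uniform than your case split on $\ell(D)$: if $D\neq w_5^*D$, the linear equivalence $D\sim w_5^*D$ produces a non-constant $f\in L(D)$ with $\mathrm{div}_0(f)=w_5^*D$ and $\mathrm{div}_\infty(f)=D$, hence a degree $4$ map $\overline{f}$ with $\overline{f}\circ w_5\neq\overline{f}$, contradicting DNS directly; there is no need to distinguish $\ell(D)=1$ from $\ell(D)=2$ or to discuss base loci. Second, your treatment of the ramification locus is correct but can be simplified to match the paper: the fixed points of $w_5$ are computed explicitly and each has degree $6$ over $\Q$, so no point of a $\Q$-rational effective degree $4$ divisor can be $w_5$-fixed, and $D=w_5^*D$ immediately gives $D\in\rho^*C^{(2)}(\Q)$.
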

\begin{proof}
Derickx, Najman and Siksek \cite{derickx} showed that $X(\mathrm{b}5,\mathrm{ns}7)$ has no degree 4 map to $\P^1$, other than $\rho$ followed by the hyperelliptic covering map $x$ (and the composition of $x\circ \rho$ with automorphisms of $\P^1$). In particular, each degree 4 map $f:X(\mathrm{b}5,\mathrm{ns}7)\to \P^1$ satisfies $f\circ w_5=f$. 

Since the torsion subgroup has odd size, we find $w_5^*[D-D_0]=[D-D_0]$. As $w_5^*D_0=D_0$ by construction, we conclude that $w_5^*D\sim D$. If $D\neq w_5^*D$, there exists a non-constant $f\in L(D)$ satisfying $\mathrm{div}_0(f)=w_5^*D$ and $\mathrm{div}_{\infty}(f)=D$. Then $f$ defines a degree 4 map $\overline{f}:\;X(\mathrm{b}5,\mathrm{ns}7)\to \P^1$ such that $\overline{f}\circ w_5\neq \overline{f}$. We conclude that $D=w_5^*D$. Now $D\in \rho^*C^{(2)}(\Q)$ unless one of the points in the support of $D$ is fixed by $w_5$. We compute the fixed points of $w_5$ explicitly in \texttt{Magma}, and find that each of them is defined over a field of degree 6. 
\end{proof}
%
%In the case of the above proposition, when $D=w^*D$, there are two possibilities: $D\in \rho^*C^{(2)}(\Q)$ or $\rho$ ramifies at one of points in the support of $D$. We check that $\rho$ only ramifies at points defined over fields of degree 6, so that indeed $D\in \rho^*C^{(2)}(\Q)$. Thus,by the previous two propositions, if $D\in X^{(4)}(\Q)\setminus \rho^*C^{(2)}(\Q)$, then we know that $2\cdot [D-D_0]$ is not in $\rho^*J_C(\Q)$. This is very useful information, because it allows us to remove the problematic set $\rho^*J_C(\Q)$ from our possiblities when sieving. 
Conversely, note that all pullbacks $D=\rho^*d$ also satisfy $w^*(D-D_0)=(D-D_0)$. Define $W=\Z/7\Z\times \Z^2\setminus \{0\}\times \Z^2$ and $S=X(\mathrm{b}5,\mathrm{ns}7)^{(4)}(\Q)\setminus \rho^*C^{(2)}(\Q)$. Since $w_5^*D_{\mathrm{tor}}=-D_{\mathrm{tor}}$, $w_5^*D_1=D_1$ and $w_5^*D_2=D_2$, this proposition means that $\iota(S)\subset \phi(W)$. In practice, this means that we will be able to disregard pullbacks altogether and sieve in $W$, c.f.\ Section \ref{mwsieve}.

Next, we recall that Derickx, Najman and Siksek already found that $X(\mathrm{b}5,\mathrm{ns}7)^{(3)}(\Q)=\{c_0,c_{\infty}\}$; in particular, the curve has no rational points. The quotient $C$ has (at least) 6 rational points $Q_1,\ldots,Q_6$, pulling back to 6 quadratic points. It remains to find the isolated quadratic and quartic points on $X(\mathrm{b}5,\mathrm{ns}7)$.  To this end, we use \texttt{Magma} code written by \"Ozman and Siksek \cite{ozman} to find the irreducible components of the intersections of $X(\mathrm{b}5,\mathrm{ns}7)$ with hyperplanes in $\P^2$, ranging over all hyperplanes $au+bv+cw=0$, where $a,b,c\in \{-5,\ldots,5\}$. As it turns out, we only find the isolated quadratic points $P_0$ and $w_5(P_0)$ this way, not the quartic points. Increasing the search parameters far beyond 5 is too time-consuming, so we need a better search method.

Instead, write $T=\{Q_1,\ldots,Q_6\}\subset C(\Q)$, and define \[
\widetilde{\mathcal{L}}'=(\mathcal{P}_0+\rho^*T)\cup (w_5^*\mathcal{P}_0+\rho^*T) \text{ and }
\widetilde{\mathcal{L}}=(\mathcal{P}_0+\rho^*C(\Q))\cup (w_5^*\mathcal{P}_0+\rho^*C(\Q)),
\]
where $\mathcal{P}_0$ was defined in Theorem \ref{Xb5ns7thm}.
In the next section, we describe the input of the Mordell--Weil sieve for $X(\mathrm{b}5,\mathrm{ns}7)^{(4)}(\Q)$. We can run this sieve with the input as given there, replacing only  $\mathcal{L}$ by $\mathcal{\widetilde{L}}$ and $\mathcal{L}'$ by $\mathcal{\widetilde{L}}'$. The input set of known points being too small, the sieve naturally does not terminate: a number of cosets of $\Z/7\Z\times \Z^2$ remain as possible images of unknown effective degree 4 divisors. These are cosets with respect to a subgroup of huge index (after considering $n$ primes $p_1,\ldots p_n$, the subgroup is contained in the kernel of $\phi_{p_1},\ldots,\phi_{p_n}$, in the notation of Section \ref{mwsieve}), making it statistically unlikely that these cosets contain any elements $(a,b,c)\in \Z/7\Z\times \Z^2$ of small size $\mathrm{max}\{|b|,|c|\}\leq 100$, unless they are there for a reason. Using the LLL-algorithm, we do find elements of small size in some of these cosets. By computing Riemann--Roch spaces (c.f.\ Lemma \ref{lem2.3}), we determine the effective degree 4 divisors mapping to such elements under $\iota$, and voil\`a, we find  $\mathcal{P}_1,\ldots,\mathcal{P}_4$ (as defined in Theorem \ref{Xb5ns7thm}). The sets we thus obtain are
\[
\mathcal{L}'\colonequals \{\mathcal{P}_1,w_5^*\mathcal{P}_1,\ldots,\mathcal{P}_4,w_5^*\mathcal{P}_4\}\cup (\mathcal{P}_0+\rho^*T)\cup (w_5^*\mathcal{P}_0+\rho^*T),
\]
and 
\[
\mathcal{L}\colonequals \{\mathcal{P}_1,w_5^*\mathcal{P}_1,\ldots,\mathcal{P}_4,w_5^*\mathcal{P}_4\}\cup (\mathcal{P}_0+\rho^*C(\Q))\cup (w_5^*\mathcal{P}_0+\rho^*C(\Q))\supset \mathcal{L}'
\]
\subsection{The proof of Theorem \ref{Xb5ns7thm}}\label{Xb5ns7thmproofsec}
We apply the Mordell--Weil sieve defined in Section \ref{mwsieve} with $V=X(\mathrm{b}5,\mathrm{ns}7)^{(4)}$, $\mathcal{A}=J(\mathrm{b}5,\mathrm{ns}7)$, $B=\Z/7\Z\times \Z^2$, $\phi$ and $\iota$ as defined at the end of Section \ref{Xb5ns7mwgp} and $S$, $W$ and $\mathcal{L}$ as defined in Section \ref{ptssec}. For each prime $p\geq 17$, we define $\mathcal{N}_p\subset \widetilde{X}(\mathrm{b}5,\mathrm{ns}7)(\F_p)$ to be the reductions of the points in $\mathcal{L}'$ satisfying part (2) or part (3) of Theorem \ref{chabthm}. When $p<17$, let $\mathcal{N}_p=\emptyset$. Then we set
\[
\mathcal{M}_p=\iota_p^{-1}(\phi_p(W))\setminus \mathcal{N}_p.
\]
All assumptions for the sieve are then satisfied, and we run it with the primes $p_1=11$, $p_2=13$, $p_3=17$, $p_4=23$, $p_5=53$, $p_6=29$, $p_7=71$, $p_8=43$, $p_9=37$ and $p_{10}=31$. This terminates, and by Proposition \ref{mwsieveprop} we have thus proved Theorem \ref{Xb5ns7thm}. 
\subsection{The $j$-invariants}
On the planar Derickx--Najman--Siksek model for $X(\mathrm{b}5,\mathrm{ns}7)$ as described, the $j$-invariant map is not known. Instead, to compute the $j$-invariant of the points in Table \ref{tableb5ns7}, we use \cite{boxmodels}. Here a different (canonical) model for $X(\mathrm{b}5,\mathrm{ns}7)$ is computed, together with the morphism from this model to our planar model, and with the $j$-invariant. We thus pull back the effective degree 4 divisors of Table \ref{tableb5ns7} to this canonical model, and evaluate the $j$-invariant there. 

\section{$X(\mathrm{b}3,\mathrm{b}5,\mathrm{e}7)$ and $X(\mathrm{s}3,\mathrm{b}5,\mathrm{e}7)$}\label{X1andX2sec}
By Corollary \ref{X1X2cor} and subsequent remarks, it suffices now to determine the quadratic points on 
\[
X_1\colonequals X(\mathrm{b}5,\mathrm{e}7)/w_5 \text{ and } X_2\colonequals X(\mathrm{b}5,\mathrm{e}7)/\phi_7w_5.
\]
Recall from Section \ref{sec51} that both curves admit a degree 2 map $\rho_i:X_i\to C=X(\mathrm{b}5,\mathrm{ns}7)/w_5$. We also recall that $C$ is a hyperelliptic curve of genus 2 whose Mordell--Weil group has rank 2, so determining $C(\Q)$ with abelian Chabauty is unlikely to succeed, c.f.\ Remark \ref{rk55}.  Consequently, we cannot expect to be able to determine $\rho_i^*C(\Q)\subset X_i^{(2)}(\Q)$ using abelian Chabauty. Instead, therefore, we shall attempt to describe $X_i^{(2)}(\Q)\setminus \rho_i^*C(\Q)$ using, again, relative symmetric Chabauty and the Mordell--Weil sieve. This suffices, as rational points on $C$ come from rational or quadratic points on $X(\mathrm{b}5,\mathrm{ns}7)$ and therefore have rational or quadratic $j$-invariants. 
\subsection{Overview} The curves $X_1$ and $X_2$ have genera 8 and 5 respectively. 

 Unlike for $X(\mathrm{b}5,\mathrm{ns}7)$, each $J(X_i)(\Q)$ appears to have rank strictly greater than $2=\mathrm{rk}(J(C)(\Q))$. Compared to $X(\mathrm{b}5,\mathrm{ns}7)$, this causes two extra difficulties when attempting to use Chabauty:
\begin{itemize}
    \item We do not obtain a finite index subgroup of $J(X_i)(\Q)$ by pulling back generators of $J(C)(\Q)$, making it hard to sieve effectively.
    \item We do not obtain vanishing differentials with trace zero from the kernel of $1+w_i^*$, where $w_i$ is the involution on $X_i$ such that $C=X_i/w_i$. 
\end{itemize}
We instead use cusp forms to find vanishing differentials in Section \ref{vandiffsec}, while the Mordell--Weil group issue is addressed in the remainder of this section. \\

Recall from  Section \ref{mwsieve} that rather than the Jacobian, all we really need for sieving is a morphism $X_i^{(2)}\to A$ for some abelian variety $A$. As a start, we can try to take $A=J(C)$. Then $X_i^{(2)}\to J(C)$ is the Abel--Jacobi map composed with the push-forward $(\rho_i)_*$. Sieving in $J(C)(\Q)$ is unlikely to work, however, because both $\#X_i^{(2)}(\F_p)$ and $\#J(C)(\F_p)$ have size close to $p^2$, c.f.\ Remark \ref{sieveproportion}. Which other Jacobians, of which we know the Mordell--Weil group, does $X_i^{(2)}$ map to?

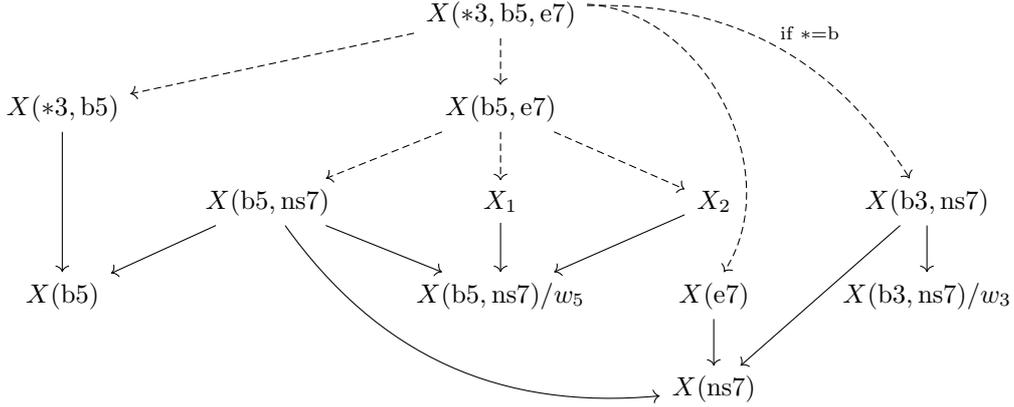
\begin{figure}[h!]
\begin{tikzcd}
                               &                                                                              & {X(*3,\mathrm{b}5,\mathrm{e}7)} \arrow[lld, dashed] \arrow[rrdd, "\text{if }*=\mathrm{b}", dashed, bend left] \arrow[d, dashed] \arrow[rddd, dashed, bend left=60] &                          &                                                     \\
{X(*3,\mathrm{b}5)} \arrow[dd] &                                                                              & {X(\mathrm{b}5,\mathrm{e}7)} \arrow[ld, dashed] \arrow[d, dashed] \arrow[rd, dashed]                                                                               &                          &                                                     \\
                               & {X(\mathrm{b}5,\mathrm{ns}7)} \arrow[ld] \arrow[rd] \arrow[rrdd, bend right] & X_1 \arrow[d]                                                                                                                                                      & X_2 \arrow[ld]           & {X(\mathrm{b}3,\mathrm{ns}7)} \arrow[ldd] \arrow[d] \\
X(\mathrm{b}5)                 &                                                                              & {X(\mathrm{b}5,\mathrm{ns}7)/w_5}                                                                                                                                  & X(\mathrm{e}7) \arrow[d] & {X(\mathrm{b}3,\mathrm{ns}7)/w_3}                   \\
                               &                                                                              &                                                                                                                                                                    & X(\mathrm{ns}7)          &                                                    
\end{tikzcd}
\label{fig1}
\caption{Maps between modular curves, where $*\in\{\mathrm{b},\mathrm{s}\}$. A full line means we have computed this map explicitly, whereas a dotted line has not been computed. }

\end{figure}

One answer is $J(\mathrm{e}7)$. The curve $X(\mathrm{e}7)$ is an elliptic curve of rank 0, and $X_i^{(2)}$ has a map to its Jacobian $J(\mathrm{e}7)$. Indeed, each degree 2 effective divisor $D\in X_i^{(2)}$ pulls back to a degree 4 effective divisor on $X(\mathrm{b}5,\mathrm{e}7)$, then pushes forward to a degree 4 effective divisor on $X(\mathrm{e}7)$. This then has an Abel--Jacobi map into $J(\mathrm{e}7)$. In particular, demanding that each $D\in X_i^{(2)}(\F_p)$ maps into the image of $J(\mathrm{e}7)(\Q)$ in $J(\mathrm{e}7)(\F_p)$, we obtain a stronger sieve (with $A=J(C)\times J(\mathrm{e}7)$). Now $\mathrm{dim}(A)=3>2=\mathrm{dim}(X_i^{(2)})$, making $A$ potentially large enough for sieving. In practice, however, this sieve still does not terminate fast enough. 

We thus use the final trump card we have been dealt but neglected so far: the information at the prime 3. We are only interested in those quadratic points on $X_i$ that are the image of a quartic point on $X(\mathrm{s}3,\mathrm{b}5,\mathrm{e}7)$ or $X(\mathrm{b}3,\mathrm{b}5,\mathrm{e}7)$. The curves $X(\mathrm{s}3,\mathrm{b}5)$ and $X(\mathrm{b}3,\mathrm{b5})=X_0(15)$ are also elliptic curves of rank 0, and we can make use of their Mordell--Weil groups. More precisely, we apply the sieve from Section \ref{mwsieve} to the four varieties
\[
Y_{*,i}\colonequals X_i^{(2)}\times_{X(\mathrm{b}5)^{(4)}}X(*3,\mathrm{b}5)^{(4)} \quad\text{ for } i\in \{1,2\},\; *\in \{\mathrm{b},\mathrm{s}\}
\]
using the abelian variety $A=J(C)\times J(\mathrm{e}7)\times J(*3,\mathrm{b}5)$ and a relative symmetric Chabauty method. Note that the map $X_i^{(2)}\to X(\mathrm{b}5)^{(4)}$ is defined by first pushing forward to $C$, then pulling back to $X(\mathrm{b}5,\mathrm{ns}7)$ and then pushing forward to $X(\mathrm{b}5)$, see Figure \ref{fig1}. This sieve is successful for three of the four cases: for $i=1$ and $*=\mathrm{b}$, the sieve does not terminate. 

The reason is that $C$ is hyperelliptic. Denote by $x:C\to \P^1$ the map to $\P^1$. All $D\in x^*\P^1(\Q)\subset C^{(2)}(\Q)$  have the same image $O\in J(C)(\Q)$. So in order to ``remove'' in the sieve a class $(O,a,b)\in J(C)\times J(\mathrm{e}7)\times J(\mathrm{b}3,\mathrm{b}5)$ as a possible image of an unknown element of $X_1^{(2)}(\Q)$, we need, modulo a prime $p$, all $p+1$ such pullbacks $x^*\P^1(\F_p)$ to be discounted simultaneously by the sieve; for one such class $(O,a,b)$ this does not happen when $i=1$ and $*=\mathrm{b}$. 

We therefore add yet another curve and yet another Mordell--Weil group. The curve $X(\mathrm{b}3,\mathrm{ns}7)$ has genus 5, and its quotient $C_2\colonequals X(\mathrm{b}3,\mathrm{ns}7)/w_3$ is a genus 2 hyperelliptic curve whose Mordell--Weil group is isomorphic to $\Z/2\Z\times \Z^2$. We thus redefine
\[
Y_{\mathrm{b},1}\colonequals X(\mathrm{b}3,\mathrm{ns}7)^{(4)}\times_{X(\mathrm{ns}7)^{(4)}}X_1^{(2)}\times_{X(\mathrm{b}5)^{(4)}}X(\mathrm{b}3,\mathrm{b}5)^{(4)}
\]
and sieve in $A=J(C_2)\times J(C)\times J(\mathrm{e}7)\times J(\mathrm{b}3,\mathrm{b}5)$. This works. 

\begin{theorem}\label{fibredproductthm}
The sets $Y_{*,i}(\Q)$, for $*\in \{\mathrm{b},\mathrm{s}\}$ and $i\in\{1,2\}$, consist entirely of points of which the image in $X_i^{(2)}(\Q)$ is in $\rho_i^*C(\Q)$. 
\end{theorem}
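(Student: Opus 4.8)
The plan is to reduce the proof of Theorem \ref{fibredproductthm} to a run of the Mordell--Weil sieve of Section \ref{mwsieve}, combined with the relative symmetric Chabauty method of Theorem \ref{chabthm}, exactly as was done for $X(\mathrm{b}5,\mathrm{ns}7)$ in Section \ref{Xb5ns7thmproofsec}. For each pair $(*,i)$ with $*\in\{\mathrm{b},\mathrm{s}\}$ and $i\in\{1,2\}$ we must assemble the input data $(\mathrm{i})$--$(\mathrm{iv})$ required by Proposition \ref{mwsieveprop}. For the known points $\mathcal{L}$ we take the fibred-product points lying above $\rho_i^*C(\Q)$ together with the (finitely many) known isolated quadratic points on $X_i$; for the abelian scheme $\mathcal{A}$ we take (a model over $\Z[1/N]$ of) $J(C)\times J(\mathrm{e}7)\times J(*3,\mathrm{b}5)$, respectively $J(C_2)\times J(C)\times J(\mathrm{e}7)\times J(\mathrm{b}3,\mathrm{b}5)$ in the exceptional case $(*,i)=(\mathrm{b},1)$; the map $\iota$ is the product of the Abel--Jacobi maps obtained by pushing forward along the maps in Figure \ref{fig1}. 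For the group $B$ and homomorphism $\phi$ we use the explicitly known finite Mordell--Weil groups of the rank-$0$ elliptic factors and the finite-index subgroups of $J(C)(\Q)$ and $J(C_2)(\Q)$ computed via Stoll's algorithm; the set $W\subset B$ records the constraint, analogous to that in Section \ref{ptssec}, that images of non-pullback points avoid the trace-fixed part coming from $w_i$.

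The first concrete step is to produce explicit models for $X_1$, $X_2$, $X(\mathrm{b}3,\mathrm{ns}7)$ and all the relevant maps to $C$, $X(\mathrm{e}7)$, $X(*3,\mathrm{b}5)$ (and $C_2$), using the algorithm of \cite{boxmodels} as indicated in Section \ref{modelssec}; from these we read off equations over $\Z[1/N]$ and the reduction data at the good primes. The second step is to verify the Chabauty rank hypotheses: for the residue discs of known points we compute the matrices $\mathcal{A}$ and $\mathcal{A}_C$ of Section \ref{deg4chabsec} (here in the $m=2$ incarnation described in Section \ref{deg2chabsec}) and check, for a suitable set of primes $p\geq 17$, that $\mathrm{rk}(\mathcal{A}_C)\geq 2$ at pullbacks and $\mathrm{rk}(\mathcal{A})\geq 2$ at isolated points, so that Theorem \ref{chabthm} forces each residue disc to contribute only its known point(s) or a pullback. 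This pins down the sets $\mathcal{N}_p$ and hence the $\mathcal{M}_p=\iota_p^{-1}(\phi_p(W))\setminus\mathcal{N}_p$. The third step is the sieve itself: choosing primes $p_1,\ldots,p_n$ of good reduction (not dividing $N$), we compute $J(\widetilde{Y})(\F_p)$ for the relevant Jacobians via Hess's algorithm and check that $W\cap\bigcap_i\phi_{p_i}^{-1}(\iota_{p_i}(\mathcal{M}_{p_i}))=\emptyset$, which by Proposition \ref{mwsieveprop} gives $Y_{*,i}(\Q)=\mathcal{L}$, and since every point of $\mathcal{L}$ maps into $\rho_i^*C(\Q)$, this is exactly the theorem.

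The main obstacle, and the reason for the escalating fibre products, is getting the sieve to terminate: as in Remark \ref{sieveproportion} we need $\dim\mathcal{A}$ genuinely larger than $\dim Y_{*,i}=2$, and moreover we need the rank-$0$ abelian factors to carry enough residue-disc-separating information. The subtle point is the one flagged in the overview for $(*,i)=(\mathrm{b},1)$: because $C$ is hyperelliptic, the $p+1$ pullbacks in $x^*\P^1(\F_p)$ all share one image $O\in J(C)(\F_p)$, so a stray class $(O,a,b)$ can only be eliminated if all $p+1$ of these discs are simultaneously discounted by the Chabauty input at some prime, which fails; adding the curve $X(\mathrm{b}3,\mathrm{ns}7)$ and its quotient $C_2$ (another hyperelliptic genus-$2$ curve, but with a \emph{different} hyperelliptic map, so that $x^*\P^1(\F_p)$ no longer maps to a single point of $J(C_2)$) resolves this. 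Verifying that the enlarged sieve does terminate, with an explicit choice of primes, is the computational heart of the argument; everything else is bookkeeping of maps between the modular curves in Figure \ref{fig1} and routine Magma computations of Mordell--Weil groups, Riemann--Roch spaces and the Chabauty matrices.
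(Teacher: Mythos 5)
Your overall architecture (fibred products, the product abelian variety $J(C)\times J(\mathrm{e}7)\times J(*3,\mathrm{b}5)$, the extra factor $J(C_2)$ for $(*,i)=(\mathrm{b},1)$, relative symmetric Chabauty on pullback residue discs feeding $\mathcal{N}_p$ into the Mordell--Weil sieve) matches the paper's proof. But there is one genuine gap: you treat the computation of the Chabauty matrices, i.e.\ of a basis of the space $V_C$ of trace-zero vanishing differentials on $X_i$, as routine. It is not, and the paper flags this explicitly as one of the two obstructions specific to $X_1$ and $X_2$. For $X(\mathrm{b}5,\mathrm{ns}7)$ one gets $\mathrm{Ker}(1+\widetilde{w}^*)\subset\widetilde{V}_C$ for free because $\mathrm{rk}(J(X)(\Q))=\mathrm{rk}(J(C)(\Q))$; here the ranks (apparently) differ, that inclusion fails, and since no finite-index subgroup of $J(X_i)(\Q)$ is known one also cannot certify vanishing differentials by integrating against generators. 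The paper's workaround is the content of Section \ref{vandiffsec}: decompose $H^0(X_i,\Omega)$ into eigenform blocks via the covering $Y=X(\Gamma_0(5\cdot 7^2)\cap\Gamma_1(7))\to X_i$ (defined only over $K=\Q(\zeta_7)^+$), verify $L(f\otimes\chi,1)\neq 0$ for the relevant newforms and all characters $\chi$ of $\mathrm{Gal}(K/\Q)$, invoke Proposition \ref{kolyvaginprop} (which itself needs the no-CM/no-inner-twist analysis and the Guitart--Quer restriction-of-scalars machinery) to get $\mathrm{rk}(A_f(K))=0$, and then push this down to $X_i$ via the chain rule for Coleman integration (Lemma \ref{map1formlemma}, Corollary \ref{vandiffcor}). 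Without some such argument your second step cannot be carried out, and the sieve has no nontrivial $\mathcal{N}_p$ to work with.

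Two smaller points. First, your set $W$ is supposed to encode that non-pullback points avoid the $w_i$-fixed part of the Jacobian, by analogy with Section \ref{ptssec}; but that constraint for $X(\mathrm{b}5,\mathrm{ns}7)$ rested on the Derickx--Najman--Siksek gonality result (no degree-4 map to $\P^1$ other than $x\circ\rho$), for which no analogue is established on $X_1$ or $X_2$. The paper simply sieves with $W=B$, which suffices; you should drop the unjustified restriction. Second, your $\mathcal{L}$ includes ``known isolated quadratic points on $X_i$'' and you propose a rank condition $\mathrm{rk}(\mathcal{A})\geq 2$ at such points; this would require the full space $V$ of vanishing differentials, which is again inaccessible here. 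In the paper $\mathcal{L}$ consists only of the points above $\rho_i^*(Q_j)$, $j=1,\ldots,6$, only the pullback case of Theorem \ref{siksekthm} is used (for which $p\geq 3$ suffices, not $p\geq 17$), and all non-pullback residue classes are eliminated by the sieve alone.
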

 Together with Corollary \ref{X1X2cor} and subsequent remarks, Theorem \ref{fibredproductthm} shows that all  quartic points on $X(\mathrm{b}3,\mathrm{b}5,\mathrm{e}7)$ and $X(\mathrm{s}3,\mathrm{b}5,\mathrm{e}7)$ either have quadratic or rational $j$-invariant, correspond to a $\Q$-curve, or have non-totally real $j$-invariant displayed in Table \ref{tableb5ns7}. Together with Theorems \ref{X0105thm} and \ref{Xs3b5b7thm}, this finally proves Theorem \ref{thm1.5}.

To prove Theorem \ref{fibredproductthm}, we note that it remains necessary to use a relative Chabauty method on top of the sieve, because each $Q\in C(\Q)$ determines for each $i\in \{1,2\}$ a point on $Y_{*,i}(\Q)$ for at least one $*\in \{\mathrm{b},\mathrm{s}\}$. In the next section, we describe the symmetric Chabauty method used to prove Theorem \ref{fibredproductthm} (in conjunction with the sieve).

%The final part of the proof for modularity of elliptic curves over totally real fields not containing $\sqrt{5}$ is to find the quadratic points on $X_1$ and $X_2$. Note that each of them has a degree 2 quotient to $C\colonequals X(\mathrm{b}5,\mathrm{ns}7)/w_5$, so they have infinitely many quadratic points. We are interested in the exceptional ones: those not coming from $C(\Q)$. These curves $X_1$ and $X_2$ have genera 5 and 8, but we do not a priori know their ranks. We can construct models using their definition as a quotient of a fibred product, but those models will not tell us anything about the rank, nor will they allow us to  compute vanishing differentials. 
\subsection{Relative symmetric Chabauty for quadratic points}\label{deg2chabsec}
We consider again the situation as in Section \ref{infmwgpsec}, with $K=\Q$: consider a prime $p$, curves $X/\Q$ and $C/\Q$ with minimal proper regular models $\mathcal{X}/\Z_p$ and $\mathcal{C}/\Z_p$ respectively, and a $\Z_p$-morphism $\rho: \mathcal{X}\to \mathcal{C}$ of degree 2 on generic fibres. The difference is that we are now interested in $X^{(2)}(\Q)$ instead of $X^{(4)}(\Q)$.

We consider again the space $V_C$ of vanishing differentials with trace zero on $C$. Let $\om_1,\ldots,\om_n\in V_C\cap H^0(\mathcal{X},\Omega^1_{\mathcal{X},\Z_p})$ be a basis for $V_C$, and consider $\mathcal{Q}\in X^{(2)}(\Q)$ and a uniformiser $t_Q$ at a point $Q\in \mathcal{Q}$, such that $t_Q$ reduces to a uniformiser at the reduction $\widetilde{Q}$ of $Q$ at a prime $\pp$ of $\Q(Q)$ above $p$.  In the ring $\widehat{\Omega}_{X,Q}$, we then have
\[
\om_i=(a_0(i,Q)+a_1(i,Q)t_Q+a_2(i,Q)t_Q^2+\ldots)\dd t_Q,\;\; a_j(i,Q)\in \overline{\Z}_p \text{ for each }i,j,Q,
\]
and $\widetilde{\om}_i=(\sum_{j=0}^{\infty} \widetilde{a}_j(i,Q)\widetilde{t}_Q)\dd \widetilde{t}_Q$, where a tilde denotes reduction modulo $\pp$. 

The following theorem is due to Siksek \cite{siksek}.
\begin{theorem}[Siksek]\label{siksekthm} Assume that $p\geq 3$ is a prime of good reduction for $X$ and $C$.
If $\mathcal{Q}=\rho^*R$ for some $R\in C(\Q)$ and there exist $i\in \{1,\ldots,n\}$ and $Q\in \mathcal{Q}$ such that
\[
\widetilde{a}_0(i,Q)\neq 0,
\]
then each $\mathcal{P}\in X^{(2)}(\Q)$ in the residue disc of $\mathcal{Q}$ is also in $\rho^*C(\Q)$. 
\end{theorem}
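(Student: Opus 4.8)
The plan is to reduce the statement to a local computation at $p$ comparing the $p$-adic zero sets of the tiny integrals of trace-zero vanishing differentials, following the $m=1$ argument of Chabauty--Coleman adapted to symmetric squares. First I would fix $R \in C(\Q)$ with $\mathcal{Q} = \rho^* R$, and suppose $\mathcal{P} \in X^{(2)}(\Q)$ lies in the residue disc $D(\mathcal{Q})$; the goal is to show $\mathcal{P} \in \rho^* C(\Q)$. Writing $\mathcal{Q} = Q_1 + Q_2$ with $Q_2 = w(Q_1)$ (the degree-2 cover is a quotient by an involution $w$, since $\rho$ has degree $2$), I would parametrise the residue disc: a point $\mathcal{P}$ in $D(\mathcal{Q})$ corresponds to a pair of points $P_1, P_2$ with $P_k$ in the residue class of $Q_k$, governed by local coordinates $z_1, z_2 \in \mathfrak{m}_{\pp}$. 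The $\Q$-rationality of $\mathcal{P}$ as a divisor forces the pair $(z_1, z_2)$ to be Galois-stable.

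Next I would write down the Chabauty condition. Every $\mathcal{P} \in X^{(2)}(\Q)$ satisfies $\int_{\iota_X(\mathcal{P})} \om = 0$ for all $\om \in V \supset V_C$, in particular for the chosen $\om_i \in V_C$. Breaking this integral into tiny integrals along the residue classes of $Q_1$ and $Q_2$ gives, for each $i$,
\[
\sum_{k=1}^{2} \int_{[P_k - Q_k]} \om_i = \sum_{k=1}^{2} \sum_{n \geq 1} \frac{a_{n-1}(i, Q_k)}{n} z_k^n = 0,
\]
using the tiny-integral formula from Section \ref{infmwgpsec}. The leading-order term of this power series in $(z_1, z_2)$ is $a_0(i, Q_1) z_1 + a_0(i, Q_2) z_2$. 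The trace-zero hypothesis on $\om_i$ means that, on the cover $\rho$, the pullback of a differential on $C$ contributes symmetrically, so the pullback divisors $\rho^* C(\Q) \cap D(\mathcal{Q})$ form a one-parameter analytic family inside $D(\mathcal{Q})$ cut out by $z_2 = -z_1 + (\text{higher order})$, i.e. the locus where $P_2 = w(P_1)$. The claim is then that if $\widetilde{a}_0(i, Q) \neq 0$ for some $Q \in \mathcal{Q}$ and some $i$, the zero set of the single power series above, intersected with the Galois-stable locus, is exactly this pullback family.

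The key step is the linear-algebra/Newton-polygon argument: because $\om_i$ has trace zero, one checks $a_0(i, Q_2) = -a_0(i, Q_1) \pmod{\pp}$ (the involution $w$ acts by $-1$ on $\widetilde{\om}_i$ up to the normalisation in the matrix $\mathcal{A}_C$ from Section \ref{deg4chabsec}), so the leading linear form is a nonzero multiple of $z_1 - z_2$ — precisely the tangent direction to the pullback family. A Weierstrass-preparation / Hensel-type argument then shows the full zero set of that power series in the residue disc is a smooth analytic curve with this tangent, hence coincides with $\rho^* C$ locally; intersecting with the discrete-valuation constraints coming from $\Q$-rationality (the Galois action permuting $P_1, P_2$) leaves only points of $\rho^* C(\Q)$. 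I expect the main obstacle to be making the ``trace zero forces the leading form to be $z_1 - z_2$'' step fully rigorous — i.e. correctly tracking how the involution $w$ acts on uniformisers at $Q_1$ and $Q_2$ and on the expansion coefficients $a_j(i, Q_k)$, and verifying that $\widetilde{a}_0(i,Q) \neq 0$ genuinely rules out a spurious extra branch — but this is exactly the computation carried out by Siksek in \cite{siksek}, so I would cite that result directly rather than reprove it, and simply verify here that our $\om_i \in V_C$ and our degree-$2$ map $\rho$ satisfy its hypotheses.
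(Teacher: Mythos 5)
The paper gives no proof of this theorem: it is stated with the attribution ``due to Siksek \cite{siksek}'' and used as a black box. You ultimately do the same --- cite Siksek and defer the bookkeeping --- so in that sense the approaches coincide, and this is the right call.

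That said, the sketch you wrote on the way to that citation contains a sign inconsistency worth fixing for your own understanding. You assert both (a) that the pullback locus $\rho^*C$ in the disc is $\{z_2 = -z_1 + \text{h.o.t.}\}$ and (b) that the trace-zero condition forces $\widetilde{a}_0(i,Q_2) = -\widetilde{a}_0(i,Q_1)$, hence a leading form proportional to $z_1 - z_2$. These two statements use opposite conventions for the local parameter at $Q_2$ and cannot hold simultaneously: with $t_2 := w^*t_1$ one gets pullback locus $z_1 = z_2$ together with $a_0(i,Q_2) = -a_0(i,Q_1)$ (so the leading form $a_0(z_1 - z_2)$ does vanish on the pullback locus); with $t_2 := -w^*t_1$ one gets pullback locus $z_2 = -z_1$ together with $a_0(i,Q_2) = +a_0(i,Q_1)$ and leading form $a_0(z_1 + z_2)$. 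Either is internally consistent, but as written your leading form $z_1 - z_2$ does \emph{not} vanish on your pullback locus $z_2 = -z_1$, which would (incorrectly) suggest the tiny integral is nonzero on pullbacks. The conceptual content --- trace-zero forces the linear term to cut out precisely the tangent to $\rho^*C$, and $\widetilde{a}_0 \neq 0$ makes the Weierstrass/Hensel step work to rule out a second branch --- is the right picture, and the genuinely delicate part you flagged (tracking the action of $w$ on uniformisers and the $\Q$-rationality constraint, including the case $Q_1 = Q_2$ where one must pass to symmetric functions) is exactly what is handled carefully in Siksek's paper, so citing it is appropriate.
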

This theorem allows us to define the sets $\mathcal{M}_p$ required for the Mordell--Weil sieve, c.f.\ Section \ref{mwsieve}. In the next sections, we describe how to obtain the remaining sieve input, such as  the models of these curves, the maps between them, the Mordell--Weil groups and the vanishing differentials. 

\subsection{Models and maps}\label{modmapsec}
In this section, we give explicit descriptions for the models and maps in Figure \ref{fig1}, as well as the known Mordell--Weil groups.

\subsubsection{The map $X(\mathrm{b}3,\mathrm{b}5)\to X(\mathrm{b}5)$}
We begin on the left-hand side. A model for the elliptic curve $X(\mathrm{b}3,\mathrm{b}5)=X_0(15)$, together with the map to $X(\mathrm{b}5)=X_0(5)=\P^1$, can be computed by built-in functions of the Small Modular Curves package in \texttt{Magma}: 
\[
X(\mathrm{b}3,\mathrm{b}5):\;y^2 + xy + y = x^3 + x^2 - 10x - 10,
\]
and the degree 4 map to $X(\mathrm{b}5)=\P^1$ is given by
\[
(x:y:z)\mapsto (-6x^2 + xy + 6xz + 10yz + 12z^2 :
x^2 + 2xz + z^2),
\]
where $(x:y:z)$ are the coordinates in $\P^2$. The Mordell--Weil group is
\[
J(\mathrm{b}3,\mathrm{b}5)(\Q)=\Z/2\Z \cdot (-1, 0)\oplus \Z/4\Z \cdot (-2,-2).
\]

\subsubsection{The map $X(\mathrm{s}3,\mathrm{b}5)\to X(\mathrm{b}5)$}
The curve $X(\mathrm{s}3,\mathrm{b}5)$ is isomorphic to $X_0(45)/w_9$ (c.f.\ \cite[Example 2.10]{boxmodels}), for which the \texttt{ModularCurveQuotient} function in \texttt{Magma} can compute a model using modular forms. However, for such quotients, the map to $X_0(5)$ is not built in. 

Instead, we compute a canonical model for the non-hyperelliptic genus 3 curve $X_0(45)$ in $\P^2$, by finding equations between three linearly independent weight 2 cusp forms $f_1,f_2,f_3$. On $X_0(45)$, we compute a matrix for the action of $w_9$ on these cusp forms (and hence on this model), and by taking the quotient, we obtain explicitly the map $X_0(45)\to X_0(45)/w_9$, and find
\[
X(\mathrm{s}3,\mathrm{b}5):\; y^2 + xy + y = x^3 + x^2 - 5x + 2.
\]
This map describes $x$ and $y$ as rational functions in terms of $f_1,f_2$ and $f_3$, so we obtain $q$-expansions $x(q)$, $y(q)$ for the functions $x$ and $y$ on $X(\mathrm{s}3,\mathrm{b}5)$. Now using the \texttt{qExpansionsOfGenerators} function in \texttt{Magma}, we find the $q$-expansion $h(q)$ of a Hauptmodul on $X(\mathrm{b}5)$. 

Finally, we compute $X(\mathrm{s}3,\mathrm{b}5)\to X(\mathrm{b}5)$ using the method of Section \ref{modelssec}. We do not display this degree 6 map here for brevity reasons, but invite the curious reader to use the \texttt{Magma} code. 

The Mordell--Weil group is
\[
J(\mathrm{s}3,\mathrm{b}5)(\Q)=\Z/2\Z \cdot (3/4, -7/8)\oplus \Z/4\Z \cdot (0,-2).
\]
\subsubsection{The map $X(\mathrm{e}7)\to X(\mathrm{ns}7)$} This map was determined by Freitas, Le Hung and Siksek \cite{freitas} using the $j$-invariant map on $X(\mathrm{ns}7)=\P^1$ computed by Chen \cite{chen}. They found $X(\mathrm{e}7)$ to be an elliptic curve given by
\[
X(\mathrm{e}7):\; y^2=7(16x^4 + 68x^3 + 111x^2 + 62x + 11)
\]
with the map $X(\mathrm{e}7)\to X(\mathrm{ns}7)$ being simply a projection $(x:y:z)\mapsto (x:z)$. 

The Mordell--Weil group is
\[
J(\mathrm{e}7)(\Q)=\Z/2\Z\cdot [(-1/3 , -14/3 )-(-1/3 , 14/3 )]
\]

\subsubsection{$X_1$, $X_2$ and $X(\mathrm{b}5,\mathrm{ns}7)$}
Canonical models for these curves, as well as the map $X(\mathrm{b}5,\mathrm{ns}7)\to X(\mathrm{ns}7)$, were computed by the author in \cite[Section 5]{boxmodels}. We display them here for $X(\mathrm{b}5,\mathrm{ns}7)$ and $X_2$. The model for $X_1$ in $\P^7$ is an intersection of 15 quadrics, which were displayed \cite{boxmodels}. 

A canonical model for $X(\mathrm{b}5,\mathrm{ns}7)$ in $\P_{X_0,\ldots,X_5}^5$ is given by:
\begin{align*}X(\mathrm{b}5,\mathrm{ns}7):\; & 14X_0^2 + 12X_2X_3 - 16X_3^2 - 14X_2X_4 + 30X_3X_4 - 11X_4^2\\&\;\;\;\;\; + 28X_2X_5 - 58X_3X_5 + 40X_4X_5 - 28X_5^2=0,\\& 7X_0X_1 - 2X_2X_4 - 4X_3X_4 + 2X_4^2 + 12X_3X_5 - 7X_4X_5 + 10X_5^2=0,\\& 14X_1^2 - 4X_2X_3 + 16X_3^2 + 10X_2X_4 + 14X_3X_4 - 21X_4^2\\&\;\;\;\;\; + 4X_2X_5 - 58X_3X_5 + 64X_4X_5 - 66X_5^2=0,\\& 2X_0X_2 - 2X_0X_3 + 2X_1X_3 - 5X_0X_4 - 6X_1X_4 + 8X_0X_5 + 4X_1X_5=0,\\& 4X_1X_2 - 2X_0X_3 - 6X_1X_3 - X_0X_4 + 3X_1X_4 + 3X_0X_5 - 2X_1X_5=0,\\& 8X_2^2 - 20X_2X_3 + 16X_3^2 - 14X_2X_4 + 14X_3X_4 - 21X_4^2\\&\;\;\;\;\; + 28X_2X_5 - 42X_3X_5 + 56X_4X_5 - 28X_5^2=0.
\end{align*}  
and the Atkin--Lehner involution acts as
\[
w_5:\; (X_0:X_1:X_2:X_3:X_4:X_5)\mapsto (-X_0:-X_1:X_2:X_3:X_4:X_5).
\]
The map $X(\mathrm{b}5,\mathrm{ns}7)\to X(\mathrm{ns}7)=\P^1$ is given by
\[
(X_0:\ldots:X_5)\mapsto (7X_0 - 2X_2 + 4X_3 - X_4 - 4X_5:
    -14X_0 - 7X_1 + 6X_2 - 12X_4 + 10X_4 - 9X_5).
\]
A canonical model for $X(\mathrm{b}5,\mathrm{e}7)/\phi_7w_5$ in $\P^4_{X_0,\ldots,X_4}$ is given by

\begin{align*} X(\mathrm{b}5,\mathrm{e}7)/\phi_7w_5:\;& 448X_0^2 - 9X_1^2 + 9X_2^2 + 54X_2X_3 + 9X_3^2 + 112X_0X_4 + 126X_1X_4 + 7X_4^2=0,\\& 16X_0X_1 - 3X_1^2 + 3X_2^2 + 6X_2X_3 + 3X_3^2 + 2X_1X_4 + 21X_4^2=0,\\& 3X_1X_2 + 28X_0X_3 + 12X_1X_3 + 21X_2X_4 + 14X_3X_4=0
\end{align*}
and the remaining involution acts by
\[
w_5=\phi_7:\; (X_0:X_1:X_2:X_3:X_4)\mapsto (X_0:X_1:-X_2:-X_3:X_4).
\]
The quotients of each of these three curves by their respective involutions yield the map to $C$. The map to $X(\mathrm{ns}7)$ was also computed using Chen's $j$-map and is therefore compatible with $X(\mathrm{e}7)\to X(\mathrm{ns}7)$. This $j$-map was used to find the $q$-expansion for a Hauptmodul on $X(\mathrm{ns}7)$, after which the map $X(\mathrm{b}5,\mathrm{ns}7)\to X(\mathrm{ns7})$ was determined using the method described in Section \ref{modelssec}. This strategy did not succeed, however, in determining the map $X(\mathrm{b}5,\mathrm{ns}7)\to X(\mathrm{b}5)$, since the degree of this map (which is 21) is too large for such a computation. Instead, note that the map to $X(\mathrm{b}5)=\P^1$ corresponds to an element $g\in \Q(X(\mathrm{b}5,\mathrm{ns}7))$. Denote by $j\in \Q(X(\mathrm{b}5,\mathrm{ns}7))$ the function corresponding to the $j$-invariant, computed via the map to $X(\mathrm{ns}7)$ and Chen's $j$-map on $X(\mathrm{ns}7)$. The $j$-invariant $j_5$ on $X(\mathrm{b}5)$ is a rational function of degree 6 such that
\[
j_5(g)-j=0 \in \Q(X(\mathrm{b}5,\mathrm{ns}7))(g). 
\]
Multiplying this equation by the denominator of $j_5(g)$, we obtain a degree 6 polynomial equation in $g$, of which $\texttt{Magma}$ can find the single root in $\Q(X(\mathrm{b}5,\mathrm{ns}7))$. This root yields the map $X(\mathrm{b}5,\mathrm{ns}7)\to X(\mathrm{b}5)$. We do not display the map here for brevity reasons. 

\subsubsection{The curve $X(\mathrm{b}3,\mathrm{ns}7)$} This curve has genus 5. We computed a model, as well as the action of $w_3$, via cusp forms using the algorithm described in \cite{boxmodels}. This yields the following model in $\P^4$:
\begin{align*}
10528X_0^2 - 21X_1X_2 - 112X_2^2 - 1136X_0X_3 - 17X_3^2 - 1016X_0X_4 + 273X_3X_4 - 176X_4^2&=0,\\ 47X_1^2 + 58X_1X_2 - 4X_2^2 - 336X_0X_3 - 9X_3^2 - 560X_0X_4 - 2X_3X_4 - 60X_4^2&=0,\\ 40X_0X_1 + 8X_0X_2 + X_1X_3 + 3X_2X_3 - 7X_1X_4&=0.
\end{align*}
We find that the Atkin--Lehner involution $w_3$ acts by
\[
w_3:\; (X_0:\ldots:X_4)\mapsto (X_0:-X_1:-X_2:X_3:X_4).
\]
The quotient by this automorphism yields the genus 2 hyperelliptic curve
\[
C_2=X(\mathrm{b}3,\mathrm{ns}7)/w_3:\;y^2 = x^6 + 6x^5 - x^4 - 46x^3 - 43x^2 - 12x.
\]
Using an algorithm of Stoll \cite{stoll}, we compute its Mordell--Weil group $J(X(\mathrm{b}3,\mathrm{ns}7)/w_3)(\Q)$: 
\[
\Z\cdot [(-1 , 3 )-\infty^-]\oplus\Z\cdot [(0,0)-\infty^-]\oplus \Z/2\Z\cdot [(0,0)-(-4 , 0 )].
\]
Finally, we use the $q$-expansions of the cusp forms used to compute our model for $X(\mathrm{b}3,\mathrm{ns}7)$ and of the Hauptmodul on  $X(\mathrm{ns}7)$ to compute $X(\mathrm{b}3,\mathrm{ns}7)\to X(\mathrm{ns}7)$ via the method of Section \ref{modelssec}. This yields
\[
(X_0:\ldots:X_4)\mapsto (3X_1 + 6X_2 + 5X_3 + 6X_4:
56X_0 - 6X_1 - 4X_2 - 6X_3 + 4X_4).
\]
\begin{remark}\label{Xb3ns7remark}
While we will use the Mordell--Weil group $J(C_2)(\Q)$ for sieving, we could also instead use a subgroup $G\subset J(\mathrm{b}3,\mathrm{ns}7)(\Q)$ satisfying $2\cdot J(\mathrm{b}3,\mathrm{ns}7)(\Q) \subset G$, c.f.\ Remark \ref{mwrk}. This would result in a stronger sieve. We can construct such $G$ as follows. By \cite[Theorem 1]{chen}, $J(\mathrm{b}3,\mathrm{ns}7)$ is isogenous to the new part of $J(X(\mathrm{b}3,\mathrm{b}49)/w_{49})$. In \texttt{Magma}, we can compute the newforms corresponding to this new part of the Jacobian, and find that it is isogenous to $A_f\times A_g\times A_h$, where $f,g,h$ are newforms, $A_f=J(C_2)$, and $A_g$ and $A_h$ have dimensions 1 and 2 respectively. Here $A_k$ is the $\Q$-simple abelian variety attached to the newform $k$. We compute that $L(g,1)\neq 0$ and $L(h,1)\neq 0$, which by Kolyvagin--Logach\"ev \cite{kolyvagin} implies that $A_g(\Q)$ and $A_h(\Q)$ are torsion, so that $J(\mathrm{b}3,\mathrm{ns}7)(\Q)$ and $J(C_2)(\Q)$ have equal rank. Pulling back $J(C_2)(\Q)$ to $J(\mathrm{b}3,\mathrm{ns}7)$ under the quotient map thus yields a subgroup $H\subset J(\mathrm{b}3,\mathrm{ns}7)(\Q)$ satisfyng $2\cdot J(\mathrm{b}3,\mathrm{ns}7)(\Q)\subset H$ modulo torsion by Proposition \ref{boxprop}. Finally, composing the map $X(\mathrm{b}3,\mathrm{ns}7)\to X(\mathrm{ns}7)$ with the $j$-map on the latter, we find the $j$-map on $X(\mathrm{b}3,\mathrm{ns}7)$. Its poles consist of two irreducible degree 3 divisors $c_0$ and $c_{\infty}$: the cuspidal divisors. The difference $[c_0-c_{\infty}]$ has order 7 in the Mordell--Weil group, and reduction modulo primes shows that $2\cdot J(\mathrm{b}3,\mathrm{ns}7)(\Q)_{\mathrm{tors}}\subset \langle [c_0-c_{\infty}]\rangle$. The group $G$ generated by $[c_0-c_{\infty}]$ and the pullbacks of the generators of $J(C_2)(\Q)$ displayed above thus satisfies $2\cdot J(\mathrm{b}3,\mathrm{ns}7)\subset G$.
\end{remark}
\subsection{The vanishing differentials on $X_1$ and $X_2$}\label{vandiffsec} In \cite[Section 5]{boxmodels}, we defined the following newforms of trivial Nebentypus character:
\begin{align*}
   f_{49}&\colonequals  q+q^2-q^4-3q^8-3q^9+O(q^{11}) \in S_2(\Gamma_0(49)\cap \Gamma_1(7),\overline{\Q}), \\
   f_{35}&\colonequals q+q^3-2q^4-q^5+q^6-2q^8-3q^{10}+O(q^{11})\in S_2(\Gamma_0(5)\cap \Gamma_1(7),\overline{\Q}),\\
   g_{35}&\colonequals q+ \al q^2 -(\al+1)q^3 +(2-\al)q^4+ q^5 -4q^6, -q^7+ (\al - 4)q^8+(\al + 2)q^9\\&\;\;\;\;\;\;\;\;\;\;+ \al q^{10}+O(q^{11}) \in S_2(\Gamma_0(5)\cap \Gamma_1(7),\overline{\Q}), \text{ where }\al=(-1+\sqrt{17})/2,\\
   f_0&\colonequals q -2q^2 -3q^3+ 2q^4 +q^5+ 6q^6 +6q^9 -2q^{10}+O(q^{11})\in S_2(\Gamma_0(5\cdot 7^2)\cap \Gamma_1(7),\overline{\Q}), \\
   f_1&\colonequals q+\sqrt{2}q^2-(\sqrt{2}+1)q^3-q^5-(\sqrt{2}+2)q^6-2\sqrt{2}q^8+2\sqrt{2}q^9\\&\;\;\;\;\;\;\;\;\;\;-\sqrt{2}q^{10}+O(q^{11}) \in S_2(\Gamma_0(5\cdot 7^2)\cap \Gamma_1(7),\overline{\Q}), \text{ and }\\
   f_2&\colonequals q+ (1+\sqrt{2})q^2+ (1-\sqrt{2})q^3+(2\sqrt{2}+1)q^4 +q^5-q^6+ (\sqrt{2}+3)q^8-2\sqrt{2}q^9\\&\;\;\;\;\;\;\;\;\;\;+(1+\sqrt{2})q^{10}+O(q^{11})\in S_2(\Gamma_0(5\cdot 7^2)\cap \Gamma_1(7),\overline{\Q}).
\end{align*}
Let $G(\mathrm{b}5,\mathrm{e}7)\subset \mathrm{GL}_2(\Z/35\Z)$ be the intersection of the inverse images of $G(\mathrm{e}7)\subset \mathrm{GL}_2(\F_7)$ and $B_0(5)\subset \mathrm{GL}_2(\F_5)$ under the reduction maps. Define the fixed spaces \[
S_1=S_2(\Gamma_0(5\cdot 7^2)\cap \Gamma_1(7),\Q(\zeta_7)^+)^{\langle G(\mathrm{b}5,\mathrm{e}7),w_5\rangle}\text{ and }S_2=S_2(\Gamma_0(5\cdot 7^2)\cap \Gamma_1(7),\Q(\zeta_7)^+)^{\langle G(\mathrm{b}5,\mathrm{e}7),\phi_7w_5\rangle},
\]
where the action of $G(\mathrm{b}5,\mathrm{e}7)$ was defined in \cite[Section 3.1]{boxmodels}. Despite containing cusp forms with Fourier coefficients in $\Q(\zeta_7)^+$, both $S_1$ and $S_2$ are $\Q$-vector spaces. Then the map $f(q)\mapsto f(q)(\dd q)/q$ defines isomorphisms $S_i\simeq H^0(X_i,\Omega)$ for $i\in \{1,2\}$. We computed in \cite{boxmodels} canonical models for $X_1$ and $X_2$ by finding equations satisfied by the $q$-expansions of bases for $S_1$ and $S_2$ respectively.  We describe the isomorphism $S_i\simeq H^0(X_i,\Omega^1)$ explicitly. 
\begin{lemma}\label{canonicalisolemma}
Suppose that $X$ is a non-hyperelliptic curve of genus at least 2, and $f_0,\ldots,f_n$ is a basis for $H^0(X,\Omega^1)$. We obtain a canonical map $\phi:X\to \P^n$, $x\mapsto (f_0(x):\ldots:f_n(x))$ which is an isomorphism onto its image $Z$. Consider $Q\in X$ with local coordinate $q$ at $Q$. Then for each $i$ we have a power series expansion $f_i(q)=(\sum_{j\geq 1}a_{ij}q^j)(\dd q)/q$ at $Q$. Let $x_0,\ldots,x_n$ be the variables of $\P^n$, and consider $\om\in H^0(Z,\Omega^1)$. Then  $\omega=g(x_0,\ldots,x_n)\dd (x_0/x_n)$, where $g$ is a quotient of homogeneous polynomials of equal degree. The expansion of $\phi^*\om$ at $Q$ is
\[
\phi^*\om(q) = g(f_0(q),\ldots,f_n(q))\frac{\dd (f_0/f_n)}{\dd q}\dd q.
\]
\end{lemma}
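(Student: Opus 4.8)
The plan is to deduce the formula purely from the functoriality of the pullback of Kähler differentials, using the defining property of the canonical map; no genuinely hard input is needed.

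First I would record the one identity everything rests on. Since $X$ is non-hyperelliptic of genus at least $2$, the canonical system is very ample, so $\phi\colon X\to\P^n$ is a closed immersion onto $Z$ --- this is classical and is already part of the hypothesis. By the very construction of $\phi$ one has $\phi^{*}\mathcal{O}_{\P^n}(1)\cong\Omega^1_X$ with the homogeneous coordinate $x_i$ corresponding to $f_i$; concretely, for all indices $i,k$ the rational function $x_i/x_k$ on $\P^n$ satisfies $\phi^{*}(x_i/x_k)=f_i/f_k$ in the function field of $X$.

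Next I would justify the displayed shape of $\omega$ and the well-definedness of the right-hand side. The rational differential $d(x_0/x_n)$ restricts to a nonzero rational differential on the curve $Z$, so $\omega/d(x_0/x_n)$ is a rational function on $Z$, and every rational function on an open subset of $\P^n$ is a ratio $g=P/R$ of homogeneous polynomials of equal degree $d$. Because $g$ has homogeneity degree $0$, the symbol $g(x_0,\ldots,x_n)$ is a genuine function, and $\phi^{*}\bigl(g(x_0,\ldots,x_n)\bigr)=g(f_0,\ldots,f_n):=P(f_0,\ldots,f_n)/R(f_0,\ldots,f_n)$ is a well-defined element of the function field of $X$: in a local coordinate $q$ at $Q$, writing $f_i=\bigl(\sum_{j\geq1}a_{ij}q^j\bigr)(dq)/q$, the factor $\bigl((dq)/q\bigr)^{d}$ cancels between numerator and denominator, leaving a power series $g(f_0(q),\ldots,f_n(q))$ in $q$; likewise $f_0/f_n$ becomes an honest power series in $q$.

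Finally I would compute. By compatibility of the exterior derivative with pullback,
\[
\phi^{*}\omega=\phi^{*}\bigl(g(x_0,\ldots,x_n)\bigr)\cdot d\bigl(\phi^{*}(x_0/x_n)\bigr)=g(f_0,\ldots,f_n)\cdot d(f_0/f_n),
\]
and expanding at $Q$ the ordinary one-variable chain rule gives $d(f_0/f_n)=\tfrac{d(f_0/f_n)}{dq}\,dq$, which is precisely the asserted expansion. I do not foresee a serious obstacle: the content is entirely formal once $\phi^{*}x_i=f_i$ is in place. The one delicate point --- and the place a reader is most likely to pause --- is the bookkeeping between the $f_i$ regarded as sections of $\Omega^1_X$ (the ``$(dq)/q$-normalised power series'' of the statement) and the ratios $f_i/f_k$ regarded as honest functions; I would spell out the cancellation of $\bigl((dq)/q\bigr)^{d}$ so that the right-hand side is manifestly a well-defined meromorphic differential in the variable $q$.
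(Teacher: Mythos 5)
Your argument is correct and is exactly the content the paper compresses into ``This follows directly from the definitions'': the identity $\phi^*(x_i/x_k)=f_i/f_k$ from the construction of the canonical map, functoriality of $d$ under pullback, and the degree-zero homogeneity of $g$ making $g(f_0(q),\ldots,f_n(q))$ well defined. Your explicit remark about the cancellation of the $((\dd q)/q)^d$ factors is the only point worth spelling out, and you handle it correctly.
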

\begin{proof}
This follows directly from the definitions.
\end{proof}
Given our model for $X_i$, we compute a basis for $H^0(X_i,\Omega)$, leading to an explicit isomorphism $H^0(X_i,\Omega)\simeq S_i$.  We use this to determine the 1-form on $X_i$ corresponding to a given $q$-expansion of a cusp form in $S_i$.

Denote by $B_d$ the operator on modular forms mapping $q\mapsto q^d$ and by $\chi$ the Dirichlet character $\chi:(\Z/7\Z)^{\times}\to \CC^{\times}$ given by $\chi(3)=-e^{2\pi i/3}$. For $X_2$, the chosen basis of $S_2$ was $h_0,h_1,h_2,h_3,h_4$, where
\begin{align*}
h_0&\in V_{49}\colonequals \mathrm{Span}_{\overline{\Q}}\{f_{49},f_{49}\otimes \chi^2,f_{49}\otimes \chi^4,f_{49}|B_5,(f_{49}\otimes \chi^2)|B_5,(f_{49}\otimes \chi^4)|B_5\}\\
h_1&\in \mathrm{Span}_{\overline{\Q}}\{f_{35},f_{35}\otimes \chi^2,f_{35}\otimes \chi^4,f_{35}|B_7\}, \quad h_4\in \mathrm{Span}_{\overline{\Q}}\{f_0\otimes \chi,f_0\otimes \chi^3,f_0\otimes \chi^5\}.
\end{align*}
 Here, when $f$ is a newform and $\ep$ a Dirichlet character, we denote by $f\otimes \ep$ the newform (at some level) satisfying $a_n(f\otimes\ep)=a_n(f)\ep(n)$ for all $n$ coprime to the level of $f$ and the conductor of $\ep$. 
 The forms $h_2$ and $h_3$ are fixed by the remaining involution on $X_2$ and thus correspond to the 1-forms on the quotient $C$. They are linear combinations of $f_1,f_1\otimes \chi^2, f_1\otimes \chi^4$ and their Galois conjugates. 

For $X_1$, the chosen basis of $S_1$ was $g_0,g_1,\ldots,g_7$, where $g_4=h_2$, $g_5=h_3$, and
\begin{align*}
g_0&\in V_{49},\; g_1,g_2\in \mathrm{cSpan}_{\overline{\Q}}\{g_{35},g_{35}\otimes \chi^2,g_{35}\otimes \chi^4,g_{35}|B_7\},\;g_3\in \mathrm{Span}_{\overline{\Q}}\{f_0,f_0\otimes \chi^2,f_0\otimes\chi^4\},\\
 g_6&,g_7\in \mathrm{cSpan}_{\overline{\Q}}\{f_2,f_2\otimes \chi^2,f_2\otimes\chi^4,(f_2\otimes \chi^2)|B_7,(f_2\otimes \chi^4)|B_7\},
\end{align*}
where $\mathrm{cSpan}(A)$ denotes the span of $A$ and the Galois conjugates of elements in $A$.

 By studying these twists of eigenforms, we will show that $h_1,h_4$ and $g_1,g_2,g_6$ and $g_7$ correspond to vanishing differentials on $X_2$ and $X_1$ respectively. These differentials can then  be determined explicitly in terms of the models for $X_1$ and $X_2$ using Lemma \ref{canonicalisolemma}. 
 
In \cite[Section 5]{boxmodels}, it was shown that the modular curve
\[
Y\colonequals X(\Gamma_0(5\cdot 7^2)\cap \Gamma_1(7))
\]
admits a morphism $\pi_i:Y\to X_i$ for each $i\in \{1,2\}$. While both $Y$ and $X_i$ are defined over $\Q$, this morphism is defined over $K=\Q(\zeta_7)^+$, the real subfield of $\Q(\zeta_7)$. Moreover, $H^0(Y_K,\Omega^1)\simeq S_2(\Gamma_0(5\cdot 7^2)\cap \Gamma_1(7),K)$, and the embedding $\pi_i^*: H^0((X_i)_K,\Omega^1)\to H^0(Y_K,\Omega^1)$ corresponds to the inclusion map on modular forms
\[
S_i\otimes K\subset S_2(\Gamma_0(5\cdot 7^2)\cap \Gamma_1(7),K). 
\]
\begin{lemma}\label{map1formlemma}
Let $K$ be a number field, and choose a prime $\pp$ of $\O_K$. Suppose that $\pi: X\to Z$ is a non-constant morphism of curves over $K$, and $\om\in H^0(Z,\Omega^1)$ is such that $\pi^*\om$ is a vanishing differential. Then $\om$ is a vanishing differential.
\end{lemma}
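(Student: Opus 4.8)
The plan is to deduce this from the chain rule for Coleman integration, equation~(\ref{chainrule}), combined with the standard identity $\pi_*\circ\pi^*=[\deg\pi]$ on Jacobians. Write $X,Z$ for the two curves and $n=\deg\pi$, and let $\pi^*\colon J(Z)\to J(X)$ denote Picard pullback and $\pi_*\colon J(X)\to J(Z)$ Albanese pushforward; both are homomorphisms of abelian varieties defined over $K$, since $\pi$ is, and $\pi_*\circ\pi^*=[n]$ on $J(Z)$.

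First I would set up the compatibility of the Coleman pairing with $\pi$. The claim to establish is that for every $D\in J(X)(K_{\pp})$ and every $\om\in H^0(Z_{K_{\pp}},\Omega^1)$ one has
\[
\int_{D}\pi^{*}\om=\int_{\pi_{*}D}\om .
\]
This is precisely~(\ref{chainrule}) applied to the morphism $\pi_*\colon J(X)\to J(Z)$ of abelian varieties, once one checks that under the Abel--Jacobi isomorphisms $\iota_X^{*}\colon H^0(J(X),\Omega)\simeq H^0(X,\Omega)$ and $\iota_Z^{*}\colon H^0(J(Z),\Omega)\simeq H^0(Z,\Omega)$, the pullback $(\pi_*)^{*}\colon H^0(J(Z),\Omega)\to H^0(J(X),\Omega)$ corresponds to $\pi^{*}\colon H^0(Z,\Omega^1)\to H^0(X,\Omega^1)$. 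That identification in turn follows from $\pi_*\circ\iota_X=\iota_Z\circ\pi$ (true after a compatible choice of base points, and in any case valid on the level of translation-invariant differentials), in exactly the same way that the pairing on $J(X)(K)\times H^0(X_{K_{\pp}},\Omega)$ is itself obtained from the pairing on $J(X)$ via $\iota_X^{*}$.

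Then the lemma follows in a line. Let $E\in J(Z)(K)$ be arbitrary. Since $\pi$ is a $K$-morphism, $\pi^{*}E\in J(X)(K)$, and because $\pi^{*}\om$ is a vanishing differential on $X$ we have $\int_{\pi^{*}E}\pi^{*}\om=0$. Using the displayed identity, the relation $\pi_*\circ\pi^{*}=[n]$, and the $\Z$-linearity of the pairing on the left,
\[
n\int_{E}\om=\int_{nE}\om=\int_{\pi_*\pi^{*}E}\om=\int_{\pi^{*}E}\pi^{*}\om=0 .
\]
Since $n\neq 0$ and $K_{\pp}$ has characteristic zero, $\int_{E}\om=0$; as $E\in J(Z)(K)$ was arbitrary, $\om$ annihilates $J(Z)(K)$ under the Coleman pairing, i.e.\ $\om$ is a vanishing differential.

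I do not expect a genuine obstacle here. The only point requiring care is the bookkeeping in the second paragraph: transporting the abstract chain rule~(\ref{chainrule}) for homomorphisms of abelian varieties into the statement $\int_{D}\pi^{*}\om=\int_{\pi_*D}\om$ for the curve morphism $\pi$. This is routine once the Abel--Jacobi compatibilities are made explicit, and the excerpt already implicitly performs such specialisations (when it deduces the pairing $J(X)(K)\times H^0(X_{K_{\pp}},\Omega)\to K_{\pp}$ from the pairing on the Jacobian), so the ``hard part'' amounts only to stating those identifications cleanly.
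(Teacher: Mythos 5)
Your proposal is correct and follows the same route as the paper: both use the chain rule (\ref{chainrule}) together with the identity $\pi_*\circ\pi^*=[\deg\pi]$ on Jacobians to reduce the vanishing of $\int_E\om$ to the vanishing of $\int_{\pi^*E}\pi^*\om$. The paper phrases the reduction as ``$\pi_*(J(X)(K))$ has finite index in $J(Z)(K)$'' and your version writes out the same computation $n\int_E\om=\int_{\pi_*\pi^*E}\om=\int_{\pi^*E}\pi^*\om=0$ explicitly, but the argument is identical.
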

\begin{proof}
Consider $D\in J(Z)(K)$. We first note that $\pi_*(J(X)(K))\subset J(Z)(K)$ has finite index (for $D\in J(Z)(K)$, we have $\mathrm{deg}(\pi)D=\pi_*\pi^*D\in \pi_*J(X)(K)$). So after multiplying by an integer, we may suppose that $D=\pi_*E$ for some $E\in J(X)(K)$. Then 
\[
\int_{\pi_*E}\om =\int_E\pi^*\om =0
\]
by the chain rule (\ref{chainrule}). 
\end{proof}

%the curves $X_1$ and $X_2$ were explicitly constructed using the aforementioned cusp forms forming a basis for $S_2(\Gamma_1)$ and $S_2(\Gamma_2)$as quotients of $Y\colonequals X(\Gamma_0(5\cdot 7^2)\cap \Gamma_1(7))$. While $Y$ and $X_i$ are curves over $\Q$, this quotient map $\pi_i:Y\to X_i$ is defined over $K\colonequals \Q(\zeta_7)^+$, as the cusp forms defining our model for $X_i$ have coefficients in this field. Recall also from \cite[p.\ 9]{boxmodels} that $H^0(X_i,\Omega^1)$ is a subspace of the space of cusp forms $S_2(\Gamma_0(5\cdot 7^2)\cap \Gamma_1(7),\Q(\zeta_7)^+)$, where a cusp form $f(q)$ corresponds to the differential $f(q)\dd q/q$. This inclusion defines the pullback %
%\[
%\pi_i^*:\; H^0((X_i)_{K},\Omega^1)\longrightarrow H^0(Y_{K},\Omega^1). 
%\]
If we can find $f\in S_i$ corresponding to a vanishing differential on $Y_K$, then $f$ corresponds to a vanishing differential on $(X_i)_K$ (hence on $X_i/\Q$) by the previous lemma. To this end, we make use of rank 0 quotients. 
%\begin{lemma}\label{eichlerlemma}
%Consider integers $M$ and $K$ and let $N=\mathrm{gcd}(M,K)$. Consider $Z\colonequals X(\Gamma_0(M)\cap\Gamma_1(K))$, and  integers $d,N'$ such that $(d\cdot N')\mid N$. Let $g\in \Gamma_1(N')$ be a newform such that $g|B_d\in S_2(\Gamma_0(M)\cap \Gamma_1(K),\overline{\Q})$. We can associate to the Galois orbit of $g$ a $\Q$-simple abelian variety $A_g$, and the operator $B_d$ determines a quotient map $J(\Gamma_0(N)\cap \Gamma_1(K))\to A_g$. Any Abel--Jacobi map $\iota: Z\to J(Z)$ defines the same isomorphism $\iota^*:\; H^0(J(Z),\Omega^1)\simeq H^0(Z,\Omega^1)$. We obtain a map $\pi_g\circ \iota:\; Z\to A_g$ such that $\iota^*\pi_g^*(H^0((A_g)_{\overline{\Q}},\Omega^1))$ is the subspace generated by the Galois conjugates of $g|B_d$. 
%\end{lemma}
%\begin{proof}
%This is classical Eichler--Shimura theory; see for example \cite{diamondshurman}. 
%\end{proof}
Now let $g$ be an eigenform in $S_2(\Gamma_0(5\cdot 7^2)\cap \Gamma_1(7),\overline{\Q})$. By Eichler--Shimura theory, we obtain a morphism $\pi_g\colon J(Y)\to A_g$, where $A_g$ is the Abelian variety associated to $g$ by Eichler and Shimura. Let $\iota\colon Y\to J(Y)$ be the Abel--Jacobi map. Then $\iota^*\pi_g^*H^0((A_g)_{\overline{\Q}},\Omega)$ is the space of 1-forms generated by the forms corresponding to $g$ and its $\mathrm{Gal}(\overline{\Q}/\Q)$-conjugates.  Let us assume that $A_g(K)$ has rank zero. Then each element in $A_g(K)$ is torsion, and for each $\om \in H^0(A_g,\Omega^1)$ and each $D\in J(Y)(K)$, we thus have an equality of Coleman integrals
\[
\int_D\iota^*\pi_g^*\om = \int_{(\pi_g)_*D}\om =0
\]
by the chain rule (\ref{chainrule}) and because torsion elements annihilate $H^0(A_g,\Omega)$. To prove that $g$ corresponds to a vanishing differential on $Y_K$, it thus suffices to show that $A_g(K)$ has rank 0, for which we use Proposition \ref{kolyvaginprop}. Note that the group of characters on $\mathrm{Gal}(K/\Q)$ is generated by $\chi^2$. 

\begin{corollary}\label{vandiffcor}
The cusp forms $h_1$ and $h_4$ correspond to vanishing differentials on $X_2$ with trace zero to $C$. The forms $g_1,g_2,g_6$ and $g_7$ correspond to vanishing differentials on $X_1$ with trace zero to $C$.
\end{corollary}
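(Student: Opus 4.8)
The plan is to establish, for each of the relevant cusp forms, that the associated Eichler--Shimura abelian variety has rank $0$ over $K = \Q(\zeta_7)^+$, so that Lemma \ref{map1formlemma} applies and the corresponding $1$-forms on $Y_K$ — hence on $(X_i)_K$ and thus on $X_i/\Q$ — are vanishing differentials. First I would recall that $h_1$ lives in the span of $f_{35}$, $f_{35}\otimes\chi^2$, $f_{35}\otimes\chi^4$ and $f_{35}|B_7$; since $B_7$ is the degeneracy operator $q\mapsto q^7$, the form $f_{35}|B_7$ and $f_{35}$ generate the same piece of the Jacobian up to isogeny, so it suffices to control $A_{f_{35}\otimes\chi^j}$ for $j\in\{0,2,4\}$, i.e.\ to control $A_{f_{35}\otimes\chi^j}(K)$. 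Because $\mathrm{Gal}(K/\Q)$ is cyclic of order $3$ with character group generated by $\chi^2$, the forms $f_{35}\otimes\chi^j$ for $j\in\{0,2,4\}$ are precisely the twists of the single newform $f_{35}$ by the characters of $\mathrm{Gal}(K/\Q)$. Hence Proposition \ref{kolyvaginprop} applies verbatim to $f=f_{35}$, $K$ as above: once one checks that $f_{35}$ has no CM and no inner twists, and that $L(f_{35}\otimes\chi^j,1)\neq 0$ for $j\in\{0,2,4\}$ (a finite computation in \texttt{Magma}), we conclude $\mathrm{rk}(A_{f_{35}\otimes\chi^j}(K))=0$ for all $j$, so $h_1$ is a vanishing differential. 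The identical argument with $f_0$ in place of $f_{35}$ handles $h_4$, whose span is $\{f_0\otimes\chi,f_0\otimes\chi^3,f_0\otimes\chi^5\}$; these are the twists of $f_0\otimes\chi$ (equivalently of $f_0$, reindexed) by the characters of $\mathrm{Gal}(K/\Q)$, so again Proposition \ref{kolyvaginprop} gives rank $0$ over $K$.

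For $X_1$, the forms $g_1,g_2$ lie in $\mathrm{cSpan}\{g_{35},g_{35}\otimes\chi^2,g_{35}\otimes\chi^4,g_{35}|B_7\}$ and $g_6,g_7$ in $\mathrm{cSpan}\{f_2,f_2\otimes\chi^2,f_2\otimes\chi^4,(f_2\otimes\chi^2)|B_7,(f_2\otimes\chi^4)|B_7\}$. Here one subtlety is that $g_{35}$ and $f_2$ themselves have Hecke eigenvalue fields larger than $\Q$ (indeed $g_{35}$ has $\al=(-1+\sqrt{17})/2$), and $g_{35}$ has Nebentypus, so one is no longer inside $S_2(\Gamma_0(N))$ and must be slightly careful about which version of the Kolyvagin--Logachev input is invoked. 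The structural point to make is that $\mathrm{cSpan}\{g_{35},g_{35}\otimes\chi^2,g_{35}\otimes\chi^4,g_{35}|B_7\}$, viewed inside $H^0(Y_K,\Omega^1)$ and then pulled back along $\iota$, is exactly $\iota^*\pi_g^*H^0((A_g)_{\overline\Q},\Omega)$ for $g=g_{35}$ together with the twists by the characters of $\mathrm{Gal}(K/\Q)$ — i.e.\ the span of $g_{35}$, its twists by $\mathrm{Gal}(K/\Q)$-characters, and its $\mathrm{Gal}(\overline\Q/\Q)$-conjugates. So it suffices that $A_{g_{35}\otimes\chi^j}(K)$ has rank $0$ for $j\in\{0,2,4\}$. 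Since $g_{35}$ is a newform with Nebentypus it may have inner twists, so Proposition \ref{kolyvaginprop} does not directly apply; instead I would invoke the more general framework of Gonz\'alez-Gim\'enez--Guitart and Guitart--Quer recalled just before Proposition \ref{kolyvaginprop}, tracking the building block $B$ of $A_{g_{35}}$: because all the relevant inner-twist characters and the twist characters $\chi^j$ factor through the fixed abelian field $L_{g_{35}}\cdot K$, the symmetry of $c_{B/L}$ proved in \cite{guitart} propagates to $c_{B/K}$ exactly as in the proof of Proposition \ref{kolyvaginprop}, reducing $\mathrm{Res}_{K/\Q}(A_{g_{35}\otimes\chi^j})$ to a product of modular abelian varieties $A_{f'}$ for newforms $f'$ that are themselves twists of $g_{35}$ by characters of $\mathrm{Gal}(K/\Q)$. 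One then checks in \texttt{Magma} that $L(f',1)\neq 0$ for each such $f'$, and Kolyvagin--Logachev gives rank $0$. The same argument applies to $f_2$, yielding that $g_6,g_7$ are vanishing differentials; in every case the trace-zero property to $C$ is automatic because $C$'s differentials come from the separate forms $h_2=g_4$, $h_3=g_5$ (linear combinations of $f_1$ and its twists), which are disjoint from the spaces containing $h_1,h_4,g_1,g_2,g_6,g_7$, so the trace map kills the latter.

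Concretely, the steps in order are: (1) verify $f_{35}$ and $f_0$ are non-CM with no inner twists, and $g_{35}$, $f_2$ are non-CM, recording their inner-twist characters and the fields $E_g, F_g, L_g$; (2) for each newform, enumerate the twists by the characters of $\mathrm{Gal}(K/\Q) = \langle\chi^2\rangle$ and, in the Nebentypus cases, the further newforms $f'$ appearing in the decomposition of $\mathrm{Res}_{K/\Q}$ of the building block via \cite[Proposition 2]{guitart} and \cite[Theorem 5.3]{quer}; (3) compute the relevant special $L$-values and check they are nonzero, invoking Kolyvagin--Logachev \cite{kolyvagin} to deduce rank $0$ over $\Q$ of each $A_{f'}$, hence rank $0$ over $K$ of the original $A_g$; (4) apply Lemma \ref{map1formlemma} along $\pi_i:Y\to X_i$ to transfer the vanishing-differential property down to $X_i$; (5) note the trace-zero statement from the explicit description of $H^0(C,\Omega^1)$ in terms of $f_1$-twists (the forms $g_4=h_2$, $g_5=h_3$). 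The main obstacle I anticipate is step (2)–(3) in the Nebentypus cases $g_{35}$ and $f_2$: one must correctly identify the building block and all newforms occurring in the Weil restriction (including possible extra twists coming from the inner-twist characters composed with the $\chi^j$), so that no $L$-value needed for the rank-zero conclusion is overlooked; the arithmetic-geometry bookkeeping of which characters factor through which abelian field, exactly as in the proof of Proposition \ref{kolyvaginprop}, is where the real care is required, whereas the non-vanishing of the finitely many $L$-values is a routine (if computation-heavy) \texttt{Magma} check.
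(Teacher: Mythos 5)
Your overall strategy coincides with the paper's: reduce the statement to $\mathrm{rk}(A_g(K))=0$ for $K=\Q(\zeta_7)^+$ and the eigenforms spanning $h_1,h_4,g_1,g_2,g_6,g_7$, prove those rank statements via Proposition \ref{kolyvaginprop} together with the non-vanishing of finitely many twisted $L$-values, descend along $\pi_i\colon Y\to X_i$ using Lemma \ref{map1formlemma}, and obtain the trace-zero property from the fact that $\pi_{C,*}$ factors through the $f_1$-isotypic part of $J(Y)$, to which the eigenforms in question do not belong. Your treatment of the oldforms $f|B_d$ and of the trace-zero step is fine.

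Two points need correcting. First, for $h_4$ the relevant span is $\{f_0\otimes\chi,\,f_0\otimes\chi^3,\,f_0\otimes\chi^5\}$, which is the orbit under $\widehat{G}=\langle\chi^2\rangle$ of the \emph{trivial-Nebentypus} newform $f_0\otimes\chi^3$, not of $f_0$; Proposition \ref{kolyvaginprop} must be applied to $f_0\otimes\chi^3$ (as the paper does), since applying it to $f_0$ would yield rank zero for the wrong three varieties $A_{f_0}$, $A_{f_0\otimes\chi^2}$, $A_{f_0\otimes\chi^4}$. Second, and more substantively, your premise that $g_{35}$ has non-trivial Nebentypus is false: all of $f_{49},f_{35},g_{35},f_0,f_1,f_2$ are defined in the paper as newforms of \emph{trivial} Nebentypus, and the computational content of the proof is precisely the verification that $f_{35}$, $f_0\otimes\chi^3$, $g_{35}$ and $f_2$ have no CM and no inner twists, after which Proposition \ref{kolyvaginprop} applies directly; a Hecke eigenvalue field larger than $\Q$ (such as $\Q(\sqrt{17})$ for $g_{35}$) is no obstruction, since the proposition only requires $E_f=F_f$, not $E_f=\Q$. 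Your proposed detour through the building-block machinery for forms \emph{with} inner twists is both unnecessary and, as sketched, incomplete: the proof of Proposition \ref{kolyvaginprop} uses the no-inner-twist hypothesis exactly to conclude that the characters $\chi_s$ lie in $\widehat{G}$ and hence $L_g\subset K$, and without it the set of newforms occurring in the Weil restriction of the building block is not controlled by the argument you outline. Fortunately the hypothesis does hold for $g_{35}$ and $f_2$, so the direct route goes through.
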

\begin{proof} Let $\Psi\colonequals \{h_1,h_4,g_1,g_2,g_6,g_7\}$. Each element of $\Psi$ is a linear combination of the newforms in $\Phi\colonequals  \{f_{35},f_{35}\otimes \chi^2,f_{35}\otimes \chi^4,g_{35},g_{35}\otimes \chi^2,g_{35}\otimes \chi^4,f_0\otimes\chi^3,f_0\otimes\chi,f_0\otimes\chi^5,f_2,f_2\otimes\chi^2,f_2\otimes\chi^4\}$ and their Galois conjugates. By Lemmas \ref{canonicalisolemma} and  \ref{map1formlemma}, the discussion below the latter and Proposition \ref{kolyvaginprop}, to prove that the cusp forms in $\Psi$ are vanishing differentials, it suffices to show that $f_{35}$, $f_0\otimes \chi^3$, $g_{35}$ and $f_2$ have no CM and no inner twists, and that each $f\in \Phi$ satisfies $L(f,1)\neq 0$. We verified this in \texttt{Sage}, which uses an algorithm of Tim Dokchitser \cite{dokchitser} to evaluate $L(f,1)$. 

It remains to show that each element of $\Psi$ corresponds to a differential on $X_i$ with trace zero to $C$. Consider the map $\pi_C\colon Y\to C$ (which factors via $X_1$ and $X_2$). It suffices to show that each element of $\Phi$ corresponds to a differential $\om$ on $Y$ with $\pi_{C,*}(\om)=0$.  Now $\pi_C^*H^0(C_K,\Omega)$ is the space generated by the 1-forms corresponding to $h_2=g_4$ and $h_3=g_5$, each of which is a linear combination of $f_1, f_1\otimes \chi^2,f_1\otimes \chi^4$ and their conjugates. The pushforward map $\pi_{C,*}\colon J(Y)\to J(C)$ thus factors via 
\[
\pi_{f_1}\colon J(Y)\to A_{f_1}\times A_{f_1\otimes \chi^2}\times A_{f_1\otimes \chi^4}.
\]
As $\Phi$ consists of eigenforms unequal to (conjugates of) $f_1$,  $f_1\otimes\chi^2$ and $f_1\otimes \chi^4$, indeed $\pi_{f_1}(\om)=0$ for each $\om\in H^0(Y,\Omega)$ corresponding to an eigenform in $\Phi$, from which it follows that also $\pi_{C,*}(\om)=0$, as desired. %Denote by $\om(f)\in H^0(Y_K,\Omega)$ the 1-form corresponding to $f\in S_2(\Gamma_0(5\cdot 7^2)\cap \Gamma_1(7),K)$. It suffices to show that $\pi_{C,*}(\om(f))=0$ when $f\in S_2(\Gamma_0(5\cdot 7^2)\cap \Gamma_1(7),K)$ is an eigenform unequal to $f_1$, $f_1\otimes \chi^2$ and $f_1\otimes \chi^4$.  rite $J(X_i)\sim A_i\times J(C)$ for some abelian variety $A_i$. The differentials on $X_i$ coming from $C$ correspond to linear combinations of $h_2=g_4$ and $h_3=g_5$, each of which is, in turn, a linear combination of $f_1$, $f_1\otimes \chi^2$ and $f_1\otimes \chi^4$. We note that $A_{f_i}$ (The differentials coming from $J(C)$ correspond to linear combinations of $h_2,h_3$ (on $X_2$) and $g_4,g_5$ (on $X_1$).)
\end{proof}
\subsection{Proof of Theorem \ref{fibredproductthm}}
When $(*,i)\in \{(\mathrm{b},2),(\mathrm{s},1),(\mathrm{s},2)\}$, we apply the Mordell--Weil sieve described in Section \ref{mwsieve} with $V=X_i^{(2)}\times_{X(\mathrm{b}5)^{(4)}}X(*3,\mathrm{b}5)^{(4)}$ and $A=J(C)\times J(\mathrm{e}7)\times J(*3,\mathrm{b}5)$. When $(*,i)=(\mathrm{b},1)$, we apply the sieve with $V=  X(\mathrm{b}3,\mathrm{ns}7)^{(4)}\times_{X(\mathrm{ns}7)^{(4)}}X_1^{(2)}\times_{X(\mathrm{b}5)^{(4)}}X(\mathrm{b}3,\mathrm{b}5)^{(4)}$ and $A=J(C)\times J(\mathrm{e}7)\times  J(\mathrm{b}3,\mathrm{b}5)\times J(X(\mathrm{b}3,\mathrm{ns}7)/w_3))$. In all cases, the map $\iota:V\to A$ is determined by the Abel--Jacobi maps on $C^{(2)}$, $X(*3,\mathrm{b}5)^{(4)}$, $X(\mathrm{e}7)^{(4)}$ and $(X(\mathrm{b}3,\mathrm{ns}7)/w_3)^{(4)}$, and we let $\mathcal{L}$ be the set of points in $V(\Q)$ mapping into $\{\rho_i^*(Q_j) \mid j\in \{1,\ldots,6\}\}$ on $X_i^{(2)}$.  (In particular, we find no other rational points on $V$.) When $(*,i)\in \{(\mathrm{b},2),(\mathrm{s},1),(\mathrm{s},2)\}$, we let $B=(\Z^2)\times (\Z/2\Z)\times (\Z/2\Z\times \Z/4\Z)$ and define $\phi\colon B\to A(\Q)$ via the generators for $J(C)(\Q)$, $J(\mathrm{e}7)(\Q)$ and $J(*3,\mathrm{b}5)(\Q)$ given in Section \ref{modmapsec}. Similarly, when $(*,i)=(\mathrm{b},1)$, we take $B=(\Z^2)\times (\Z/2\Z)\times (\Z/2\Z\times \Z/4\Z)\times (\Z/2\Z\times \Z^2)$ and $\phi$ is defined via the generators. 

For a prime $p\geq 3$ of good reduction for $V$, we define $\mathcal{N}_p\subset \widetilde{V}(\F_p)$ to be the reductions of the points in $\mathcal{L}$ whose image in $X_i^{(2)}$ satisfies the non-vanishing condition in Theorem \ref{siksekthm}. By Corollary \ref{vandiffcor}, we can verify this condition using the vanishing differentials corresponding to $h_1,h_4$ (when $i=2$) and $g_1,g_2,g_6$ and $g_7$ (when $i=1$) under the isomorphism of Lemma \ref{map1formlemma}.  Then, as before, $\mathcal{M}_p=\iota_p^{-1}(\phi_p(B))\setminus \mathcal{N}_p$.  In each of the four sieves, the primes  13, 23, 43, 53, 67, 83, 71, 89, 97, 79 and 181 suffice to obtain an empty intersection, so we are done by Proposition \ref{mwsieveprop}.

\section{Further study}\label{furtherstudy}
\subsection{Quartic fields containing $\sqrt{5}$}
When considering quartic fields containing $\sqrt{5}$, we can no longer use Thorne's theorem (Theorem \ref{modliftthm} (ii)). Instead, we can use \cite[Proposition 2.1]{freitas}, to obtain the following theorem.
\begin{theorem}
Suppose that $K$ is any totally real quartic field. If an elliptic curve $E/K$ is not modular, then $E$ gives rise to a $K$-point on one of the curves
\[
X(\mathrm{u}3,\mathrm{v}5,\mathrm{w}7), \quad u\in \{\mathrm{b},\mathrm{s}\},\; v\in \{\mathrm{b},\mathrm{s},\mathrm{ns}\},\; w\in \{\mathrm{b},\mathrm{e}\}.
\]
\end{theorem}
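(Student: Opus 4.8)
The plan is to reduce this statement to Theorem~\ref{thm1.1} combined with a replacement for Thorne's theorem at the prime $5$. Recall that Theorem~\ref{modliftthm} gave us three separate constraints: at $3$ the image lies in $C_s^+(3)$ or $B(3)$; at $5$, \emph{provided $\sqrt5\notin K$}, the image lies in $B(5)$; and at $7$, provided $K\cap\Q(\zeta_7)=\Q$, the image lies in $B(7)$ or $G(\mathrm{e}7)$. When we drop the hypothesis $\sqrt5\notin K$, only the constraint at $5$ is lost, so the first step is to invoke \cite[Proposition~2.1]{freitas}, which (for any totally real field) forces $\mathrm{Im}(\overline\rho_{E,5})$ into one of $B(5)$, $C_s^+(5)$, or $C_{\mathrm{ns}}^+(5)$ when $E$ is non-modular. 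This is weaker than part~(ii) of Theorem~\ref{modliftthm} but still parametrisable: it says precisely that $E$ gives a $K$-point on $X(\mathrm{b}5)$, $X(\mathrm{s}5)$ or $X(\mathrm{ns}5)$.

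Next I would combine the three local constraints exactly as in Section~\ref{modliftsec}. For a non-modular $E/K$, parts~(i) and~(iii) of Theorem~\ref{modliftthm} are unchanged, and we still assume $K$ totally real; the only subtlety is that for the prime $7$ we need $K\cap\Q(\zeta_7)=\Q$. But here one can note that a totally real quartic field $K$ cannot contain $\Q(\zeta_7)^+=\Q(\zeta_7+\zeta_7^{-1})$ as a proper-or-equal subfield in a way that causes trouble: $[\Q(\zeta_7)^+:\Q]=3$, and $3\nmid 4$, so $\Q(\zeta_7)^+\not\subset K$; hence $K\cap\Q(\zeta_7)^+=\Q$, and since $K$ is real, $K\cap\Q(\zeta_7)=\Q$ as well. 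Thus Kalyanswamy's result applies verbatim and gives the $B(7)$ or $G(\mathrm{e}7)$ alternative, i.e.\ a $K$-point on $X(\mathrm{b}7)$ or $X(\mathrm{e}7)$. Since $3$, $5$, $7$ are pairwise coprime, we take the fibre product over $X(1)$ of the three chosen curves at levels $3$, $5$, $7$ and normalise, exactly as in the construction of $X(\mathrm{u}_1p_1,\ldots,\mathrm{u}_np_n)$; away from $j\in\{0,1728,\infty\}$ the $K$-point on each factor lifts to a $K$-point on the fibred product by the discussion following the modular-curve preliminaries. Ranging over all admissible labels $u\in\{\mathrm{b},\mathrm{s}\}$, $v\in\{\mathrm{b},\mathrm{s},\mathrm{ns}\}$, $w\in\{\mathrm{b},\mathrm{e}\}$ (there are $2\cdot3\cdot2=12$ of them) then yields the claimed list of twelve curves $X(\mathrm{u}3,\mathrm{v}5,\mathrm{w}7)$.

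The only genuine gaps to fill are the boundary cases $j(P)\in\{0,1728\}$ and the cusps: if $j(P)=0$ or $1728$ then $E$ has potential CM and is a $\Q$-curve, hence modular by Ribet's theorem, so such $P$ can be excluded from consideration; and cusps do not correspond to elliptic curves at all. This is the same dévissage already used to pass from Theorem~\ref{modliftthm} to Theorem~\ref{thm1.1} in Section~\ref{modliftsec}, so it requires no new argument. I do not expect a serious obstacle here: the statement is essentially a bookkeeping corollary of \cite[Proposition~2.1]{freitas} together with Theorem~\ref{modliftthm}(i),(iii) and the coprimality of $3,5,7$. The only point deserving a sentence of care is the verification that $K\cap\Q(\zeta_7)=\Q$ is automatic for totally real quartic $K$, as sketched above; everything else is the standard fibre-product construction of modular curves with composite level.
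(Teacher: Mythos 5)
Your proof is correct and follows essentially the same route the paper takes (the paper's own proof is little more than a citation of \cite[Proposition~2.1]{freitas}); you have merely made explicit the bookkeeping that the authors leave implicit. In particular, you correctly observe that $K\cap\Q(\zeta_7)=\Q$ is automatic for a totally real quartic $K$ (since $\Q(\zeta_7)^+$ has prime degree $3\nmid 4$ and $\Q(\sqrt{-7})$ is imaginary), so Kalyanswamy's constraint at $7$ applies without further hypothesis, and the rest is the standard fibre-product argument with the $j\in\{0,1728\}$ boundary cases disposed of via CM/$\Q$-curve modularity.
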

It was noted in \cite[Remark (iii) in Section 4.2]{freitas} that we cannot do any better than this at 5 without stronger modularity lifting results for fields containing $\sqrt{5}$. Instead of 4 curves, we thus need to consider 16 curves, a monumental task. An advantage is that instead of quartic points, we now need to study quadratic points over $\Q(\sqrt{5})$. 

On the other hand, we lose an important $\Q$-curve argument used multiple times in Sections \ref{X0105sec} and \ref{Xs3b5b7sec}: on many curves to consider, only one Atkin--Lehner involution exists. For example, the quotient of $X(\mathrm{b}3,\mathrm{ns}5)$ (genus 2) by $w_3$ is $\P^1$ and the quotient of $X(\mathrm{ns}5,\mathrm{b}7)$  (genus 5) by $w_7$ is an elliptic curve with infinitely many $\Q(\sqrt{5})$-points. These $\Q(\sqrt{5})$-points on the quotients pull back to infinitely many quadratic points over $\Q(\sqrt{5})$ on $X(\mathrm{b}3,\mathrm{ns}5)$ and $X(\mathrm{ns}5,\mathrm{b}7)$. Those points correspond to $\Q(\sqrt{5})$-curves, which, unlike $\Q$-curves, are not known to be modular. 

Another complicating factor is that an analysis of the Jacobians of $X(\mathrm{u}3,\mathrm{v}5,\mathrm{b}7)$ for $u\in \{\mathrm{b},\mathrm{s}\}$ and $v\in \{\mathrm{s},\mathrm{ns}\}$  (using the methods outlined in Section \ref{ranksec}) revealed multiple 1-dimensional factors corresponding to elliptic curves with positive rank over $\Q(\sqrt{5})$, particularly in the $\mathrm{ns}5$ case. 

\subsection{Quintic fields}
After quartic fields, one naturally wonders what is possible for quintic fields. An apparent advantage is that quintic points on modular curves, unlike quartic points, do not arise as inverse images of lower degree points under degree 2 maps, which are abundant on modular curves due to the Atkin--Lehner involutions. 

However, other problems do arise due to the increased degree. The Chabauty method for quartic points on $X(\mathrm{b}5,\mathrm{ns}7)$ does not work for quintic points, as the Chabauty condition $r<g-(d-1)$ is not satisfied when $r=2$, $g=6$ and $d=5$. Other quotients of $X(\mathrm{b}3,\mathrm{b}5,\mathrm{e}7)$ and $X(\mathrm{s}3,\mathrm{b}5,\mathrm{e}7)$ that do satisfy the Chabauty condition and have small genus do not appear to exist. For $X(\mathrm{b}3,\mathrm{b}5,\mathrm{b}7)=X_0(105)$, its handy genus 5 quotient $X_0(105)/w_5$ has infinitely many quintic points (we computed multiple Riemann--Roch spaces to be 2-dimensional, c.f.\ Section \ref{finitemwgpsec}). Similarly, the method for computing quartic points on $X(\mathrm{s}3,\mathrm{b}5,\mathrm{b}7)$ used in Section \ref{Xs3b5b7sec} also fails in practice for quintic points.

While not necessarily infeasible, it is clear that new ideas are required for quintic fields.

\bibliographystyle{alpha}
\bibliography{refs}
\end{document}